\numberwithin{equation}{section}
\newtheorem{theorem}{Theorem}
\newtheorem{proposition}[theorem]{Proposition}
\newtheorem{lemma}[theorem]{Lemma}
\newtheorem{corollary}[theorem]{Corollary}
\newtheorem{conjecture}[theorem]{Conjecture}
\theoremstyle{remark}
\newtheorem{remarknu}[theorem]{Remark}
\newtheorem*{note}{Note}
\def\fl#1{\left\lfloor#1\right\rfloor}
\def\coef#1{\left\langle#1\right\rangle}
\def\Cat{\operatorname{Cat}}
\def\Hom{\operatorname{Hom}}
\def\tS{{\tilde S}}
\def\FF{F}
\def\GG{G}
\def\al{\alpha}
\def\be{\beta}
\def\de{\delta}
\def\si{\sigma}
\def\ta{\tau}
\def\ga{\gamma}
\def\om{\omega}
\def\Ga{\Gamma}
\def\la{\lambda}
\newcommand{\Z}{\mathbb{Z}}
\begin{document}
\title[Mod-$2^k$ behaviour of recursive sequences]{A 
method for determining the mod-$2^k$ behaviour of
recursive sequences, with applications to subgroup counting}
\author[M. Kauers, C. Krattenthaler, and 
T.\,W. M\"uller]{M. Kauers$^\dagger$, C. Krattenthaler$^{\ddagger}$, and
T. W. M\"uller$^*$} 

\address{$^\dagger$Research Institute for Symbolic Computation, 
Johannes Kepler Universit\"at, Altenbergerstrasze 69,
A-4040 Linz, Austria.
WWW: {\tt http://www.kauers.de}}

\address{$^{\ddagger*}$Fakult\"at f\"ur Mathematik, Universit\"at Wien,
Nordbergstrasze~15, A-1090 Vienna, Austria.
WWW: {\tt http://www.mat.univie.ac.at/\lower0.5ex\hbox{\~{}}kratt}.}

\address{$^*$School of Mathematical Sciences, Queen Mary
\& Westfield College, University of London,
Mile End Road, London E1 4NS, United Kingdom.
}

\thanks{$^\dagger$Research supported by the Austrian
Science Foundation FWF, grant Y464-N18\newline\indent
$^\ddagger$Research partially supported by the Austrian
Science Foundation FWF, grants Z130-N13 and S9607-N13,
the latter in the framework of the National Research Network
``Analytic Combinatorics and Probabilistic Number
Theory"\newline\indent
$^*$Research supported by Lise Meitner Grant M1201-N13 of the Austrian
Science Foundation FWF}

\dedicatory{Dedicated to Doron Zeilberger}

\subjclass[2000]{Primary 20E06;
Secondary 05A15 05E99 11A07 20E07 33F10 68W30}

\keywords{Polynomial recurrences, symbolic summation,
subgroup numbers, free subgroup numbers, 
Catalan numbers, Fu\ss--Catalan numbers}

\begin{abstract}
We present a method to obtain congruences modulo powers of $2$ for
sequences given by recurrences of finite 
depth with polynomial coefficients. 
We apply this method to 
Catalan numbers, Fu\ss--Catalan numbers, and to subgroup counting
functions associated with Hecke groups and their lifts.
This leads to numerous new results, including many extensions
of known results to higher powers of $2$. 
\end{abstract}
\maketitle

\section{Introduction}

Ever since the work of Sylow \cite{SyloAA}, 
Frobenius \cite{Frob1,Frob2}, and P.~Hall 
\cite{PHall2}, the study of congruences for subgroup numbers and 
related numerical quantities of groups has played an important role 
in group theory.

Divisibility properties of subgroup numbers of 
(finitely generated) infinite groups may to some extent be
viewed as some kind of
analogue to these classical results for finite groups. 
To the best of our knowledge, the first significant result in this
direction was obtained by Stothers \cite{Stothers}: {\it the number
of index-$n$-subgroups in the inhomogeneous modular group
$PSL_2(\mathbb Z)$ is odd if, and only if, $n$ is of the form
$2^k-3$ or $2^{k+1}-6,$ for some positive integer $k\ge2$.}
A different proof of this result was given by Godsil, Imrich, and
Razen \cite{GIR}. 

The systematic study of divisibility properties of
subgroup counting functions for infinite groups begins with
\cite{MuHecke}. There, the parity of subgroup numbers and the number
of free subgroups of given finite index are determined for arbitrary
Hecke groups $\mathfrak H(q)=C_2*C_q$ with $q\ge3$. 
Subsequently, the results of \cite{MuHecke} were generalised to
larger classes of groups and arbitrary prime modulus in 
\cite{CaMuAA,KrMuAC,ModSub2,ModSub,MuPu2}. 
A first attempt at obtaining congruences modulo higher prime powers
was made in \cite{MuPu}, where the behaviour of subgroup numbers in
$PSL_2(\mathbb Z)\cong \mathfrak H(3)$ 
is investigated modulo $8$ and a congruence modulo
$16$ is derived for the number of free subgroups of given index in
$PSL_2(\mathbb Z)$.

A common feature of all the above listed sequences of subgroup
numbers is that they obey recurrences of finite depth with polynomial
coefficients. The purpose of this paper is to present a new
method for determining congruences modulo arbitrarily large powers of
$2$ for sequences described by such recurrences. Our method is
inspired by the observation that many of the aforementioned results 
say in essence that the generating function for the subgroup numbers
under consideration, when reduced modulo a $2$-power, 
can be expressed as a polynomial in the basic series
\begin{equation} \label{eq:Phidef}
\Phi(z)=\sum _{n\ge0} ^{}z^{2^n}
\end{equation}
with coefficients that are Laurent polynomials in $z$. 
What our method affords is an algorithmic procedure to find such
polynomial expressions, provided they exist. 
By applying our method to Catalan numbers, to (certain) Fu\ss--Catalan
numbers, and to various subgroup counting problems in Hecke groups
and their lifts, we obtain far-reaching generalisations and extensions
of the previously mentioned results.
In order to give a concrete illustration, the recent result 
\cite[Theorems~6.1--6.6]{LiYeAA} of Liu and Yeh determining the
behaviour of Catalan numbers $\Cat_n$ modulo $64$ 
can be compactly written in the form
\begin{multline} \label{eq:Cat64}
\sum _{n=0} ^{\infty}\Cat_nz^n=
32 z^5+16 z^4+6
   z^2+13 z+1
+\left(32 z^4+32
   z^3+20 z^2+44 z+40\right)
   \Phi(z)\\
+\left(16 z^3+56
   z^2+30
   z+52+\frac{12}{z}\right)
   \Phi^2(z)
+\left(32 z^3+60
   z+60+\frac{28}{z}\right)
   \Phi^3(z)\\
+\left(32 z^3+16
   z^2+48
   z+18+\frac{35}{z}\right)
   \Phi^4(z)
+\left(32
   z^2+44\right)
   \Phi^5(z)\\
+\left(48
   z+8+\frac{50}{z}\right)
   \Phi^6(z)+\left(32
   z+32+\frac{4}{z}\right)
   \Phi^7(z)
\quad \quad \text { modulo }64,
\end{multline}
as may be seen by a straightforward (but
rather tedious) computation. Our method can not only {\it
find\/} this result, but it produces as well corresponding formulae
modulo {\it any} given power of $2$ in a completely automatic
fashion, see Theorems~\ref{thm:Cat} and \ref{thm:Cat4096} in
Section~\ref{sec:Cat}. 

In a sense, which is made precise in Section~\ref{sec:method}, 
our method is very
much in the spirit of Doron Zeilberger's philosophy that
{\it mathematicians should train computers to automatically produce
theorems}. Indeed, Theorems~\ref{thm:Cat}, \ref{thm:freie-m},
\ref{thm:Unterg}, \ref{thm:2mf}, \ref{thm:H5}
imply that our algorithm is able to
produce a theorem on the behaviour modulo {\it any} given $2$-power of
the subgroup counting functions featuring in these theorems,
and, if fed with a concrete $2$-power, 
our implementation will diligently output the corresponding result 
(provided the input does not cause the available computer resources
to be exceeded \dots). 
Moreover, when discussing subgroup numbers of 
lifts of $PSL_2(\mathbb Z)$, 
(such as the homogeneous modular group $SL_2(\mathbb Z)$),
a crucial role is also played by an application
of the holonomic functions approach to finding recurrences for
multi-variate hypergeometric sums, pioneered by Wilf and Zeilberger 
\cite{WiZeAC,ZeilAN}, and further developed in \cite{ChyzAA,ChSaAA,KoutAA}.

The rest of this introduction is devoted to a more detailed description of the contents of
this paper.
In Section~\ref{sec:Phi} we discuss our main character, the formal
power series $\Phi(z)$ defined in \eqref{eq:Phidef}. 
While $\Phi(z)$ is transcendental over $\mathbb Q[z]$ (or,
equivalently, over $\Z[z]$), it is easy to see
that it is algebraic modulo powers of $2$. The focus in that section
is on polynomial identities for $\Phi(z)$ modulo a given $2$-power
which are of {\it minimal degree}.
Then, in
Section~\ref{sec:method}, we describe our method of expressing the
generating function of a recursive sequence, when reduced modulo a
given $2$-power, as a polynomial in $\Phi(z)$ with coefficients that
are Laurent polynomials in $z$. 
The method relies in an essential way on the polynomial identities from
Section~\ref{sec:Phi}. The problem how to extract the explicit value of 
a concrete coefficient in a polynomial expression in $\Phi(z)$ 
(such as \eqref{eq:Cat64}) modulo a given $2$-power 
is discussed in Section~\ref{sec:extr}, where we present an efficient
algorithm performing this task. This algorithm is of theoretical
value (minimal length relations between powers of $\Phi(z)$ such as 
the ones in Proposition~\ref{prop:minpol} are established by applying
this algorithm to the powers of $\Phi(z)$; see also
Appendix~\ref{appA}) as well as of practical significance, as is
demonstrated by the derivations of Theorems~\ref{thm:1} and
\ref{thm:Ga3}.

As a first illustration of our method, we apply it to Catalan numbers, 
thereby
significantly improving numerous earlier results in the literature;
see Section~\ref{sec:Cat}. This is contrasted in
Section~\ref{sec:nonex} with an example (concerning particular
Fu\ss--Catalan numbers) where our method is bound to
fail. The reason is spelled out in Theorem~\ref{thm:C2C7}, which, at
the same time, also gives a new description for the parity pattern of the
numbers of free subgroups of given index in the Hecke group $\mathfrak
H(7)$. 

The subsequent sections contain several applications
of our method to the problem of determining congruences modulo a
given $2$-power for numbers of subgroups of Hecke groups $\mathfrak{H}(q)$ and their lifts
\begin{equation} \label{eq:Heckelift}
\Gamma_m(q) = C_{2m} \underset{C_m}{\ast} C_{qm} = \big\langle
x,y\,\big\vert\, x^{2m} = y^{qm} = 1,\, x^2 = y^q\big\rangle,
\quad m\ge1. 
\end{equation}
Ubiquitous in these applications is --- explicitly or implicitly ---
the intimate relation between subgroup numbers of a group $\Ga$ and 
numbers of permutation representations of $\Ga$, in the form of
identities between the corresponding generating functions. This is
directly visible in the folklore result \eqref{eq:Dey} (which is not 
only used in Sections~\ref{sec:SL2Z}, \ref{sec:Ga3}, and \ref{sec:H5}, but
also lies 
behind the crucial differential equation \eqref{eq:S1diff} in
Section~\ref{sec:PSL2Z}; cf.\ its derivation in \cite{GIR}), and also
indirectly in Lemma~\ref{Lem:Gamma_m(q)FreeRec} via the
$A$-invariants $A_\mu(\mathfrak H(q))$, see \cite[Sec.~2.2]{MuHecke}.
In the cases relevant here, the numbers of permutation
representations of $\Ga$ satisfy linear recurrences with polynomial
coefficients --- to make these explicit may require the algorithmic
machinery around the ``holonomic paradigm"
(cf.\ \cite{ChyzAA,ChSaAA,KoutAA,MallAA,SaZiAA,WiZeAC,ZeilAN}),
see Sections~\ref{sec:SL2Z} and \ref{sec:Ga3} for corresponding
examples.
Via the aforementioned generating function relation, such a 
recurrence can be translated into a Riccati-type differential
equation for the generating function of the subgroup numbers that we
are interested in. It is here, where our method comes in: it is
tailor-made for being applied to formal power series $F(z)$ satisfying this
type of differential equation, and it affords an algorithmic
procedure to find a polynomial in $\Phi(z)$ which agrees, after
reduction of the coefficients of $F(z)$ modulo a given power
of $2$, with the power series $F(z)$.

We start in Section~\ref{sec:free} with free subgroup numbers
of lifts $\Ga_m(q)$, for primes $q\ge3$, where we prepare the ground for 
application of our method. More specifically, in
Proposition~\ref{Prop:flambdaq2Part}
we present a lower bound for the
$2$-adic valuation of the number of free subgroups of index $n$
in $\Ga_m(q)$, where $q$ is a
Fermat prime. In particular,
this result implies that the sequence of free subgroup numbers under
consideration is essentially zero modulo a given $2$-power in the
case when $m$ is even. 
In Section~\ref{sec:free2}, we show that our method provides an
algorithm for determining these numbers of free subgroups of $\Ga_m(3)$
modulo any given $2$-power in the case when $m$ is odd. 
The corresponding results (see Theorems~\ref{thm:freie-m} and
\ref{thm:freie-m64}) go far beyond the previous result
\cite[Theorem~1]{MuPu} on the behaviour of the number of free subgroups
of $PSL_2(\mathbb Z)$ modulo~$16$.
Our method provides as well an
algorithm for determining the number of {\it all\/} subgroups
of index $n$ in $PSL_2(\mathbb Z)$ modulo powers of $2$, as we demonstrate in
Section~\ref{sec:PSL2Z}. Not only are we able to provide a new proof
of Stothers' result \cite{Stothers} (which was stated in the second
paragraph above), but our method leads as well to refinements modulo
arbitrary powers of $2$ of Stothers' result and of the mod-$8$ results 
in \cite[Theorem~2]{MuPu} mentioned earlier; 
see Theorems~\ref{thm:Unterg} and \ref{thm:Untergr64}.
For the homogeneous modular group
$SL_2(\mathbb Z)$ (being isomorphic to the lift $\Ga_2(3)$)
and for the lift $\Ga_3(3)$, however, our method
from Section~\ref{sec:method} 
fails already for the modulus $8=2^3$. 
We overcome this obstacle by instead tuning our computations
with the target of obtaining results modulo~$16=2^4$. Indeed,
this leads to the determination of the number of subgroups of index~$n$
in $SL_2(\mathbb Z)$ and in $\Ga_3(3)$ modulo~8
(see Theorems~\ref{thm:Sl2Z8} and \ref{thm:Ga38}), but direct application
of our method does not produce corresponding results modulo~16.
Only by an {\it enhancement\/} of the method,
which we outline in Appendix~\ref{appD}, we are able 
to produce descriptions of the subgroup numbers of $SL_2(\mathbb Z)$
modulo~16, see Theorem~\ref{thm:Sl2Z16}. 
For the subgroup numbers of $\Ga_3(3)$ even this
enhancement fails, and this shows that the generating function for
these subgroup numbers, when coefficients are reduced modulo~16, {\it
cannot\/} be represented as a polynomial in $\Phi(z)$ with
coefficients that are Laurent polynomials in $z$. Still,
the results in Theorems~\ref{thm:Sl2Z8},
\ref{thm:Sl2Z16}, and \ref{thm:Ga38}
go significantly beyond the earlier parity results 
\cite[Eq.~(6.3) with $\vert H\vert=1$]{KrMuAC} for these groups.
This is explained in
Sections~\ref{sec:SL2Z} and \ref{sec:Ga3}, 
with Section~\ref{sec:lift} preparing the
ground by providing formulae for the number of permutation
representations of $SL_2(\mathbb Z)$ as well as other lifts of
$PSL_2(\mathbb Z)$. 
A further example where our method works for
any $2$-power is the subject of Section~\ref{sec:Hecke}: there we
apply the method to a functional equation 
(see \eqref{eq:2meq}) 
extending the functional equation for Catalan numbers (producing {\it
Fu\ss--Catalan numbers}), and show that it works for any given
$2$-power; see Theorem~\ref{thm:2mf}. 
In fact, we apply a variation of the method here,
in that the basic series $\Phi(z)$ gets replaced by a slightly different
series, which we denote by $\Phi_h(z)$ (see \eqref{eq:Phim}).
If Theorem~\ref{thm:2mf} is combined with results from \cite{MuHecke}, then
it turns out that our method provides as well an algorithm for
determining the number of free subgroups of index $n$ in a Hecke
group $\mathfrak H(q)$ and its lifts, 
where $q$ is a Fermat prime, modulo
any given $2$-power; see Corollary~\ref{thm:freeHecke}. 
The same assertion holds as well for the problem
of determining the number of subgroups of index $n$ in the Hecke
group $\mathfrak H(5)$, again modulo any given $2$-power
(see Theorems~\ref{thm:H5} and \ref{cor:H5} in Section~\ref{sec:H5}). 
We conjecture that the same is true
for Hecke groups $\mathfrak H(q)$, with $q$ a Fermat prime
(see Conjecture~\ref{conj:Hq}). The results of Sections~\ref{sec:Hecke} 
and \ref{sec:H5} discussed above largely generalise the
parity results \cite[Cor.~A', respectively Theorem~B]{MuHecke} 
for subgroup numbers of Hecke groups, although our results are not
independent, in the sense that we base our analyses on prior
results from \cite{MuHecke}.

\medskip
Concluding the introduction, we remark that there is no principal
obstacle to generalising our method to other basic series and
moduli.
For example, one may think of analysing the behaviour of recursive
sequences modulo powers of any prime $p$ in terms of
the obvious generalisation of $\Phi(z)$, i.e., the series
$\sum _{n\ge0} ^{}z^{p^n}$. It is in fact not difficult to see that
our results from Sections~\ref{sec:Cat} and \ref{sec:Hecke} for
Fu\ss--Catalan numbers characterised by the functional equation
\eqref{eq:2meq} for their generating function have rather
straightforward analogues for Fu\ss--Catalan numbers whose
generating function satisfies the functional equation
\begin{equation} \label{eq:pmeq}
zf^{p^h}(z)-f(z)+1=0.
\end{equation}
However we are not aware of any applications of this (or of variants) 
to congruence properties of subgroup numbers modulo powers of
primes $p$ different from $2$. In fact, the known results (cf.\
\cite{MuPu2} or \cite[Theorem~3]{MuPu}) 
strongly point to the fact that the phenomena that appear modulo
primes different from $2$ cannot be captured by series of the type
$\sum _{n\ge0} ^{}z^{p^n}$.
So, currently, we do not know of interesting
applications in this direction, but we hope to be able to return to this 
circle of ideas in future publications. 

\begin{note}
This paper is accompanied by several {\sl Mathematica} files 
and a {\sl Mathematica} notebook so that an interested reader is
able to redo (most of) the computations that are presented in
this article. Files and notebook are available at the article's
website
{\tt http://www.mat.univie.ac.at/\lower0.5ex\hbox{\~{}}kratt/artikel/modlifts.html}.
\end{note}

\section{The $2$-power series $\Phi(z)$}
\label{sec:Phi}

Here we consider the formal power series $\Phi(z)$ defined in
\eqref{eq:Phidef}.
This series is the principal character in the method for determining
congruences of recursive sequences modulo $2$-powers, 
which we describe in Section~\ref{sec:method}. 
It is well known that this series is transcendental over
$\mathbb Z[z]$ (this follows for instance from the density argument 
used in the proof of Lemma~\ref{lem:minpol} below). However, if the
coefficients of $\Phi(z)$ are considered modulo a $2$-power $2^\ga$,
then $\Phi(z)$ obeys a polynomial relation with coefficients that are
polynomials in $z$. The focus of this section is on what may be said 
concerning such polynomial relations, and, in particular, about those of
minimal length. 

\medskip
Here and in the sequel,
given power series (or Laurent series) $f(z)$ and $g(z)$,
we write 
$$f(z)=g(z)~\text {modulo}~2^\ga$$ 
to mean that the coefficients
of $z^i$ in $f(z)$ and $g(z)$ agree modulo~$2^\ga$ for all $i$.

We say that a polynomial $A(z,t)$ in 
$z$ and $t$ is {\it minimal for the modulus
$2^\ga$}, if it is monic (as a polynomial in $t$), 
has integral coefficients, satisfies
$A(z,\Phi(z))=0$~modulo~$2^\ga$, and there is no monic polynomial $B(z,t)$
with integral coefficients
of $t$-degree less than that of $A(z,t)$ with $B(z,\Phi(z))=0$~modulo~$2^\ga$.
(Minimal polynomials are not unique; see Remark~\ref{rem:min}.)
Furthermore, we let $v_2(\al)$ denote the $2$-adic valuation of the
integer $\al$, that is, the maximal exponent $e$ such that $2^e$
divides $\al$.

The lemma below provides a lower bound for the degree of a polynomial
that is minimal for the modulus $2^\ga$.

\begin{lemma} \label{lem:minpol}
If $A(z,t)$ is minimal for the modulus $2^\ga$, 
then the degree $d$ of $A(z,t)$ in $t$ satisfies $v_2(d!)\ge\ga$.
In particular, the series $\Phi(z)$ is transcendental over $\mathbb
Z[z]$.
\end{lemma}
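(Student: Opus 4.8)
The plan is to exploit the defining relation $\Phi(z)=z+\Phi(z^2)$, which holds exactly (not just modulo a $2$-power), together with the Frobenius-type fact that modulo $2$ one has $\Phi(z)^2 \equiv \Phi(z^2)$. Suppose $A(z,t)$ is minimal for the modulus $2^\ga$, with $\deg_t A = d$. Write $A(z,t) = t^d + \sum_{j<d} a_j(z)\,t^j$. The key computation is to substitute $t=\Phi(z)$, use $\Phi(z)=z+\Phi(z^2)$ to replace $\Phi(z)$ by $z+\Phi(z^2)$, and expand. Since $\Phi(z^2)$ is (modulo $2$) congruent to $\Phi(z)^2$, one obtains from $A(z,\Phi(z))\equiv 0$ a new polynomial relation for $\Phi$ whose leading behaviour in $t$ is governed by the binomial coefficient $\binom{d}{1}=d$ from expanding $(z+\Phi(z^2))^d$. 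More precisely, I would track how the $2$-adic valuation of the coefficients is forced to drop when one iterates this substitution: each application of $\Phi(z)=z+\Phi(z^2)$ combined with squaring produces cancellations controlled by binomial coefficients, and minimality (no relation of smaller $t$-degree modulo $2^\ga$) forces these cancellations to exhaust exactly a power of $2$ equal to $2^\ga$.

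The cleanest route, which I would try first, is a direct density/counting argument rather than iterated substitution. Reduce everything modulo $2$ first: modulo $2$, $\Phi(z)$ satisfies $\Phi(z)^2 = \Phi(z) - z$, i.e. $\Phi$ is a root of $t^2 - t + z = 0$ over $\mathbb{F}_2[z]$ (using $\Phi^2\equiv\Phi(z^2)$ and $\Phi(z^2)=\Phi(z)-z$). Hence over $\mathbb{F}_2(z)$ the minimal polynomial of $\Phi$ has degree $2$. Now lift: a monic $A(z,t)\in\mathbb{Z}[z,t]$ with $A(z,\Phi(z))\equiv 0 \pmod{2^\ga}$, reduced modulo $2$, must be divisible by $t^2-t+z$ in $\mathbb{F}_2[z,t]$. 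Writing $A = (t^2-t+z)Q + 2R$ and feeding in $t=\Phi$, the term $(t^2-t+z)Q$ vanishes modulo... no: it vanishes only modulo $2$ after substitution, so $A(z,\Phi)\equiv 2\cdot(\text{something})\pmod{4}$, and I would iterate this descent. At stage $k$ the "something" is itself, modulo $2$, a polynomial in $\Phi$ of degree $<d$ that one divides again by $t^2-t+z$; the obstruction to continuing is the degree dropping below $2$, i.e. after roughly $\log_2 d$ divisions one can no longer divide, and the residual term is a genuinely nonzero power series times an odd constant. Counting how many factors of $2$ one gains before getting stuck gives exactly $v_2(\lfloor d \rfloor !)$-many, matching the Legendre/de Polignac formula $v_2(d!)=\sum_{i\ge1}\lfloor d/2^i\rfloor$.

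The main obstacle will be making this descent quantitatively precise: showing that each division by $t^2-t+z$ in $\mathbb{F}_2[z,t]$ genuinely contributes one factor of $2$ to $A(z,\Phi(z))$ and that these contributions accumulate to *exactly* $v_2(d!)$ rather than merely a weaker bound. This requires controlling the interplay between the $t$-degree dropping (which limits how many divisions are possible) and the fact that $\Phi(z^2)\equiv\Phi(z)^2$ only modulo $2$, so each lift introduces new error terms that must be shown not to cancel — the transcendence of $\Phi$ over $\mathbb{Z}[z]$ (which is the "in particular" clause, and which I would establish en passant by noting that a polynomial relation over $\mathbb{Z}[z]$ would be minimal for *every* modulus $2^\ga$, contradicting $v_2(d!)\ge\ga$ for all $\ga$) guarantees the residual term never vanishes identically. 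I would handle the error-term bookkeeping by a careful induction on $\ga$, at each step using the relation $\Phi(z)=z+\Phi(z^2)$ to rewrite the lowest-order surviving term and a gap/lacunarity argument (the exponents appearing in $\Phi(z^2)$, $\Phi(z^4)$, $\dots$ are increasingly sparse) to rule out accidental cancellation of leading coefficients.
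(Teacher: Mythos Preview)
Your approach is genuinely different from the paper's, but the descent you sketch has a real gap that appears at the very first iteration.

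You write $A=(t^2-t+z)Q+2B$ with $Q$ monic of degree $d-2$, substitute $t=\Phi(z)$, and note that $(t^2-t+z)\big|_{t=\Phi}$ vanishes only modulo~$2$. In fact it equals exactly $2H_{1,1}(z)$, where $H_{1,1}(z)=\sum_{n_1>n_2\ge0}z^{2^{n_1}+2^{n_2}}$ (this is the paper's equation~(2.2)). So after one step your ``something'' is
\[
H_{1,1}(z)\,Q(z,\Phi(z))+B(z,\Phi(z)),
\]
which is \emph{not} a polynomial in $\Phi$ with coefficients in $\mathbb{Z}[z]$: the series $H_{1,1}$ is linearly independent of $1$ and $\Phi$ over $\mathbb{Z}[z,z^{-1}]$ (even modulo~$2$). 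You therefore cannot ``divide again by $t^2-t+z$'' as you propose, and the minimality hypothesis on $A$ gives no grip on this new expression. Your sketch treats the ``something'' as still being a polynomial in $\Phi$ of degree $<d$, but it is not one.

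There is also a bookkeeping problem: you say the descent runs for ``roughly $\log_2 d$'' steps and accumulates $v_2(d!)$ factors of~$2$, but $v_2(d!)=d-s(d)$ is of order~$d$, not $\log_2 d$. A naive degree-drop-by-$2$ descent would give at most $\lfloor d/2\rfloor$ steps; obtaining the full $v_2(d!)=\sum_{i\ge1}\lfloor d/2^i\rfloor$ would require recursing into the quotients $Q$ as well, but those are not themselves relations for~$\Phi$, so that recursion does not get started.

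The paper's proof is precisely the density argument you mention in passing but abandon. One expands $\Phi^m(z)=m!\,E_m(z)+R_m(z)$ with $E_m(z)=\sum_{n_1>\cdots>n_m\ge0}z^{2^{n_1}+\cdots+2^{n_m}}$, and counts nonzero coefficients in dyadic blocks: $E_m$ contributes $\binom{n-1}{m-1}\sim n^{m-1}/(m-1)!$ exponents in $[2^{n-1},2^n)$, while $R_m$ contributes only $O(n^{m-2})$. If $v_2(d!)<\gamma$ then $d!\,E_d$ survives modulo $2^\gamma$ and its density $\sim d\,n^{d-1}$ strictly dominates the $O(n^{d-2})$ contribution of every term $a_j(z)\Phi^j(z)$ with $j<d$; hence $A(z,\Phi(z))$ cannot vanish modulo~$2^\gamma$. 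This is short, avoids all lifting issues, and yields the transcendence statement immediately.
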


\begin{proof} 
We introduce the following {\it density function} with respect to a
given modulus $2^\ga$ for a power series $f(z)$ in $z$:
\begin{equation} \label{eq:density}
D(f,2^\ga;n):=\big\vert\{e:2^{n-1}\le e<2^n\text { and }
\coef{z^e}f(z)\not\equiv 0~\text {modulo}~2^\ga\}\big\vert,
\quad \quad n=1,2,\dots,
\end{equation}
where $\coef{z^{e}}f(z)$
denotes the coefficient of $z^{e}$ in $f(z)$.
Setting 
\begin{equation} \label{eq:Em}
E_m(z):=\sum _{n_1>\dots>n_m\ge0}
^{}z^{2^{n_1}+2^{n_2}+\dots+2^{n_m}},
\end{equation}
simple counting yields that
\begin{equation} \label{eq:DEm}
D(E_m,2^\ga;n)= \binom {n-1}{m-1}\sim \frac {1} {(m-1)!}n^{m-1},
\quad \quad \text {as }n\to\infty.
\end{equation}
(The modulus $2^\ga$ does not play a role here.)
Furthermore, by considering the binary representations of possible
exponents $e$ such that the coefficient of $z^e$ in $\Phi^m(z)$ does
not vanish, we have
\begin{equation} \label{eq:DPhi}
D(\Phi^m,2^\ga;n)=\mathcal O\left(n^{m-1}\right),
\quad \quad \text {as }n\to\infty.
\end{equation}
Indeed, by direct expansion, we see that
\begin{equation} \label{eq:PhiE}
\Phi^m(z)=m!\,E_m(z)+R_m(z),
\end{equation}
where all monomials $z^e$ which appear with non-vanishing coefficient
in $R_m(z)$ have a binary expansion with at most $m-1$ digits $1$.
Consequently, again by elementary counting, we have
$$D(R_m,2^\ga;n)\le \binom {n-1}{m-2}+\binom {n-1}{m-3}+\dots+\binom
{n-1}0,$$
and hence $D(R_m,2^\ga;n)=\mathcal O(n^{m-2})$ as $n\to\infty$. 

Let us, by way of contradiction, suppose that the degree $d$ of $A(z,t)$
satisfies $v_2(d!)<\ga$.
Considering \eqref{eq:PhiE} with $m=d$, we see that $E_d(z)$ appears
in $\Phi^d(z)$ with a non-zero coefficient modulo $2^\ga$. 
Furthermore, the other terms in $\Phi^d(z)$ (denoted by $R_d(z)$ in
\eqref{eq:PhiE}) have a density function modulo $2^\ga$ 
which is asymptotically
strictly smaller than the density function of $E_d(z)$.
Consequently, if we remember \eqref{eq:DEm}, we have
$$
D(\Phi^d,2^\ga;n)\sim \frac {d!} {(d-1)!}n^{d-1}=dn^{d-1},
\quad \quad \text {as }n\to\infty.
$$
Moreover, by \eqref{eq:DPhi}, all powers $\Phi^m(z)$ with $m<d$ have
a density function modulo $2^\ga$ which is 
asymptotically strictly smaller than $n^{d-1}$. Altogether, it is 
impossible that a linear combination of powers $\Phi^m(d)$,
$m=0,1,\dots,d$, with coefficients that are polynomials in $z$ sums
up to zero modulo $2^\ga$, a contradiction to our assumption that $d$ is
the degree of a minimal polynomial $A(z,t)$. The particular statement 
is an immediate consequence of the inequality just proven.
\end{proof}

\begin{proposition} \label{prop:minpol}
Minimal polynomials for the moduli
$2,4,8,16,32,64,128$ are
\begin{alignat*}2 
&t^2+t+z&&\text {\em modulo 2},\\
&
(t^2+t+z)^2
&&\text {\em modulo 4},\\
&
t^4+6t^3+(2z+3)t^2+(2z+6)t+2z+5z^2
&&\text {\em modulo 8},\\
&
(t^2+t+z)(t^4+6t^3+(2z+3)t^2+(2z+6)t+2z+5z^2)\quad 
&&\text {\em modulo 16},\\
&
(t^4+6t^3+(2z+3)t^2+(2z+6)t+2z+5z^2)^2
&&\text {\em modulo 32},\\
&
(t^4+6t^3+(2z+3)t^2+(2z+6)t+2z+5z^2)^2
&&\text {\em modulo 64},\\
&
t^8+124
   t^7+t^6 (68
   z+18)+t^5 (124
   z+24)+t^4 \left(62
   z^2+64
   z+81\right)&&\\
&+t^3
   \left(20 z^2+76
   z+28\right)+t^2
   \left(116 z^3+114 z^2+12
   z+92\right)\\
&+t
   \left(116 z^3+28 z^2+8
   z+16\right)+9 z^4+124
   z^3+12 z^2+112 z
&&\text {\em modulo 128}
.
\end{alignat*}
\end{proposition}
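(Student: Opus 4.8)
The plan is to treat, for each modulus $2^\ga$ in the list, two separate points: (i)~that the displayed polynomial $A(z,t)$ satisfies $A(z,\Phi(z))=0$ modulo~$2^\ga$, and (ii)~that no monic integral polynomial of strictly smaller $t$-degree annihilates $\Phi(z)$ modulo~$2^\ga$. Point~(ii) costs nothing: the $t$-degrees of the seven displayed polynomials are $2,4,4,6,8,8,8$, and by Legendre's formula $v_2(d!)=d-s_2(d)$ (with $s_2$ the binary digit sum) these are exactly the least $d$ with $v_2(d!)\ge\ga$ for $\ga=1,\dots,7$ --- e.g.\ $v_2(6!)=v_2(7!)=4$ while $v_2(8!)=7$, which forces $d=8$ for $\ga=5,6,7$. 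Hence Lemma~\ref{lem:minpol} immediately excludes any monic integral annihilator of smaller $t$-degree, so once (i) holds each displayed polynomial is minimal. Thus the substance of the proposition is entirely the annihilation identities of~(i), and no minimality argument beyond invoking Lemma~\ref{lem:minpol} is needed.

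The next move is to cut (i) down to a handful of genuinely new verifications. Write $A_1(z,t)=t^2+t+z$ and $A_3(z,t)=t^4+6t^3+(2z+3)t^2+(2z+6)t+2z+5z^2$, and let $A_7(z,t)$ be the displayed degree-$8$ polynomial for the modulus $128$. The polynomials for the moduli $4,16,32,64$ are precisely $A_1^2$, $A_1A_3$, $A_3^2$, $A_3^2$; and since $A(z,\Phi(z))\in\Z[[z]]$ for $A\in\Z[z,t]$, the single fact $A_1(z,\Phi(z))=0$ modulo~$2$ gives $A_1(z,\Phi(z))^2=0$ modulo~$4$, and, once $A_3(z,\Phi(z))=0$ modulo~$8$ is available, also $A_1(z,\Phi(z))A_3(z,\Phi(z))=0$ modulo~$16$ and $A_3(z,\Phi(z))^2=0$ modulo~$64$ (hence modulo~$32$). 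So everything reduces to the three identities $A_1(z,\Phi(z))\equiv0$ modulo~$2$, $A_3(z,\Phi(z))\equiv0$ modulo~$8$, and $A_7(z,\Phi(z))\equiv0$ modulo~$128$.

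All three I would verify through the functional equation $\Phi(z^2)=\Phi(z)-z$ and its companions. The basic one is the exact relation $\Phi(z)^2=\Phi(z)-z+2E_2(z)$ in $\Z[[z]]$ (a crossed monomial $z^{2^a+2^b}$, $a\ne b$, occurs twice, while $z^{2^a}\cdot z^{2^a}=z^{2^{a+1}}$), from which $A_1(z,\Phi(z))=\Phi(z)^2+\Phi(z)+z=2(E_2(z)+\Phi(z))\equiv0$ modulo~$2$ is immediate, and, more generally, $E_m(z^2)=E_m(z)-zE_{m-1}(z^2)$ with $E_0\equiv1$ (so $E_2(z^2)=E_2(z)-z\Phi(z)+z^2$, etc.). For the mod-$8$ identity, rewriting $A_3=A_1^2+2A_1+4(t^3+t+z^2)$ and substituting $A_1(z,\Phi(z))=2(E_2+\Phi)$ yields $A_3(z,\Phi(z))=4\bigl[(E_2+\Phi)^2+(E_2+\Phi)+\Phi^3+\Phi+z^2\bigr]$, and one checks the bracket is $\equiv0$ modulo~$2$ using the mod-$2$ Frobenius relations $\Phi^2\equiv\Phi+z$, $E_2^2\equiv E_2+z\Phi+z^2$, $\Phi^3\equiv z+(1+z)\Phi$; this gives $A_3(z,\Phi(z))\equiv0$ modulo~$8$. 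For the mod-$128$ identity the scheme is the same --- reduce $A_7(z,\Phi(z))$ against $\Phi(z)^2\equiv\Phi(z)+z$ and the higher refinements of \eqref{eq:PhiE} controlling $\Phi^m$ for $m\le8$, and check that the result vanishes --- but now the bookkeeping of binary carries inside $E_m$ for $m$ up to $8$, together with the attendant linear algebra over $\Z/128\Z$, is heavy enough that I would hand it to the coefficient-extraction algorithm of Section~\ref{sec:extr}; this is, in fact, exactly how minimal-length relations of this type are produced (cf.\ Appendix~\ref{appA}).

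The step I expect to be the real obstacle is this last one: the degree-$8$ polynomial for modulus~$128$ is genuinely new, not obtainable from $A_1$ and $A_3$ by the product trick (it has the same $t$-degree $8$ as the mod-$32/64$ polynomial $A_3^2$ but is a different polynomial, in line with the non-uniqueness of minimal polynomials noted in Remark~\ref{rem:min}), so one is forced to carry out the full normal-form reduction of $\Phi(z)^8$ modulo~$128$ and solve for the eight coefficients --- the routine but lengthy part that the algorithm automates. Everything conceptual is already in place: the degree count pins all seven $t$-degrees via Lemma~\ref{lem:minpol}, and the functional equation $\Phi(z^2)=\Phi(z)-z$ together with $E_m(z^2)=E_m(z)-zE_{m-1}(z^2)$ reduces each annihilation check to a finite computation of the kind illustrated above for modulus~$8$.
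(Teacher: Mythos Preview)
Your argument is correct and follows essentially the same route as the paper: invoke Lemma~\ref{lem:minpol} for the degree lower bounds, reduce the seven annihilation statements to the three ``new'' cases $A_1$ mod~$2$, $A_3$ mod~$8$, $A_7$ mod~$128$ via products, verify these by expanding powers of $\Phi(z)$ in terms of the $E_m$-series, and defer the mod-$128$ computation to the machinery of Section~\ref{sec:extr}/Appendix~\ref{appA}. Your mod-$8$ verification via the decomposition $A_3=A_1^2+2A_1+4(t^3+t+z^2)$ is a neat repackaging of what the paper does by direct substitution of \eqref{eq:Phi2}--\eqref{eq:Phi4}, but the underlying computation is the same.
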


\begin{proof}
In order to be consistent with Section~\ref{sec:extr}, let us change
notation and write
$$
H_{1,1,\dots,1}(z):=\sum _{n_1>\dots>n_m\ge0}
^{}z^{2^{n_1}+2^{n_2}+\dots+2^{n_m}}
$$
(with $m$ occurrences of $1$ in $H_{1,1,\dots,1}(z)$).
Note that the above series is identical with the series which we
earlier denoted by $E_m(z)$.
Straightforward calculations yield that
\begin{align} 
\label{eq:Phi2}
\Phi^2(z)&=\Phi(z)+2H_{1,1}(z)-z,\\
\label{eq:Phi3}
\Phi^3(z)&=-2\sum _{n\ge0} ^{}z^{3\cdot 2^n}+
3(1-z)\Phi(z)+6H_{1,1}(z)+6H_{1,1,1}(z)-3z,\\
\notag
\Phi^4(z)&=-12\sum _{n\ge0} ^{}z^{3\cdot 2^n}
-8\sum _{n_1>n_2\ge0} ^{}z^{3\cdot 2^{n_1}+2^{n_2}}
-8\sum _{n_1>n_2\ge0} ^{}z^{2^{n_1}+3\cdot 2^{n_2}}
+(13-18z)\Phi(z)\\
\label{eq:Phi4}
&\kern2cm
+(30-12z)H_{1,1}(z)+36H_{1,1,1}(z)+24H_{1,1,1,1}(z)+5z^2-13z.
\end{align}
In particular, relation~\eqref{eq:Phi2}, together with
Lemma~\ref{lem:minpol}, immediately implies the
claims about minimal polynomials for the moduli $2$ and $4$.
Moreover, a simple computation using
\eqref{eq:Phi2}--\eqref{eq:Phi4} shows that
\begin{equation} \label{eq:rel4}
\Phi^4(z)+6\Phi^3(z)+(2z+3)\Phi^2(z)+(2z+6)\Phi(z)+2z+5z^2=0
\quad \text {modulo }8.
\end{equation}
Together with Lemma~\ref{lem:minpol},
this establishes the claims about minimal polynomials for the moduli
$8$, $16$, $32$, and $64$. In order to prove the claim
for the modulus $128$, one uses the expressions for $\Phi^i(z)$,
$i=2,3,\dots,8$, given above and in Appendix~\ref{appA}.
\end{proof}

\begin{remarknu}\label{rem:min}
Minimal polynomials are highly non-unique: for example, 
the polynomial
$$
(t^2+t+z)^2+2(t^2+t+z)
$$
is obviously also a minimal polynomial for the modulus $4$.
\end{remarknu}

Based on the observations in Proposition~\ref{prop:minpol}
and Lemma~\ref{lem:minpol}, we propose the following conjecture.

\begin{conjecture} \label{conj:1}
The degree of a minimal polynomial for the modulus $2^\ga$, $\ga\ge1$,
is the least $d$ such that $v_2(d!)\ge\ga$.
\end{conjecture}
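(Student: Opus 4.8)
The inequality $v_2(d!)\ge\ga$ for the $t$-degree $d$ of every polynomial that is minimal for the modulus $2^\ga$ is precisely Lemma~\ref{lem:minpol}; moreover the density argument given there rules out \emph{any} monic integer polynomial of $t$-degree $d$ with $v_2(d!)<\ga$ annihilating $\Phi(z)$ modulo $2^\ga$, not merely the minimal ones. Hence Conjecture~\ref{conj:1} is equivalent to an existence statement: writing $d_\ga$ for the least $d$ with $v_2(d!)\ge\ga$, there should be a polynomial $A_\ga(z,t)$, monic of degree $d_\ga$ in $t$ and with integer coefficients, with $A_\ga(z,\Phi(z))=0$ modulo $2^\ga$. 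My plan is to construct such polynomials by recursion on $\ga$, using as raw material the identities \eqref{eq:Phi2}--\eqref{eq:Phi4} and their analogues in Appendix~\ref{appA}, which exhibit each power $\Phi^m(z)$ as an integral combination of the ``carry-free'' series $E_j(z)$ of \eqref{eq:Em} with $j\le m$, plus ``carry series'' of the shape $\sum_n z^{c\cdot 2^n+\cdots}$, and in which the term $E_m(z)$ carries the coefficient exactly $m!$.

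The decisive resource is that $m!\,E_m(z)\equiv 0$ modulo $2^\ga$ once $v_2(m!)\ge\ga$; this is what should force the $(\Z/2^\ga\Z)[z]$-algebra generated by $\Phi(z)$ to have rank $d_\ga$, far below the $2^\ga$ one would obtain by naively iterating the mod-$2$ relation. Guided by Proposition~\ref{prop:minpol} --- the quadratic $N(z,t):=t^2-t+z$, for which \eqref{eq:Phi2} gives $N(z,\Phi(z))=2E_2(z)$, then $N^2$, then the quartic $Q$ (sharper than $N^2$, yet of the same degree as the mod-$4$ annihilator!), then $(t^2+t+z)\,Q$, then $Q^2$ twice, then an octic --- one recognises two moves that raise the $2$-adic precision: \emph{squaring} a relation (doubling both the precision and the degree) and \emph{multiplying by the mod-$2$ minimal polynomial} $t^2+t+z$ (raising the precision by one and the degree by two), interleaved with passage to a sharper annihilator of unchanged degree. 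Because $v_2(d!)=d-s_2(d)$ by Legendre's formula, the arithmetic of $d_\ga$ should prescribe which move to take at each step, and the recursion must be arranged so that the ``defect'' $2^{-\ga}A_\ga(z,\Phi(z))$ is, at every stage, an honest integral power series.

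The main obstacle is the degree bookkeeping inside this recursion. Squaring is efficient in precision but overshoots in degree unless one can feed the products $E_i(z)E_j(z)$ and the auxiliary carry series back --- through \eqref{eq:Phi2}--\eqref{eq:Phi4} --- into the $\Z[z]$-span of $1,\Phi(z),\dots,\Phi(z)^{d_\ga-1}$ with \emph{controlled} powers of $2$ in the denominators. This asks for a structure theorem for the relevant subring of $(\Z/2^\ga\Z)[[z]]$: the $\Z[z]$-module $M:=\bigoplus_{j\ge0}\Z[z]\,E_j(z)$ is stable under the Mahler substitution $\sigma\colon f(z)\mapsto f(z^2)$ (one checks $\sigma(E_j)=\sum_{i=0}^{j}(-z)^i E_{j-i}$), but $M$ is \emph{not} closed under multiplication --- products of the $E_j$'s already generate carry series such as $\sum_n z^{3\cdot 2^n}$, which provably lie outside $M$ even over $\Q(z)$ --- so one is forced into its multiplicative closure and must equip that with a filtration whose graded pieces are annihilated by explicit powers of $2$. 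Combining the order-one inhomogeneous Mahler equation $\sigma(\Phi)=\Phi-z$ with the congruence $\sigma(f)\equiv f^2$ modulo $2$ should turn the functional equation into a genuine algebraic relation, and tracking the $2$-adic precision lost while iterating this passage is where I expect $v_2(d!)$ to emerge; doing this while keeping the degree pinned at $d_\ga$ is the crux, and, I believe, the reason the statement is only conjectured here.
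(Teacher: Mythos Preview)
The statement you are attempting is \emph{Conjecture}~\ref{conj:1}: the paper does not prove it and offers no proof to compare against. What the paper does provide is Lemma~\ref{lem:minpol} (the lower bound you cite), Proposition~\ref{prop:minpol} (verification for $\ga\le 7$), and Remark~\ref{rem:1}(2), which reduces the full conjecture to the existence, for each $\de\ge1$, of a monic degree-$2^\de$ polynomial $A_\de(z,t)$ with $A_\de(z,\Phi(z))\equiv 0$ modulo $2^{2^\de-1}$; products $\prod_\de A_\de^{\al_\de}$ then handle all other moduli via $v_2(d!)=d-s(d)$. This reduction is cleaner than the step-by-step ``square, then multiply by $t^2+t+z$'' recursion you sketch: one only has to jump from $A_\de$ to $A_{\de+1}$, and the paper observes that $A_\de^2$ already annihilates $\Phi(z)$ modulo $2^{2^{\de+1}-2}$, so the missing ingredient is gaining exactly one further power of $2$ in precision at degree $2^{\de+1}$.

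Your proposal correctly isolates this as the crux, and you are right that the obstruction lies in the multiplicative structure: the $\Z[z]$-module spanned by the $E_j$'s is not a ring, and the carry series that arise in products are precisely what one must eliminate with controlled $2$-divisibility. But your plan stops short of a mechanism for doing so --- the Mahler-operator idea ($\sigma(\Phi)=\Phi-z$ combined with $\sigma(f)\equiv f^2$ mod $2$) is suggestive, yet you do not explain how iterating it yields the required single extra factor of $2$ without degree growth. As you yourself concede in the final sentence, this is not a proof; it is a plausible strategy that identifies the same obstacle the paper leaves open. The paper's only further contribution beyond Remark~\ref{rem:1}(2) is the procedural remark (paragraph before Lemma~\ref{lem:aiodd}) that one can \emph{compute} candidate $A_\de$'s by an undetermined-coefficients Ansatz over $\Z/2\Z$, which works in every instance tried but carries no guarantee.
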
 

\begin{remarknu} \label{rem:1}
(1) Given the binary expansion of $d$, say
$$d=d_0+d_1\cdot 2+d_2\cdot
4+\cdots+d_r\cdot 2^r, \quad 0\le \ga_i\le1,$$ 
by the well-known formula of
Legendre \cite[p.~10]{LegeAA}, we have
\begin{align}\notag
v_2(d!)&=\sum _{\ell=1} ^{\infty}\fl{\frac {d} {2^\ell}}=
\sum _{\ell=1} ^{\infty}\fl{\sum _{i=0} ^{r}d_i 2^{i-\ell}}=
\sum _{\ell=1} ^{\infty}\sum _{i=\ell} ^{r}d_i 2^{i-\ell}\\
\label{eq:Leg}
&=
\sum _{i=1} ^{r}\sum _{\ell=1} ^{i}d_i 2^{i-\ell}=
\sum _{i=1} ^{r}d_i\left(2^{i}-1\right)=
d-s(d),
\end{align}
where $s(d)$ denotes the sum of digits of $d$ in its binary
expansion.
Consequently, an equivalent way of phrasing Conjecture~\ref{conj:1}
is to say that the degree of a minimal polynomial for the 
modulus $2^\ga$ is the least $d$ with $d-s(d)\ge\ga$.

\medskip
(2)
We claim that, in order to establish Conjecture~\ref{conj:1}, 
it suffices to prove the conjecture for $\ga=2^{\de}-1$, 
$\de=1,2,\dots$. If we take into account
Lemma~\ref{lem:minpol} plus the above remark, this means that it is
sufficient to prove that, for each $\de\ge1$, there is
a polynomial $A_\de(z,t)$ of degree $2^{\de}$ such that
\begin{equation} \label{eq:min2ga}
A_\de(z,\Phi(z))=0\quad \text {modulo }2^{2^\de-1}.
\end{equation}
For, arguing by induction, let us suppose that 
we have already constructed $A_1(z,t),\break
A_2(z,t),
\dots,A_m(z,t)$ satisfying \eqref{eq:min2ga}.
Let
$$\al=\al_1\cdot 2+\al_2\cdot
4+\cdots+\al_m\cdot 2^m, \quad 0\le \al_i\le1,$$ 
be the binary expansion of the even positive integer
$\al$. In this situation, we have
\begin{equation} \label{eq:betaind}
\prod _{\de=1} ^{m}A_{\de}^{\al_\de}(z,\Phi(z))=0\quad \text {modulo }
\prod _{\de=1} ^{m}2^{\al_\de(2^\de-1)}=
\prod _{\de=0} ^{m}2^{\al_\de(2^\de-1)}=
2^{\al-s(\al)}.
\end{equation}
On the other hand, the degree of the left-hand side of
\eqref{eq:betaind} as a polynomial in $\Phi(z)$ is
$\sum _{\de=1} ^{m}\al_\de 2^\de=\al$.

Let us put these observations together.
In view of \eqref{eq:Leg},
Lemma~\ref{lem:minpol} says that the degree of a minimal
polynomial for the modulus $2^\ga$ cannot be smaller than the
least integer, $d^{(\ga)}$ say, for which $d^{(\ga)}-s(d^{(\ga)})\ge\ga$. 
(We remark that $d^{(\ga)}$
must be automatically even.) If we take into account that
the quantity $\al-s(\al)$, as a function in $\al$, is weakly monotone
increasing in $\al$, then \eqref{eq:betaind} tells us that,
as long as $d^{(\ga)}\le 2+4+\dots+2^m=2^{m+1}-2$, we have found a monic
polynomial of degree $d^{(\ga)}$, $B_\ga(z,t)$ say, for which
$B_\ga(z,\Phi(z))=0$ modulo~$2^\ga$, namely the left-hand side
of \eqref{eq:betaind} with $\al$ replaced by $d^{(\ga)}$, to wit
$$
B_\ga(z,t)=\prod _{\de=1} ^{m}A_{\de}^{d^{(\ga)}_\de}(z,t),
$$
where $d^{(\ga)}=d^{(\ga)}_1\cdot 2+d^{(\ga)}_2\cdot
4+\cdots+d^{(\ga)}_m\cdot 2^m$ is the binary expansion of 
$d^{(\ga)}$. Hence, it must necessarily be a minimal polynomial
for the modulus $2^\ga$.

Since $(2^{m+1}-2)-s(2^{m+1}-2)=2^{m+1}-2-m$, we have thus found minimal
polynomials for all moduli $2^\ga$ with $\ga\le 2^{m+1}-m-2$. Now we should note that the
quantity $\al-s(\al)$ makes a jump from $2^{m+1}-m-2$ to $2^{m+1}-1$
when we move from $\al=2^{m+1}-2$ to $\al=2^{m+1}$ (the reader should 
recall that it suffices to consider even $\al$). 
If we take $A_m^2(z,t)$, which has degree $2\cdot 2^m=2^{m+1}$, 
then, by \eqref{eq:min2ga}, 
we also have a minimal polynomial for the modulus 
$\left(2^{2^m-1}\right)^2=2^{2^{m+1}-2}$
and, in view of the preceding remark, as well for all moduli $2^\ga$
with $\ga$ between $2^{m+1}-m-1$ and $2^{m+1}-2$. 

So, indeed, the first modulus for which we do not have a minimal
polynomial is the modulus $2^{2^{m+1}-1}$. This is the role
which $A_{m+1}(z,t)$ (see \eqref{eq:min2ga} with $m+1$ in place of 
$\de$) would have to play.

\smallskip
The arguments above show at the same time that, supposing that we have
already constructed $A_1(z,t),A_2(z,t),\dots,A_m(z,t)$, the polynomial 
$A^2_m(z,t)$ is a very close ``approximation" to the polynomial
$A_{m+1}(z,t)$ that we are actually looking for next,
which is only ``off" by a factor of $2$.
In practice, one can recursively compute polynomials $A_\de(z,t)$
satisfying \eqref{eq:min2ga} by following the procedure outlined in
the next-to-last paragraph before Lemma~\ref{lem:aiodd} in the next
section. It is these computations (part of which are reported in
Proposition~\ref{prop:minpol}) which have led us to believe in the truth of 
Conjecture~\ref{conj:1}.
\end{remarknu}

\section{Coefficient extraction from powers of $\Phi(z)$}
\label{sec:extr}

In the next section we are going to describe a method for expressing
formal power series satisfying certain differential equations, after
the coefficients of the series have been reduced modulo $2^k$, 
as polynomials in the
$2$-power series $\Phi(z)$ (which has been discussed in the previous
section; for the definition see \eqref{eq:Phidef}), the
coefficients being Laurent polynomials in $z$. Such a method would be
without value if we could not, at the same time, provide a procedure
for extracting coefficients from powers of $\Phi(z)$. The description
of such a procedure is the topic of this section.

Clearly, a brute force expansion of a power $\Phi^K(z)$, where $K$ is
a given positive integer, yields
\begin{equation} \label{eq:Phipot}
\Phi^K(z)=\sum _{r=1} ^{K}
\underset{a_1+\dots+a_r=K}{\sum _{a_1,\dots,a_r\ge1} ^{}}\frac {K!}
{a_1!\,a_2!\cdots a_r!}H_{a_1,a_2,\dots,a_r}(z),
\end{equation}
where
$$
H_{a_1,a_2,\dots,a_r}(z):=\sum_{n_1>n_2>\dots>n_r\ge0}
z^{a_12^{n_1}+a_22^{n_2}+\dots+a_r2^{n_r}}.
$$
The expansion \eqref{eq:Phipot} is not (yet) suited for our purpose,
since, when $a_1,a_2,\dots,a_r$ vary over all possible choices such
that their sum is $K$, the series $H_{a_1,a_2,\dots,a_r}(z)$ are {\it not\/}
linearly independent over the ring $\Z[z,z^{-1}]$ 
of Laurent polynomials in $z$ over the integers\footnote{The
same is true for an arbitrary ring in place of the ring $\Z$ of
integers.},
and, second, coefficient extraction from a series
$H_{a_1,a_2,\dots,a_r}(z)$ can be a hairy task if some of the $a_i$'s 
are even.

However, we shall show (see Corollary~\ref{lem:Hind}) that, 
if we restrict to {\it odd\/}
$a_i$'s, then the corresponding series $H_{a_1,a_2,\dots,a_r}(z)$,
together with the (trivial) series $1$, 
{\it are} linearly independent over $\Z[z,z^{-1}]$, and there is an
efficient algorithm to express all other
series $H_{b_1,b_2,\dots,b_s}(z)$, where we do {\it not\/} make any
restriction on the $b_i$'s, as a linear combination over
$\Z[z,z^{-1}]$ of $1$ and the former series (see Lemma~\ref{lem:Hbi}). 
Since coefficient
extraction from a series $H_{a_1,a_2,\dots,a_r}(z)$ with all $a_i$'s
odd is straightforward (see Remark~\ref{rem:eff}), this solves the problem of
coefficient extraction from powers of $\Phi(z)$.

As a side result, the procedure which we described in the previous
paragraph, and which will be substantiated below, provides all the
means for determining minimal polynomials in the sense of
Section~\ref{sec:Phi}: as explained in Item~(2) of Remark~\ref{rem:1} 
at the end of that section, it suffices to find a minimal polynomial
for the modulus $2^{2^\de-1}$, $\de=1,2,\dots$. For doing this, we 
would take a minimal polynomial $A_{\de-1}(z,t)$ for the modulus
$2^{2^{\de-1}-1}$, expand the square $A^2_{\de-1}(z,t)$,
and replace each coefficient $c_{\al,\be}$ of a monomial
$z^\alpha t^\beta$ in $A^2_{\de-1}(z,t)$ by 
$c_{\al,\be}+2^{2^{\de}-2}x_{\al,\be}$, 
where $x_{\al,\be}$ is a variable, thereby obtaining a modified polynomial, 
$B_{\de-1}(z,t)$ say.
Now we would substitute $\Phi(z)$
for $t$, so that we obtain $B_{\de-1}(z,\Phi(z))$.
Here, we express powers of $\Phi(z)$
in terms of the series $H_{a_1,a_2,\dots,a_r}(z)$ with all $a_i$'s
being odd, and collect terms.
By reading the coefficients of $z^\ga H_{a_1,a_2,\dots,a_r}(z)$ in this
expansion of $B_{\de-1}(z,\Phi(z))$ and equating them to zero 
modulo $2^{2^\de-1}$,
we produce a system of linear equations
modulo $2^{2^\de-1}$ in the unknowns $x_{\al,\be}$. 
By the definition of $A_{\de-1}(z,t)$, after division by $2^{2^\de-2}$,
this system reduces to a system
modulo $2$, that is, to a linear system of equations over the field
with two elements. A priori, this system need not have a solution,
but experience seems to indicate that it always does; see
Conjecture~\ref{conj:1}.

\medskip
We start with an auxiliary result pertaining to the uniqueness of
representations of integers as sums of powers of $2$ with
multiplicities, tailor-made for application to the series
$H_{a_1,a_2,\dots,a_r}(z)$.

\begin{lemma} \label{lem:aiodd}
Let $d,r,s$ be positive integers with $r\ge s,$ $c$ an integer
with $\vert c\vert\le d,$ and let 
$a_1,a_2,\dots,a_r$ respectively $b_1,b_2,\dots,b_s$ be two sequences of 
odd integers, with $1\le a_i\le d$ for $1\le i\le r,$ and $1\le b_i\le
d$ for $1\le i\le s$. If
\begin{equation} \label{eq:a2b2}
a_12^{2rd}+a_22^{2(r-1)d}+\dots+a_r2^{2d}=
b_12^{n_1}+b_22^{n_2}+\dots+b_s2^{n_s}+c
\end{equation}
for integers $n_1,n_2,\dots,n_s$ with $n_1>n_2>\dots>n_s\ge0,$ then
$r=s,$ $c=0,$ $a_i=b_i,$ and $n_i=2d(r+1-i)$ for $i=1,2,\dots,r$.
\end{lemma}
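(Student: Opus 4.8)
The plan is to prove the claim by a descent on $r$, peeling off the highest power of $2$ from each side of \eqref{eq:a2b2} one step at a time. First I would rewrite the left-hand side as $\sum_{i=1}^{r} a_i 2^{2d(r+1-i)}$, so that the exponents on the left are exactly $2d\cdot r > 2d(r-1) > \dots > 2d$, spaced $2d$ apart, while the coefficients $a_i$ are odd and bounded by $d$. The key quantitative input is that the ``gap'' $2d$ between consecutive exponents on the left is large compared to the total size of a block of $b_j$-terms sitting below a given power of $2$: if $n_j < 2d$, then $b_1 2^{n_1} + \dots$ restricted to exponents $< 2d$ is at most $d(2^{2d-1} + 2^{2d-2} + \dots + 2^0) = d(2^{2d}-1) < 2d\cdot 2^{2d}$ — wait, more carefully, one wants $d(2^{2d}-1) + |c| \le d(2^{2d}-1) + d = d\cdot 2^{2d} < 2^{2d + \log_2 d + 1}$; the honest statement is that a sum of $b_j 2^{n_j}$ with all $n_j < N$ together with $c$ has absolute value $< d\cdot 2^{N} \le 2^{N+d}$, and since consecutive left-hand exponents differ by $2d \ge d+1$, a single high term $a_1 2^{2dr}$ cannot be matched by anything supported strictly below exponent $2dr$.

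The main step is therefore: show $n_1 = 2dr$ and $b_1 = a_1$. I would argue that $2^{2dr} \le$ LHS $<2^{2dr+1}$ roughly (using that the lower terms $\sum_{i\ge2} a_i 2^{2d(r+1-i)} < d(2^{2d(r-1)} + \dots) < 2^{2d(r-1)+d} \le 2^{2dr - d} \le 2^{2dr-1}$, so LHS $\in [2^{2dr}, 2^{2dr} + 2^{2dr-1}]$ after also absorbing $a_1 \le d < 2^{2d-1}$ into the bound $a_1 2^{2dr} < 2^{2dr + 2d - 1}$ — so I get LHS $< 2^{2dr+2d}$ and LHS $\ge 2^{2dr}$). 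On the right, if $n_1 < 2dr$ then RHS $< d\cdot 2^{2dr} + d < 2^{2dr + 2d}$ but I need RHS $\ge 2^{2dr}$; the cleanest route is to show $n_1$ is forced into the narrow window around $2dr$ and then match leading binary digits. Concretely, reducing \eqref{eq:a2b2} modulo $2^{2d}$ kills every left-hand term (all exponents are multiples of $2d$, hence $\ge 2d$), leaving $0 \equiv (\text{sum of }b_j 2^{n_j}\text{ with }n_j < 2d) + c \pmod{2^{2d}}$; since that sum plus $c$ has absolute value $\le d(2^{2d-1} + \dots + 1) + d < 2^{2d}$ once one checks $d(2^{2d}-1) + d = d\cdot 2^{2d} $ — hmm, that is not $< 2^{2d}$. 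So I need to be more surgical: look at the \emph{smallest} left-hand exponent, $2d$, and compare valuations. The right approach is induction from the bottom: consider \eqref{eq:a2b2} and take $2$-adic valuations / work modulo increasing powers of $2$ starting from $2^0$, forcing $c = 0$ first (valuation of both sides is positive since every left term is even and... no, the $b_j 2^{n_j}$ could have $n_j = 0$).

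Let me restate the intended argument cleanly: I would do descent \emph{from the top}. Suppose \eqref{eq:a2b2} holds. Since $0 \le a_1 2^{2dr} \le$ all terms and the remaining left terms sum to something in $[0, d(2^{2d(r-1)} + \dots + 2^{2d}) ] \subset [0, 2^{2dr - 1})$ (using $d \le 2^{d} \le 2^{2d-1}$ and the geometric sum), I get that LHS lies in the dyadic-ish range determined by $a_1 2^{2dr}$; on the other side, the same bound $d(2^{n_1 - 2d} + \dots) < 2^{n_1}$ — checking $d(2^{n_1-2d}+2^{n_1-4d}+\dots) \le d\cdot 2^{n_1-2d}\cdot 2 = d\cdot 2^{n_1 - 2d + 1} \le 2^{n_1 - d + 2} < 2^{n_1}$ provided $d \ge 3$, with the small cases $d \in \{1,2\}$ handled by hand — shows $b_1 2^{n_1} \le $ RHS $< b_1 2^{n_1} + 2^{n_1} + |c| < 2^{n_1}(b_1 + 2)$ from below and RHS $\ge b_1 2^{n_1} - |c| > 2^{n_1}(b_1 - 1)$, wait $c$ can be negative so RHS $\in (b_1 2^{n_1} - 2^{n_1} \cdot(\text{small}) , \dots)$. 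Comparing LHS $= a_1 2^{2dr} + (\text{small})$ with RHS and matching the unique location of the ``dominant'' dyadic block forces $n_1 = 2dr$ and then, dividing out and comparing the next digit, $a_1 = b_1$. Subtract $a_1 2^{2dr}$ from both sides and induct on $r$ (the bound $|c| \le d$ is preserved, and the number of $b$'s drops by one, so $r \ge s$ is preserved as $r-1 \ge s-1$); the base case $r = 1$ is the same leading-term comparison. At the end $c = 0$ falls out because once all $a_i$ and matching $b_i$ with $n_i = 2d(r+1-i)$ are stripped, nothing is left on the right except $c$, and $|c| \le d < 1 \cdot 2^{?}$... actually once $r$ terms are matched, if $s < r$ we would have stripped a term with no partner, contradiction via the same size estimate, forcing $s = r$, and then the final equation reads $0 = c$.

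\textbf{Main obstacle.} The delicate point is the size bookkeeping: the coefficients $a_i, b_i$ are odd and only bounded by $d$, not by $2$, so ``match the leading binary digit'' must be replaced by ``match the leading block of roughly $\log_2 d$ binary digits,'' and one must verify that the gap $2d$ between successive left-hand exponents is genuinely wide enough to prevent a high $a_i 2^{2d(r+1-i)}$ term from being reconstructed by a pile of lower $b_j$-terms plus the error $c$. Getting the inequalities $d(2^{N} + 2^{N-2d} + \dots) + d < 2^{N + 2d}$ to hold uniformly — which is where the specific exponent $2d$ (rather than, say, $d+1$) in the statement is used, giving the safety margin — and cleanly handling the small cases $d = 1, 2$ where the asymptotic slack is thin, is the part that needs care; everything else is a routine descent.
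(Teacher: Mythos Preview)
Your top-down plan has a real gap, and it is precisely the one you flag but do not resolve. You want to show $n_1=2rd$ and $b_1=a_1$ by size, arguing that the ``dominant dyadic block'' of the two sides must agree. On the left this is fine: the lower terms $\sum_{i\ge 2}a_i2^{2d(r+1-i)}$ really are $<2^{2rd}$, so $\lfloor \mathrm{LHS}/2^{2rd}\rfloor=a_1$. But on the right you have no gap structure: the $n_j$ are only required to decrease by at least $1$, so the ``tail'' $\sum_{j\ge 2}b_j2^{n_j}+c$ can be as large as $d(2^{n_1-1}+2^{n_1-2}+\cdots)\approx d\cdot 2^{n_1}$, which is comparable to $b_12^{n_1}$ itself. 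In particular, if $n_1<2rd$ one still only gets $\mathrm{RHS}<d\cdot 2^{2rd}$, and for $d\ge 2$ this does not contradict $\mathrm{LHS}\ge 2^{2rd}$. So ``match the leading $\log_2 d$ bits'' does not go through by size alone; to pin down $n_1$ you are forced back to $2$-adic (i.e.\ bottom-up) information, which you have not supplied.

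Ironically, you already located the correct argument and discarded it. When you reduced \eqref{eq:a2b2} modulo $2^{2d}$, you computed that the surviving right-hand piece (the $b_j2^{n_j}$ with $n_j<2d$, plus $c$) has absolute value at most $d\cdot 2^{2d}$, and you rejected this because $d\cdot 2^{2d}\not<2^{2d}$. But you do not need it to be $<2^{2d}$: the congruence says this piece is an integer \emph{multiple} of $2^{2d}$, and the bound says that multiple is some $b$ with $0\le b\le d$. Now divide the whole equation by $2^{2d}$: the left becomes $a_12^{2(r-1)d}+\cdots+a_{r-1}2^{2d}$, the high $b$-terms become $b_12^{n_1-2d}+\cdots+b_t2^{n_t-2d}$, and the constant becomes $b-a_r$, which still satisfies $|b-a_r|\le d$. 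That is again an instance of the lemma with $r$ replaced by $r-1$, and induction finishes. The base case $r=s=1$ is a short case analysis on $n_1$ using that $a_1,b_1$ are odd. This bottom-up descent is exactly what the paper does; the only idea you were missing is to absorb the quotient $b$ into the new constant rather than insisting it vanish.
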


\begin{proof}
We use induction on $r$.

First, let $r=1$. Then $s=1$ as well, and \eqref{eq:a2b2} becomes
\begin{equation} \label{eq:a2b2A}
a_12^{2d}=b_12^{n_1}+c.
\end{equation}
If $n_1>2d$, then the above equation, together with the assumption
that $a_1$ is odd, implies
$$
2^{2d}\equiv c\quad \text {modulo }2^{2d+1}.
$$
However, by assumption, we have $\vert c\vert\le d<2^{2d}$, which is
absurd.

If $d<n_1< 2d$, then it follows from \eqref{eq:a2b2A} that $c$ must
be divisible by $2^{n_1}$. Again by assumption, we have 
$\vert c\vert\le d<2^d<2^{n_1}$, so that $c=0$. But then
\eqref{eq:a2b2A} cannot be satisfied since $b_1$ is
assumed to be odd.

If $0\le n_1\le d$, then we estimate
$$
b_12^{n_1}+c\le d\left(2^{d}+1\right)\le (2^d-1)(2^d+1)<2^{2d},
$$
which is again a contradiction to \eqref{eq:a2b2A}. 

The only remaining possibility is $n_1=2d$. If this is substituted in
\eqref{eq:a2b2A} and the resulting equation is combined with 
$\vert c\vert\le d<2^{2d}$, then the conclusion is that the equation
can only be satisfied if $c=0$ and $a_1=b_1$, in accordance with
the assertion of the lemma.

\medskip
We now perform the induction step. We assume that the assertion of
the lemma is established for all $r<R$, and we want to show that this
implies its validity for $r=R$. Let $t$ be maximal such that $n_t\ge
2d$. Then reduction of \eqref{eq:a2b2} modulo $2^{2d}$ yields
\begin{equation} \label{eq:a2b2B}
b_{t+1}2^{n_{t+1}}+b_{t+2}2^{n_{t+2}}+\dots+b_s2^{n_s}+c\equiv 0\quad 
\text {modulo }2^{2d}.
\end{equation}
Let us write $b\cdot 2^{2d}$ for the left-hand side in
\eqref{eq:a2b2B}. Then, by dividing \eqref{eq:a2b2} (with $R$ instead
of $r$) by $2^{2d}$, we obtain
\begin{equation} \label{eq:a2b2C}
a_12^{2(R-1)d}+a_22^{2(R-2)d}+\dots+a_{R-1}2^{2d}=
b_12^{n_1-2d}+b_22^{n_2-2d}+\dots+b_t2^{n_t-2d}+b-a_R.
\end{equation}
We have
\begin{align*}
0\le b&\le
2^{-2d}d\left(2^{2d-1}+2^{2d-2}+\dots+2^{2d-s+t}+1\right)\\[2mm]
&\le 2^{-2d}d\left(2^{2d}-2^{2d-s+t}+1\right)\le d.
\end{align*}
Consequently, we also have $\vert b-a_R\vert\le d$.
This means that we are in a position to apply the induction
hypothesis to \eqref{eq:a2b2C}. The conclusion is that
$t=R-1$, $b-a_R=0$, $a_i=b_i$, and $n_i=2d(R+1-i)$ for
$i=1,2,\dots,R-1$. If this is used in \eqref{eq:a2b2} with $r=R$,
then we obtain
$$
a_R2^{2d}=c
$$
or
$$
a_R2^{2d}=b_R2^{n_R}+c,
$$
depending on whether $s=R-1$ or $s=R$. The first case is absurd since
$c\le d<2^{2d}\le a_R2^{2d}$. On the other hand, the second case
has already been
considered in \eqref{eq:a2b2A}, and we have seen there that it
follows that $c=0$, $a_R=b_R$, and $n_R=2d$.

This completes the proof of the lemma.
\end{proof}

The announced independence of the series $H_{a_1,a_2,\dots,a_r}(z)$
with all $a_i$'s odd is now an easy consequence.

\begin{corollary} \label{lem:Hind}
The series $H_{a_1,a_2,\dots,a_r}(z)$, with all $a_i$'s odd, together
with the series $1$ are
linearly independent over $(\Z/2\Z)[z,z^{-1}]$, and consequently as
well over $(\Z/2^\ga\Z)[z,z^{-1}]$ for an arbitrary positive integer
$\ga$, and over $\Z[z,z^{-1}]$.
\end{corollary}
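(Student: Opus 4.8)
The plan is to deduce Corollary~\ref{lem:Hind} directly from Lemma~\ref{lem:aiodd} by a ``spreading out the exponents'' trick. Suppose we have a non-trivial linear relation
$$
c_0(z)\cdot 1+\sum_{(a_1,\dots,a_r)} c_{a_1,\dots,a_r}(z)\,H_{a_1,a_2,\dots,a_r}(z)=0
$$
over $\Z[z,z^{-1}]$ (the case of $(\Z/2^\ga\Z)[z,z^{-1}]$, and in particular $(\Z/2\Z)[z,z^{-1}]$, follows the same way, or else one clears denominators and reduces), where the sum ranges over finitely many tuples of odd positive integers. First I would multiply through by a suitable power of $z$ to make all the $c$'s honest polynomials in $z$, and let $d$ be a common bound: $d$ exceeds every $a_i$ occurring, exceeds the maximal length $r$ of any tuple occurring (so $r\le d$), and exceeds the degree of every coefficient polynomial $c_\bullet(z)$ as well as $\deg c_0$, so that the ``constant offsets'' coming from $z^j$ with $j\le d$ will play the role of the integer $c$ with $|c|\le d$ in Lemma~\ref{lem:aiodd}.

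The key step is then to pick out, among all tuples $(a_1,\dots,a_r)$ appearing with a non-zero coefficient, one of \emph{maximal length} $r$, and to look at the coefficient, in the left-hand side, of the specific monomial
$$
z^{\,a_12^{2rd}+a_22^{2(r-1)d}+\dots+a_r2^{2d}+j},
$$
for each fixed $j$ with $0\le j\le d$. Lemma~\ref{lem:aiodd} is precisely the statement that this exponent can be written in the form $b_12^{n_1}+\dots+b_s2^{n_s}+c$ (with odd $b_i$'s in range, $s\le r$, $n_1>\dots>n_s\ge0$, $|c|\le d$) \emph{only} in the obvious way, namely $s=r$, $c=j$, $b_i=a_i$, $n_i=2d(r+1-i)$. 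Now every monomial contributed by $c_0(z)\cdot 1$ has the form $z^j$ with $j\le\deg c_0\le d$, so it contributes only when the ``big'' part $a_12^{2rd}+\dots+a_r2^{2d}$ is empty, which it is not; and every monomial contributed by a term $c_{b_1,\dots,b_s}(z)H_{b_1,\dots,b_s}(z)$ has the form $z^{b_12^{m_1}+\dots+b_s2^{m_s}+j'}$ with $s\le r$, the $b_i$ odd and $\le d$, and $0\le j'\le d$. By Lemma~\ref{lem:aiodd} (applied with this $d$, with $c=j-j'$ if one wants to be careful, or more simply by matching the two expansions digit-block by digit-block) the only way such a monomial can equal our chosen exponent is $s=r$, $(b_1,\dots,b_s)=(a_1,\dots,a_r)$, $m_i=2d(r+1-i)$, and $j'=j$. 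Hence the coefficient of that monomial on the left-hand side is exactly the coefficient of $z^j$ in $c_{a_1,\dots,a_r}(z)$, and this must vanish for every $j$, forcing $c_{a_1,\dots,a_r}(z)=0$ — contradicting our choice. This proves linear independence over $\Z[z,z^{-1}]$, and the same argument verbatim over $(\Z/2^\ga\Z)[z,z^{-1}]$, hence a fortiori over $(\Z/2\Z)[z,z^{-1}]$.

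One should double-check the bookkeeping that makes Lemma~\ref{lem:aiodd} applicable: the lemma fixes the \emph{left}-hand tuple to have its $i$-th term weighted by $2^{2(r+1-i)d}$, i.e. the exponents $2d, 4d, \dots, 2rd$ equally spaced with gap $2d$, whereas an arbitrary $H_{b_1,\dots,b_s}(z)$ has exponents $b_12^{m_1}+\dots+b_s2^{m_s}$ with the $m_i$ only required to be strictly decreasing. So the natural move is: among the appearing tuples of maximal length $r$, single out one, and evaluate the relation at the monomial whose exponent is $a_12^{2rd}+\dots+a_r2^{2d}$ plus an offset $j\in\{0,\dots,d\}$; Lemma~\ref{lem:aiodd} with this $d$ then says the only $(b;m;c)$-expansion of that number, subject to $s\le r$, $b_i$ odd and in $[1,d]$, $|c|\le d$, is the trivial one. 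The main obstacle is simply organizing the constants correctly — ensuring a single $d$ simultaneously dominates all the $a_i$'s, all the tuple-lengths, and all the coefficient-polynomial degrees so that ``$|c|\le d$'' genuinely covers every stray $z^j$ term; once that is set up, Lemma~\ref{lem:aiodd} does all the real work and the corollary is immediate. The remark on efficient coefficient extraction (Remark~\ref{rem:eff}) is not needed for this statement.
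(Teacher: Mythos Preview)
Your proof is correct and follows essentially the same approach as the paper: choose a tuple of maximal length among those appearing with nonzero coefficient, bound everything by a single $d$, and apply Lemma~\ref{lem:aiodd} to the exponent $a_12^{2rd}+\dots+a_r2^{2d}$ (plus a small offset) to see that only the chosen tuple can contribute. The only cosmetic difference is that the paper normalises so that $p_{i_0}(z)$ has nonzero constant term and extracts a single contradiction, whereas you clear to polynomials and vary the offset $j$ to kill each coefficient of $c_{a_1,\dots,a_r}(z)$ in turn; both are straightforward packagings of the same idea.
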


\begin{proof}
Let us suppose that
\begin{equation} \label{eq:Hlincomb}
p_0(z)+\sum _{i=1}
^{N}p_i(z)H_{a_1^{(i)},a_2^{(i)},\dots,a_{r_i}^{(i)}}(z)=0,
\end{equation} 
where the $p_i(z)$'s are non-zero Laurent polynomials in $z$
over $\Z/2\Z$ (respectively over $\Z/2^\ga\Z$ or over $\Z$), the
$r_i$'s are positive integers, and 
$a_j^{(i)}$, $j=1,2,\dots,r_i$, $i=1,2,\dots,N$, are odd integers. 
We may also assume that the tuples
$(a_1^{(i)},a_2^{(i)},\dots,a_{r_i}^{(i)})$, $i=1,2,\dots,N$, 
are pairwise distinct. Choose $i_0$ such
that $r_{i_0}$ is maximal among the $r_i$'s. Without loss of
generality, we may assume that the coefficient of $z^0$ in $p_{i_0}(z)$
is non-zero (otherwise we could multiply both sides of 
\eqref{eq:Hlincomb} by an appropriate power of $z$). Let $d$ be the
maximum of all $a_j^{(i)}$'s and the absolute values of exponents of
$z$ appearing in monomials with non-zero coefficient in
the Laurent polynomials $p_i(z)$, $i=0,1,\dots,N$. Then, according to
Lemma~\ref{lem:aiodd} with $r=r_{i_0}$, $a_j=a_j^{(i_0)}$,
$j=1,2,\dots,r_{i_0}$, the coefficient of
$$
z^{a_1^{(i_0)}2^{2rd}+a_2^{(i_0)}2^{2(r-1)d}+\dots+a_r^{(i_0)}2^{2d}}
$$
is $1$ in $H_{a_1^{(i_0)},a_2^{(i_0)},\dots,a_{r_{i_0}}^{(i_0)}}(z)$,
while it is zero in series 
$z^eH_{a_1^{(i_0)},a_2^{(i_0)},\dots,a_{r_{i_0}}^{(i_0)}}(z)$, where
$e$ is a non-zero integer with $\vert e\vert\le d$, and in all other series
$z^eH_{a_1^{(i)},a_2^{(i)},\dots,a_{r_i}^{(i)}}(z)$,
$i=1,\dots,i_0-1,i_0+1,\dots,N$, where $e$ is a (not necessarily
non-zero) integer with $\vert e\vert\le d$.
This contradiction to \eqref{eq:Hlincomb} establishes
the assertion of the corollary.
\end{proof}

\begin{remarknu} \label{rem:eff}
Coefficient extraction from a series $H_{a_1,a_2,\dots,a_{r}}(z)$
with all $a_i$'s odd is straightforward: if we want to know whether
$z^M$ appears in $H_{a_1,a_2,\dots,a_{r}}(z)$, that is, whether we can
represent $M$ as
$$
M=a_12^{n_1}+a_22^{n_2}+\dots+a_r2^{n_r}
$$
for some $n_1,n_2,\dots,n_r$ with $n_1>n_2>\dots>n_r\ge0$, then
necessarily $n_r=v_2(M)$, $n_{r-1}=v_2(M-a_r2^{n_r})$, etc.
The term $z^M$ appears in $H_{a_1,a_2,\dots,a_{r}}(z)$ if, and only
if, the above process terminates after {\it exactly} $r$ steps.
This means, that, with $n_r,n_{r-1},\dots,n_1$ constructed as above, 
we have
$$
M-\left(a_s2^{n_s}+\dots+a_{r-1}2^{n_{r-1}}+a_r2^{n_r}\right)>0
$$
for $s>1$, and
$$
M-\left(a_12^{n_1}+\dots+a_{r-1}2^{n_{r-1}}+a_r2^{n_r}\right)=0.
$$
It should be noted that, given $a_1,a_2,\dots,a_r$, this procedure of
coefficient extraction needs at most $O(\log M)$ operations, that is,
its computational complexity is linear.
\end{remarknu}

Our next goal is to show that a series $H_{b_1,b_2,\dots,b_s}(z)$
can be expressed as a linear combination over $\Z[z,z^{-1}]$ of
the series $1$ and 
the series $H_{a_1,a_2,\dots,a_r}(z)$, where all $a_i$'s are odd.
In doing this, we are forced to consider the more general series
$$
H_{b_1,b_2,\dots,b_s}^{\be_1,\be_2,\dots,\be_s}(z):=
\sum _{n_1+\be_1>n_2+\be_2>\dots>n_s+\be_s\ge0} ^{}
z^{b_12^{n_1}+b_22^{n_2}+\dots+b_s2^{n_s}},
$$
where, as before, $b_1,b_2,\dots,b_s$ are positive integers, 
and $\be_1,\be_2,\dots,\be_s$ are integers.

\begin{lemma} \label{lem:Hbi}
For positive integers $b_1,b_2,\dots,b_s$ 
and integers $\be_1,\be_2,\dots,\be_s,$ the series
$H_{b_1,b_2,\dots,b_s}^{\be_1,\be_2,\dots,\be_s}(z)$
can be expressed as a linear combination
over $\Z[z^{1/2^e}]$ {\em(}for a suitable integer $e${\em)}
of the series $1$ and series of the form
$H_{a_1,a_2,\dots,a_r}(z),$ where all $a_i$'s are odd.
Moreover, in the above expansion of the series 
$H_{b_1,b_2,\dots,b_s}(z)=
H_{b_1,b_2,\dots,b_s}^{0,0,\dots,0}(z)$
we have $e=0$; that is, in that case all coefficients are
in $\Z[z]$.
\end{lemma}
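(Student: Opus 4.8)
The plan is to prove the statement by induction on the number $s$ of lower indices, combining two reductions: first one makes all the $b_i$ odd, and then one telescopes the shift parameters $\be_i$ down to $0$.

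\emph{Base case and reduction to odd coefficients.} For $s=1$ one computes directly: writing $b_1=2^{f}a_1$ with $a_1$ odd,
$$
H_{b_1}^{\be_1}(z)=\sum_{n\ge-\be_1}z^{a_12^{n+f}}=\sum_{m\ge f-\be_1}z^{a_12^m},
$$
which equals $H_{a_1}(z)$ minus the finitely many monomials $z^{a_12^m}$ with $0\le m<f-\be_1$ when $f-\be_1\ge0$, and $H_{a_1}(z)$ plus the monomials $z^{a_12^m}$ with $f-\be_1\le m\le-1$ when $f-\be_1<0$; these corrections lie in $\Z[z]$ in the first case and in $\Z[z^{1/2^e}]$ for $e$ large in the second, and in particular $e=0$ works whenever $\be_1\le0$. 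For general $s$, writing $b_i=2^{f_i}a_i$ with $a_i$ odd and performing the joint substitution $n_i\mapsto n_i+f_i$ turns $H_{b_1,\dots,b_s}^{\be_1,\dots,\be_s}(z)$ into $H_{a_1,\dots,a_s}^{\be_1-f_1,\dots,\be_s-f_s}(z)$, all of whose coefficients are now odd; in the case $\be_1=\dots=\be_s=0$ the resulting shifts $-f_i$ are all $\le0$. So from now on all $a_i$ may be assumed odd.

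\emph{Telescoping away the shifts.} The crucial identity is that, for arbitrary integers $\be_1,\dots,\be_s$,
\begin{multline*}
H_{a_1,\dots,a_s}^{\be_1+1,\be_2,\dots,\be_s}(z)-H_{a_1,\dots,a_s}^{\be_1,\be_2,\dots,\be_s}(z)\\
=\sum_{\substack{n_1+\be_1>n_3+\be_3>\dots>n_s+\be_s\ge0\\ n_2=n_1+\be_1-\be_2}}
z^{a_12^{n_1}+a_22^{n_2}+a_32^{n_3}+\dots+a_s2^{n_s}},
\end{multline*}
which holds because the tuples counted by $H^{\be_1}$ are exactly those counted by $H^{\be_1+1}$ that moreover satisfy $n_2+\be_2<n_1+\be_1$, so the difference collects precisely the tuples with $n_2+\be_2=n_1+\be_1$. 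In the summand the two contributions $a_12^{n_1}+a_22^{n_1+\be_1-\be_2}$ merge into a single term $c\,2^{\min(n_1,\,n_1+\be_1-\be_2)}$ with $c$ a positive integer, and a short case distinction ($\be_1>\be_2$, $\be_1=\be_2$, or $\be_1<\be_2$) shows that, after one more index shift in the case $\be_1=\be_2$ (where $c=a_1+a_2$ is even), the right-hand side is a series $H_{\vec c}^{\vec\ga}(z)$ with only $s-1$ lower indices, which by the induction hypothesis is a $\Z[z^{1/2^e}]$-linear combination of $1$ and series $H_{a_1,\dots,a_r}(z)$ with all $a_i$ odd.

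Iterating this identity and its companion lowering $\be_1$, one moves $\be_1$ to $0$ at the cost of finitely many such lower-index corrections; the entirely analogous identity merging positions $i$ and $i+1$ (valid once $\be_1,\dots,\be_{i-1}$ have been set to $0$) then moves $\be_2,\dots,\be_{s-1}$ successively to $0$, and a still simpler identity for the last index — which factors off a monomial $z^{a_s2^{-1-\be_s}}$ and leaves $H_{a_1,\dots,a_{s-1}}(z)$ — disposes of $\be_s$. What remains is $H_{a_1,\dots,a_s}^{0,\dots,0}(z)=H_{a_1,\dots,a_s}(z)$, which is itself of the required form, so the first assertion follows. For the ``moreover'' part one checks that, starting from all $\be_i=0$, every shift occurring in the process stays $\le0$ (the merging corrections carry shifts among $\{\be_1,\ \be_1-g,\ \be_2,\dots,\be_s\}$ with $g\ge1$, and the last-index correction carries the monomial $z^{a_s2^{f_s-1}}$ with nonnegative integer exponent), so that the base case and the induction hypothesis deliver coefficients in $\Z[z]$, i.e.\ $e=0$. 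The main obstacle I anticipate is the book-keeping in the telescoping step: verifying the displayed identity verbatim and, above all, carrying out the three-case computation that merges $a_12^{n_1}+a_22^{n_1+\be_1-\be_2}$ into one power of $2$ with the correct residual shifts and the correct parity of $c$ — routine, but easy to get wrong. A secondary point requiring care is that the recursion is genuinely an induction on $s$, since every correction produced in the telescoping has strictly fewer lower indices; this is what makes the repeated appeals to the induction hypothesis legitimate and guarantees termination after finitely many steps.
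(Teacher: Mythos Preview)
Your overall strategy---induction on $s$, first forcing all coefficients odd by the substitution $n_i\mapsto n_i+f_i$, then telescoping the shifts $\beta_i$ down to~$0$---is close in spirit to the paper's, but the telescoping step has a real gap for interior indices.

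For $\beta_1$ your displayed identity is correct: since there is no constraint to the left of position~$1$, passing from $\beta_1$ to $\beta_1+1$ relaxes only the inequality $n_1+\beta_1>n_2+\beta_2$, and the difference is exactly the ``equality slice'' $n_1+\beta_1=n_2+\beta_2$, which merges positions $1$ and $2$ into a single $(s-1)$-index $H^{\gamma}$ series. But for $i\ge2$ the situation is \emph{not} ``entirely analogous.'' Incrementing $\beta_i$ changes both adjacent constraints: $n_{i-1}+\beta_{i-1}>n_i+\beta_i$ becomes stricter while $n_i+\beta_i>n_{i+1}+\beta_{i+1}$ becomes weaker. Hence
\[
H^{\ldots,\beta_i+1,\ldots}-H^{\ldots,\beta_i,\ldots}
\]
is a \emph{difference} of two boundary pieces, one with $n_{i}+\beta_i=n_{i+1}+\beta_{i+1}$ (merging $i,i+1$) and one with $n_{i-1}+\beta_{i-1}=n_i+\beta_i+1$ (merging $i-1,i$). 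Worse, the first of these pieces carries the constraints $n_{i-1}+\beta_{i-1}>n_i+\beta_i+1$ \emph{and} $n_i+\beta_i>n_{i+2}+\beta_{i+2}$, which assign two different shifts ($\beta_i+1$ on the left, $\beta_i$ on the right) to the merged position; it is therefore not literally an $H^{\gamma}$ series, and you would need a further splitting to express it as such. The same asymmetry bites your treatment of $\beta_s$: the step produces both the monomial term you mention and a merge of positions $s-1,s$, and the monomial term carries the residual constraint $n_{s-1}+\beta_{s-1}\ge1$ rather than $\ge0$. None of this is unfixable, but it is not the one-line ``entirely analogous'' step you claim, and the shift-tracking for the ``moreover'' clause cannot be completed until these extra pieces are written down and checked.

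The paper sidesteps this two-sided difficulty by processing positions from the right: at position~$h$ (with positions $h+1,\dots,s$ already odd with shift~$0$) it substitutes $n_h'=n_h+v_2(b_h)$, which simultaneously makes $b_h$ odd and forces $\beta_h=0$, and the chain violations that arise involve only positions $h$ and $h+1$; the resulting corrections are genuine $H^{\gamma}$ series with $s-1$ lower indices, written out explicitly as four recurrences. For the ``moreover'' part the paper does not track shifts at all: instead it invokes the linear independence of the odd-index series (the preceding corollary) to kill any coefficient carrying a fractional exponent, which is considerably cleaner than bookkeeping the sign of every shift through the recursion.
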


\begin{proof}
We describe an algorithmic procedure for expressing 
$H_{b_1,b_2,\dots,b_s}^{\be_1,\be_2,\dots,\be_s}(z)$ 
in terms of series 
$H_{a_1,a_2,\dots,a_r}^{\ga_1,\ga_2,\dots,\ga_r}(z)$,
where either $r<s$, or $r=s$ and 
$$
\max\{i:a_i\text { is even or }\ga_i\ne0\}
<\max\{i:b_i\text { is even or }\be_i\ne0\}.
$$
In words, in the second case
the length of the string of consecutive $0$'s at the tail of the upper parameters 
respectively the length of the string of consecutive 
odd numbers at the tail of the lower parameters has been increased.

Our algorithmic procedure consists of four recurrence relations,
\eqref{eq:Rek1}--\eqref{eq:Rek4} below. 
For the first two of these, let $b_s=b'_s2^{e_s}$, where
$e_s=v_2(b_s)$. By definition, the number $b'_s$ is odd.
Then we have
\begin{multline*}
H_{b_1,b_2,\dots,b_s}^{\be_1,\be_2,\dots,\be_s}(z)\\[2mm]
= \sum _{n_1+\be_1-\be_s+e_s>\dots>
n_{s-1}+\be_{s-1}-\be_s+e_s>
n_s+e_s\ge -\be_s+e_s} ^{}
z^{b_12^{n_1}+b_22^{n_2}+\dots+b_{s-1}2^{n_{s-1}}+b'_s2^{n_s+e_s}}.
\end{multline*}
In the above sum on the right-hand side,
let $n'_s=n_s+e_s$ be a new summation index. Then, for
$e_s\le\be_s$, one sees that
\begin{multline} \label{eq:Rek1}
H_{b_1,b_2,\dots,b_s}^{\be_1,\be_2,\dots,\be_s}(z)=
H_{b_1,b_2,\dots,b_{s-1},b'_s}
^{\be_1-\be_s+e_s,\be_2-\be_s+e_s,\dots,
\be_{s-1}-\be_s+e_s,0}(z)\\[2mm]
+\sum _{k=1} ^{\be_s-e_s}z^{b'_s2^{-k}}
H_{b_1,b_2,\dots,b_{s-1}}
^{\be_1-\be_s+e_s+k-1,\be_2-\be_s+e_s+k-1,\dots,
\be_{s-1}-\be_s+e_s+k-1}(z).
\end{multline}
On the other hand, for $e_s\ge\be_s$, one has
\begin{multline} \label{eq:Rek2}
H_{b_1,b_2,\dots,b_s}^{\be_1,\be_2,\dots,\be_s}(z)=
H_{b_1,b_2,\dots,b_{s-1},b'_s}
^{\be_1-\be_s+e_s,\be_2-\be_s+e_s,\dots,
\be_{s-1}-\be_s+e_s,0}(z)\\[2mm]
-\sum _{k=0} ^{e_s-\be_s-1}z^{b'_s2^{k}}
H_{b_1,b_2,\dots,b_{s-1}}
^{\be_1-\be_s+e_s-k-1,\be_2-\be_s+e_s-k-1,\dots,
\be_{s-1}-\be_s+e_s-k-1}(z).
\end{multline}

Now consider
$$
H_{b_1,\dots,b_h,b_{h+1},\dots,b_s}
^{\be_1,\dots,\be_h,0,\dots,0}(z),
$$
where $1\le h<s$ and 
all of $b_{h+1},\dots,b_s$ are odd. Similar to the proceedings 
above, let $b_h=b'_h2^{e_h}$, where
$e_h=v_2(b_h)$. Again, by definition, the number $b'_h$ is odd.
Then we have
\begin{multline*}
H_{b_1,\dots,b_h,b_{h+1},\dots,b_s}
^{\be_1,\dots,\be_h,0,\dots,0}(z)\\[2mm]
= \underset{n_h+\be_h>n_{h+1}>\dots>n_s\ge0}
{\sum _{n_1+\be_1-\be_h+e_h>\dots>
n_{h-1}+\be_{h-1}-\be_h+e_h>
n_h+e_h} ^{}}
z^{b_12^{n_1}+\dots+b_{h-1}2^{n_{h-1}}+b'_h2^{n_h+e_h}+
b_{h+1}2^{n_{h+1}}+\dots+b_s2^{n_s}}.
\end{multline*}
In the above sum on the right-hand side,
let $n'_h=n_h+e_h$ be a new summation index. Then, for
$e_h\le\be_h$, one sees that
\begin{multline} \label{eq:Rek3}
H_{b_1,\dots,b_h,b_{h+1},\dots,b_s}
^{\be_1,\dots,\be_h,0,\dots,0}(z)=
H_{b_1,\dots,b_{h-1},b'_h,b_{h+1},\dots,b_s}
^{\be_1-\be_h+e_h,\dots,
\be_{h-1}-\be_h+e_h,0,0,\dots,0}(z)\\[2mm]
+\sum _{k=0} ^{\be_h-e_h-1}
H_{b_1,\dots,b_{h-1},b'_h+b_{h+1}2^k,b_{h+2},\dots,b_s}
^{\be_1-\be_h+e_h+k,\dots,
\be_{h-1}-\be_h+e_h+k,k,0,\dots,0}(z).
\end{multline}
On the other hand, for $e_h\ge\be_h$, one has
\begin{multline} \label{eq:Rek4}
H_{b_1,\dots,b_h,b_{h+1},\dots,b_s}
^{\be_1,\dots,\be_h,0,\dots,0}(z)=
H_{b_1,\dots,b_{h-1},b'_h,b_{h+1},\dots,b_s}
^{\be_1-\be_h+e_h,\dots,
\be_{h-1}-\be_h+e_h,0,0,\dots,0}(z)\\[2mm]
-\sum _{k=1} ^{e_h-\be_h}
H_{b_1,\dots,b_{h-1},b'_h2^k+b_{h+1},b_{h+2},\dots,b_s}
^{\be_1-\be_h+e_h-k,\dots,
\be_{h-1}-\be_h+e_h-k,0,0,\dots,0}(z).
\end{multline}

It is clear that, if we recursively apply \eqref{eq:Rek1}--\eqref{eq:Rek4} to a
given series 
$H_{b_1,b_2,\dots,b_s}^{\be_1,\be_2,\dots,\be_s}(z)$, 
and use $H_\emptyset^\emptyset(z)=1$ as an initial condition,
we will eventually arrive at a linear combination of $1$ and series
$H_{a_1,a_2,\dots,a_r}^{0,0,\dots,0}(z)=
H_{a_1,a_2,\dots,a_r}(z)$
with all $a_i$'s being odd, where the coefficients are 
polynomials in $z^{1/2^e}$ for a suitable $e$. (Potential fractional 
exponents come from the relation \eqref{eq:Rek1}.)
This proves the first assertion of the lemma.

Now let us consider the case where all the $\be_i$'s are zero. Suppose 
that we have an expansion as described in the first part of the lemma
for $H_{b_1,b_2,\dots,b_s}(z)$,
\begin{equation} \label{eq:be=0}
H_{b_1,b_2,\dots,b_s}(z)=
{\sum _{\mathbf a} ^{}}
c(\mathbf a)
z^{e(\mathbf a)}
H_{\mathbf a}(z),
\end{equation}
where the sum is taken over all finite
tuples $\mathbf a=(a_1,a_2,\dots)$ with all $a_i$'s being odd, and
where only finitely many coefficients $c(\mathbf a)$ are non-zero.
We also allow the tuple $\mathbf a$ to be the empty tuple $()$ and
make the convention that $H_{()}(z)=1$, so that the series $1$ is as
well included in the linear combination on the right-hand side of
\eqref{eq:be=0}.

Let us now consider exponents $e(\mathbf a)$ that are not
integral. Let $\epsilon$ be a real number strictly between $0$ and
$1$, and concentrate on exponents $e(\mathbf a)$ 
with fractional part $\epsilon$; in symbols 
$\{e(\mathbf a)\}=\epsilon$. Then we isolate these exponents
$e(\mathbf a)$ in the relation \eqref{eq:be=0}, and since there are
no fractional exponents on the left-hand side, we obtain
$$
0= 
{{\sum _{\{e(\mathbf a)\}=\epsilon} ^{}}}
c(\mathbf a)
z^{e(\mathbf a)}
H_{\mathbf a}(z).
$$
After dividing both sides through by $z^\epsilon$, an application 
of Corollary~\ref{lem:Hind} shows that $c(\mathbf a)=0$ for all
$\mathbf a$ with $\{e(\mathbf a)\}=\epsilon$. Thus, all exponents 
$e(\mathbf a)$ actually occurring in \eqref{eq:be=0} 
with non-zero coefficients $c(\mathbf a)$ are in fact integral.
This completes the proof of the lemma.
\end{proof}

Computer computations suggest that, if we restrict our attention to
the series\break 
$H_{b_1,b_2,\dots,b_s}(z)$, which are the ones that we are
actually interested in, there is a strengthening of
Lemma~\ref{lem:Hbi} (see also Appendix~\ref{appA}).

\begin{conjecture} \label{conj:H}
For any positive integers $b_1,b_2,\dots,b_s,$ the series
$H_{b_1,b_2,\dots,b_s}(z)$
can be expressed as a linear combination
over $\Z[z,z^{-1}]$ of the series $1$ and series of the form
$H_{a_1,a_2,\dots,a_r}(z),$ where all $a_i$'s are odd, $r\le s,$ and
$a_1+a_2+\dots+a_r\le b_1+b_2+\dots+b_s$.
\end{conjecture}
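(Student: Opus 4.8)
The plan is to prove Conjecture~\ref{conj:H} by strengthening the algorithmic procedure in the proof of Lemma~\ref{lem:Hbi} so that one tracks, as loop invariants, two additional quantities: first, that no fractional exponents of $z$ ever appear when starting from purely integral lower parameters $\be_i$ (which is almost already shown, but needs to be maintained throughout, not just at the end); and second, that the ``total weight'' $a_1+a_2+\dots+a_r$ of every series $H_{a_1,\dots,a_r}^{\ga_1,\dots,\ga_r}(z)$ produced at any stage of the recursion never exceeds the total weight $b_1+b_2+\dots+b_s$ of the input series (and $r$ never exceeds $s$). Granting these invariants, the termination argument already given in Lemma~\ref{lem:Hbi} yields the conjecture, since the final series all have odd parameters and the bounds are inherited.

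First I would revisit the four recurrences \eqref{eq:Rek1}--\eqref{eq:Rek4} and check the weight bookkeeping in each. In \eqref{eq:Rek1} and \eqref{eq:Rek2}, the parameter $b_s$ is replaced by its odd part $b'_s = b_s/2^{e_s} \le b_s$, and some terms \emph{drop} the last parameter entirely (lowering both $r$ and the total weight); so the weight can only decrease or stay the same, and $r$ never increases. The delicate relations are \eqref{eq:Rek3} and \eqref{eq:Rek4}: there $b_h = b'_h 2^{e_h}$ is split, and in \eqref{eq:Rek3} the parameter becomes $b'_h + b_{h+1}2^k$ with $0 \le k \le \be_h - e_h - 1$, while in \eqref{eq:Rek4} it becomes $b'_h 2^k + b_{h+1}$ with $1 \le k \le e_h - \be_h$; in the latter the factor $2^k$ is a potential source of weight \emph{growth}. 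The key observation I would exploit is that in \eqref{eq:Rek3}--\eqref{eq:Rek4} the parameter $b_{h+1}$ is simultaneously \emph{removed} from the tuple (the new tuple is $b_1,\dots,b_{h-1},(\text{new}),b_{h+2},\dots,b_s$), so the net change in total weight is $(b'_h 2^k + b_{h+1}) - (b_h + b_{h+1}) = b'_h 2^k - b'_h 2^{e_h} = b'_h(2^k - 2^{e_h}) \le 0$ precisely because $k \le e_h$ in \eqref{eq:Rek4} (and trivially in \eqref{eq:Rek3}, where the new parameter $b'_h + b_{h+1}2^k$ contributes weight $b'_h + b_{h+1}2^k$ while we lose $b_h + b_{h+1} = b'_h 2^{e_h} + b_{h+1}$, and here one needs $b'_h + b_{h+1}2^k \le b'_h 2^{e_h} + b_{h+1}$, i.e. $b_{h+1}(2^k - 1) \le b'_h(2^{e_h}-1)$, which holds since $k \le \be_h - e_h - 1$ forces $e_h \ge 1$ unless the sum is empty, and one must check the small cases). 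This is where the real work lies, and it is the step I expect to be the main obstacle: \eqref{eq:Rek3} does not obviously preserve the weight bound in all parameter regimes, and one may need a more careful potential function — e.g. weighting by $\sum a_i 2^{c_i}$ for suitable exponents tied to the $\ga_i$'s — rather than the naive sum $\sum a_i$, in order to get a quantity that is monotone under all four recurrences and that specializes to $\sum b_i$ on the input and to $\sum a_i$ on the odd output.

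The auxiliary step is to show the coefficients land in $\Z[z,z^{-1}]$ rather than merely $\Z[z^{1/2^e}]$. As noted in Lemma~\ref{lem:Hbi}, fractional exponents can only enter via \eqref{eq:Rek1}, through the factor $z^{b'_s 2^{-k}}$. I would argue that when one tracks the combined effect of repeatedly applying \eqref{eq:Rek1}--\eqref{eq:Rek4} starting from integral $\be_i$, any fractional exponent generated at an intermediate stage is necessarily cancelled by the time the recursion terminates — this is essentially the content of the ``divide by $z^\epsilon$ and apply Corollary~\ref{lem:Hind}'' argument at the end of the proof of Lemma~\ref{lem:Hbi}, and it carries over verbatim; the weight-bounded refinement does not interfere with it. Negative integer exponents of $z$, on the other hand, are genuinely possible (again via \eqref{eq:Rek1}), which is why the conjectured statement allows $\Z[z,z^{-1}]$ and not $\Z[z]$.

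Finally, I would assemble these pieces: run the procedure of Lemma~\ref{lem:Hbi}, invoke the (refined) termination argument to conclude it halts at a $\Z[z,z^{-1}]$-linear combination of $1$ and odd-parameter series $H_{a_1,\dots,a_r}(z)$, and read off from the maintained invariants that $r \le s$ and $a_1+\dots+a_r \le b_1+\dots+b_s$ for every term appearing. The only genuinely new mathematical content beyond Lemma~\ref{lem:Hbi} is the monotonicity of the weight under the four recurrences, so the proof should be short modulo that verification — but that verification, especially pinning down the correct potential function that works uniformly for \eqref{eq:Rek1}--\eqref{eq:Rek4}, is where I would concentrate the effort.
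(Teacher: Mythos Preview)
The statement you are trying to prove is labeled \emph{Conjecture}~\ref{conj:H} in the paper, and the paper does not prove it; the authors say only that ``computer computations suggest'' it. So there is no proof in the paper to compare against, and your proposal is an attempt to settle an open problem.

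Your proposal has a real gap at exactly the place you flag. The claimed local invariant --- that the total weight $\sum a_i$ is non-increasing under each of \eqref{eq:Rek1}--\eqref{eq:Rek4} --- is simply false for \eqref{eq:Rek3}. Concretely, start with $H_{1,8}(z)$. Applying \eqref{eq:Rek2} (since $e_2=3\ge\be_2=0$) produces, among other terms, $H_{1,1}^{3,0}(z)$, whose weight is $1+1=2$. Now apply \eqref{eq:Rek3} to this (here $h=1$, $b_h=1$, $e_h=0$, $\be_h=3$): the sum on the right contains the term $H_{1+1\cdot 2^2}^{2}(z)=H_{5}^{2}(z)$, of weight $5>2$. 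So the weight of an intermediate series can strictly exceed the weight of its immediate predecessor. Your inequality check for \eqref{eq:Rek3}, namely $b_{h+1}(2^k-1)\le b'_h(2^{e_h}-1)$, fails exactly here because $e_h=0$ while $\be_h>0$, which is a perfectly legitimate configuration arising from earlier steps.

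What one would actually need is a potential that remembers the upper parameters $\ga_i$ as well --- something morally like ``$\sum a_i$ together with the powers of $2$ that have been stripped off and parked in the $\ga_i$'s'' --- and that (i) is non-increasing under all four recurrences, (ii) equals $\sum b_i$ on the input $H_{b_1,\dots,b_s}^{0,\dots,0}$, and (iii) dominates $\sum a_i$ on every output $H_{a_1,\dots,a_r}^{0,\dots,0}$. You gesture at this, but you do not define such a potential or verify these three properties, and the example above shows that finding one is not a routine bookkeeping exercise. Until that is done, the argument is a plausible outline rather than a proof, and the conjecture remains open.

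One small side remark: your claim that negative powers of $z$ are ``genuinely possible'' in the final expansion of $H_{b_1,\dots,b_s}(z)$ contradicts the second assertion of Lemma~\ref{lem:Hbi}, which already shows the coefficients lie in $\Z[z]$ (not just $\Z[z,z^{-1}]$) when all $\be_i=0$. The only new content of the conjecture over Lemma~\ref{lem:Hbi} is the pair of bounds $r\le s$ and $\sum a_i\le\sum b_i$.
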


To conclude this section, let us provide an illustration of the above
discussion. We set ourselves the task of determining the coefficient
of $z^{1099511640192}$ in $\Phi^5(z)$.
In order to accomplish this task, we first express $\Phi^5(z)$ in
terms of series $H_{a_1,\dots,a_r}(z)$ with all $a_i$'s being odd.
This is done by means of the expansion \eqref{eq:Phipot} and 
the algorithm described in the proof of Lemma~\ref{lem:Hbi}.
The resulting expansion is displayed in Appendix~\ref{appA}.

Now we have to answer the question, in which of the series
$H_{a_1,\dots,a_r}(z)$ that appear in this expansion of $\Phi^5(z)$
do we find the monomial $z^{1099511640192}$. Using the algorithm
described in Remark~\ref{rem:eff}, we see that 
{\allowdisplaybreaks
\begin{align*}
1099511640192&=5\cdot 2^7+1099511639552,\\
&=3\cdot 2^{13} +2^{12} +2^7+1099511611392,\\
&=2^{40} +3\cdot 2^{12} +2^7 ,\\
&=2^9 +2^8 +3\cdot 2^7+1099511639040,\\
&=3\cdot 2^{12} +2^7+1099511627776,\\
&=2^8 +3\cdot 2^7+1099511639552,\\
&=2^{40} +2^{13} +2^{12} +2^7 ,\\
&=1+3\cdot 2^0+1099511640188,\\
&=3\cdot 2^7+1099511639808,\\
&=1+2^2+2^1+2^0+1099511640184,\\
&=2^{13} +2^{12} +2^7+1099511627776,\\
&=1+2^1+2^0+1099511640188,\\
&=2^{12} +2^7+1099511635968,\\
&=2+2^1+1099511640188,\\
&=1+2^0+1099511640190,\\
&=2^7+1099511640064.
\end{align*}}%
Here, the third line shows that $z^{1099511640192}$ appears in
$H_{1,3,1}(z)$, and the seventh line shows that it appears in
$H_{1,1,1,1}(z)$ (thereby making it impossible to appear
in $H_{1,1,1,1,1}(z)$), while the remaining lines show that it does not
appear in any other term in the expansion of $\Phi^5(z)$ displayed in
Appendix~\ref{appA}. Hence, by taking into account the coefficients
with which the series $H_{1,3,1}(z)$ and $H_{1,1,1,1}(z)$ appear in
this expansion, the coefficient of $z^{1099511640192}$
in $\Phi^5(z)$ is seen to equal $-40+240=200$.

\section{The method}
\label{sec:method}

We consider a (formal) differential equation 
\begin{equation} \label{eq:diffeq}
\mathcal P(z;F(z),F'(z),F''(z),\dots,F^{(s)}(z))=0,
\end{equation}
where $\mathcal P$ is a polynomial with integer coefficients, which has a
power series solution $F(z)$ with integer coefficients. In this
situation, we propose the following algorithmic approach to
determining the series $F(z)$ modulo a $2$-power $2^{3\cdot 2^\al}$,
for some positive integer $\al$. We make the Ansatz
\begin{equation} \label{eq:Ansatz}
F(z)=\sum _{i=0} ^{2^{\al+2}-1}a_i(z)\Phi^i(z)\quad \text {modulo
}2^{3\cdot 2^\al},
\end{equation}
with $\Phi(z)$ as given in \eqref{eq:Phidef}, and where the $a_i(z)$'s
are (at this point) undetermined Laurent polynomials in $z$.
Now we substitute \eqref{eq:Ansatz} into \eqref{eq:diffeq}, and
we shall gradually determine approximations $a_{i,\be}(z)$ to $a_i(z)$ such that
\eqref{eq:diffeq} holds modulo $2^\be$, for $\be=1,2,\dots,3\cdot
2^\al$. To start the procedure, we consider the differential equation
\eqref{eq:diffeq} modulo $2$, with
\begin{equation} \label{eq:Ansatz1}
F(z)=\sum _{i=0} ^{2^{\al+2}-1}a_{i,1}(z)\Phi^i(z)\quad \text {modulo
}2.
\end{equation}
Using the elementary fact that $\Phi'(z)=1$ modulo $2$, we see that
the left-hand side of \eqref{eq:diffeq} is a polynomial in
$\Phi(z)$ with coefficients that are Laurent polynomials in $z$. 
We reduce powers $\Phi^k(z)$ with $k\ge2^{\al+2}$ using
the relation (which is implied by the minimal polynomial for the
modulus~$8$ given in Proposition~\ref{prop:minpol})\,\footnote{Actually, 
if we would like to obtain an optimal result, 
we should use the relation implied by
a minimal polynomial for the modulus $2^{3\cdot 2^\al}$ in the sense of 
Section~\ref{sec:Phi}. But since we have no general formula available
for such a minimal polynomial (cf.\ Item (2) of Remark~\ref{rem:1} in that section),
and since we wish to prove results for arbitrary moduli, choosing instead
powers of a minimal polynomial for the modulus $8$ is the best compromise.
In principle, it may happen that there exists a polynomial in $\Phi(z)$
with coefficients that are Laurent polynomials in $z$, which is identical
with $F(z)$ after reduction of its coefficients modulo $2^{3\cdot 2^\al}$,
but the Ansatz \eqref{eq:Ansatz} combined with the
reduction \eqref{eq:PhiRel} fails because it is too restrictive. 
We are not aware of a concrete example where this obstruction occurs.
The subgroup numbers of $SL_2(\Z)$ 
(which we treat
modulo~$8$ in Section~\ref{sec:SL2Z} 
by 
the method described here, and modulo~$16$ by an enhancement of the method
outlined in Appendix~\ref{appD}) are a potential candidate when
considered modulo $2^{3\cdot 2^\al}$ for $\al\ge1$.
On the other hand, once we are successful using this
(potentially problematic) Ansatz, then the result can easily be converted into
an optimal one by further reducing the polynomial thus obtained, using
the relation implied by a minimal polynomial for the modulus
$2^{3\cdot 2^\al}$.}
\begin{equation} \label{eq:PhiRel}
\big(\Phi^4(z)+6\Phi^3(z)+(2z+3)\Phi^2(z)+(2z+6)\Phi(z)+2z+5z^2\big)
^{2^\al}=0\quad \text {modulo }2^{3\cdot 2^\al}.
\end{equation}
Since, at this point, we are only interested in finding a solution to
\eqref{eq:diffeq} modulo~$2$, the above relation simplifies to
\begin{equation} \label{eq:PhiRel2}
\Phi^{2^{\al+2}}(z)+\Phi^{2^{\al+1}}(z)+z^{2^{\al+1}}=0\quad 
\text{modulo }2.
\end{equation}
Now we compare coefficients of powers $\Phi^k(z)$,
$k=0,1,\dots,2^{\al+2}-1$ (see Remark~\ref{rem:comp}). This yields a
system of $2^{\al+2}$ (differential) equations (modulo~$2$)
for the unknown Laurent polynomials $a_{i,1}(z)$, $i=0,1,\dots,2^{\al+2}-1$,
which may or may not have a solution. 

Provided we have already found
Laurent polynomials $a_{i,\be}(z)$, $i=0,1,\dots,2^{\al+2}-1$, for some $\be$
with $1\le \be\le 3\cdot 2^{\al}-1$, such that
\begin{equation} \label{eq:Ansatz2}
\sum _{i=0} ^{2^{\al+2}-1}a_{i,\be}(z)\Phi^i(z)
\end{equation}
solves \eqref{eq:diffeq} modulo~$2^\be$, we put 
\begin{equation} \label{eq:Ansatz2a}
a_{i,\be+1}(z):=a_{i,\be}(z)+2^{\be}b_{i,\be+1}(z),\quad 
i=0,1,\dots,2^{\al+2}-1,
\end{equation}
where the $b_{i,\be+1}(z)$'s are (at this point) undetermined 
Laurent polynomials in $z$. Next we substitute
\begin{equation} \label{eq:Ansatz2b}
\sum _{i=0} ^{2^{\al+2}-1}a_{i,\be+1}(z)\Phi^i(z)
\end{equation}
instead of $F(z)$ in \eqref{eq:diffeq}. Using the fact that
$\Phi'(z)=\sum _{n=0} ^{\be}2^nz^{2^n-1}$ modulo~$2^{\be+1}$, 
we expand the left-hand side as a polynomial in $\Phi(z)$ (with
coefficients being Laurent polynomials in $z$), we apply again the reduction
using relation \eqref{eq:PhiRel}, we compare coefficients of powers
$\Phi^k(z)$, $k=0,1,\dots,2^{\al+2}-1$ (again, see
Remark~\ref{rem:comp}),
and, as a result, we obtain a
system of $2^{\al+2}$ (differential) equations (modulo~$2^{\be+1}$)
for the unknown Laurent polynomials $b_{i,\be+1}(z)$, $i=0,1,\dots,2^{\al+2}-1$,
which may or may not have a solution. If we manage to push this 
procedure through until $\be=3\cdot 2^\al-1$, then,
setting $a_i(z)=a_{i,3\cdot 2^\al}(z)$, $i=0,1,\dots,2^{\al+2}-1$,
the right-hand side of \eqref{eq:Ansatz} is a solution to
\eqref{eq:diffeq} modulo~$2^{3\cdot 2^\al}$, as required.

\begin{remarknu} \label{rem:comp}
As the reader will have noticed, each comparison of coefficients of
powers of $\Phi(z)$ is based on the ``hope" that, 
if a polynomial in $\Phi(z)$ is zero modulo a $2$-power $2^\be$
(as a formal Laurent series), then already all coefficients
of powers of $\Phi(z)$ in this polynomial vanish modulo $2^\be$.
However, this implication is false in general 
(see Lemma~\ref{lem:Null16} below for the case of modulus $2^4=16$).
It may thus happen that the method described in this section fails to find a
solution modulo $2^\be$
to a given differential equation in the form of a polynomial in
$\Phi(z)$ with coefficients that are Laurent polynomials in $z$ over
the integers, while such a solution does in fact
exist. As a matter of fact, this situation occurs in the 
analysis modulo~$16$ of the
subgroup numbers of $SL_2(\Z)$,
see 
Theorem~\ref{thm:Sl2Z16}.
In Appendix~\ref{appD}, we outline an enhancement of the method,
which (at least in principle; Appendix~\ref{appD} treats only the
case of the modulus~$16$ explicitly) allows us to decide whether
or not a solution modulo a given power in terms of a polynomial in
$\Phi(z)$ with coefficients that are Laurent polynomials in $z$ over
the integers exists, and, if so, to explicitly find such a solution.
\end{remarknu}

It is not difficult to see that performing the iterative step 
\eqref{eq:Ansatz2a} amounts to solving a system of linear
differential equations in the unknown functions $b_{i,\be+1}(z)$
modulo~$2$, where all of them are Laurent polynomials in $z$,
and where only first derivatives of the $b_{i,\be+1}(z)$'s occur.
Solving such a system is equivalent to solving an ordinary system of
linear equations, as is shown by the lemma below.

Given a Laurent polynomial $p(z)$ over the integers, we write
$p^{(o)}(z)$ for the odd part $\frac {1} {2}(p(z)-p(-z))$
and $p^{(e)}(z)$ for the even part $\frac {1} {2}(p(z)+p(-z))$ of $p(z)$,
respectively.

\begin{lemma} \label{lem:2x2diff}
Let $c_{i,j}(z)$ and $d_{i,j}(z),$ $1\le i,j\le N,$ and $r_i(z),$ $1\le
i\le N,$ be
given Laurent polynomials in $z$ with integer coefficients.
Then the system of differential equations
\begin{equation} 
\label{eq:2x2diff}
\sum _{j=1} ^{N}c_{i,j}(z)f_j(z)+
\sum _{j=1} ^{N}d_{i,j}(z)f'_j(z)=r_i(z)\quad 
\text {\em modulo }2, \quad \quad 1\le i\le N,
\end{equation}
has solutions $f_j(z),$ $1\le j\le N,$ that are Laurent polynomials
in $z$ over the integers
if, and only if, the system of\/ {\em linear} equations
\begin{align}
\notag
\sum _{j=1} ^{N}c^{(e)}_{i,j}(z)f^{(1)}_j(z)+
\sum _{j=1} ^{N}c^{(o)}_{i,j}(z)f^{(2)}_j(z)+
\sum _{j=1} ^{N}z^{-1}d^{(1)}_{i,j}(z)f^{(2)}_j(z)&=r^{(e)}_i(z)\quad 
\text {\em modulo }2, \\[2mm]
\notag
\sum _{j=1} ^{N}c^{(o)}_{i,j}(z)f^{(1)}_j(z)+
\sum _{j=1} ^{N}c^{(e)}_{i,j}(z)f^{(2)}_j(z)+
\sum _{j=1} ^{N}z^{-1}d^{(o)}_{i,j}(z)f^{(2)}_j(z)&=r^{(o)}_i(z)\quad 
\text {\em modulo }2,\\
&\kern3cm 
1\le i\le N,
\label{eq:4x4}
\end{align}
has a solution in Laurent polynomials 
$f_j^{(1)}(z),f_j^{(2)}(z)$ in $z$ over the integers for $1\le j\le N$.
\end{lemma}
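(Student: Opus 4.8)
The plan is to reduce the differential system \eqref{eq:2x2diff} to the linear system \eqref{eq:4x4} by exploiting the characteristic-$2$ identity $\frac{d}{dz}z^{2m}=0$, so that only the odd-exponent part of each $f_j(z)$ can contribute to a nonzero derivative, while the even-exponent part contributes only through the undifferentiated terms. First I would introduce, for each unknown $f_j(z)$, the decomposition into even and odd parts $f_j(z)=f_j^{(e)}(z)+f_j^{(o)}(z)$ modulo $2$, and observe that modulo $2$ we have $(z^{2m})'=0$ and $(z^{2m+1})'=z^{2m}$, whence $f_j'(z)\equiv \bigl(f_j^{(o)}(z)\bigr)'\equiv z^{-1}f_j^{(o)}(z)\pmod 2$ — here $z^{-1}f_j^{(o)}(z)$ is genuinely a Laurent polynomial because $f_j^{(o)}$ has only odd exponents. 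So the substitution $f_j^{(1)}(z):=f_j^{(e)}(z)$, $f_j^{(2)}(z):=f_j^{(o)}(z)$ turns $f_j'(z)$ into $z^{-1}f_j^{(2)}(z)$ and kills all derivatives from \eqref{eq:2x2diff}.

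Next I would split each equation of \eqref{eq:2x2diff} into its even and odd parts. Since multiplication by $z$ interchanges even and odd parts of a Laurent polynomial modulo $2$, for a product $p(z)g(z)$ one has $(pg)^{(e)}=p^{(e)}g^{(e)}+p^{(o)}g^{(o)}$ and $(pg)^{(o)}=p^{(o)}g^{(e)}+p^{(e)}g^{(o)}$ modulo $2$ (this is just the parity bookkeeping of adding exponents modulo $2$). Applying this to $c_{i,j}(z)f_j(z)$ with $g=f_j$ split as $f_j^{(1)}+f_j^{(2)}$, and to $d_{i,j}(z)f_j'(z)$ with $f_j'(z)\equiv z^{-1}f_j^{(2)}(z)$, and taking even/odd parts of $r_i(z)$, yields exactly the two families of linear equations displayed in \eqref{eq:4x4}: the even part gives the first family (with $c^{(e)}f^{(1)}$, $c^{(o)}f^{(2)}$, and $z^{-1}d^{(e)}$-type term — matching the paper's $z^{-1}d^{(1)}_{i,j}$ notation for the relevant part of $d$), the odd part gives the second. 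Here I would be slightly careful to match the paper's notation for which half of $d_{i,j}$ multiplies $f_j^{(2)}$ in each equation; the key point is that because only $f_j^{(2)}$ survives differentiation, the $d_{i,j}$ contribution is split according to the parity it needs to produce, exactly as the two parities of $z^{-1}d_{i,j}^{(\cdot)}$ in \eqref{eq:4x4}.

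For the converse direction, given Laurent-polynomial solutions $f_j^{(1)}(z),f_j^{(2)}(z)$ of \eqref{eq:4x4}, I would first note they may be taken to have the correct parities (replace $f_j^{(1)}$ by its even part and $f_j^{(2)}$ by its odd part, which does not change \eqref{eq:4x4} since the coefficient functions were already sorted by parity); then $f_j(z):=f_j^{(1)}(z)+f_j^{(2)}(z)$ recombines the even/odd split of \eqref{eq:4x4} back into \eqref{eq:2x2diff}, using $f_j'(z)\equiv z^{-1}f_j^{(2)}(z)$ once more. The equivalence is then an identity. The main obstacle — and really the only subtle point — is the bookkeeping in the second paragraph: tracking exactly which parity-component of each $c_{i,j}$ and $d_{i,j}$ lands in which of the two equations of \eqref{eq:4x4}, and confirming that the term written as $z^{-1}d_{i,j}^{(1)}$ in the paper is the parity-part of $d_{i,j}$ that, after multiplication by $z^{-1}f_j^{(2)}$ (odd divided by $z$, hence even), contributes to the even equation, with the complementary part $z^{-1}d_{i,j}^{(o)}$ contributing to the odd equation. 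Everything else is the routine observation that differentiation modulo $2$ is the linear operator $f\mapsto z^{-1}f^{(o)}$ on Laurent polynomials.
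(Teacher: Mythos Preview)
Your proposal is correct and follows essentially the same approach as the paper: reduce derivatives via $f_j'(z)\equiv z^{-1}f_j^{(o)}(z)$ modulo~$2$, then split each equation into even and odd parts. For the converse, the paper does exactly what you sketch --- it writes out the four equations obtained by taking even and odd parts of both equations in \eqref{eq:4x4} with a generic solution $(g_j^{(1)},g_j^{(2)})$, and observes that the even part of the first equation together with the odd part of the second is precisely \eqref{eq:4x4} with $(g_j^{(1)})^{(e)}$ and $(g_j^{(2)})^{(o)}$ substituted; your one-line justification ``since the coefficient functions were already sorted by parity'' compresses this step but is the same argument. (Your suspicion about the notation is right: the paper's $d^{(1)}_{i,j}$ is almost certainly a typo for $d^{(e)}_{i,j}$, as the parity bookkeeping and the proof's own displayed equations confirm.)
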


\begin{proof}We write $f_j(z)=f_j^{(e)}(z)+f_j^{(o)}(z)$,
and observe that
$$
f_j'(z)=z^{-1}f_j^{(o)}(z)\quad 
\text {modulo }2.
$$
If this is used in
\eqref{eq:2x2diff}, and if we separate the even and odd parts on both
sides of the equations, then \eqref{eq:4x4} with $f_j^{(1)}(z)=f_j^{(e)}(z)$
and $f_j^{(2)}(z)=f_j^{(o)}(z)$, $j=1,2,\dots,N$, results after little
manipulation. 

Conversely, suppose that $g_j^{(1)}(z),g_j^{(2)}(z)$,
$j=1,2,\dots,N$, is a solution to the system \eqref{eq:4x4}, that
is,
\begin{align}
\notag
\sum _{j=1} ^{N}c^{(e)}_{i,j}(z)g^{(1)}_j(z)+
\sum _{j=1} ^{N}c^{(o)}_{i,j}(z)g^{(2)}_j(z)+
\sum _{j=1} ^{N}z^{-1}d^{(1)}_{i,j}(z)g^{(2)}_j(z)&=r^{(e)}_i(z)\quad 
\text { modulo }2, \\[2mm]
\notag
\sum _{j=1} ^{N}c^{(o)}_{i,j}(z)g^{(1)}_j(z)+
\sum _{j=1} ^{N}c^{(e)}_{i,j}(z)g^{(2)}_j(z)+
\sum _{j=1} ^{N}z^{-1}d^{(o)}_{i,j}(z)g^{(2)}_j(z)&=r^{(o)}_i(z)\quad 
\text { modulo }2,\\
&\kern3cm 
1\le i\le N.
\end{align}
At this point, the $g_j^{(1)}(z)$'s need not be even Laurent polynomials, and
the $g_j^{(2)}(z)$'s need not be odd Laurent polynomials. We have to prove
that there exists a solution
$f_j^{(1)}(z),f_j^{(2)}(z)$, $j=1,2,\dots,N$, such that all
$f_j^{(1)}(z)$'s are even Laurent polynomials and all 
$f_j^{(2)}(z)$'s are odd Laurent polynomials.

By separating even and odd parts of the $g_k^{(1)}(z)$'s and the
$g_j^{(2)}(z)$'s, we obtain the equations
\begin{align}
\notag
\sum _{j=1} ^{N}c^{(e)}_{i,j}(z)(g^{(1)}_j)^{(e)}(z)+
\sum _{j=1} ^{N}c^{(o)}_{i,j}(z)(g^{(2)}_j)^{(o)}(z)+
\sum _{j=1}
^{N}z^{-1}d^{(1)}_{i,j}(z)(g^{(2)}_j)^{(o)}(z)&=r^{(e)}_i(z)\\
\label{eq:4x4g1}
&\text { modulo }2, \\[2mm]
\notag
\sum _{j=1} ^{N}c^{(e)}_{i,j}(z)(g^{(1)}_j)^{(o)}(z)+
\sum _{j=1} ^{N}c^{(o)}_{i,j}(z)(g^{(2)}_j)^{(e)}(z)+
\sum _{j=1} ^{N}z^{-1}d^{(1)}_{i,j}(z)(g^{(2)}_j)^{(e)}(z)&=0\\
\notag
&\text { modulo }2, \\[2mm]
\notag
\sum _{j=1} ^{N}c^{(o)}_{i,j}(z)(g^{(1)}_j)^{(e)}(z)+
\sum _{j=1} ^{N}c^{(e)}_{i,j}(z)(g^{(2)}_j)^{(o)}(z)+
\sum _{j=1}
^{N}z^{-1}d^{(o)}_{i,j}(z)(g^{(2)}_j)^{(o)}(z)&=r^{(o)}_i(z)\\
\label{eq:4x4g2}
&\text { modulo }2,\\
\notag
\sum _{j=1} ^{N}c^{(o)}_{i,j}(z)(g^{(1)}_j)^{(o)}(z)+
\sum _{j=1} ^{N}c^{(e)}_{i,j}(z)(g^{(2)}_j)^{(e)}(z)+
\sum _{j=1} ^{N}z^{-1}d^{(o)}_{i,j}(z)(g^{(2)}_j)^{(e)}(z)&=0\\
\notag
&\text { modulo }2,\\
\notag
&\kern0cm 
1\le i\le N.
\end{align}
Combining \eqref{eq:4x4g1} and \eqref{eq:4x4g2}, we see that
$(g_j^{(1)})^{(e)}(z),(g_j^{(2)})^{(o)}(z)$, $j=1,2,\dots,N$, is a
solution to \eqref{eq:4x4}, and now the 
$(g_j^{(1)})^{(e)}(z)$'s are indeed even polynomials while the 
$(g_j^{(2)})^{(o)}(z)$'s are indeed odd polynomials.
Addition of both sides of \eqref{eq:4x4g1} and \eqref{eq:4x4g2} then
yields that 
$$
f_j(z):=(g_j^{(1)})^{(e)}(z)+(g_j^{(2)})^{(o)}(z),\quad 1\le j\le N,
$$
is a solution to \eqref{eq:2x2diff} in Laurent polynomials in $z$
over the integers.
\end{proof}

In general, it is difficult to characterise when the system
\eqref{eq:4x4} has a solution. What one has to do is to solve the
system over the {\it field\/} of rational functions in $z$ over
$\Z/2\Z$, and then to see whether possibly occurring denominators
cancel out or, in the case of a parametric solution, whether denominators
can be {\it made} to cancel by a suitable choice of the parameters.
One simple case, where a
characterisation is possible, is given in Lemma~\ref{lem:diff710},
which is crucial for the proof that the generating function for the
number of subgroups of index $n$ of $PSL_2(\mathbb Z)$, when these
are reduced modulo a given power of $2$, can always be expressed as a
polynomial in $\Phi(z)$ with coefficients that are Laurent
polynomials in $z$.

\medskip
We remark that the idea of the method that we have described in this
section has certainly further potential. For example, the fact that
the series $\Phi(z)$ remains invariant under the substitution $z\to
z^2$ (or, more generally, under the substitution $z\to z^{2^h}$, 
where $h$ is some
positive integer) --- up to a simple additive correction --- can be
exploited in order to extend the range of applicability of our method
to equations where we not only allow differentiation but also this
kind of substitution. This is actually already used in a very hidden way in
Section~\ref{sec:H5} (cf.\ \cite[Theorem~12]{MuHecke}, setting in
relation subgroup numbers of the Hecke group $\mathfrak H(q)$ with
subgroup numbers of $C_q*C_q$ modulo~$2$; in terms of generating
functions, the meaning of this theorem is that the generating function
for the former numbers can be expressed in terms of the generating
function for the latter numbers by a relation
which involves a substitution $z\to z^2$).
Furthermore, as we already mentioned in the introduction, 
there is no obstacle to modifying the method presented here to work 
for recursive sequences which are reduced modulo powers of $p$, in
connection with
the series $\sum _{n\ge0} ^{}z^{p^n}$, although at present we are not
able to offer any interesting applications in this direction.

\section{A sample application: Catalan numbers}
\label{sec:Cat}

The {\it Catalan numbers}, defined by $\Cat_n=\frac {1} {n+1}\binom
{2n}n$, $n=0,1,\dots$, are ubiquitous in enumerative combinatorics.
(Stanley provides a list of 66~sequences of sets enumerated by 
Catalan numbers in \cite[Ex.~6.19]{StanBI},
with many more in the addendum \cite{Stadd}.)
Recently, there have been several papers on the congruence properties
of Catalan numbers modulo powers of $2$, see
\cite{EuLYAA,LiYeAA,PoSaAA,XiXuAA}. In particular, in \cite{LiYeAA} the
Catalan numbers are determined modulo $64$. As we already mentioned
in the introduction, the corresponding result
(cf.\ \cite[Theorems~6.1--6.6]{LiYeAA}) can be compactly written 
in the form \eqref{eq:Cat64}.
Clearly, once we know the right-hand side of \eqref{eq:Cat64}, 
the validity of the congruence \eqref{eq:Cat64} can be routinely
verified by substituting the right-hand side into the well-known
functional equation (cf.\ \cite[(2.3.8)]{Wilf})
\begin{equation} \label{eq:CatEF}
zC^2(z)-C(z)+1=0,
\end{equation}
where $C(z)=\sum _{n=0} ^{\infty}\Cat_nz^n$ denotes the generating
function for the Catalan numbers,
and reducing powers
of $\Phi(z)$ whose exponent exceeds $7$ by means of the relation
\eqref{eq:PhiRel} with $\al=1$.
We shall now demonstrate that the method from
Section~\ref{sec:method} allows one not only to {\it find\/} the congruence 
\eqref{eq:Cat64} algorithmically, but also to find analogous
congruences modulo {\it arbitrary} powers of $2$.\footnote{\label{F:Lucas}%
In principle, one could use the
generalisations of Lucas' theorem due to Davis and Webb \cite{DaWeAA}, and to
Granville \cite{GranAA}, respectively, to analyse the classical
expression $\frac {1} {n+1}\binom {2n}n$ for the Catalan numbers 
modulo a given $2$-power,
or, more generally, the right-hand side of
\eqref{eq:FCat}. But this approach would be
rather cumbersome in comparison with our method, and it is doubtful
that one would be able to derive results which are of the same level of 
generality as Theorems~\ref{thm:Cat}, \ref{thm:Cat4096}, or \ref{thm:2mf}.}

\begin{theorem} \label{thm:Cat}
Let $\Phi(z)=\sum _{n\ge0} ^{}z^{2^n}$, and let $\al$ be some
positive integer.
Then the generating function $C(z)$ for Catalan numbers, 
reduced modulo $2^{3\cdot 2^\al}$, 
can be expressed as a polynomial in $\Phi(z)$ of degree at most
$2^{\al+2}-1$ with coefficients that are Laurent polynomials in $z$
over the integers.
\end{theorem}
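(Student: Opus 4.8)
The plan is to run the method of Section~\ref{sec:method} on the functional equation \eqref{eq:CatEF}, and to verify that at every stage of the iterative lifting procedure the linear system that arises is solvable over $(\Z/2\Z)[z,z^{-1}]$. First I would set up the Ansatz \eqref{eq:Ansatz} with $3\cdot 2^\al$ in the exponent of the modulus, write $C(z)=\sum_{i=0}^{2^{\al+2}-1}a_i(z)\Phi^i(z)$ modulo $2^{3\cdot 2^\al}$, substitute into $zC^2(z)-C(z)+1=0$, and reduce all powers $\Phi^k(z)$ with $k\ge 2^{\al+2}$ using relation \eqref{eq:PhiRel}. Because \eqref{eq:CatEF} is algebraic (no derivatives), the situation is in fact simpler than the general differential-equation setup: the iterative step \eqref{eq:Ansatz2a}, instead of reducing to a system of linear differential equations as in Lemma~\ref{lem:2x2diff}, reduces to a genuine system of $2^{\al+2}$ linear equations over $(\Z/2\Z)[z,z^{-1}]$ in the unknowns $b_{i,\be+1}(z)$, $i=0,\dots,2^{\al+2}-1$.

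The key point is to show that these linear systems are always consistent. The base step is modulo $2$: there $\Phi(z)\equiv z+z^2+z^4+\cdots$ and $C(z)\equiv\sum_{n\ge0}\Cat_n z^n$, and the parity of $\Cat_n$ is classically known ($\Cat_n$ is odd iff $n=2^j-1$), so one checks directly that $C(z)\equiv 1+\Phi(z)$ modulo $2$ — equivalently one verifies that $z(1+\Phi)^2-(1+\Phi)+1=z\Phi^2+z+\Phi$, which vanishes modulo $2$ by \eqref{eq:Phi2}. This furnishes $a_{i,1}(z)$ (namely $a_{0,1}=a_{1,1}=1$, rest $0$). For the inductive step, suppose \eqref{eq:Ansatz2} solves \eqref{eq:CatEF} modulo $2^\be$; write the error as $2^\be\,G_\be(z)$ with $G_\be$ a Laurent polynomial, plug in \eqref{eq:Ansatz2a}, and observe that modulo $2^{\be+1}$ the new equation becomes
\begin{equation*}
2^\be\Big(G_\be(z)+2z\,C_\be(z)\sum_{i=0}^{2^{\al+2}-1}b_{i,\be+1}(z)\Phi^i(z)-\sum_{i=0}^{2^{\al+2}-1}b_{i,\be+1}(z)\Phi^i(z)\Big)\equiv 0,
\end{equation*}
where $C_\be(z)=\sum_i a_{i,\be}(z)\Phi^i(z)$. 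After reducing high powers of $\Phi$ via \eqref{eq:PhiRel} and dividing by $2^\be$, this is a linear system modulo $2$ for the $b_{i,\be+1}(z)$. Crucially, modulo $2$ the coefficient of $\sum_i b_{i,\be+1}\Phi^i$ is $2zC_\be-1\equiv 1$, i.e. the "coefficient matrix" of the system is unipotent modulo $2$ once powers of $\Phi$ are expressed in the standard basis, so the system is triangular and always solvable — indeed one can solve it successively for $b_{i,\be+1}$ from the top degree down, the only subtlety being the feedback from the reduction relation \eqref{eq:PhiRel}.

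The main obstacle, and where care is needed, is exactly this coupling introduced by reducing $\Phi^k$ for $2^{\al+2}\le k<2\cdot 2^{\al+2}$ back into the allowed range: the reduction \eqref{eq:PhiRel} is only $\equiv 0$ modulo $2^{3\cdot 2^\al}$, not modulo higher powers, so one must check that throughout the $3\cdot 2^\al$ lifting steps the truncation to degree $2^{\al+2}-1$ never destroys solvability — this is the role of the exponent $3\cdot 2^\al$ in the modulus and the reason the Ansatz uses precisely $2^{\al+2}$ powers of $\Phi$ (matching the $t$-degree $2^{\al+2}$ of the $2^\al$-th power of the modulus-$8$ minimal polynomial, cf.\ Lemma~\ref{lem:minpol}). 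Concretely, I would argue that because $z(2C_\be)-1\equiv 1$ modulo $2$ and the extra relation from \eqref{eq:PhiRel} modulo $2$ is the honest algebraic identity \eqref{eq:PhiRel2}, the map $\,(b_0,\dots,b_{2^{\al+2}-1})\mapsto(2zC_\be-1)\sum_i b_i\Phi^i \bmod (\text{relation},2)$ is a bijection on $((\Z/2\Z)[z,z^{-1}])^{2^{\al+2}}$; hence every inhomogeneous system encountered has a (unique) Laurent-polynomial solution, the procedure runs to completion at $\be=3\cdot 2^\al-1$, and setting $a_i(z)=a_{i,3\cdot 2^\al}(z)$ yields the claimed representation. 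That $2C_\be(z)-z^{-1}$ is invertible modulo $2$ and that its inverse is again a Laurent polynomial (so that no spurious denominators appear) is the one estimate I would want to nail down carefully.
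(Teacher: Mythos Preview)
Your overall strategy --- run the method of Section~\ref{sec:method} on \eqref{eq:CatEF} and check that every lifting step produces a solvable linear system --- is exactly the paper's approach, and your observation for the iterative step is essentially right. In fact it is even simpler than you make it: since the cross term $2\cdot 2^{\be}zC_\be(z)\sum_i b_{i,\be+1}(z)\Phi^i(z)$ already vanishes modulo $2^{\be+1}$ (the factor $2\cdot 2^\be$ kills it outright), after dividing by $2^\be$ one is left with the \emph{diagonal} system $b_{i,\be+1}(z)+\text{Pol}_i(z)\equiv 0$ modulo $2$. No unipotence, back-substitution, or invertibility of $2C_\be-z^{-1}$ is needed; each $b_{i,\be+1}$ is simply read off.

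The genuine gap is your base step. You assert $C(z)\equiv 1+\Phi(z)$ modulo $2$, but this is false: since $\Cat_n$ is odd iff $n=2^k-1$, one has $C(z)\equiv\sum_{k\ge 0}z^{2^k-1}=z^{-1}\Phi(z)$ modulo $2$, whereas $1+\Phi(z)=1+\sum_{k\ge 0}z^{2^k}$; these already disagree at the coefficient of $z^2$. Your verification does not hold either: using $\Phi^2\equiv\Phi+z$ modulo $2$ (from \eqref{eq:Phi2}), one gets
\[
z\Phi^2+z+\Phi\equiv z(\Phi+z)+z+\Phi=(1+z)(\Phi+z)\not\equiv 0\pmod 2.
\]
The correct base solution, in the basis $\{\Phi^i:0\le i<2^{\al+2}\}$ after reducing high powers via \eqref{eq:PhiRel2}, is $a_{2^{\al+1},1}(z)=z^{-1}$, $a_{0,1}(z)=\sum_{k=0}^{\al}z^{2^k-1}$, and all other $a_{i,1}\equiv 0$ modulo $2$; the paper obtains this by showing inductively that $a_{i,1}\equiv 0$ unless $2^{\al+1}\mid i$, then solving the two remaining congruences (and ruling out the spurious solutions $a_{2^{\al+1},1}\equiv 0$ and $\widetilde a_{0,1}\equiv z^{-1}$ on the grounds that $C(z)$ is not a polynomial modulo $2$ and has no negative powers). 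With the base step repaired, your argument goes through --- and more cleanly than you anticipated.
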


\begin{proof} 
We apply the method from Section~\ref{sec:method}. We start by
substituting the Ansatz \eqref{eq:Ansatz1} in \eqref{eq:CatEF} and
reducing the result modulo $2$. 
In this way, we obtain
\begin{equation} \label{eq:quadr}
z
\sum _{i=0} ^{2^{\al+2}-1}a_{i,1}^2(z)\Phi^{2i}(z)
+\sum _{i=0} ^{2^{\al+2}-1}a_{i,1}(z)\Phi^i(z)+1=0\quad \text
{modulo }2.
\end{equation}
We may reduce $\Phi^{2i}(z)$ further using the relation
\eqref{eq:PhiRel2}. This leads to
\begin{multline} \label{eq:GlCat}
z\sum _{i=0} ^{2^{\al+1}-1}\left(a_{i,1}^2(z)
+z^{2^{\al+1}}a_{i+2^{\al+1},1}^2(z)\right)\Phi^{2i}(z)
+
z\sum _{i=0} ^{2^{\al}-1}
z^{2^{\al+1}}a_{i+3\cdot2^{\al},1}^2(z)\Phi^{2i}(z)\\[2mm]
+
z\sum _{i=0} ^{2^{\al}-1}\left(a_{i+2^{\al+1},1}^2(z)
+a_{i+3\cdot2^{\al},1}^2(z)\right)\Phi^{2i+2^{\al+1}}(z)
+\sum _{i=0} ^{2^{\al+2}-1}a_{i,1}(z)\Phi^i(z)
+1=0\quad \text
{modulo }2.
\end{multline}
Now we compare coefficients of $\Phi^i(z)$, for
$i=0,1,\dots,2^{\al+2}-1$. For $i$ odd, we see immediately that this
implies that $a_{i,1}(z)=0$ modulo~$2$. Proceeding inductively, we
now suppose that $a_{2^\be u,1}(z)=0$ modulo~$2$ for odd $u$ and
some positive integer $\be$, $\be<\al$. Reading off coefficients of 
$\Phi^{2^{\be+1}i}$, where $i$ is odd, we then obtain 
\begin{multline*}
za_{2^{\be}i,1}^2(z)
+z^{2^{\al+1}+1}a_{2^{\be}i+2^{\al+1},1}^2(z)
+z^{2^{\al+1}+1}a_{2^{\be}i+3\cdot2^\al,1}^2(z)
\\[2mm]
+za_{2^{\be}i+2^\al,1}^2(z)
+za_{2^{\be}i+2^{\al+1},1}^2(z)
+a_{2^{\be+1}i,1}(z)
=0\quad \text {modulo }2.
\end{multline*}
However, due to our inductive assumption, all squared terms on the
left-hand side of this congruence vanish, and we conclude that
$a_{2^{\be+1}i,1}(z)=0$ modulo~$2$. 

So far, we have found that all coefficient Laurent polynomials $a_{i,1}(z)$
vanish modulo~$2$ except possibly $a_{0,1}(z)$ and
$a_{2^{\al+1},1}(z)$. 
The corresponding congruences that we obtain
from extracting coefficients of $\Phi^0(z)$ and
$\Phi^{2^{\al+1}}(z)$, respectively, in \eqref{eq:GlCat}, are
\begin{align} 
\label{eq:GlCat-a0}
za_{0,1}^2(z)
+z^{2^{\al+1}+1}a_{2^{\al+1},1}^2(z)+a_{0,1}(z)+1=0\quad 
&\text {modulo }2,
\\[2mm]
\label{eq:GlCat-aviel}
za_{2^{\al+1},1}^2(z)+a_{2^{\al+1},1}(z)=0\quad 
&\text {modulo }2.
\end{align}
The only solutions to \eqref{eq:GlCat-aviel} are
$
a_{2^{\al+1},1}(z)=0
$ modulo~$2$,
respectively
$
a_{2^{\al+1},1}(z)=z^{-1}
$ modulo~$2$. The first option is impossible, since it would imply
that, modulo~$2$, the series $C(z)$ reduces to a polynomial; a
contradiction to the well-known fact (easily derivable from
Legendre's formula \cite[p.~10]{LegeAA} for the $p$-adic valuation
of factorials; cf.\ \eqref{eq:Leg}) 
that the Catalan number $\Cat_n$ is odd if, and only if,
$n=2^k-1$ for some $k$. Thus,
\begin{equation*} 
a_{2^{\al+1},1}(z)=z^{-1}\quad \text {modulo }2.
\end{equation*}
Use of this result in \eqref{eq:GlCat-a0} yields the congruence
\begin{equation} \label{eq:Cata0}
za_{0,1}^2(z)+a_{0,1}(z)
+z^{2^{\al+1}-1}+1=0\quad 
\text {modulo }2
\end{equation}
for $a_{0,1}(z)$. We let
$$
a_{0,1}(z)=\widetilde a_{0,1}(z)+\sum _{k=0} ^{\al}z^{2^k-1}
$$
and substitute this in \eqref{eq:Cata0}. Thereby, we get
$$
z\widetilde a_{0,1}^2(z)+\sum _{k=0} ^{\al}z^{2^{k+1}-1}
+\widetilde a_{0,1}(z)+\sum _{k=0} ^{\al}z^{2^k-1}
+z^{2^{\al+1}-1}+1=0\quad 
\text {modulo }2,
$$
or, after simplification,
$$
z\widetilde a_{0,1}^2(z)+\widetilde a_{0,1}(z)=0\quad 
\text {modulo }2.
$$
Again, either $\widetilde a_{0,1}(z)=0$ modulo~$2$, or
$\widetilde a_{0,1}(z)=z^{-1}$ modulo~$2$. Here, the second option is
impossible, since it would imply that $C(z)$ contains a negative
$z$-power, which is absurd. 

In summary, we have found that
\begin{align*}
a_{0,1}(z)&=\sum _{k=0} ^{\al}z^{2^k-1}\quad \text {modulo }2,\\[2mm]
a_{2^{\al+1},1}(z)&=z^{-1}\quad \text {modulo }2,
\end{align*}
with all other $a_{i,1}(z)$ vanishing, forms the unique
solution modulo $2$ in Laurent polynomials $a_{i,1}(z)$
to the system of congruences resulting from \eqref{eq:GlCat}.

\medskip
After we have completed the ``base step," we now proceed with the
iterative steps described in Section~\ref{sec:method}. We consider
the Ansatz \eqref{eq:Ansatz2}--\eqref{eq:Ansatz2b}, where the
coefficients $a_{i,\be}(z)$ are supposed to provide a solution
$C_{\be}(z)=\sum _{i=0} ^{2^{\al+2}-1}a_{i,\be}(z)\Phi^i(z)$ to
\eqref{eq:CatEF} modulo~$2^\be$. This Ansatz, substituted in
\eqref{eq:CatEF}, produces the congruence
\begin{equation*}
zC_{\be}^2(z)-C_{\be}(z)
+2^\be\sum _{i=0} ^{2^{\al+2}-1}b_{i,\be+1}(z)\Phi^i(z)
+1=0
\quad 
\text {modulo }2^{\be+1}.
\end{equation*}
By our assumption on $C_{\be}(z)$, we may divide by $2^\be$.
Comparison of powers of $\Phi(z)$ then yields a system of congruences
of the form
$$
b_{i,\be+1}(z)+\text {Pol}_i(z)=0\quad 
\text {modulo }2,\quad \quad 
i=0,1,\dots,2^{\al+2}-1,
$$
where $\text {Pol}_i(z)$, $i=0,1,\dots,2^{\al+2}-1$, are certain
Laurent polynomials with integer coefficients. This system being trivially
uniquely solvable, we have proved that, for an arbitrary positive
integer $\al$, the algorithm of
Section~\ref{sec:method} will produce a solution $C_{{3\cdot
2^\al}}(z)$ to \eqref{eq:CatEF} modulo $2^{3\cdot 2^\al}$ which is a
polynomial in $\Phi(z)$ with coefficients that are Laurent polynomials in
$z$.
\end{proof}

For example, our computer program needs only about 30 seconds to 
come up with the corresponding congruence modulo $2^{3\cdot 2^2}=4096$.

{
\allowdisplaybreaks
\begin{theorem} \label{thm:Cat4096}
Let $\Phi(z)=\sum _{n\ge0} ^{}z^{2^n}$.
Then we have
\begin{multline} \label{eq:Cat4096}
\sum _{n=0} ^{\infty}\Cat_nz^n=
2048 {z^{14}} + 
3072 {z^{13}} + 
2048 {z^{12}} + 
3584 {z^{11}} + 
640 {z^{10}} + 
2240 {z^9} + 
32 {z^8} \\[2mm]
+ 832 {z^7} + 
2412 {z^6} + 
1042 {z^5} + 
2702 {z^4} + 
53 {z^3} + 
2 {z^2} + 
z + 
1 
\\[2mm]
\kern-3cm +  \left( 
2048 {z^{12}} +
3840 {z^{10}} + 
2112 {z^8} + 
2112 {z^7} + 
552 {z^6} \right. \\[2mm] 
\left.+ 3128 {z^5} + 
2512 {z^4} + 
4000 {z^3} + 
3904 {z^2} 
\right)  
   \Phi(z) \\[2mm]
+ \left( 
2048 {z^{13}} +
3072 {z^{11}} + 
1536 {z^{10}} + 
1152 {z^9} + 
1024 {z^8} + 
4000 {z^7} + 
3440 {z^6} \right. \\[2mm] 
\left.+ 3788 {z^5} + 
3096 {z^4} + 
3416 {z^3} + 
2368 {z^2} + 
288 z 
\right) 
{{\Phi^2(z)}}\\[2mm]
 +   \left( 
2048 {z^{11}} +
2048 {z^{10}} + 
2304 {z^9} + 
512 {z^8} + 
2752 {z^7} + 
3072 {z^6} + 
728 {z^5} \right. \\[2mm] 
\left.+ 3528 {z^4} + 
1032 {z^3} + 
3168 {z^2} + 
3456 z + 
3904 
\right)  {{\Phi^3(z)}}\\[2mm]
 + 
  \left( 
2048 {z^{12}} +
3072 {z^{11}} + 
1024 {z^{10}} + 
2048 {z^9} + 
1152 {z^8} + 
1728 {z^7} + 
2272 {z^6} + 
2464 {z^5} \right. \\[2mm] 
\left.+ 3452 {z^4} + 
3154 {z^3} + 
2136 {z^2} + 
3896 z + 
1600 + 
{\frac {48} z} 
\right)  
   {{\Phi^4(z)}}\\[2mm]
\kern-1.5cm + \left( 
2048 {z^{10}} +
2048 {z^9} + 
1792 {z^8} + 
1792 {z^7} + 
1088 {z^6} + 
1536 {z^5} \right. \\[2mm] 
\left.+ 1704 {z^4} + 
3648 {z^3} + 
3288 {z^2} + 
200 z + 
3728 +{\frac {2272} z} 
\right)  {{\Phi^5(z)}}\\[2mm]
 + 
  \left( 
2048 {z^{11}} 
1024 {z^9} + 
1536 {z^8} + 
3200 {z^7} + 
2816 {z^6} + 
1312 {z^5} + 
3824 {z^4} \right. \\[2mm] 
\left.+ 140 {z^3} + 
592 {z^2} + 
3692 z + 
488 + 
{\frac {2760} z} 
\right)  {{\Phi^6(z)}} \\[2mm]
+ 
  \left( 
2048 {z^9} +
2304 {z^7} + 
2304 {z^6} + 
3520 {z^5} + 
960 {z^4} + 
2456 {z^3} \right. \\[2mm] 
\left.+ 2128 {z^2} + 
2936 z + 
1784 + 
{\frac {4024} z} 
\right)  {{\Phi^7(z)}}\\[2mm]
 + \left( 
2048 {z^{10}} +
1024 {z^9} + 
2048 {z^8} + 
512 {z^7} + 
3968 {z^6} + 
1088 {z^5} + 
1888 {z^4} \right. \\[2mm] 
\left.+ 832 {z^3} + 
1444 {z^2} + 
2646 z + 
3258 + 
{\frac {339} z} 
\right)  
   {{\Phi^8(z)}}\\[2mm]
\kern-3cm + \left( 
2048 {z^8} +
3328 {z^6} + 
1536 {z^5} + 
3008 {z^4} \right. \\[2mm] 
\left. + 
320 {z^3} + 
2168 {z^2} + 
1144 z + 
3992 + 
{\frac {3152} z} 
\right)  {{\Phi^9(z)}}\\[2mm]
\kern-3cm + \left( 
2048 {z^9} +
3072 {z^7} + 
512 {z^6} + 
1408 {z^5} + 
2560 {z^4} \right. \\[2mm] 
\left. + 3424 {z^3} + 
3408 {z^2} + 
1316 z + 
3608 + 
{\frac {2380} z} 
\right)  {{\Phi^{10}(z)}}\\[2mm]
\kern-2cm + 
  \left( 
2048 {z^7} +
2048 {z^6} + 
2816 {z^5} + 
3072 {z^4} + 
1856 {z^3} \right. \\[2mm] 
\left. + 
2688 {z^2} + 
1288 z + 
3880 + 
{\frac {3904} z} 
\right)  
   {{\Phi^{11}(z)}}\\[2mm]
\kern-3cm + \left( 
2048 {z^8} +
1024 {z^7} + 
3072 {z^6} + 
2048 {z^5} + 
1408 {z^4} \right. \\[2mm] \left. +
2624 {z^3} + 
1440 {z^2} + 
224 z + 
948 + 
{\frac {358} z} 
\right)  {{\Phi^{12}(z)}}\\[2mm]
 + 
  \left( 
2048 {z^6} +
2048 {z^5} + 
3328 {z^4} + 
2816 {z^3} + 
1984 {z^2} + 
384 z + 
2488 + 
{\frac {2384} z} 
\right)  {{\Phi^{13}(z)}}\\[2mm]
 + 
  \left( 
2048 {z^7} +
1024 {z^5} + 
512 {z^4} + 
2432 {z^3} + 
1792 {z^2} + 
3040 z + 
336 + 
{\frac {260} z} 
\right)  {{\Phi^{14}(z)}}\\[2mm] 
+  \left( 
2048 {z^5} +
768 {z^3} + 
256 {z^2} + 
64 z + 
2752 + 
{\frac {2696} z} 
\right)  {{\Phi^{15}(z)}}\\[2mm]
\text{\em modulo }4096.
\end{multline}
\end{theorem}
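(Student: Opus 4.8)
The plan is to obtain \eqref{eq:Cat4096} as the explicit instance $\al=2$ of Theorem~\ref{thm:Cat}, and then to certify it by a direct substitution into the Catalan functional equation \eqref{eq:CatEF}.

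First I would run the algorithm of Section~\ref{sec:method} with $\al=2$, so that the Ansatz \eqref{eq:Ansatz} carries the sixteen powers $\Phi^0(z),\Phi^1(z),\dots,\Phi^{15}(z)$ and the target modulus is $2^{3\cdot2^2}=4096$. The base step modulo~$2$ is carried out exactly as in the proof of Theorem~\ref{thm:Cat}: substituting \eqref{eq:Ansatz1} into \eqref{eq:CatEF}, reducing the squared powers of $\Phi(z)$ by \eqref{eq:PhiRel2}, and comparing coefficients of $\Phi^i(z)$ shows that $a_{i,1}(z)\equiv0$ modulo~$2$ for all $i\notin\{0,8\}$, while the only admissible choices for the remaining two are $a_{0,1}(z)=1+z+z^3$ and $a_{8,1}(z)=z^{-1}$ modulo~$2$ (here $\al+1=3$, so $\sum_{k=0}^{\al}z^{2^k-1}=1+z+z^3$ and $2^{\al+1}=8$). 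One may already sanity-check this against \eqref{eq:Cat4096}: reducing every displayed coefficient modulo~$2$ leaves only $1+z+z^3$ in front of $\Phi^0(z)$ and $z^{-1}$ in front of $\Phi^8(z)$. Next I would perform the eleven lifting steps $\be=1,2,\dots,11$ of Section~\ref{sec:method}: at step $\be$ the correction $a_{i,\be+1}(z)=a_{i,\be}(z)+2^{\be}b_{i,\be+1}(z)$ is substituted into \eqref{eq:CatEF}, the congruence is divided by $2^{\be}$, powers $\Phi^k(z)$ with $k\ge16$ are reduced via \eqref{eq:PhiRel} with $\al=2$, and comparison of coefficients of $\Phi^i(z)$ yields a system $b_{i,\be+1}(z)+\mathrm{Pol}_i(z)\equiv0$ modulo~$2$ which, as noted in the proof of Theorem~\ref{thm:Cat} and because no derivatives occur here, is \emph{trivially} uniquely solvable. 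Thus every stage is forced; the algorithm terminates with $a_i(z)=a_{i,12}(z)$, and these are precisely the Laurent polynomials appearing in \eqref{eq:Cat4096}.

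Second, I would verify \eqref{eq:Cat4096} a posteriori, independently of the algorithm. Let $R(z)$ denote its right-hand side. Every negative power $z^{-1}$ occurring in \eqref{eq:Cat4096} multiplies some $\Phi^j(z)$ with $j\ge4$, and $z^{-1}\Phi^j(z)$ is a formal power series, so $R(z)$ is a formal power series. Substituting $R(z)$ into \eqref{eq:CatEF} produces a Laurent-polynomial combination of $\Phi^0(z),\dots,\Phi^{30}(z)$; reducing the powers $\Phi^k(z)$ with $k\ge16$ by means of the relation \eqref{eq:PhiRel} with $\al=2$ (a monic relation of $\Phi$-degree $16$ valid modulo~$4096$: the modulus-$8$ minimal polynomial $P(t)$ of Proposition~\ref{prop:minpol} satisfies $P(\Phi(z))\equiv0$ modulo~$8$ by \eqref{eq:rel4}, hence $P(\Phi(z))^4\equiv0$ modulo~$8^4=4096$) turns $zR^2(z)-R(z)+1$ into a combination $\sum_{i=0}^{15}c_i(z)\Phi^i(z)$ with $c_i(z)\in\Z[z,z^{-1}]$, and one checks $c_i(z)\equiv0$ modulo~$4096$ for every $i$. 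Hence $zR^2(z)-R(z)+1\equiv0$ modulo~$4096$ as a formal Laurent series. Subtracting the exact identity $zC^2(z)-C(z)+1=0$ gives $\bigl(C(z)-R(z)\bigr)\bigl(z(C(z)+R(z))-1\bigr)\equiv0$ modulo~$4096$; the second factor has constant term $-1$ (since $C(0)=R(0)=1$), hence is invertible in $(\Z/4096\Z)[[z]]$, and therefore $C(z)\equiv R(z)$ modulo~$4096$, which is \eqref{eq:Cat4096}.

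The only genuine obstacle is the size of the bookkeeping: for $\al=2$ one has sixteen unknown Laurent polynomials and twelve successive stages (one base step and eleven lifts), each requiring the expansion of $zF^2(z)-F(z)+1$ as a polynomial in $\Phi(z)$, the reduction of high powers via \eqref{eq:PhiRel}, and the extraction of sixteen separate $\Phi$-coefficients. This is a ``rather tedious'' but completely mechanical task, best delegated to a computer (about $30$ seconds in our implementation). Conceptually nothing beyond Theorem~\ref{thm:Cat} is needed: its proof already guarantees that every linear system met along the way has a unique solution, so the output is determined, while the a posteriori verification above --- which sidesteps the coefficient-comparison ``hope'' discussed in Remark~\ref{rem:comp} by establishing an outright Laurent-series congruence --- furnishes an independent certificate of correctness.
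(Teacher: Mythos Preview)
Your proposal is correct and follows the paper's approach: the paper presents Theorem~\ref{thm:Cat4096} simply as the output of the algorithm of Theorem~\ref{thm:Cat} with $\al=2$ (``our computer program needs only about 30 seconds to come up with the corresponding congruence modulo $2^{3\cdot 2^2}=4096$''), and your first part reproduces exactly that. Your second part --- the a~posteriori substitution of $R(z)$ into \eqref{eq:CatEF}, reduction via \eqref{eq:PhiRel} with $\al=2$, and the factorisation $(C-R)(z(C+R)-1)\equiv 0$ with the unit second factor --- is the verification route the paper describes in the introduction for the mod-$64$ case but does not write out here; it is a welcome addition, since it gives a certificate independent of the algorithm's internal bookkeeping and also disposes cleanly of the question whether the algorithm's output is actually a power series equal to $C(z)$ modulo~$4096$.
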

}

The reader is reminded that coefficient extraction from an expression
such as the one on the right-hand side of \eqref{eq:Cat4096} is straightforward,
via the algorithm described in Section~\ref{sec:extr} (see \eqref{eq:Phipot}
and the proof of Lemma~\ref{lem:Hbi}).

\section{A non-example}
\label{sec:nonex}

Consider the equation
\begin{equation} \label{eq:EqH7}
z\FF^6(z)-\FF(z)+1=0,
\end{equation}
which has a unique formal power series solution $\FF(z)$.
We note that the coefficients in the series are special instances of
numbers that are now commonly known as {\it Fu\ss--Catalan numbers}, 
which have
numerous combinatorial interpretations; cf.\ \cite[pp.~59--60]{ArmDAA}.
It was shown in \cite[Eq.~(36)]{MuHecke} that the coefficient of $z^\la$ in 
the series $\FF(z)$ has the same parity as the number of free
subgroups of index $14\la$ in the Hecke group $\mathfrak H(7)=C_2*C_7$. 

If we try our method from Section~\ref{sec:method}, then 
already at the mod-$2$ level we fail:
let $\FF(z)=a_1(z)\Phi(z)+a_0(z)$~modulo~$2$, 
for some Laurent polynomials $a_0(z)$
and $a_1(z)$. Upon substitution in \eqref{eq:EqH7} and simplification
using (cf.\ Proposition~\ref{prop:minpol})
$$
\Phi^2(z)+\Phi(z)+1=0 \text{ modulo }2,
$$
we obtain
\begin{multline*}
z a_0^6(z) + z a_0^4(z)
   a_1^2(z) 
   +a_0(z)+z a_1^6(z)+1\\[2mm]
   +\Phi(z) \left(
   z a_0^4(z)
   a_1^2(z) +
   z a_0^2(z) a_1^4(z) 
   +a_1(z)\right)=0
\quad \text{modulo }2.
\end{multline*}
or, equivalently,
\begin{align} 
\label{eq:a0a1}
z a_0^6(z) + z a_0^4(z)
   a_1^2(z) 
+z a_1^6(z)
   +a_0(z)
+1&=0
\quad \text {modulo }2\\[2mm]
\label{eq:a0a12}
   z a_0^4(z)
   a_1^2(z) +
   z a_0^2(z) a_1^4(z) 
   +a_1(z)&=0
\quad \text {modulo }2.
\end{align}
However, this congruence has no solution in Laurent polynomials
$a_0(z)$ and $a_1(z)$. For, the Laurent polynomials
$a_0^6(z)$,
$a_0^4(z)a_1^2(z)$,
$a_1^6(z)$, all of them being squares, contain only even powers 
of $z$ when the coefficients are reduced modulo~$2$. Consequently, 
the term $a_0(z)+1$ on the left-hand side of \eqref{eq:a0a1} 
can only contain even powers (modulo~$2$). In particular, $a_0(z)$ must
contain the term $1$. If we now suppose that $a_0(z)$ and/or $a_1(z)$
contain negative powers of $z$, then we obtain a contradiction
regardless whether the orders (the minimal $e$ such that $z^e$
appears in a Laurent polynomial) of $a_0(z)$ and $a_1(z)$ are the
same or not. This implies that both $a_0(z)$ and $a_1(z)$
are actually polynomials in~$z$, with $a_0(z)$ being of the form 
$a_0(z)=1+\widetilde a_0(z)$, where $\widetilde a_0(z)$ is a
polynomial without constant term. 
If we now multiply both sides of \eqref{eq:a0a1} by $a_1^2(z)$ and 
both sides of \eqref{eq:a0a12} by $a_0^2(z)$, and subsequently add the 
two congruences, then we obtain
$$
z a_1^8(z)
   +a_0(z)a_1^2(z)
   +a_0^2(z)a_1(z)
+a_1^2(z)=0
\quad \text {modulo }2.
$$
Dividing by $a_1(z)$
and replacing $a_0(z)$ by $1+\widetilde a_0(z)$, we obtain the
equivalent congruence
$$
z a_1^7(z)
   +\widetilde a_0(z)a_1(z)
   +\widetilde a_0^2(z)+1=0
\quad \text {modulo }2.
$$
This congruence has no solution since $\widetilde a_0(z)$ has no
constant term modulo $2$.

\medskip
In the next theorem, we reveal the deeper reason why our method must
fail for $\FF(z)$. Namely, it shows that exponents $e$ of terms $z^e$
which survive in $\FF(z)$ after reduction of its coefficients modulo
$2$ may have an arbitrary number of blocks of consecutive $1$'s. 
In contrast, a polynomial in
$\Phi(z)$ of degree $d$ with coefficients that are 
Laurent polynomials in $z$ can only have terms $z^e$, 
where $e$ contains at most $d$ blocks of consecutive $1$'s,
apart from a right-most block of bounded length.\footnote{The length
of this right-most block is in fact bounded by the maximal
modulus of an exponent of $z$ occurring in a Laurent polynomial
coefficient.} 

\begin{theorem} \label{thm:C2C7}
Let $\FF(z)$ be the unique formal power series solution to the
functional equation \eqref{eq:EqH7}. Then the coefficient of $z^n$ in
$\FF(z)$ is odd if, and only if, the sequence of binary digits of $n$
is built by concatenating (in any order) blocks of $0011$ and\/ $01$.
In particular, the number of free subgroups of index $n$ in $\mathfrak
H(7)=C_2*C_7$ is odd if, and only if, the above condition holds.
\end{theorem}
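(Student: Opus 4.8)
The plan is to show that, reduced modulo~$2$, the series $\FF(z)$ is exactly the generating function of the set $S$ of admissible indices described in the theorem, by verifying that both power series satisfy one and the same digit-recursion, which itself is extracted from \eqref{eq:EqH7} by a single substitution.

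I would begin by reducing \eqref{eq:EqH7} modulo~$2$, obtaining $\FF(z)=1+z\FF^6(z)$ in $\mathbb{F}_2[[z]]$. Since every $g\in\mathbb{F}_2[[z]]$ obeys the Frobenius identity $g(z)^2=g(z^2)$, we have $\FF^2(z)=\FF(z^2)$ and $\FF^4(z)=\FF(z^4)$, hence $\FF^6(z)=\FF(z^4)\,\FF(z^2)$, so that
\[\FF(z)=1+z\,\FF(z^2)\,\FF(z^4)\qquad\text{modulo }2.\]
Substituting this identity with $z$ replaced by $z^2$ into its own right-hand side, and then using $\FF(z^4)^2=\FF(z^8)$ together with $\FF(z^8)^2=\FF(z^{16})$, the cross term collapses and one is left with
\[\FF(z)=1+z\,\FF(z^4)+z^3\,\FF(z^{16})\qquad\text{modulo }2.\]
Comparing coefficients of $z^n$ on both sides shows that this recursion has a unique solution in $\mathbb{F}_2[[z]]$: its constant term is forced to be~$1$, while for $n\ge1$ the coefficient of $z^n$ equals, in $\mathbb{F}_2$, the coefficient of $z^{(n-1)/4}$ if $n\equiv1\pmod 4$ plus the coefficient of $z^{(n-3)/16}$ if $n\equiv3\pmod{16}$, both indices being strictly smaller than~$n$.

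It remains to match the combinatorial description with this recursion. Reading a binary string with its leftmost block most significant, appending the block $01$ to the binary expansion of $n$ replaces $n$ by $4n+1$, and appending $0011$ replaces $n$ by $16n+3$; hence the set $S$ of non-negative integers whose binary digit string is a concatenation of blocks $0011$ and $01$ is precisely the smallest subset of $\mathbb{Z}_{\ge0}$ that contains $0$ and is closed under $n\mapsto 4n+1$ and $n\mapsto 16n+3$. Since $4n+1\equiv1\pmod 4$ while $16n+3\equiv3\pmod 4$, and neither image contains~$0$, one obtains the \emph{disjoint} decomposition $S=\{0\}\sqcup(4S+1)\sqcup(16S+3)$, the inclusion ``$\subseteq$'' holding because a non-zero element of $S$ is determined by the last operation applied to produce it. Setting $H(z)=\sum_{n\in S}z^n\in\mathbb{F}_2[[z]]$, this partition reads $H(z)=1+z\,H(z^4)+z^3\,H(z^{16})$, so $H$ satisfies the same recursion as $\FF$ modulo~$2$; by uniqueness, $\FF(z)=H(z)$ modulo~$2$, which is the first assertion of the theorem. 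The statement about free subgroup numbers of $\mathfrak{H}(7)$ then follows at once from \cite[Eq.~(36)]{MuHecke}, which identifies the parity of the coefficient of $z^\lambda$ in $\FF(z)$ with the parity of the number of free subgroups of index $14\lambda$ in $\mathfrak{H}(7)$.

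There is no heavy computation here; the only points that need care are the two places where something collapses. The conceptual one is the passage from \eqref{eq:EqH7} to the recursion $\FF(z)=1+z\FF(z^4)+z^3\FF(z^{16})$: one must iterate the Frobenius-simplified functional equation exactly once so as to eliminate $\FF(z^2)$ in favour of the block-append operators $z\mapsto z^4$ and $z\mapsto z^{16}$, and check that the leftover factor $\FF(z^4)^2\FF(z^8)$ genuinely simplifies to $\FF(z^{16})$. The bookkeeping one is verifying that $\{0\}\sqcup(4S+1)\sqcup(16S+3)$ really is a partition of $S$, so that no coefficients cancel modulo~$2$ in $H(z)$, together with a careful treatment of the leading-zero convention, so that e.g.\ $n=3$ --- whose unpadded binary expansion is $11$ but which is represented by the single block $0011$ --- is correctly recognised as admissible.
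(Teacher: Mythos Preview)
Your proof is correct and takes a genuinely different route from the paper's.

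The paper first obtains the closed form $\langle z^n\rangle\FF(z)=\frac{1}{6n+1}\binom{6n+1}{n}$ via Lagrange inversion, then invokes Legendre's formula to rewrite oddness as the carry-free condition $s(6n+1)=s(5n+1)+s(n)$, and finally establishes equivalence with the block description by a rather delicate digit-by-digit case analysis (ruling out the substrings $111$, $000$, $1011$, etc.).  You bypass all of this: working entirely in $\mathbb{F}_2[[z]]$, you use Frobenius to turn $zF^6$ into $zF(z^2)F(z^4)$, iterate the relation once to eliminate $F(z^2)$, and arrive at the transparent recursion $F(z)=1+zF(z^4)+z^3F(z^{16})$, whose two shift operators $n\mapsto 4n+1$ and $n\mapsto 16n+3$ are exactly ``append $01$'' and ``append $0011$''.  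The block description then drops out immediately from the disjoint decomposition $S=\{0\}\sqcup(4S+1)\sqcup(16S+3)$ and uniqueness of the solution.

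What each approach buys: yours is shorter and explains \emph{why} the two blocks $01$ and $0011$ appear---they are forced by the single self-substitution that kills the awkward factor $F(z^2)$.  The paper's route, on the other hand, yields the extra arithmetic characterisation $s(6n+1)=s(5n+1)+s(n)$ and connects the result to the general Kummer/Legendre machinery for $\frac{1}{n}\binom{kn}{n-1}$, which is useful if one wants to attack the analogous question for other $\mathfrak{H}(q)$ via the explicit Fu\ss--Catalan formula.
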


\begin{proof}
By replacing $\FF(z)$ by $1+\GG(z)$ in \eqref{eq:EqH7}, we obtain
$$
z\left(1+\GG(z)\right)^6-\GG(z)=0,
$$
or, equivalently,
$$
z=\frac {\GG(z)} {(1+\GG(z))^6},
$$
so that $\GG(z)$ is the compositional inverse of the series
$z/(1+z)^6$. By the Lagrange inversion formula (cf.\
\cite[Theorem~5.4.2 with $k=1$]{StanBI}), we obtain for $n\ge1$ that
\begin{align*}
\coef{z^n}\FF(z)
&=\coef{z^n}\GG(z)=\frac {1} {n}\coef{z^{-1}}\frac {(1+z)^{6n}}
{z^n}\\[2mm]
&=\frac {1} {n}\coef{z^{n-1}}{(1+z)^{6n}} 
=\frac {1} {n}\binom {6n}{n-1}
=\frac {1} {6n+1}\binom {6n+1}{n}.
\end{align*}
By the well-known theorem of Legendre \cite[p.~10]{LegeAA}
(cf.\ \eqref{eq:Leg}), 
we see that the coefficient of
$z^n$ in $\FF(z)$ is odd if, and only if,
\begin{equation} \label{eq:s65}
s(6n+1)-s(5n+1)-s(n)=0,
\end{equation}
where, as in Section~\ref{sec:Phi}, $s(m)$ denotes the binary digit sum of $m$.
Another way to phrase \eqref{eq:s65} is to say that, whenever we find a
$1$ in the binary expansion of $n$, then there must also be a $1$ in
the binary expansion of $6n+1$ at the same digit place.

We are now ready to establish the claim of the theorem.
In view of the above considerations, it suffices to show that the
condition on $n$ in the statement of the theorem is 
equivalent to \eqref{eq:s65}.

\medskip
Let $n$ be a positive integer with the property that its binary
expansion is formed by concatenating blocks of the form $0011$ and
$01$. We prove that \eqref{eq:s65} holds in this case by induction
on $n$. It is routine to check that our assertion 
holds true for $n=1,2,\dots,15$.
Now, let $n=4n_1+1$, with some positive integer $n_1$. In other words,
the right-most digits in the binary expansion of $n$ are $01$ and
the binary expansion of $n_1$ is formed by 
concatenating blocks of the form $0011$ and $01$.
In that case, we have
\begin{align} \notag
s(6n+1)-s(5n+1)&-s(n)\\[2mm]
\notag
&=s\left(4(6n_1+1)+2+1\right)-s\left(4(5n_1+1)+2\right)-s(4n_1+1)\\[2mm]
&=s(6n_1+1)+2-s(5n_1+1)-1-s(n_1)-1=0,
\label{eq:4n+1}
\end{align}
by the induction hypothesis applied to $n_1$.
On the other hand, if $n=16n_1+3$ for some positive integer $n_1$,
that is, if the right-most digits in the binary expansion of $n$ are
$0011$ and the binary expansion of $n_1$ is formed by 
concatenating blocks of the form $0011$ and
$01$, then
\begin{align} \notag
s(6n+1)-s(5n+1)&-s(n)\\[2mm]
\notag
&=s\left(16(6n_1+1)+2+1\right)-s\left(16(5n_1+1)\right)-s(16n_1+2+1)\\[2mm]
&=s(6n_1+1)+2-s(5n_1+1)-s(n_1)-2=0,
\label{eq:6n+1}
\end{align}
establishing again the truth of \eqref{eq:s65}.

\medskip
In order to prove the converse, let us suppose that $n$ satisfies
\eqref{eq:s65}. We start by 
showing that the binary expansion of $n$ cannot contain
any of the substrings $111$, $000$, $1011$. We call an occurrence of any
of these substrings a ``violation." 

Assuming that the right-most violation is a substring of the form $111$,
we have
\begin{align*}
n&=\hphantom{11}\dots \mathbf1110 \dots,\\[1mm]
2n&=\hphantom{0}\dots 1110 \dots0,\\[1mm]
4n&={}\dots 1110 \dots00,\\[1mm]
6n+1&=\hphantom{1}\dots 1\mathbf01\hphantom{0} \dots1,
\end{align*}
since to the right of the substring $111$ in $n$ there are only blocks
of the form $0011$ and $01$ according to our assumption, which implies
that there cannot be any carries ``destroying" the substring $101$ in
$6n+1$. However, this means that at the place where we find the
bold-face $1$ in (the binary expansion of) $n$ we find a $0$
in (the binary expansion of) $6n+1$, a contradiction to
\eqref{eq:s65}.

Now we assume that the right-most violation is a substring of the form
$000$. In that case, we have
\begin{align*}
n&=\hphantom{11}\dots \mathbf1000 \dots,\\[1mm]
2n&=\hphantom{0}\dots 1000 \dots0,\\[1mm]
4n&={}\dots 1000 \dots00,\\[1mm]
6n+1&=\hphantom{1}\dots 1\mathbf0\hphantom{00} \dots1,
\end{align*}
a contradiction to \eqref{eq:s65} for the same reason.

Finally we assume that the right-most violation is a substring of the
form $1011$. Then we have
\begin{align*}
n&=\hphantom{11}\dots \mathbf10110 \dots,\\[1mm]
2n&=\hphantom{0}\dots 10110 \dots0,\\[1mm]
4n&={}\dots 10110 \dots00,\\[1mm]
6n+1&=\hphantom{1}\dots 0\mathbf001\hphantom{0} \dots1,
\end{align*}
since to the right of the substring $1011$ in $n$ there are only blocks
of the form $0011$ and $01$ according to our assumption, which implies
that there cannot be any carries ``destroying" the substring $0001$ in
$6n+1$. However, this means that at the place where we find the
bold-face $1$ in $n$ we find a $0$ in $6n+1$, again a contradiction to
\eqref{eq:s65}.

Now let us suppose that $n=2^\al n_1+n_0$, where $n_0$ is an $\al$-digit
(binary) number formed by concatenating blocks of the form $0011$ and
$01$. By applying the computations \eqref{eq:4n+1} and \eqref{eq:6n+1}
(possibly several times), 
one sees that $n$ satisfies \eqref{eq:s65} if, and only if, $n_1$ does. 

We claim that $n_1$ cannot be even. For, if it were, say $n_1=2^\be n_2$,
then $n_1$ has a $1$ at the digit place $\be$ while $6n_1+1$ has not,
a contradiction to \eqref{eq:s65}. But then the already established fact 
that $n$, and hence $n_1$, cannot contain any of the substrings 
$111$, $000$, $1011$ implies that the right-most digits
in the binary expansion of $n_1$
form either a block $01$ or a block $0011$. This provides an
inductive argument that the binary expansion of $n$ 
is formed by concatenating blocks of the form $0011$ and
$01$, and thus completes the proof of the theorem.
\end{proof}

\section{Free subgroups in lifts of Hecke groups}
\label{sec:free}

For integers $m,q$ with $m\geq1$, $q\geq3$, and $q$ prime, we consider
the group $\Gamma_m(q)$ as defined in \eqref{eq:Heckelift}.
Denote by $f_\lambda^{(q)}(m)$ the number of free subgroups of index
$2qm\lambda$ in $\Gamma_m(q)$. 
The purpose of this section is to estimate the $2$-adic valuation of 
$f_\lambda^{(q)}(m)$ in the case when $q$ is a Fermat prime. 
This estimate is
based on a recurrence relation for these numbers, which, in turn,
results from a specialisation of a differential equation in
\cite[Sec.~2]{MuHecke}. Moreover, this differential equation for the
generating function of free subgroup numbers in $\Ga_m(q)$
will become of crucial importance in
Sections~\ref{sec:free2} and \ref{sec:Hecke}. 

In order to present the aforementioned differential equation, we
first need to compute several important invariants of $\Ga_m(q)$.
Using notation and definitions from \cite[Sec.~2]{MuHecke}, 
we have $m_{\Gamma_m(q)} =
2qm$, $\chi(\Gamma_m(q)) = -\frac{q-2}{2qm}$, and thus, 
\[
\mu(\Gamma_m(q)) = 1 - m_{\Gamma_m(q)} \chi(\Gamma_m(q)) =q-1 =
\mu(\mathfrak{H}(q)). 
\]
Moreover, for the family of zeta-invariants
$\{\zeta_\kappa(\Gamma_m(q)):\kappa\mid 2qm\}$
of $\Gamma_m(q)$, we find that
\begin{equation}
\label{Eq:ZetaGamma_m(q)}
\zeta_\kappa(\Gamma_m(q)) = \begin{cases}
   1,& \kappa=m,\\[1mm]
   -1,& \kappa=2qm,\\[1mm]
   0,& \mbox{otherwise.}
   \end{cases}
\end{equation}
Our first result in this section
compares the $A$-invariants of $\Gamma_m(q)$, as
defined in \cite[Eq.~(14)]{MuHecke}, to those of the underlying Hecke
group $\mathfrak{H}(q)$, and provides an estimate for their
$2$-adic valuation. 
\begin{lemma}
\label{Lem:ACompare}
\begin{enumerate}
\item For an integer $m\ge1$ and a prime $q\ge3$, we have
\[
A_\mu(\Gamma_m(q)) = m^{q-1} A_\mu(\mathfrak{H}(q)),\quad 0\leq\mu\leq q-1.
\]
\item We have 
\[
v_2(A_\mu(\mathfrak{H}(q)) \geq \mu,\quad 0\leq \mu\leq q-1.
\]
\end{enumerate}
\end{lemma}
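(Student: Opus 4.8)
Proof proposal.

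The plan is to establish both parts by a direct computation from the definition of the $A$-invariants in \cite[Eq.~(14)]{MuHecke}, which presents $A_\mu(\Ga)$ as an explicit combinatorial function of the quantities $m_\Ga$, $\chi(\Ga)$ (equivalently $\mu(\Ga)=1-m_\Ga\chi(\Ga)$) and the zeta-invariants $\{\zeta_\kappa(\Ga)\}$ --- all of which have just been recorded for $\Ga=\Gamma_m(q)$, and are available for $\mathfrak H(q)$ from \cite[Sec.~2.2]{MuHecke}. The single observation that powers both parts is that $\mathfrak H(q)=\Gamma_1(q)$, so that the invariants of $\mathfrak H(q)$ are simply the $m=1$ instance of those of $\Gamma_m(q)$; the rest is bookkeeping of two rather different flavours.

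For part~(i) the relevant invariants scale uniformly in $m$: we have $m_{\Gamma_m(q)}=2qm=m\cdot m_{\mathfrak H(q)}$, $\chi(\Gamma_m(q))=m^{-1}\chi(\mathfrak H(q))$, $\mu(\Gamma_m(q))=\mu(\mathfrak H(q))=q-1$, and, by \eqref{Eq:ZetaGamma_m(q)}, $\kappa\mapsto\zeta_\kappa(\Gamma_m(q))$ is supported on $\{m,2qm\}$ with $\zeta_{m\kappa}(\Gamma_m(q))=\zeta_\kappa(\mathfrak H(q))$ for every $\kappa$ (and $\zeta_\nu(\Gamma_m(q))=0$ whenever $m\nmid\nu$). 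I would substitute these scaled quantities into \cite[Eq.~(14)]{MuHecke}, collect powers of $m$, and verify that the resulting expression is homogeneous of degree $\mu(\Gamma_m(q))=q-1$ in $m$, so that a factor $m^{q-1}$ pulls out and what remains is exactly the corresponding expression for $\mathfrak H(q)$. The one delicate point here is the degree count --- checking term by term that the net exponent of $m$ is precisely $q-1$, with no shift introduced by $\chi$ or by the summation range. (That such a clean relation exists is no surprise: $x^2=y^q$ is central of order $m$ in $\Gamma_m(q)$, exhibiting $\Gamma_m(q)$ as a central extension of $\mathfrak H(q)$ by $C_m$; but the direct substitution should be the cleanest route to a proof.)

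For part~(ii) I would substitute the concrete data of $\mathfrak H(q)$ --- namely $m_{\mathfrak H(q)}=2q$, $\chi(\mathfrak H(q))=-\frac{q-2}{2q}$, and $\zeta_1(\mathfrak H(q))=1$, $\zeta_{2q}(\mathfrak H(q))=-1$ with all other zeta-invariants zero --- into \cite[Eq.~(14)]{MuHecke}, obtaining a closed form for $A_\mu(\mathfrak H(q))$, $0\le\mu\le q-1$, as an explicit (in general alternating) combination of products of binomial and multinomial coefficients. Since $\mu\le q-1<2q=m_{\mathfrak H(q)}$, the term indexed by $\kappa=2q$ is severely restricted over this range --- it is either absent or visibly divisible by a high power of $2$ --- so the estimate reduces to showing that $2^\mu$ divides the term indexed by $\kappa=1$; the $2$-powers there originate with the even index forced by the free factor $C_2$ in $\mathfrak H(q)=C_2*C_q$, and I would make them explicit and then bound the relevant $2$-adic valuations via Legendre's formula \eqref{eq:Leg} (equivalently, by counting carries in base~$2$, Kummer's theorem), so as to reach $v_2\ge\mu$ throughout. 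The step I expect to be the genuine obstacle is precisely this $2$-adic accounting: if the closed form for $A_\mu(\mathfrak H(q))$ really does involve cancellation among its terms, then $v_2$ cannot be controlled termwise and one must group the terms and argue via congruences modulo $2^\mu$, leaning on Lucas-/Kummer-type evaluations of the binomial coefficients that occur; in the benign case, where the $\kappa=2q$ contribution simply vanishes for $\mu\le q-1$, part~(ii) collapses to a single valuation estimate and is routine, and part~(i) --- once the shape of \cite[Eq.~(14)]{MuHecke} is in hand --- is essentially the substitution plus the homogeneity check flagged above.
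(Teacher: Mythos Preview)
Your approach for part~(i) is essentially the paper's, but you are vaguer where the paper is concrete. The paper writes out the formula from \cite[Eq.~(14)]{MuHecke} with the specific zeta-data \eqref{Eq:ZetaGamma_m(q)} plugged in, obtaining
\[
A_\mu(\Gamma_m(q)) = \frac{1}{\mu!} \sum_{j=0}^\mu (-1)^{\mu-j}\binom{\mu}{j}\prod_{\substack{1\le k\le 2qm\\(k,2qm)=m}}(2qmj+k),
\]
and then makes the single substitution $k=mk'$. The condition $(k,2qm)=m$ becomes $(k',2q)=1$, so the product has exactly $\varphi(2q)=q-1$ factors, each contributing one power of $m$; what remains is literally $A_\mu(\mathfrak H(q))$. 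Your ``homogeneity check'' is precisely this counting, and it is not delicate at all once the formula is written down --- the exponent is $\varphi(2q)$, full stop, with no contribution from $\chi$ or the summation range. Your speculation about ``terms indexed by $\kappa$'' suggests you are imagining the general formula as a sum over divisors; in the case at hand it collapses to a single product, and the $\zeta_{2qm}=-1$ contribution is the factor $(2qm(j+1))^{-1}$ that cancels against the leading $2qm(j+1)$.

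For part~(ii) the paper does not give an argument at all: it simply cites \cite[Lemma~1]{MuHecke}. Your proposed direct $2$-adic analysis via Legendre/Kummer is plausible and is presumably close to what that lemma does, but your structural picture (a $\kappa=1$ term versus a $\kappa=2q$ term, the latter possibly ``absent'') does not match the actual shape of $A_\mu(\mathfrak H(q))$ as displayed above; there is only one product, and the $2$-adic content must be read off the alternating sum in $j$. So your plan is sound in spirit but would need reworking once the correct closed form is in front of you.
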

\begin{proof}
(i) In view of (\ref{Eq:ZetaGamma_m(q)}), we have, for $0\leq \mu\leq
q-1$, that 
{\allowdisplaybreaks
\begin{align*}
A_\mu(\Gamma_m(q)) &= \frac{1}{\mu!} \sum_{j=0}^\mu (-1)^{\mu-j}
\binom{\mu}{j} 2qm(j+1) \left(\vphantom{\sum_{g_{g_g}}}\right.\underset{(k,2qm)=m}{\prod_{1\leq k\leq
2qm}} (2qmj+k)\left.\vphantom{\sum_{g_{g_g}}}\right) 
\big(2qm(j+1)\big)^{-1}\\[1mm] 
&= \frac{1}{\mu!} \sum_{j=0}^\mu (-1)^{\mu-j} \binom{\mu}{j}
\underset{(k,2qm)=m}{\prod_{1\leq k\leq 2qm}} (2qmj+k)\\[1mm] 
&= \frac{1}{\mu!} \sum_{j=0}^\mu (-1)^{\mu-j} \binom{\mu}{j}
\underset{(k',2q)=1}{\prod_{1\leq k'\leq 2q}}
\big[m(2qj+k')\big]\\[1mm] 
&= m^{\varphi(2q)} \frac{1}{\mu!} \sum_{j=0}^\mu (-1)^{\mu-j}
\binom{\mu}{j} \underset{(k',2q)=1}{\prod_{1\leq k'\leq 2q}}
(2qj+k')\\[1mm] 
&= m^{q-1} A_\mu(\mathfrak{H}(q)),
\end{align*}}%
as claimed.

(ii) This follows from \cite[Lemma~1]{MuHecke}.
\end{proof}

Our next result is a recurrence
relation for the subgroup numbers $f_{\lambda}^{(q)}(m)$. 

\begin{lemma}
\label{Lem:Gamma_m(q)FreeRec}
For $m, q$ as in Lemma~{\em\ref{Lem:ACompare}} and  $\lambda\geq1,$ we have
\begin{multline}
\label{Eq:fqmRec}
f_{\lambda+1}^{(q)}(m) = \sum_{\mu=1}^{q-1} \sum_{\nu=1}^\mu
\underset{\mu_1+\cdots+\mu_\nu=\mu}
{\sum_{\mu_1,\ldots,\mu_\nu>0}}
\underset{\lambda_1+\cdots+\lambda_\nu=\lambda-\mu}
{\sum_{\lambda_1,\ldots,\lambda_\nu\geq0}}
\binom{\mu}{\mu_1,\ldots,\mu_\nu} \big(\nu!\, (2q)^\nu\big)^{-1}
m^{q-\nu-1} A_\mu(\mathfrak{H}(q))\\[2mm] \times
\prod_{j=1}^\nu\Big[(\mu_j-1)! \binom{\lambda_j+\mu_j-1}{\mu_j-1}\Big]
\prod_{j=1}^\nu f^{(q)}_{\lambda_j+\mu_j}(m) ,
\end{multline}
with initial value $f_1^{(q)}(m) = m^{q-1} A_0(\mathfrak{H}(q))$.
\end{lemma}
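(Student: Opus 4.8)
The plan is to obtain \eqref{Eq:fqmRec} by specialising the differential equation for the generating function of free subgroup numbers established in \cite[Sec.~2]{MuHecke} to the group $\Gamma=\Gamma_m(q)$, and then extracting a single coefficient. Write $\mathbf f_\Gamma(y)=\sum_{\lambda\ge1}f_\lambda(\Gamma)\,y^\lambda$, where $f_\lambda(\Gamma)$ is the number of free subgroups of index $m_\Gamma\lambda$ in $\Gamma$. The cited result expresses $\mathbf f'_\Gamma(y)$ as a universal expression in $y$, in $\mathbf f_\Gamma(y)$ and its derivatives, whose coefficients are assembled from the numerical invariants $\mu(\Gamma)$, $m_\Gamma$, the zeta-invariants $\zeta_\kappa(\Gamma)$ (entering through a product over a prescribed set of residues modulo $m_\Gamma$), and the $A$-invariants $A_\mu(\Gamma)$. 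All invariants needed here have just been computed above.

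First I would substitute these invariants of $\Gamma_m(q)$. From $\mu(\Gamma_m(q))=q-1$ the outer summation over $\mu$ runs from $1$ to $q-1$; each factor $m_\Gamma^{-1}$ becomes $(2qm)^{-1}$; and \eqref{Eq:ZetaGamma_m(q)}, being supported on $\kappa=m$ with sign $+1$ and on $\kappa=2qm$ with sign $-1$, turns the relevant residue product into exactly $\underset{(k,2qm)=m}{\prod_{1\le k\le 2qm}}(2qmj+k)$, which is the product occurring in the definition \cite[Eq.~(14)]{MuHecke} of $A_\mu(\Gamma_m(q))$. At this point Lemma~\ref{Lem:ACompare}(i) permits the replacement of $A_\mu(\Gamma_m(q))$ by $m^{q-1}A_\mu(\mathfrak{H}(q))$. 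The power of $m$ then works out: in the term of the differential equation carrying a product of $\nu$ copies of $\mathbf f_\Gamma$ (and hence a factor $m_\Gamma^{-\nu}=(2q)^{-\nu}m^{-\nu}$) one is left with $m^{q-1}\cdot m^{-\nu}=m^{q-\nu-1}$ and a numerical factor $(2q)^{-\nu}$, matching the factors $m^{q-\nu-1}$ and $(\nu!\,(2q)^\nu)^{-1}$ in \eqref{Eq:fqmRec} up to the $\nu!$ that comes from the symmetrisation in the convolution.

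Next I would extract the coefficient of $y^{\lambda+1}$ from $\mathbf f_\Gamma(y)$, equivalently of $y^\lambda$ from $\mathbf f'_\Gamma(y)$. The nested sums on the right of \eqref{Eq:fqmRec} are accounted for by the two elementary identities
\[
\sum_{\mu_1+\cdots+\mu_\nu=\mu}\binom{\mu}{\mu_1,\ldots,\mu_\nu}\prod_{j=1}^\nu X_{\mu_j}=\mu!\,\coef{t^\mu}\Bigl(\sum_{k\ge1}X_k\,t^k/k!\Bigr)^\nu,
\]
valid for any symbols $X_1,X_2,\ldots$, and
\[
(\mu_j-1)!\binom{\lambda_j+\mu_j-1}{\mu_j-1}f^{(q)}_{\lambda_j+\mu_j}(m)=\coef{y^{\lambda_j}}\Bigl(\tfrac{1}{y}\,\mathbf f_{\Gamma_m(q)}(y)\Bigr)^{(\mu_j-1)},
\]
the latter being immediate from $\tfrac{1}{y}\,\mathbf f_{\Gamma_m(q)}(y)=\sum_{\ell\ge0}f^{(q)}_{\ell+1}(m)\,y^{\ell}$. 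Combining these, the double product $\prod_{j=1}^\nu[\,\cdot\,]\,\prod_{j=1}^\nu f^{(q)}_{\lambda_j+\mu_j}(m)$ in \eqref{Eq:fqmRec}, after being summed over the compositions of $\mu$ and of $\lambda-\mu$, is precisely the coefficient of the relevant monomial in the $\nu$-th power of a derivative-shifted copy of $\mathbf f_\Gamma(y)$ occurring in the specialised equation. Matching the two sides term by term in $\mu$ and $\nu$ yields \eqref{Eq:fqmRec}. The initial value is then immediate: $f_1^{(q)}(m)$ counts the free subgroups of index $2qm$ in $\Gamma_m(q)$, which by the general theory (the lowest-order term of the differential equation) equals $A_0(\Gamma_m(q))$ and hence, by Lemma~\ref{Lem:ACompare}(i), equals $m^{q-1}A_0(\mathfrak{H}(q))$.

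The main obstacle will be the purely combinatorial reconciliation in this last step: one must check that every factor --- the multinomial $\binom{\mu}{\mu_1,\ldots,\mu_\nu}$, the normalisation $(\nu!\,(2q)^\nu)^{-1}$, the term $(\mu_j-1)!\binom{\lambda_j+\mu_j-1}{\mu_j-1}$, and the precise power $m^{q-\nu-1}$ --- emerges with the correct multiplicity from the specialised differential equation, that the summation ranges are consistent (in particular $1\le\nu\le\mu\le\lambda$ for a nonvanishing term, forced by $\lambda_1+\cdots+\lambda_\nu=\lambda-\mu\ge0$), and that the shift by $y$ (reflecting whether the generating function is indexed from $\lambda=0$ or $\lambda=1$) is placed correctly. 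This is routine bookkeeping, but it is error-prone and is where essentially all of the work lies.
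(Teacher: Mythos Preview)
Your proposal is correct and follows essentially the same approach as the paper: specialise \cite[Eq.~(18)]{MuHecke} to $\Gamma_m(q)$, use Lemma~\ref{Lem:ACompare}(i) to rewrite $A_\mu(\Gamma_m(q))$ as $m^{q-1}A_\mu(\mathfrak{H}(q))$, obtain the differential equation \eqref{Eq:FmqDiffEq} for $G_m(q;z)=\sum_{\lambda\ge0}f^{(q)}_{\lambda+1}(m)z^\lambda$, and then compare coefficients of $z^\lambda$ (with $\lambda=0$ giving the initial value). The paper's proof is extremely terse and omits the bookkeeping you spell out, but the strategy is identical.
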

\begin{proof}
Setting $\mathfrak{G} =  \Gamma_m(q)$ in \cite[Eq.~(18)]{MuHecke},
and using Part~(i) of Lemma~\ref{Lem:ACompare} to compute
$A_\mu(\Gamma_m(q))$ in terms of $A_\mu(\mathfrak{H}(q))$, leads to the
differential equation 
\begin{multline}
\label{Eq:FmqDiffEq}
\kern-2pt
G_{m}(q;z) = m^{q-1} A_0(\mathfrak{H}(q))\\[2mm]
 + \sum_{\mu=1}^{q-1}
\sum_{\nu=1}^\mu
\underset{\mu_1+\cdots+\mu_\nu=\mu}{\sum_{\mu_1,\ldots,\mu_\nu>0}}
\binom{\mu}{\mu_1,\ldots,\mu_\nu}\big(\nu!\, (2q)^\nu\big)^{-1}
m^{q-\nu-1} A_\mu(\mathfrak{H}(q)) z^\mu \prod_{j=1}^\nu
\big(G_{m}(q;z)\big)^{(\mu_j-1)} 
\end{multline} 
for the generating function 
$G_{m}(q;z):= \sum_{\lambda\geq0}
f_{\lambda+1}^{(q)}(m) z^\lambda$.
Comparing the coefficient of
$z^\lambda$ in (\ref{Eq:FmqDiffEq}) for $\lambda\geq1$ yields
(\ref{Eq:fqmRec}); while, for $\lambda=0$, we obtain the required
initial value $f_1^{(q)}(m)$. 
\end{proof}
Given these preparations, we can now show the following estimate for
the $2$-adic valuation of $f_\lambda^{(q)}(m)$ in the case when $q$ is a Fermat prime. 
\begin{proposition}
\label{Prop:flambdaq2Part}
{\em (i)} Let $m\geq1$ be an integer, and let $q\geq3$ be a Fermat prime. Then we have
\begin{equation}
\label{Eq:flambdaq2Part}
v_2(f_\lambda^{(q)}(m)) \geq v_2(m)(\lambda+q-2),\quad \lambda\geq1.
\end{equation}
In particular, if $m$ is even, then $f_\lambda^{(q)}(m)$ is zero
modulo any given $2$-power for all sufficiently large values of
$\lambda$.  

{\em (ii)} For $q=3,$ equality occurs in Inequality~{\em
(\ref{Eq:flambdaq2Part})} if, and only if, $\lambda+1$ is a $2$-power.

\end{proposition}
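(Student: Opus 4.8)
The plan is to establish (i) by a strong induction on $\lambda$ driven by the recurrence \eqref{Eq:fqmRec}, and to derive (ii) by specialising that recurrence to $q=3$ and reducing it modulo $2$, where it becomes the functional equation of the Catalan numbers.

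For part (i) I would first dispose of the case of odd $m$, where there is nothing to prove since $f_\lambda^{(q)}(m)$ is a non-negative integer and the right-hand side of \eqref{Eq:flambdaq2Part} is $0$; so assume $v_2(m)\ge1$. The base case $\lambda=1$ is immediate from the initial value $f_1^{(q)}(m)=m^{q-1}A_0(\mathfrak H(q))$. For the inductive step I would substitute the inductive hypothesis into \eqref{Eq:fqmRec} and bound the $2$-adic valuation of a generic summand. The relevant estimates are: $v_2\binom{\mu}{\mu_1,\dots,\mu_\nu}=\sum_j s(\mu_j)-s(\mu)\ge0$ and $v_2\bigl((\nu!\,(2q)^\nu)^{-1}\bigr)=-(\nu-s(\nu))-\nu$, both via Legendre's formula \eqref{eq:Leg} and the fact that $q$ is odd; $v_2(A_\mu(\mathfrak H(q)))\ge\mu$ by Lemma~\ref{Lem:ACompare}(ii); $v_2(m^{q-\nu-1})=(q-\nu-1)v_2(m)$ with $q-\nu-1\ge0$ since $\nu\le\mu\le q-1$; the factor $\prod_j(\mu_j-1)!\binom{\lambda_j+\mu_j-1}{\mu_j-1}$ is a non-negative integer; and, by the inductive hypothesis applied to each $f_{\lambda_j+\mu_j}^{(q)}(m)$ (legitimate because $1\le\lambda_j+\mu_j\le\lambda$),
\[
v_2\Bigl(\prod_{j=1}^\nu f_{\lambda_j+\mu_j}^{(q)}(m)\Bigr)\ \ge\ v_2(m)\sum_{j=1}^\nu\bigl(\lambda_j+\mu_j+q-2\bigr)\ =\ v_2(m)\bigl(\lambda+\nu(q-2)\bigr),
\]
using $\sum_j\lambda_j=\lambda-\mu$ and $\sum_j\mu_j=\mu$. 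Summing all contributions, the summand has valuation at least $v_2(m)\bigl((\lambda+1)+q-2\bigr)$ as soon as
\[
\sum_{j=1}^\nu s(\mu_j)-s(\mu)+s(\nu)+\mu+v_2(m)\,\nu(q-3)\ \ge\ 2\nu ,
\]
and I would verify this last inequality by splitting into the case $q\ge5$ (where $v_2(m)\,\nu(q-3)\ge2\nu$ already suffices, the other terms being non-negative by subadditivity of the binary digit sum) and the case $q=3$ (where $\mu\le q-1=2$ forces $(\mu;\mu_1,\dots,\mu_\nu)\in\{(1;1),(2;2),(2;1,1)\}$ and the three cases are checked by hand, with equality in the first and third).

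For part (ii) I would specialise \eqref{Eq:fqmRec} to $q=3$; inserting the invariants of $\mathfrak H(3)$, namely $A_0=5$, $A_1=72$, $A_2=36$ (cf.\ \cite{MuHecke}), the recurrence collapses to $f_{\lambda+1}^{(3)}(m)=6m(\lambda+1)f_\lambda^{(3)}(m)+\sum_{a+b=\lambda,\,a,b\ge1}f_a^{(3)}(m)f_b^{(3)}(m)$ with $f_1^{(3)}(m)=5m^2$. A trivial induction then yields the factorisation $f_\lambda^{(3)}(m)=m^{\lambda+1}h_\lambda$, where $h_\lambda$ is the positive integer, independent of $m$, determined by $h_1=5$ and $h_{\lambda+1}=6(\lambda+1)h_\lambda+\sum_{a+b=\lambda}h_ah_b$. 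Hence $v_2\bigl(f_\lambda^{(3)}(m)\bigr)=(\lambda+1)v_2(m)+v_2(h_\lambda)$, so that equality in \eqref{Eq:flambdaq2Part} for $q=3$ amounts precisely to $h_\lambda$ being odd. To pin down the parity of $h_\lambda$, I would pass to $H(z)=\sum_{\lambda\ge1}h_\lambda z^\lambda$; the recurrence becomes the differential equation $zH^2(z)+6zH(z)+6z^2H'(z)-H(z)+5z=0$, which modulo $2$ reduces to $zH^2(z)+H(z)+z=0$, whence, comparing with \eqref{eq:CatEF}, $H(z)=zC(z^2)$ modulo $2$ with $C(z)=\sum_n\Cat_nz^n$. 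Therefore $h_\lambda\equiv\Cat_{(\lambda-1)/2}$ modulo $2$ for odd $\lambda$ and $h_\lambda\equiv0$ modulo $2$ for even $\lambda$; since $\Cat_n$ is odd exactly when $n=2^k-1$, the integer $h_\lambda$ is odd exactly when $\lambda=2^{k+1}-1$, i.e.\ when $\lambda+1$ is a power of $2$.

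The step I expect to require the most care is the valuation bookkeeping in the inductive step of (i): one has to track the $2$-adic valuations of the multinomial coefficient (here the identity $v_2\binom{\mu}{\mu_1,\dots,\mu_\nu}=\sum_j s(\mu_j)-s(\mu)$, equivalently Kummer's carry count, is essential for the ``boundary'' cases $\mu=\nu$ to balance with equality), of the denominator $\nu!\,(2q)^\nu$, of the $A$-invariants, and of the inductively controlled product of the $f_{\lambda_j+\mu_j}^{(q)}(m)$, and then check that they combine to give exactly the claimed bound; the case distinction $q=3$ versus $q\ge5$ seems unavoidable, because the helpful term $v_2(m)\,\nu(q-3)$ is useless when $q=3$. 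Everything in part (ii) past the specialisation of the recurrence is routine.
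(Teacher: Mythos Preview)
Your proof is correct. For part~(i) you follow essentially the paper's route: bound the $2$-adic valuation of a generic summand in the recurrence~\eqref{Eq:fqmRec} using Lemma~\ref{Lem:ACompare}(ii) and the inductive hypothesis. The only cosmetic difference is the endgame: the paper reduces to showing $v_2(\nu!)\le(q-3)\nu\,v_2(m)+v_2\binom{\mu}{\mu_1,\dots,\mu_\nu}$ and splits according to whether $\nu<q-1$ or $\nu=q-1$ (using that $q-1$ is a $2$-power to control $v_2(\nu!)$), whereas you rewrite everything via binary digit sums and split according to $q\ge5$ versus $q=3$. Both case analyses are short and cover the same ground.

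For part~(ii) your approach genuinely differs from the paper's. The paper argues by a direct parity induction on the recurrence~\eqref{Eq:FreeRecVar}: for even~$L$ every summand is manifestly divisible by one extra power of~$2$; for odd~$L$ all summands except the middle square $(f^{(3)}_{(L-1)/2}(m))^2$ carry an extra~$2$, and the square has the right valuation precisely when the inductive hypothesis gives equality at $(L-1)/2$, which recursively forces $L+1$ to be a $2$-power. You instead observe the clean factorisation $f_\lambda^{(3)}(m)=m^{\lambda+1}h_\lambda$ with $h_\lambda=f_\lambda^{(3)}(1)$, so that equality in~\eqref{Eq:flambdaq2Part} is equivalent to $h_\lambda$ being odd, uniformly in~$m$; you then identify the mod-$2$ functional equation of $H(z)=\sum h_\lambda z^\lambda$ with the Catalan equation~\eqref{eq:CatEF} via $H(z)\equiv zC(z^2)$, and read off the well-known parity of Catalan numbers. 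Your argument is slightly more conceptual in that it decouples the $m$-dependence in one stroke and explains the link with Catalan numbers that is otherwise only visible later in the paper (compare the proofs of Theorems~\ref{thm:Cat} and~\ref{thm:freie-m}); the paper's argument is more self-contained and avoids appealing to the Catalan parity as a separate fact.
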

\begin{proof}
(i) Since (\ref{Eq:flambdaq2Part}) is trivially true for $m$ odd, we
may suppose that $v_2(m)>0$. We use induction on $\lambda$. For
$\lambda=1$, we have  
\[
v_2(f_1^{(q)}(m)) = v_2(m^{q-1} A_0(\mathfrak{H}(q))) \geq (q-1)
v_2(m) = (\lambda+q-2)v_2(m),
\]
as desired. Now suppose that our claim (\ref{Eq:flambdaq2Part})
holds for all $f_\gamma^{(q)}(m)$ such that $\gamma < L$, with some
integer $L\geq2$, and consider an arbitrary summand  
\[
S = S(\mu, \nu, \mu_1,\ldots,\mu_\nu, \lambda_1,\ldots,\lambda_\nu)
\] 
in the recurrence relation (\ref{Eq:fqmRec}) with $\lambda=L-1$. We find that  
\begin{align*}
v_2(S) &\geq v_2\binom{\mu}{\mu_1,\ldots,\mu_\nu} - v_2(\nu!) - \nu +
(q-\nu-1)v_2(m) \\[1mm] &\hspace{7cm}+ v_2(A_\mu(\mathfrak{H}(q))) +
\sum_{j=1}^\nu v_2(f^{(q)}_{\lambda_j+\mu_j}(m))\\[2mm]  
&\geq v_2\binom{\mu}{\mu_1,\ldots,\mu_\nu} - v_2(\nu!) - \nu + (q-\nu-1) v_2(m) + \mu\\[1mm] 
&\hspace{7cm}+ \sum_{j=1}^\nu (\lambda_j+\mu_j+q-2) v_2(m)\\[2mm]  
&\ge v_2\binom{\mu}{\mu_1,\ldots,\mu_\nu} - v_2(\nu!) - \nu + (q-\nu-1)
v_2(m) + \mu\\[1mm] 
&\hspace{7cm}+ (L-1)v_2(m) + (q-2)\nu v_2(m)\\[2mm] 
&\geq (L+q-2) v_2(m) + (q-3)\nu v_2(m) +
v_2\binom{\mu}{\mu_1,\ldots,\mu_\nu} + \mu - \nu - v_2(\nu!), 
\end{align*}
where we have used Part~(ii) of Lemma~\ref{Lem:ACompare} plus the
induction hypothesis in the second 
step. Since $\nu\leq\mu$, the desired inequality for the $2$-adic
valuation of
$f_L^{(q)}(m)$ 
will follow, if we can show that 
\begin{equation}
\label{Eq:NecEst}
v_2(\nu!) \leq (q-3)\nu v_2(m) + v_2\binom{\mu}{\mu_1,\ldots,\mu_\nu}.
\end{equation} 
Since $q$ is a Fermat prime, we have $q-1=2^\alpha$ for some
$\alpha\geq1$. Thus, if $\nu<q-1$, then, by Legendre's formula for the
$p$-adic valuation of factorials (cf.\ \cite[p.~10]{LegeAA}), we get  
\[
v_2(\nu!) \leq \sum_{i\geq0}
\Big\lfloor\frac{2^\alpha-1}{2^i}\Big\rfloor < \sum_{i\geq1}
2^{\alpha-i} = q-2, 
\]
and (\ref{Eq:NecEst}) holds, the left-hand side already being
compensated by the term 
\[
(q-3)\nu\, v_2(m) \geq q-3.
\]
On the other hand, for $\nu=q-1$, we have $\mu=\nu=q-1$,
$\mu_1=\cdots=\mu_{q-1}=1$,  $v_2(\nu!)=q-2$, and  
\[
v_2\binom{\mu}{\mu_1,\ldots,\mu_\nu} = v_2((q-1)!) = q-2,
\] 
and the desired conclusion holds again. We have thus shown that every
summand $S$ on the right-hand side of (\ref{Eq:fqmRec}) satisfies 
$v_2(S) \geq (L+q-2) v_2(m)$, which implies that  
\[
v_2(f_L^{(q)}(m)) \geq (L+q-2) v_2(m), 
\]
completing the induction. 

(ii) For $q=3$, the recurrence relation~(\ref{Eq:fqmRec}), with
$\lambda$ replaced by $\lambda-1$,  takes the form 
\begin{equation}
\label{Eq:FreeRecq=3}
f_\lambda^{(3)}(m) = 6 m \lambda f_{\lambda-1}^{(3)}(m) +
\underset{\mu+\nu=\lambda-1}{\sum_{\mu,\nu\geq1}} f_\mu^{(3)}(m)
f_\nu^{(3)}(m),\quad \lambda\geq2, 
\end{equation}
with initial value $f_1^{(3)}(m) = 5m^2$. In order to establish our
second claim, we rewrite Equation 
(\ref{Eq:FreeRecq=3}) as 
\begin{multline}
\label{Eq:FreeRecVar}
f^{(3)}_\lambda(m) = 6 m \lambda f^{(3)}_{\lambda-1}(m)\, +\, \begin{cases}
  2\sum_{\mu=1}^{\frac{\lambda-2}{2}} f^{(3)}_\mu(m),
f^{(3)}_{\lambda-\mu-1}(m),&\lambda\equiv 0\,\, (2),\\[3mm]  
2\sum_{\mu=1}^{\frac{\lambda-3}{2}} f^{(3)}_\mu(m) 
f^{(3)}_{\lambda-\mu-1}(m)\,+\,
\big(f^{(3)}_{\frac{\lambda-1}{2}}(m)\big)^2,& \lambda \equiv 1\,\, (2) ,
\end{cases}\\[2mm]
\quad \text{for }\lambda\geq2,
\end{multline}
and argue again by induction on $\lambda$. For $\lambda=1$, Inequality
(\ref{Eq:flambdaq2Part}) is sharp, as required. Now suppose that, for
$\lambda<L$ with some $L\geq2$, 
Inequality~\eqref{Eq:flambdaq2Part} with $q=3$ is
sharp if, and only if, $\lambda+1$ is a $2$-power, and consider
$f_L^{(3)}(m)$ as given by (\ref{Eq:FreeRecVar}). Setting $m=2^am'$
with $m'$ odd, we have  
\[
v_2(6mLf^{(3)}_{L-1}(m)) = 1 + a + v_2(L) + v_2(f^{(3)}_{L-1}(m)) \geq a(L+1)+1.
\]
Consequently, if $L$ is even then, by what we have already shown,
$$v_2(f^{(3)}_L(m)) \geq a(L+1)+1.$$ 
For $L$ odd, all terms except possibly
$\big(f^{(3)}_{\frac{L-1}{2}}(m)\big)^2$ are divisible by $2^{a(L+1)+1}$ or a higher
$2$-power, while this exceptional term satisfies 
\[
v_2\big(f^{(3)}_{\frac{L-1}{2}}(m)\big)^2 \geq a(L+1),
\] 
with equality occurring (according to our induction hypothesis) if,
and only if, $\frac{L-1}{2} + 1 = 2^\gamma$ for some $\gamma\geq1$;
that is, if, and only if, $L+1=2^{\gamma+1}$ is a $2$-power. This
completes the induction, and the proof.
\end{proof}

\section{Free subgroup numbers
for lifts of the inhomogeneous modular group}
\label{sec:free2}

In this section, we investigate the behaviour of the numbers
$f_\la^{(3)}(m)$ of free subgroups in lifts of the inhomogeneous
modular group $PSL_2(\mathbb Z)\cong \mathfrak H(3)$ modulo powers
of~$2$. As mentioned in the introduction, the best previous result
available in the literature is \cite[Theorem~1]{MuPu}, which
determines the behaviour of $f_\la^{(3)}(1)$ modulo~$16$.
The results in this section solve the problem
of determining $f_\la^{(3)}(m)$ modulo powers of $2$ not only for $m=1$ and
the $2$-power $2^4=16$, but for {\it all\/} $m$ and modulo {\it any}
power of $2$.

Let $F_m(z):=1+\sum _{\la\ge1} ^{}f_\la^{(3)}(m)\,z^\la$ be the generating
function for these numbers. (In the notation of the previous section, 
$F_m(z)=1+zG_{m}(3;z)$.)
By specialising $q=3$ in
\eqref{Eq:FmqDiffEq}, one obtains the differential equation
\begin{equation} \label{eq:fmdiff}
(1-(6m-2)z)F_m(z)-6mz^2F_m'(z)-zF_m^2(z)
-1-(1-6m+5m^2)z=0.
\end{equation}

\begin{theorem} \label{thm:freie-m}
Let $\Phi(z)=\sum _{n\ge0} ^{}z^{2^n},$ and let $\al$ be some
positive integer.
Then, for every positive integer $m,$ 
the generating function $F_m(z),$ when reduced modulo $2^{3\cdot 2^\al},$ 
can be expressed as a polynomial in $\Phi(z)$ of degree at most
$2^{\al+2}-1,$ with coefficients that are Laurent polynomials in $z$
over the integers.
\end{theorem}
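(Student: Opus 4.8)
The plan is to follow the proof of Theorem~\ref{thm:Cat} almost verbatim, applying the method of Section~\ref{sec:method} to the Riccati-type differential equation~\eqref{eq:fmdiff}, after first disposing of the case of even $m$ by a direct appeal to Proposition~\ref{Prop:flambdaq2Part}. Indeed, if $m$ is even then, by Proposition~\ref{Prop:flambdaq2Part}(i) with $q=3$, we have $v_2\big(f_\la^{(3)}(m)\big)\ge v_2(m)(\la+1)$, so that $f_\la^{(3)}(m)\equiv 0$ modulo~$2^{3\cdot 2^\al}$ for all $\la\ge 3\cdot 2^\al-1$; hence $F_m(z)$, reduced modulo~$2^{3\cdot 2^\al}$, is merely a polynomial in $z$, which is trivially a polynomial in $\Phi(z)$ of degree $0\le 2^{\al+2}-1$ with coefficients that are (ordinary) polynomials in $z$. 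So from now on I assume $m$ to be odd.

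For the base step I would reduce \eqref{eq:fmdiff} modulo~$2$. Each of $6m-2$ and $6m$ is even (independently of the parity of $m$), while $1-6m+5m^2\equiv 1+m$ modulo~$2$, which vanishes since $m$ is odd; hence \eqref{eq:fmdiff} collapses modulo~$2$ to $zF_m^2(z)+F_m(z)+1\equiv 0$, which is precisely the Catalan functional equation~\eqref{eq:CatEF}. Consequently the mod-$2$ analysis is word for word that of the proof of Theorem~\ref{thm:Cat}: substituting the Ansatz~\eqref{eq:Ansatz1}, reducing powers $\Phi^k(z)$ with $k\ge 2^{\al+2}$ by means of~\eqref{eq:PhiRel2}, and comparing coefficients of powers of $\Phi(z)$, one finds that the unique solution in Laurent polynomials is $a_{2^{\al+1},1}(z)=z^{-1}$ and $a_{0,1}(z)=\sum_{k=0}^{\al}z^{2^k-1}$ modulo~$2$, all remaining $a_{i,1}(z)$ vanishing; the two spurious solutions are discarded exactly as for $C(z)$ (namely, $F_m(z)$ is not a polynomial modulo~$2$, and it contains no negative power of $z$).

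For the iterative step, assume that $F_{m,\be}(z)=\sum_{i=0}^{2^{\al+2}-1}a_{i,\be}(z)\Phi^i(z)$ solves~\eqref{eq:fmdiff} modulo~$2^\be$, with $1\le\be\le 3\cdot 2^\al-1$, and put $a_{i,\be+1}(z)=a_{i,\be}(z)+2^\be b_{i,\be+1}(z)$ as in~\eqref{eq:Ansatz2a}, writing $B(z)=\sum_i b_{i,\be+1}(z)\Phi^i(z)$. Substituting $F_{m,\be+1}=F_{m,\be}+2^\be B$ into the left-hand side of~\eqref{eq:fmdiff}, the key observation is that modulo~$2^{\be+1}$ all corrections coming from $B$ collapse to a single term $2^\be B(z)$: the quadratic term contributes an extra $-2^{\be+1}zF_{m,\be}B-2^{2\be}zB^2\equiv 0$, the derivative term contributes $-6m\cdot 2^\be z^2B'=-2^{\be+1}\cdot 3mz^2B'\equiv 0$ (it is here that the even factor in $6m$ is essential, and this is exactly the feature that makes~\eqref{eq:fmdiff} tailor-made for the method), while $(1-(6m-2)z)\cdot 2^\be B=2^\be B-2^{\be+1}(3m-1)zB\equiv 2^\be B$. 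Hence, after reducing the $\Phi$-powers via~\eqref{eq:PhiRel}, the left-hand side of~\eqref{eq:fmdiff} evaluated at $F_{m,\be+1}$ equals $2^\be\big(B(z)+P_\be(z)\big)$ modulo~$2^{\be+1}$, where $P_\be(z)$ is the known polynomial in $\Phi(z)$ of degree at most $2^{\al+2}-1$ obtained by dividing the (reduced) left-hand side for $F_{m,\be}$ by $2^\be$. Comparing coefficients of $\Phi^i(z)$, $i=0,1,\dots,2^{\al+2}-1$, then yields the system $b_{i,\be+1}(z)\equiv-\big(\text{coefficient of }\Phi^i(z)\text{ in }P_\be(z)\big)$ modulo~$2$, which is trivially uniquely solvable in Laurent polynomials. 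Pushing this through up to $\be=3\cdot 2^\al-1$ and setting $a_i(z)=a_{i,3\cdot 2^\al}(z)$ gives the asserted representation.

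The one delicate point --- as always, cf.\ Remark~\ref{rem:comp} --- is the step of dividing the reduced left-hand side for $F_{m,\be}$ by $2^\be$, i.e.\ of knowing that each coefficient of a power of $\Phi(z)$ in that reduced polynomial is itself divisible by $2^\be$, and not merely that the associated Laurent series vanishes modulo~$2^\be$. For~\eqref{eq:fmdiff} with $m$ odd this holds for exactly the same structural reasons as in the Catalan case treated in Theorem~\ref{thm:Cat}, since the two equations agree modulo~$2$ and, as shown above, the higher approximations enter only linearly; so I do not expect a genuinely new obstruction here, the case of even $m$ having already been settled outright.
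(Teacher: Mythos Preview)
Your proof is correct and follows essentially the same approach as the paper's: dispose of even $m$ via Proposition~\ref{Prop:flambdaq2Part}, reduce~\eqref{eq:fmdiff} modulo~$2$ to the Catalan equation (so the base step is literally that of Theorem~\ref{thm:Cat}), and observe that at each iterative step the correction $2^\be B$ enters only linearly because the coefficients $6m$, $6m-2$, and the cross term $2zF_{m,\be}B$ all carry an extra factor of~$2$, yielding the trivially solvable system $b_{i,\be+1}+\mathrm{Pol}_i\equiv 0$ modulo~$2$. Regarding your final paragraph: the coefficient-wise divisibility of the reduced left-hand side by $2^\be$ is not an additional assumption but precisely the inductive hypothesis your own argument maintains (you solve $b_{i,\be+1}\equiv-\mathrm{Pol}_i$ coefficient by coefficient, which forces each coefficient at the next level to vanish modulo~$2^{\be+1}$), so there is no gap here and Remark~\ref{rem:comp} is not relevant---the paper's proof likewise just writes ``we may divide by $2^\be$'' at this point.
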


\begin{proof} In view of Proposition~\ref{Prop:flambdaq2Part}, the assertion
is trivially true for even $m$, the polynomial in $\Phi(z)$ being a
polynomial of degree zero in this case. 
We may thus assume from now on that $m$ is odd.

We apply the method from Section~\ref{sec:method}. We start by
substituting the Ansatz \eqref{eq:Ansatz1} in \eqref{eq:fmdiff} and
reducing the result modulo $2$. 
In this way, we obtain
\begin{equation*} 
\sum _{i=0} ^{2^{\al+2}-1}a_{i,1}(z)\Phi^i(z)+
z
\sum _{i=0} ^{2^{\al+2}-1}a_{i,1}^2(z)\Phi^{2i}(z)+1=0\quad \text
{modulo }2.
\end{equation*}
This congruence is identical with the congruence \eqref{eq:quadr}.
Hence, we can copy the resulting solution from there. Namely, the
unique solution to \eqref{eq:quadr} (and, hence, to the above
congruence) is given by
\begin{align*}
a_{0,1}(z)&=\sum _{k=0} ^{\al}z^{2^k-1}\quad \text {modulo }2,\\[2mm]
a_{2^{\al+1},1}(z)&=z^{-1}\quad \text {modulo }2,
\end{align*}
with all other $a_{i,1}(z)$ vanishing.

\medskip
After we have completed the ``base  step," we now proceed with the
iterative steps described in Section~\ref{sec:method}. We consider
the Ansatz \eqref{eq:Ansatz2}--\eqref{eq:Ansatz2b}, where the
coefficients $a_{i,\be}(z)$ are supposed to provide a solution
$F_{m,\be}(z)=\sum _{i=0} ^{2^{\al+2}-1}a_{i,\be}(z)\Phi^i(z)$ to
\eqref{eq:fmdiff} modulo~$2^\be$. This Ansatz, substituted in
\eqref{eq:fmdiff}, produces the congruence
\begin{multline*}
2^\be\sum _{i=0} ^{2^{\al+2}-1}b_{i,\be+1}(z)\Phi^i(z)
+(1-(6m-2)z)F_{m,\be}(z)\\[2mm]
-6mz^2F_{m,\be}'(z)-zF_{m,\be}^2(z)-1-(1-6m+5m^2)z=0\quad 
\quad 
\text {modulo }2^{\be+1}.
\end{multline*}
By our assumption on $F_{m,\be}(z)$, we may divide by $2^\be$.
Comparison of powers of $\Phi(z)$ then yields a system of congruences
of the form
$$
b_{i,\be+1}(z)+\text {Pol}_i(z)=0\quad 
\text {modulo }2,\quad \quad 
i=0,1,\dots,2^{\al+2}-1,
$$
where $\text {Pol}_i(z)$, $i=0,1,\dots,2^{\al+2}-1$, are certain
Laurent polynomials with integer coefficients. This system being trivially
(uniquely) solvable, we have proved that, for an arbitrary positive
integer $\al$, the algorithm of
Section~\ref{sec:method} will produce a solution $F_{m,2^{3\cdot
2^\al}}(z)$ to \eqref{eq:fmdiff} modulo $2^{3\cdot 2^\al}$ which is a
polynomial in $\Phi(z)$ with coefficients that are Laurent polynomials in
$z$.
\end{proof}

We have implemented this algorithm. As an illustration, the next
theorem contains the result for the modulus $64$.\footnote{To be
precise, our implementation finds an expression for each fixed $m$. 
These particular results can then be put together ``manually" 
into the uniform expression displayed in \eqref{eq:Loes1}.}

{
\allowdisplaybreaks
\begin{theorem} \label{thm:freie-m64}
Let $\Phi(z)=\sum _{n\ge0} ^{}z^{2^n}$.
Then, for all positive odd integers $m,$ we have
\begin{multline} 
\label{eq:Loes1}
1 + \sum _{\la\ge1} ^{}f^{(3)}_\lambda(m)\,z^\lambda
=32 z^9+48 z^7+32 z^6+(16 m+8) z^5
+(16 m+8) z^4\\[1mm]
+\left(2
   m^2+34\right) z^3 + \left(4
   m^2-4 m+24\right)
   z^2+\left(5 m^2+12\right)
   z + 1
\\[2mm]
+\left(48 z^4+24 z^3+12 z^2+60
   z+40\right) \Phi(z)\\[2mm]
+ \left(16 z^5+(16 m+32)
   z^4+(4 m^2-32
   m+68) z^3+36 z^2+22
   z+12+\frac{12}{z}\right) \Phi^2(z) \\[2mm]
+  \left(32
   z^5+32 z^4+(16 m-16) z^3+40 z^2+4
   z+52+\frac{28}{z}\right)\Phi^3(z)\\[2mm]
+\bigg(32 z^7+32
   z^5+32
   z^4+(16 m+24)
   z^3+(16 m+40) z^2
\kern4cm\\[2mm]
\kern4cm\left.
+(2 m^2+16
   m+38\right) z
+24+\frac{35}{z}\bigg)\Phi^4(z) \\[2mm]
+\left(32
   z^3+16 z^2+(16 m-8) z+44\right)\Phi^5(z) \\[2mm]
+ \left(16
   z^3+16 m z^2+\left(4 m^2-16
   m+20\right) z+44+\frac{50}{z}\right)\Phi^6(z)\\[2mm]
+
   \left(32
   z^3+32
   z^2+(16 m+16) z+40+\frac{4}{z}\right)\Phi^7(z) \quad \quad 
\text {\em modulo }64.
\end{multline}
\end{theorem}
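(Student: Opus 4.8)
The plan is to run the algorithm of Section~\ref{sec:method} on the differential equation \eqref{eq:fmdiff} for the modulus $64 = 2^{3\cdot 2^1}$, i.e.\ with $\al = 1$; that this algorithm succeeds is exactly the content of Theorem~\ref{thm:freie-m} (taken with $\al=1$), which guarantees that $F_m(z)$, reduced modulo $64$, is a polynomial in $\Phi(z)$ of $\Phi$-degree at most $2^{\al+2}-1 = 7$ with Laurent-polynomial coefficients --- precisely the shape of the right-hand side of \eqref{eq:Loes1}. Concretely, for a fixed odd $m$ one starts from the mod-$2$ base solution already recorded in the proof of Theorem~\ref{thm:freie-m}, namely $a_{0,1}(z) = \sum_{k=0}^{1}z^{2^k-1} = 1+z$, $a_{4,1}(z) = z^{-1}$, and $a_{i,1}(z) = 0$ otherwise, and then carries out the five lifting steps $\be = 1, 2, \dots, 5$ of \eqref{eq:Ansatz2a}: at stage $\be$ one substitutes $\sum_{i=0}^{7}a_{i,\be+1}(z)\Phi^i(z)$ into \eqref{eq:fmdiff}, uses $\Phi'(z) \equiv \sum_{n=0}^{\be}2^nz^{2^n-1}$ modulo $2^{\be+1}$, reduces the powers $\Phi^k(z)$ with $k\ge 8$ by \eqref{eq:PhiRel} (with $\al=1$), compares coefficients of $\Phi^0,\dots,\Phi^7$, and solves the resulting system of $8$ linear congruences modulo $2$ for the correction polynomials $b_{i,\be+1}(z)$. (For odd $m$ one has $v_2(m)=0$, so Proposition~\ref{Prop:flambdaq2Part} is silent and this computation really is needed.)

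Since the implementation fixes the value of $m$, the next step is to reinstate the parameter $m$. From the recurrence \eqref{Eq:FreeRecq=3} (equivalently, from the fact that the only $m$-dependent coefficients in \eqref{eq:fmdiff} are $6m-2$, $6m$ and $1-6m+5m^2$) one checks that, modulo $64$ and for odd $m$, every Laurent-polynomial coefficient produced by the algorithm agrees with a polynomial in $m$ of degree at most $2$; hence running the algorithm for a handful of small odd values (say $m=1,3,5,7$) and interpolating assembles the uniform expression \eqref{eq:Loes1}. This is the ``manual'' step alluded to in the footnote to the theorem.

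Finally, the displayed identity is confirmed by direct verification. Let $R_m(z)$ denote the right-hand side of \eqref{eq:Loes1}; substitute $R_m(z)$ for $F_m(z)$ in \eqref{eq:fmdiff}, use $\Phi'(z)\equiv 1+2z+4z^3+8z^7+16z^{15}+32z^{31}$ modulo $64$ when forming $R_m'(z)$, and reduce powers $\Phi^k(z)$ with $k\ge8$ via \eqref{eq:PhiRel}. This yields a polynomial in $\Phi(z)$ of $\Phi$-degree at most $7$ with Laurent-polynomial coefficients; rewriting each power of $\Phi(z)$ through \eqref{eq:Phipot} and the algorithm in the proof of Lemma~\ref{lem:Hbi} in terms of the series $H_{a_1,\dots,a_r}(z)$ with all $a_i$ odd together with $1$, which are linearly independent over $(\Z/64\Z)[z,z^{-1}]$ by Corollary~\ref{lem:Hind}, one sees after collecting terms that the whole expression vanishes modulo $64$. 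Since $R_m(z)$ is a genuine power series with $R_m(0)=1$ (note $\Phi(0)=0$, and a term $z^{-1}\Phi^j(z)$ introduces no negative power because $\Phi^j(z)$ has order $j\ge1$), while comparison of coefficients of $z^n$ in \eqref{eq:fmdiff} determines $[z^n]F_m(z)$ uniquely in terms of coefficients of lower order, it follows that $R_m(z) \equiv F_m(z)$ modulo $64$, which is the assertion.

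The only genuinely delicate points are, first, the guarantee --- furnished by Theorem~\ref{thm:freie-m} --- that none of the lifting steps $\be\mapsto\be+1$ gets stuck; everything else is linear algebra over $\Z/2\Z$ together with the lengthy but mechanical bookkeeping of the $H$-series expansions. Second, because of the phenomenon described in Remark~\ref{rem:comp} --- a polynomial in $\Phi(z)$ may vanish modulo a $2$-power without all of its $\Phi$-power coefficients vanishing --- the concluding verification should be performed after passing to the basis of the $H_{a_1,\dots,a_r}(z)$ with all $a_i$ odd, rather than by merely observing that the reduced polynomial in $\Phi(z)$ is identically zero.
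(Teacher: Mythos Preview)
Your proposal is correct and follows exactly the approach the paper takes: run the algorithm of Theorem~\ref{thm:freie-m} with $\al=1$ for several fixed odd values of $m$, then assemble the uniform expression ``manually'' (this is precisely what the paper's footnote to Theorem~\ref{thm:freie-m64} says). You go slightly beyond the paper by spelling out an explicit a posteriori verification---substituting $R_m(z)$ into \eqref{eq:fmdiff}, reducing via \eqref{eq:PhiRel}, and checking vanishing in the $H$-basis rather than merely at the level of $\Phi$-coefficients---which is a sound precaution in light of Remark~\ref{rem:comp} and also rescues the interpolation step from any doubt about the $m$-degree bound.
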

}

\section{Subgroup numbers for the inhomogeneous modular group}
\label{sec:PSL2Z}

For a finitely generated group $\Ga$, let $s_n(\Ga)$ denote the number of 
subgroups of index $n$ in $\Ga$, and write $S_\Ga(z)$ for the (shifted)
generating function $\sum _{n\ge0} ^{}s_{n+1}(\Ga)\,z^n$. 

In this section, we focus on the
sequence $\big(s_n(PSL_2(\Z))\big)_{n\ge1}$ and its generating function
$S(z):=S_{PSL_2(\mathbb Z)}(z)$. We shall show that our method solves
the problem of determining these subgroup numbers modulo any given
power of $2$, thus refining the parity result of Stothers
\cite{Stothers} and the mod-$8$ result from
\cite[Theorem~2]{MuPu} mentioned in the introduction.

By the first displayed equation on top of p.~276 in \cite{GIR} (cf.\ also
\cite[Eq.~(5.29)]{KrMuAC} with  
$H=\{1\}$ and $a=b=h=1$), 
the series $S(z)$ obeys the differential equation
\begin{multline} \label{eq:S1diff}
(-1+4z^3+2z^4+4z^6-2z^7-4z^9)S(z)+(z^7-z^{10})(S'(z)+S^2(z))\\[2mm]
+1+z+4z^2+4z^3-z^4+4z^5-2z^6-2z^8=0.
\end{multline}
The differential equation \eqref{eq:S1diff} has a unique solution
since comparison of coefficients of $z^{N}$ fixes the initial values,
and yields a recurrence for
the sequence $\big(s_n(PSL_2(\mathbb Z))\big)_{n\ge1}$ 
which computes $s_{n+1}(PSL_2(\mathbb Z))$ from terms
involving only $s_i(PSL_2(\mathbb Z))$ with $i\le n$. 

\begin{theorem} \label{thm:Unterg}
Let $\Phi(z)=\sum _{n\ge0} ^{}z^{2^n},$ and let $\al$ be some
positive integer.
Then the generating function $S(z)=S_{PSL_2(\mathbb Z)}(z),$ 
when reduced modulo $2^{3\cdot 2^\al},$ 
can be expressed as a polynomial in $\Phi(z)$ of degree at most
$2^{\al+2}-1$ with coefficients that are Laurent polynomials in $z$
over the integers.
\end{theorem}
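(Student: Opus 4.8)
The plan is to follow the template of the proofs of Theorems~\ref{thm:Cat} and \ref{thm:freie-m}, applying the method of Section~\ref{sec:method} to the differential equation \eqref{eq:S1diff}. First I would carry out the base step: substitute the Ansatz \eqref{eq:Ansatz1} for $S(z)$ into \eqref{eq:S1diff} and reduce modulo~$2$, where \eqref{eq:S1diff} collapses to
\[
S(z)+z^{7}(1+z^{3})(S'(z)+S^{2}(z))+1+z+z^{4}=0\quad\text{modulo }2 .
\]
Using $\Phi'(z)=1$ modulo~$2$ together with the reduction \eqref{eq:PhiRel2} for powers $\Phi^{k}(z)$ with $k\ge2^{\al+2}$, comparison of coefficients of $\Phi^{k}(z)$, $k=0,1,\dots,2^{\al+2}-1$, produces a (non-linear, but explicit) system of congruences modulo~$2$ for the Laurent polynomials $a_{i,1}(z)$. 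I would solve this system while tracking which powers of $\Phi(z)$ may carry a non-zero coefficient, and rule out the spurious solutions — those that would force a negative power of $z$ into $S(z)$, or force $S(z)$ to reduce to a polynomial — exactly as in the Catalan case; this base step amounts to a new proof of Stothers' parity theorem \cite{Stothers} and of the mod-$8$ description in \cite[Theorem~2]{MuPu}.

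The essential new feature, compared with Theorems~\ref{thm:Cat} and \ref{thm:freie-m}, is that the coefficient $z^{7}-z^{10}$ of $S'(z)$ in \eqref{eq:S1diff} is \emph{not} divisible by~$2$ (the Catalan equation \eqref{eq:CatEF} has no derivative term at all, and the coefficient $6mz^{2}$ in \eqref{eq:fmdiff} is even), so the iterative step is not trivially solvable. Putting $a_{i,\be+1}(z)=a_{i,\be}(z)+2^{\be}b_{i,\be+1}(z)$ as in \eqref{eq:Ansatz2a}, substituting into \eqref{eq:S1diff}, dividing by~$2^{\be}$, reducing the powers of $\Phi(z)$ via \eqref{eq:PhiRel}, and comparing coefficients of $\Phi^{k}(z)$ yields a genuine \emph{linear differential} system modulo~$2$ for the $b_{i,\be+1}(z)$, involving both these functions and their first derivatives (the quadratic term $S^{2}(z)$ contributes only a factor of $2^{\be+1}$ and drops out modulo~$2$). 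I would invoke Lemma~\ref{lem:2x2diff} to convert this system into an ordinary linear system \eqref{eq:4x4} over $(\Z/2\Z)[z,z^{-1}]$, and then apply Lemma~\ref{lem:diff710} — the ``simple case'' for which solvability of \eqref{eq:4x4} is characterised — to conclude that a solution in Laurent polynomials exists at each level. Carrying the iteration through $\be=1,2,\dots,3\cdot2^{\al}-1$ then produces a polynomial in $\Phi(z)$ of degree at most $2^{\al+2}-1$ with Laurent-polynomial coefficients that solves \eqref{eq:S1diff} modulo~$2^{3\cdot2^{\al}}$; since that equation encodes a recurrence computing $s_{n+1}(PSL_2(\mathbb Z))$ from earlier terms over $\Z$, it has a unique solution modulo $2^{3\cdot2^{\al}}$, and (checking, as in the Catalan case, that the Ansatz can be maintained as a genuine power series) this polynomial agrees with $S(z)$ modulo~$2^{3\cdot2^{\al}}$.

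The main obstacle is the middle step: showing that every linear system \eqref{eq:4x4} arising from \eqref{eq:S1diff} falls within the scope of Lemma~\ref{lem:diff710}. This requires isolating the structural features of \eqref{eq:S1diff} — the precise shape of the polynomial multiplying $S(z)$ and of the coefficient of $S'(z)+S^{2}(z)$, in particular the factorisation $z^{7}-z^{10}=z^{7}(1-z)(1+z+z^{2})$ and the way this polynomial interacts with the reduction relation \eqref{eq:PhiRel} for $\Phi(z)$ — that keep the characterisation of Lemma~\ref{lem:diff710} applicable at every level~$\be$. Once that compatibility is established, the remaining work (the explicit base-step computation and the bookkeeping within each iterative step) is routine, if tedious; as a by-product, feeding the algorithm a concrete $2$-power yields the corresponding explicit congruence for $S(z)$, refining Stothers' theorem and \cite[Theorem~2]{MuPu}.
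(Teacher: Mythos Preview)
Your outline matches the paper's approach, and you have correctly located the crux: the iterative step is no longer trivially solvable because the coefficient $z^{7}-z^{10}$ of $S'(z)+S^{2}(z)$ survives modulo~$2$. Where your proposal stops short is precisely the point you flag as the ``main obstacle'': you do not say \emph{which} structural invariant makes Lemma~\ref{lem:diff710} applicable at every level, and this is the idea that actually carries the proof.

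The paper's mechanism is the following. One strengthens the inductive hypothesis: in addition to $S_\be(z)=\sum_i a_{i,\be}(z)\Phi^i(z)$ solving \eqref{eq:S1diff} modulo~$2^\be$, one also assumes that $a_{i,\be}(z)$ is divisible by $1-z^{3}$ for $i\ge1$, and that $a_{0,\be}(z)-1-2z^{2}$ is divisible by $1-z^{3}$. (This holds at the base step, as a by-product of the explicit solution there.) Under this hypothesis one checks, by a direct reduction of lines~3--7 of \eqref{eq:2be} modulo $(1-z^{3})^{2}$, that each inhomogeneity $\mathrm{Pol}_i(z)$ in \eqref{eq:bdiff} is divisible by $(1-z^{3})^{2}\equiv1+z^{6}$ modulo~$2$. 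This is exactly the solvability condition of Lemma~\ref{lem:diff710} for the equations with $i$ odd (where the coupling term $(i+1)b_{i+1,\be+1}$ drops out), and Corollary~\ref{cor:diff710} then gives that the resulting $b_{i,\be+1}(z)$ is itself divisible by $1-z^{3}$. Feeding this back into the equations for $i$ even, the term $z^{7}(1-z^{3})b_{i+1,\be+1}(z)\Phi'(z)$ is again divisible by $(1-z^{3})^{2}$, so Corollary~\ref{cor:diff710} applies once more and the divisibility invariant is preserved. Thus the paper does not route through the general conversion of Lemma~\ref{lem:2x2diff}; it decouples \eqref{eq:bdiff} by parity of~$i$ and handles each scalar equation via Lemma~\ref{lem:diff710}/Corollary~\ref{cor:diff710}, with the $(1-z^{3})$-divisibility of the coefficients as the inductive glue. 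Without naming this invariant and the $(1-z^{3})^{2}$-divisibility verification, your plan has the right shape but is missing the step that makes it go through.
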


\begin{proof} 
We apply the method from Section~\ref{sec:method}. We start by
substituting the Ansatz \eqref{eq:Ansatz1} in \eqref{eq:S1diff} and
reducing the result modulo $2$. 
In this way, we obtain
\begin{multline*} 
\sum _{i=0} ^{2^{\al+2}-1}a_{i,1}(z)\Phi^i(z)+
(z^7+z^{10})\Bigg(
\sum _{i=0} ^{2^{\al+2}-1}ia_{i,1}(z)\Phi^{i-1}(z)\Phi'(z)+
\sum _{i=0} ^{2^{\al+2}-1}a'_{i,1}(z)\Phi^i(z)\\[2mm]
+
\sum _{i=0} ^{2^{\al+2}-1}a_{i,1}^2(z)\Phi^{2i}(z)\Bigg)
+1+z+z^4=0\quad \text
{modulo }2.
\end{multline*}
We may reduce $\Phi^{2i}(z)$ further using 
Relation~\eqref{eq:PhiRel2}. This leads to
\begin{multline} \label{eq:GlUnterg}
\sum _{i=0} ^{2^{\al+2}-1}a_{i,1}(z)\Phi^i(z)+
(z^7+z^{10})\Bigg(
\sum _{i=0} ^{2^{\al+2}-1}ia_{i,1}(z)\Phi^{i-1}(z)\Phi'(z)+
\sum _{i=0} ^{2^{\al+2}-1}a'_{i,1}(z)\Phi^i(z)\\[2mm]
+\sum _{i=0} ^{2^{\al+1}-1}\left(a_{i,1}^2(z)
+z^{2^{\al+1}}a_{i+2^{\al+1},1}^2(z)\right)\Phi^{2i}(z)
+
\sum _{i=0} ^{2^{\al}-1}
z^{2^{\al+1}}a_{i+3\cdot2^{\al},1}^2(z)\Phi^{2i}(z)\\[2mm]
+
\sum _{i=0} ^{2^{\al}-1}\left(a_{i+2^{\al+1},1}^2(z)
+a_{i+3\cdot2^{\al},1}^2(z)\right)\Phi^{2i+2^{\al+1}}(z)\Bigg)
+1+z+z^4=0\quad \text
{modulo }2.
\end{multline}
In the same way as in the proof of Theorem~\ref{thm:freie-m}, one
sees that all coefficients $a_{i,1}(z)$
vanish modulo~$2$, except possibly $a_{0,1}(z)$ and
$a_{2^{\al+1},1}(z)$. 
The corresponding congruences obtained by extracting coefficients of $\Phi^0(z)$ and
$\Phi^{2^{\al+1}}(z)$, respectively, in \eqref{eq:GlUnterg}, are
\begin{equation}
\label{eq:GlUnterg-a0}
a_{0,1}(z)
+(z^7+z^{10})\left(a'_{0,1}(z)+a_{0,1}^2(z)
+z^{2^{\al+1}}a_{2^{\al+1},1}^2(z)\right)+1+z+z^4=0\quad 
\text {modulo }2
\end{equation}
and
\begin{equation}
\label{eq:GlUnterg-aviel}
a_{2^{\al+1},1}(z)
+(z^7+z^{10})\left(
a'_{2^{\al+1},1}(z)
+a_{2^{\al+1},1}^2(z)
\right)=0\quad 
\text {modulo }2.
\end{equation}
The only solutions to \eqref{eq:GlUnterg-aviel} are
$
a_{2^{\al+1},1}(z)=0
$ modulo~$2$,
respectively
$
a_{2^{\al+1},1}(z)=z^{-7}+z^{-4}
$ modulo~$2$. The first option is impossible, since there is
no Laurent polynomial $a_{0,1}(z)$ solving the
equation resulting from \eqref{eq:GlUnterg-a0}. Thus, we have 
\begin{equation} \label{eq:Untergaviel}
a_{2^{\al+1},1}(z)=z^{-7}+z^{-4}\quad \text {modulo }2.
\end{equation}
Use of this result in \eqref{eq:GlUnterg-a0} yields the congruence
\begin{equation} \label{eq:Unterga0}
a_{0,1}(z)
+z^7(1+z^3)\left(
a'_{0,1}(z)+a_{0,1}^2(z)
\right)
+z^{2^{\al+1}-7}(1+z^3)^3
+1+z+z^4=0\quad 
\text {modulo }2.
\end{equation}
for $a_{0,1}(z)$. We let
$$
a_{0,1}(z)=\widetilde a_{0,1}(z)+z^{-7}+z^{-4}+z^{-3}
+(1+z^3)\sum _{k=2} ^{\al}z^{2^k-7}
$$
and substitute this in \eqref{eq:Unterga0}. Thereby, we get
\begin{multline*}
\widetilde a_{0,1}(z)
+z^7(1+z^3)\left(
\widetilde a'_{0,1}(z)+\widetilde a_{0,1}^2(z)
\right)
+z^{-7}+z^{-4}+z^{-3}
+(1+z^3)\sum _{k=2} ^{\al}z^{2^k-7}
\\[2mm]
+z^7(1+z^3)\left(
z^{-8}+z^{-4}
+\sum _{k=2} ^{\al}z^{2^k-8}
+z^{-14}+z^{-8}+z^{-6}
+(1+z^3)^2\sum _{k=2} ^{\al}z^{2^{k+1}-14}
\right)\\[2mm]
+z^{2^{\al+1}-7}(1+z^3)^3
+1+z+z^4=0\quad 
\text {modulo }2,
\end{multline*}
or, after simplification,
$$
\widetilde a_{0,1}(z)
+z^7(1+z^3)\left(
\widetilde a'_{0,1}(z)+\widetilde a_{0,1}^2(z)
\right)
=0\quad 
\text {modulo }2.
$$
Again, either $\widetilde a_{0,1}(z)=0$ modulo~$2$ or
$\widetilde a_{0,1}(z)=z^{-7}+z^{-4}$ modulo~$2$. Here, the second option is
impossible, since it would imply that $S(z)$ contains a negative
$z$-power, which is absurd. 

In summary, we have found that
\begin{align*}
a_{0,1}(z)&=z^{-7}+z^{-4}+z^{-3}
+(1+z^3)\sum _{k=2} ^{\al}z^{2^k-7}
\quad \text {modulo }2,\\[2mm]
a_{2^{\al+1},1}(z)&=z^{-7}+z^{-4}\quad \text {modulo }2,
\end{align*}
with all other $a_{i,1}(z)$ vanishing, forms the unique
solution modulo $2$ to the system of congruences resulting from 
\eqref{eq:GlUnterg} in Laurent polynomials $a_{i,1}(z)$.
It should be noted that all $a_{i,1}(z)$'s, $1\le i\le
2^{2^{\al+2}}-1$, are divisible by $1-z^3$ modulo $2$, as is $a_{0,1}(z)-1$.

\medskip
After we have completed the ``base  step," we now proceed with the
iterative steps described in Section~\ref{sec:method}. 
The arguments turn out to be slightly more delicate here than in the
proof of Theorem~\ref{thm:freie-m}. To be more precise, when
considering
the Ansatz \eqref{eq:Ansatz2}--\eqref{eq:Ansatz2b}, where,
inductively, the
coefficients $a_{i,\be}(z)$ are supposed to provide a solution
$S_{\be}(z)=\sum _{i=0} ^{2^{\al+2}-1}a_{i,\be}(z)\Phi^i(z)$ to
\eqref{eq:S1diff} modulo~$2^\be$, we must also assume 
that $a_{i,\be}(z)$, $1\le i\le
2^{2^{\al+2}}-1$, and $a_{0,\be}(z)-1-2z^2$, are all divisible by $1-z^3$.
The reader should note that the divisibility assumptions do indeed
hold for $\be=1$, the term $-2z^2$ being negligible, since for $\be=1$
we are computing modulo $2^{\be}=2$.

The above Ansatz, substituted in
\eqref{eq:S1diff}, produces the congruence
\begin{multline} \label{eq:2be}
2^\be\sum _{i=0} ^{2^{\al+2}-1}b_{i,\be+1}(z)\Phi^i(z)\\[2mm]
+2^\be(z^7-z^{10})\Bigg(
\sum _{i=0} ^{2^{\al+2}-1}ib_{i,\be+1}(z)\Phi^{i-1}(z)\Phi'(z)
+\sum _{i=0} ^{2^{\al+2}-1}b'_{i,\be+1}(z)\Phi^i(z)
\Bigg)\\[2mm]
+(-1+4z^3+2z^4+4z^6-2z^7-4z^9)
\sum _{i=0} ^{2^{\al+2}-1}a_{i,\be}(z)\Phi^i(z)\\[2mm]
+(z^7-z^{10})\Bigg(
\sum _{i=0} ^{2^{\al+2}-1}ia_{i,\be}(z)\Phi^{i-1}(z)\Phi'(z)
\kern5cm\\[2mm]
\kern4cm
+\sum _{i=0} ^{2^{\al+2}-1}a'_{i,\be}(z)\Phi^i(z)
+\bigg(\sum _{i=0} ^{2^{\al+2}-1}a_{i,\be}(z)\Phi^i(z)\bigg)^2
\Bigg)\\[2mm]
+1+z+4z^2+4z^3-z^4+4z^5-2z^6-2z^8=0
\quad 
\text {modulo }2^{\be+1}.
\end{multline}
By our inductive construction, we know that the terms contained in 
lines 3--7 of \eqref{eq:2be} are divisible by $2^\be$. Hence, if we
were to divide by $2^\be$ and compare coefficients of $\Phi^i(z)$, for 
$i=0,1,\dots,2^{\al+2}-1$, we would obtain the modular differential
equations
\begin{multline} \label{eq:bdiff}
b_{i,\be+1}(z)+(z^7-z^{10})\big(b'_{i,\be+1}(z)+
(i+1)b_{i+1,\be+1}\Phi'(z)\big)
+\text {Pol}_i(z)=0 \quad 
\text {modulo }2,\\[2mm]
i=0,1,\dots,2^{\al+2}-1,
\end{multline}
where $\text {Pol}_i(z)$, $i=0,1,\dots,2^{\al+2}-1$, are certain
Laurent polynomials with integer coefficients. If $i$ is odd, then
the term $(i+1)b_{i+1,\be+1}$ in \eqref{eq:bdiff} vanishes
modulo~$2$. Hence, in this case, the differential equation
\eqref{eq:bdiff} is of the form appearing in Lemma~\ref{lem:diff710}.
The lemma then says that such
a differential equation has a solution if, and only if,
the Laurent polynomial $\text {Pol}_i(z)$ satisfies the condition given
there. We must therefore verify this condition for our Laurent polynomials 
$\text
{Pol}_i(z)$, arising through division of lines 3--7 of \eqref{eq:2be} by
$2^\be$. We shall actually prove (see the following paragraph) 
that $\text {Pol}_i(z)$ is divisible
by $(1-z^3)^2$, for {\it all\/} $i$ with $0\le i\le 2^{\al+2}-1$.
For odd $i$,
Corollary~\ref{cor:diff710} thus implies not only
unique existence of solutions $b_{i,\be+1}(z)$ but also divisibility of
these solutions by $1-z^3$. If we now consider 
Equation~\eqref{eq:bdiff} for even $i$, that is,
\begin{multline*} 
b_{i,\be+1}(z)+(z^7-z^{10})b'_{i,\be+1}(z)+
z^7(1-z^3)b_{i+1,\be+1}\Phi'(z)
+\text {Pol}_i(z)=0 \quad 
\text {modulo }2,\\[2mm]
i=2,4,\dots,2^{\al+2}-2,
\end{multline*}
then we see that divisibility of $b_{i+1,\be+1}(z)$ by $1-z^3$
guarantees that we may again apply Corollary~\ref{cor:diff710} to
obtain that there is also a unique solution $b_{i,\be+1}(z)$ for even
$i$, and that this solution is divisible by $1-z^3$.
In summary, we would have proved that, for an arbitrary positive
integer $\al$, the algorithm of
Section~\ref{sec:method} produces a solution $S_{2^{3\cdot
2^\al}}(z)$ to \eqref{eq:S1diff} modulo $2^{3\cdot 2^\al}$, which is a
polynomial in $\Phi(z)$ with coefficients that are Laurent polynomials in
$z$. This would establish the claim of the theorem.

It remains to prove that lines 3--7 of \eqref{eq:2be} are indeed divisible
by $(1-z^3)^2$. In order to see this conveniently, 
we write
$$
a_{i,\be}(z)=
d_{i,\be}(z)(1-z^3)\quad 
\text {modulo }(1-z^3)^2,\quad \quad 
i=1,2,\dots,2^{\al+2}-1,
$$
and
$$
a_{0,\be}(z)=
-1-2z^2+
d_{0,\be}(z)(1-z^3)\quad 
\text {modulo }(1-z^3)^2,
$$
where the $d_{i,\be}(z)$'s are polynomials of the form $p_0+p_1z+p_2z^2$, for
some integers $p_0,p_1,p_2$. (It should be noted that it is at this
point where we use our inductive hypothesis on divisibility of the
coefficients $a_{i,\be}(z)$.)
Then, reduction of lines 3--7 of \eqref{eq:2be} modulo
$(1-z^3)^2$ leads to the remainder
\begin{multline} \label{eq:Poli}
(3+2z(1-z^3))\Bigg(
-1-2z^2+
(1-z^3)\sum _{i=0} ^{2^{\al+2}-1}d_{i,\be}(z)\Phi^i(z)\Bigg)\\[2mm]
+z^7(1-z^3)\Bigg(
-4z
-3z^2\sum _{i=0} ^{2^{\al+2}-1}d_{i,\be}(z)\Phi^i(z)
+1+4z^2+4z^4
\Bigg)\\[2mm]
+3+6z^2+z(1-z^3)\quad 
\text {modulo }(1-z^3)^2.
\end{multline}
Using the fact that
$$
z^7(1-z^3)=z(1-z^3)\quad 
\text {modulo }(1-z^3)^2
$$
and similar reductions modulo $(1-z^3)^2$,
one sees that the expression in \eqref{eq:Poli} is in fact
divisible by $(1-z^3)^2$, as claimed.
This completes the proof of the theorem.
\end{proof}

Recall that, given a Laurent polynomial $p(z)$ over the integers, we write
$p^{(o)}(z)$ for the odd part $\frac {1} {2}(p(z)-p(-z))$
and $p^{(e)}(z)$ for the even part $\frac {1} {2}(p(z)+p(-z))$ of $p(z)$,
respectively.

\begin{lemma} \label{lem:diff710}
The differential equation
\begin{equation} \label{eq:diff710}
a(z)+(z^7-z^{10})a'(z)+\text {\em Pol}(z)=0\quad 
\text {\em modulo }2,
\end{equation}
where $\text {\em Pol}(z)$ is a given Laurent polynomial in $z$ with integer
coefficients, has a solution that is a Laurent polynomial
if, and only if, $\text {\em Pol}^{(o)}(z)$ is
divisible by $1+z^6$ {\em(modulo~$2$)}. In the latter case, the unique
solution is given by
$$
a(z)=\text {\em Pol}^{(e)}(z)+\frac {1+z^9} {1+z^6}
\text {\em Pol}^{(o)}(z)=
\text {\em Pol}^{(e)}(z)+\frac {1+z^3+z^6} {1-z^3}\text {\em Pol}^{(o)}(z)\quad 
\text {\em modulo }2.
$$
\end{lemma}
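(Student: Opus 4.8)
The plan is to work entirely modulo $2$ and to reduce the differential equation to a purely algebraic one by splitting $a(z)$ into even and odd parts, exactly as in the proof of Lemma~\ref{lem:2x2diff}. First I would record the two elementary reductions that drive everything: modulo $2$ we have $z^7-z^{10}=z^7(1+z^3)$, and for any Laurent polynomial $f(z)$ one has $f'(z)=z^{-1}f^{(o)}(z)$ modulo~$2$, since only the odd-degree monomials of $f$ survive differentiation mod~$2$. Substituting these into \eqref{eq:diff710} turns that differential congruence into
\[
a(z)+z^6(1+z^3)\,a^{(o)}(z)+\text{Pol}(z)=0\quad\text{modulo }2 .
\]

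Next I would separate this congruence into its even and odd parts. Because $a^{(o)}(z)$ involves only odd powers of $z$, the term $z^6a^{(o)}(z)$ has only odd powers while $z^9a^{(o)}(z)$ has only even powers; hence the even part of the congruence reads $a^{(e)}(z)+z^9a^{(o)}(z)+\text{Pol}^{(e)}(z)=0$ and the odd part reads $(1+z^6)\,a^{(o)}(z)=\text{Pol}^{(o)}(z)$, both modulo~$2$. A Laurent polynomial $a(z)$ solves the original congruence if and only if its parts satisfy these two split equations; the even equation can always be met once $a^{(o)}(z)$ is known (by setting $a^{(e)}(z):=\text{Pol}^{(e)}(z)+z^9a^{(o)}(z)$, which is visibly even), so solvability is equivalent to the existence of a Laurent polynomial $a^{(o)}(z)$ with $(1+z^6)a^{(o)}(z)=\text{Pol}^{(o)}(z)$. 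Since $(\mathbb{Z}/2\mathbb{Z})[z,z^{-1}]$ is an integral domain, this is the case precisely when $1+z^6$ divides $\text{Pol}^{(o)}(z)$ in $(\mathbb{Z}/2\mathbb{Z})[z,z^{-1}]$, and then $a^{(o)}(z)=\text{Pol}^{(o)}(z)/(1+z^6)$ is uniquely determined. I would also note that this quotient automatically consists of odd powers only: writing $q=\text{Pol}^{(o)}/(1+z^6)=q^{(e)}+q^{(o)}$, the product $(1+z^6)q^{(e)}$ has only even powers, so it must vanish (as $\text{Pol}^{(o)}(z)$ has only odd powers), whence $q^{(e)}=0$ by the domain property. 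Thus $a^{(o)}(z)$ is a genuine "odd part," and the full solution is
\[
a(z)=a^{(e)}(z)+a^{(o)}(z)=\text{Pol}^{(e)}(z)+(1+z^9)\,a^{(o)}(z)=\text{Pol}^{(e)}(z)+\frac{1+z^9}{1+z^6}\,\text{Pol}^{(o)}(z),
\]
which is unique. Finally, the two displayed forms of the answer coincide because, modulo~$2$, one has $1+z^6=(1+z^3)^2$ and $1+z^9=(1+z^3)(1+z^3+z^6)$, so that $\frac{1+z^9}{1+z^6}=\frac{1+z^3+z^6}{1+z^3}=\frac{1+z^3+z^6}{1-z^3}$.

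Since the argument is essentially bookkeeping, I do not expect a serious obstacle. The one point that needs a little care is the interplay, via the domain property of $(\mathbb{Z}/2\mathbb{Z})[z,z^{-1}]$, between the \emph{necessity} of the divisibility condition (no "extra" Laurent polynomial solution can appear, because any solution $a$ forces $(1+z^6)a^{(o)}=\text{Pol}^{(o)}$) and the fact that the computed $a^{(o)}(z)$ has no even component; everything else is the parity accounting in the even/odd split and the two factorization identities over $\mathbb{F}_2$.
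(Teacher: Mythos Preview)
Your proof is correct and follows essentially the same route as the paper. The paper's version checks the operator $a\mapsto a+(z^{7}-z^{10})a'$ on monomials $z^m$ (obtaining $z^m$ for even $m$ and $(1+z^6)z^m+z^9 z^m$ for odd $m$) and then says the claim ``follows immediately''; your use of $f'\equiv z^{-1}f^{(o)}\pmod 2$ and the explicit even/odd split is precisely the content of that monomial computation, with the uniqueness argument and the verification that $\text{Pol}^{(o)}/(1+z^6)$ has no even component spelled out in detail rather than left implicit.
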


\begin{proof} Let $a_0(z)=z^m$. Then it is obvious that
$$
a_0(z)+(z^7-z^{10})a'_0(z)=a_0(z)\quad 
\text {modulo }2
$$ 
if $m$ is even, and that 
$$
a_0(z)+(z^7-z^{10})a'_0(z)=(1+z^6)a_0(z)+z^9a_0(z)\quad 
\text {modulo }2
$$ 
if $m$ is odd. The assertion of the lemma follows now immediately.
\end{proof}

\begin{corollary} \label{cor:diff710}
If, in the differential equation \eqref{eq:diff710}{\em,} the 
Laurent polynomial
$\text {\em Pol}(z)$ is divisible by $(1-z^3)^2$ {\em(modulo~$2$),} 
then the uniquely determined solution $a(z)$ is divisible by $1-z^3$
{\em(modulo~$2$)}.
\end{corollary}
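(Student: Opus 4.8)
The plan is to read the conclusion straight off the explicit solution formula furnished by Lemma~\ref{lem:diff710}, working throughout in the Laurent polynomial ring over $\mathbb{F}_2$. The only algebraic facts I would need are the elementary factorisations
\[
(1-z^3)^2 \equiv 1+z^6 \equiv (1+z^3)^2
\qquad\text{and}\qquad
1+z^9 \equiv (1+z^3)(1+z^3+z^6)
\quad\text{modulo }2,
\]
each verified by expanding and discarding coefficients that are multiples of $2$. By hypothesis, $\text{Pol}(z)$ divisible by $(1-z^3)^2$ modulo $2$ means exactly that $\text{Pol}(z)\equiv(1+z^6)\,Q(z)$ modulo $2$ for some Laurent polynomial $Q(z)$.

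Next I would pass to even and odd parts. Since $1+z^6$ contains only even powers of $z$, multiplication by it preserves the splitting into even-exponent and odd-exponent components; hence, writing $Q(z)=Q^{(e)}(z)+Q^{(o)}(z)$, we get $\text{Pol}^{(e)}(z)\equiv(1+z^6)\,Q^{(e)}(z)$ and $\text{Pol}^{(o)}(z)\equiv(1+z^6)\,Q^{(o)}(z)$ modulo $2$. In particular $\text{Pol}^{(o)}(z)$ is divisible by $1+z^6$ modulo $2$, so the solvability criterion of Lemma~\ref{lem:diff710} is automatically met, and the unique Laurent polynomial solution of \eqref{eq:diff710} is
\[
a(z)\;\equiv\;\text{Pol}^{(e)}(z)+\frac{1+z^9}{1+z^6}\,\text{Pol}^{(o)}(z)
\;\equiv\;(1+z^6)\,Q^{(e)}(z)+(1+z^9)\,Q^{(o)}(z)
\quad\text{modulo }2.
\]

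Finally I would read off the divisibility: by the factorisations above, $(1+z^6)\,Q^{(e)}(z)=(1+z^3)^2\,Q^{(e)}(z)$ and $(1+z^9)\,Q^{(o)}(z)=(1+z^3)(1+z^3+z^6)\,Q^{(o)}(z)$ are each divisible by $1+z^3=1-z^3$ modulo $2$, hence so is their sum $a(z)$. I do not expect a genuine obstacle here, since the whole argument is bookkeeping around the closed form of Lemma~\ref{lem:diff710}; the only points deserving an explicit sentence are that even/odd splitting commutes with multiplication by the \emph{even} Laurent polynomial $1+z^6$, and that the stronger divisibility assumption of the corollary already guarantees the solvability hypothesis of the lemma, so that the displayed formula for $a(z)$ is legitimately applicable.
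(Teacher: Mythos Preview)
Your argument is correct and is exactly the computation the paper has in mind: the corollary is stated in the paper without a separate proof, as an immediate consequence of the explicit formula in Lemma~\ref{lem:diff710}, and your write-up supplies precisely those details (the factorisations $(1-z^3)^2\equiv 1+z^6$ and $1+z^9\equiv(1+z^3)(1+z^3+z^6)$ modulo~$2$, together with the observation that multiplying by the even polynomial $1+z^6$ commutes with taking odd and even parts).
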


We have implemented the algorithm described in the proof of
Theorem~\ref{thm:Unterg}. As an illustration, we present
the result for the modulus $64$.

{
\allowdisplaybreaks
\begin{theorem} \label{thm:Untergr64}
Let $\Phi(z)=\sum _{n\ge0} ^{}z^{2^n}$.
Then we have
\begin{multline} \label{eq:Loes2}
\sum _{n\ge0} ^{}s_{n+1}(PSL_2(\mathbb Z))\,z^n\\[2mm]
=
32 z^{50}+48 z^{44}+48
   z^{41}+32 z^{36}+32
   z^{35}+32 z^{33}+48
   z^{32}+16 z^{28}+40
   z^{26}+16 z^{25}\\[1mm]
+32
   z^{24}+32 z^{23}+16
   z^{22}+16 z^{21}+52
   z^{20}+32 z^{19}+40
   z^{18}+60 z^{17}\\[1mm]
+48
   z^{16}+4 z^{14}+32 z^{13}+4
   z^{12}+36 z^{11}+16
   z^{10}+60 z^9+2 z^8+16
   z^7+4 z^6\\[1mm]
+60 z^5+44 z^4+16
   z^3+54 z^2+60 z
+32+\frac{56}{z}+\frac{
   36}{z^2}+\frac{51}{z^3}+\frac{33}{z^4}+\frac{52}{z^5}+\frac {1} {z^7}\\[2mm]
+\bigg(32
   z^{34}+32 z^{26}+32
   z^{25}+32 z^{24}+16
   z^{22}+32 z^{21}+32
   z^{20}+32 z^{17}+32
   z^{16}\\[1mm]
   +48 z^{14}+16
   z^{13}+16 z^{12}+16
   z^{11}+32 z^{10}+32 z^8+48
   z^7+8 z^5+8 z^4+48 z^3+24
   z+32\\+\frac{20}{z}+\frac{12}
   {z^2}+\frac{8}{z^3}+\frac{3
   6}{z^4}+\frac{4}{z^5}+\frac
   {24}{z^6}\bigg)
   \Phi(z)\\[2mm]
+\bigg(32
   z^{34}+32 z^{29}+32
   z^{28}+32 z^{26}+32
   z^{24}+32 z^{21}+48
   z^{19}+32 z^{18}+48
   z^{17}+32 z^{14}\\[1mm]
   +48 z^{13}+32 z^{12}+56
   z^{10}+8 z^9+16 z^8+48
   z^7+24 z^6+56 z^5+44 z^4+16
   z^3\\[1mm]
   +48 z^2+40
   z+44+\frac{60}{z}+\frac{50}
   {z^2}+\frac{48}{z^3}+\frac{
   8}{z^4}+\frac{50}{z^5}+\frac{52}{z^6}+\frac{52}{z^7}
  \bigg) \Phi^2(z)\\[2mm]
+\bigg(32
   z^{28}+32 z^{24}+32
   z^{21}+32 z^{20}+32
   z^{19}+48 z^{16}+32
   z^{14}+32 z^{13}+32
   z^{12}\\[1mm]
   +32 z^{11}+16
   z^{10}+48 z^9+8 z^8+48
   z^6+56 z^4+8 z^3+16 z^2+48
   z+56+\frac{32}{z}+\frac{20}
   {z^2}\\+\frac{52}{z^3}+\frac{
   4}{z^4}+\frac{36}{z^5}+\frac{12}{z^6}+\frac{36}{z^7}\bigg) 
\Phi^3(z)\\[2mm]
+\bigg(32 z^{44}+32
   z^{41}+32 z^{33}+32
   z^{32}+32 z^{31}+32
   z^{30}+32 z^{28}+32
   z^{27}+16 z^{26}+32
   z^{24}\\[1mm]
   +32 z^{23}+48
   z^{22}+16 z^{21}+40
   z^{20}+32 z^{19}+32
   z^{18}+24 z^{17}+16
   z^{16}+48 z^{15}+32
   z^{14}\\[1mm]
   +16 z^{13}+8
   z^{12}+32 z^{11}+56
   z^{10}+56 z^9+44 z^8+40
   z^7+48 z^6+16 z^5+20 z^4+56
   z^3+30 z^2\\[1mm]
   +32z+28+\frac{40}{z}+\frac{34}
   {z^2}+\frac{52}{z^3}+\frac{
   17}{z^4}+\frac{26}{z^5}+\frac{40}{z^6}+\frac{29}{z^7}\bigg) 
\Phi^4(z)\\[2mm]
+\bigg(32
   z^{32}+32 z^{30}+32
   z^{26}+32 z^{24}+32
   z^{23}+32 z^{22}+32
   z^{21}+48 z^{20}+48
   z^{18}+32 z^{16}+48
   z^{14}\\[1mm]
   +32 z^{13}+48
   z^{12}+48 z^{11}+32 z^8+16
   z^7+56 z^6+48 z^5+48 z^4+40
   z^3+16 z^2\\[1mm]
   +32z+56+\frac{24}{z}+\frac{24}
   {z^2}+\frac{20}{z^3}+\frac{
   24}{z^4}+\frac{40}{z^5}+\frac{20}{z^6}\bigg)
   \Phi^5(z)\\[2mm]
+\bigg(32
   z^{32}+32 z^{31}+32
   z^{30}+32 z^{27}+32
   z^{24}+32 z^{23}+48
   z^{19}+16 z^{18}+48
   z^{17}\\[1mm]
   +16 z^{15}+48
   z^{14}+32 z^{12}+32
   z^{11}+56 z^8+40 z^7+56
   z^6+16 z^5\\[1mm]
   +8 z^4+56 z^3+4
   z^2+56
   z+32+\frac{8}{z}+\frac{52}{
   z^2}+\frac{60}{z^3}+\frac{3
   0}{z^4}+\frac{20}{z^5}+\frac{20}{z^6}+\frac{14}{z^7}\bigg) 
\Phi^6(z)\\[2mm]
+\bigg(32
   z^{30}+32 z^{26}+32
   z^{21}+32 z^{20}+48
   z^{18}+32 z^{16}+48
   z^{14}+32 z^{13}+48
   z^{10}+16 z^9+8 z^6\\[1mm]
+32
   z^5+16 z^4+16 z^3+8 z^2+48
   z+40+\frac{48}{z}+\frac{8}{
   z^2}+\frac{40}{z^3}+\frac{6
   0}{z^4}+\frac{8}{z^5}+\frac
   {24}{z^6}+\frac{60}{z^7}\bigg) \Phi^7(z)\\[2mm]
\text {\em modulo }64.
\end{multline}
\end{theorem}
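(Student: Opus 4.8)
Since $64=2^{3\cdot2^\al}$ with $\al=1$, the statement is the instance $\al=1$ of Theorem~\ref{thm:Unterg}, and the degree bound $2^{\al+2}-1=7$ displayed there matches the shape of \eqref{eq:Loes2}. Accordingly, the plan is \emph{not} to re-prove existence of such a representation (that is exactly what Theorem~\ref{thm:Unterg} already gives), but to record and certify the concrete coefficients: the right-hand side of \eqref{eq:Loes2} is precisely what the algorithm in the proof of Theorem~\ref{thm:Unterg} outputs when run with $\al=1$. In that run, the ``base step'' produces $a_{0,1}(z)$ and $a_{2^{\al+1},1}(z)=a_{2,1}(z)$ as given there, while the five iterative steps $\be=1,2,3,4,5$ determine the remaining $2$-adic digits of the eight Laurent polynomials $a_0(z),\dots,a_7(z)$ uniquely, the solvability at each step being guaranteed by Lemma~\ref{lem:diff710} and Corollary~\ref{cor:diff710}. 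So a self-contained proof reduces to exhibiting these eight Laurent polynomials and verifying the claimed congruence directly.

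The certification step I would carry out is a substitution into \eqref{eq:S1diff}. Write $\tilde S(z)$ for the right-hand side of \eqref{eq:Loes2}. First replace the derivative by its reduction modulo $64$,
\[
\Phi'(z)\equiv 1+2z+4z^3+8z^7+16z^{15}+32z^{31}\pmod{64},
\]
and substitute $\tilde S(z)$ for $S(z)$ in \eqref{eq:S1diff}. Expanding, the left-hand side becomes a polynomial in $\Phi(z)$ with Laurent-polynomial coefficients; because of the term $S^2(z)$ (and the extra factor $\Phi'(z)$) powers $\Phi^j(z)$ with $8\le j\le 15$ appear. Reduce all of these back to degree $\le 7$ by repeatedly applying the relation \eqref{eq:PhiRel} with $\al=1$,
\[
\bigl(\Phi^4(z)+6\Phi^3(z)+(2z+3)\Phi^2(z)+(2z+6)\Phi(z)+2z+5z^2\bigr)^2\equiv 0\pmod{64}.
\]
After collecting terms, the left-hand side of \eqref{eq:S1diff} takes the form $\sum_{j=0}^{7}P_j(z)\Phi^j(z)$ with $P_j(z)\in\Z[z,z^{-1}]$, and the assertion reduces to checking $P_j(z)\equiv0\pmod{64}$ for $j=0,1,\dots,7$ — eight finite Laurent-polynomial identities. (Equivalently, one may instead rewrite the powers of $\Phi(z)$ via \eqref{eq:Phi2}--\eqref{eq:Phi4} and their higher analogues in terms of the series $H_{a_1,\dots,a_r}(z)$ of Section~\ref{sec:extr}, and invoke their linear independence, Corollary~\ref{lem:Hind}, to read off the vanishing conditions; this avoids the slight ambiguity of the reduction via \eqref{eq:PhiRel}.)

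Finally, to conclude $\tilde S(z)\equiv S(z)\pmod{64}$, I would invoke the uniqueness already noted after \eqref{eq:S1diff}: that differential equation encodes a recurrence computing $s_{n+1}(PSL_2(\Z))$ from earlier terms, so it has a unique formal \emph{power series} solution. Hence it suffices to verify, in addition to the differential equation holding modulo $64$, that the expansion of $\tilde S(z)$ has vanishing coefficients at all negative powers of $z$ modulo $64$; then comparison of coefficients of $z^0,z^1,z^2,\dots$ propagates agreement with $S(z)$ to all orders (the $z^0$-coefficient being forced to $1=s_1(PSL_2(\Z))$ by \eqref{eq:S1diff} itself).

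The second step is lengthy but entirely routine. The one genuine subtlety — and the only place where something could in principle fail — is the cancellation of negative powers of $z$: the individual Laurent-polynomial coefficients in \eqref{eq:Loes2} have poles up to $z^{-7}$, while $\Phi(z)=z+z^2+z^4+\cdots$ has no constant term, so the fact that $\tilde S(z)$ nevertheless represents a bona fide power series modulo $64$ rests on these poles cancelling across the eight summands. I expect this, together with keeping the reduction \eqref{eq:PhiRel} applied consistently, to be the main bookkeeping obstacle; the rest is mechanical expansion and comparison of Laurent polynomials.
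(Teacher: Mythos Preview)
Your proposal is correct and matches the paper's own treatment. The paper does not give a separate proof of this theorem; it simply presents \eqref{eq:Loes2} as the output of the algorithm from the proof of Theorem~\ref{thm:Unterg} run with $\al=1$, and your verification-by-substitution plan (plug $\tilde S(z)$ into \eqref{eq:S1diff}, reduce high powers via \eqref{eq:PhiRel} with $\al=1$, and invoke uniqueness of the power-series solution) is exactly the routine check the paper has in mind---compare the analogous remark about \eqref{eq:Cat64} in Section~\ref{sec:Cat}. One tiny quibble: since $\Phi'(z)$ is a polynomial in $z$ modulo $64$, the highest power of $\Phi$ appearing after substitution is $14$ (from $S^2$), not $15$.
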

}

\section{Counting permutation representations of $\Gamma_m(3)$ for $m$ prime}
\label{sec:lift}

Let $m$ be a prime and, for a finitely generated group $\Gamma$, let
$h_\Gamma(n):= \vert \mathrm{Hom}(\Gamma, S_n)\vert$. We want to
determine the function $h_{\Gamma_m(3)}(n)$ counting the permutation
representations of degree $n$
of the lift $\Gamma_m(3)$ of the inhomogeneous modular 
group $PSL_2(\mathbb Z)\cong \mathfrak H(3)$. 
Suppose first that $m\geq5$, and classify the
representations $\Gamma_m(3)\rightarrow S_n$ by the image $\rho\in S_n$
of the central element $x^2=y^3$. The permutation $\rho$ must be of
the form 
\[
\rho = \prod_{i=1}^r \sigma_i,\quad 0\leq r\leq \lfloor n/m\rfloor,
\]
with pairwise disjoint $m$-cycles $\sigma_i$.

For fixed $r$, the symmetric group $S_n$ contains exactly
\begin{equation}
\label{Eq:RhoCount}
\frac{1}{r!} \binom{n}{m,\ldots,m,n-mr} (m-1)!^r = \frac{n!}{r!\,
(n-mr)!\, m^r}
\end{equation}
such elements $\rho$.

Next, given such $\rho$, the image of the generator $x$ will contain a
certain number $s$ of $2m$-cycles in its disjoint cycle decomposition,
$0\leq s\leq \lfloor r/2\rfloor$, each of which breaks into two
$m$-cycles when squared. Thus, in order to construct a square root of
$\rho$ (i.e., a possible image of $x$), we need to
\begin{enumerate}
\item[(i)] fix $s$ in the
range $0\leq s\leq \lfloor r/2\rfloor$, 
\vspace{2mm}
\item[(ii)] select $s$ $2$-element
subsets from the $r$ $m$-cycles of $\rho$, which can be done in 
\[
\frac{1}{s!} \binom{r}{2s} \binom{2s}{2,\ldots,2} = \frac{r!}{2^s
s!\, (r-2s)!}
\] 
different ways,
\vspace{2mm} 
\item[(iii)] lift each of these $s$ pairs of $m$-cycles to a
$2m$-cycle (whose square is the product of the two given $m$-cycles),
which can be done in $m$ ways,
\vspace{2mm} 
\item[(iv)] compute $\sigma^{\om}$ for each of
the $r-2s$ remaining $m$-cycles $\sigma$, where $\om$ is the
multiplicative inverse of $2$ modulo~$m$, and
\vspace{2mm} 
\item[(v)] select a permutation $\pi$
subject only to the condition that $\pi^2=1$ on the $n-mr$ letters not
involved in any of the $r$ $m$-cycles of $\rho$. 
\end{enumerate} 
Hence, there are
precisely 
\begin{equation}
\label{Eq:SquareRoots}
r!\, h_{C_2}(n-mr) \sum_{s=0}^{\lfloor r/2\rfloor} \frac{m^s}{2^s
s!\, (r-2s)!}
\end{equation}
distinct square roots for each $\rho$ involving $r$ $m$-cycles.

Similarly, the number of cubic roots of such $\rho$ is given by
\begin{equation}
\label{Eq:CubicRoots}
r!\, h_{C_3}(n-mr) \sum_{t=0}^{\lfloor r/3\rfloor} \frac{m^{2t}}{3^t
t!\, (r-3t)!}
\end{equation}
(classify the cubic roots by the number $t$ of $3m$-cycles, and use
the fact that each product of three disjoint $m$-cycles is the cube of
precisely $2m^2$ $3m$-cycles). Multiplying
(\ref{Eq:RhoCount})--(\ref{Eq:CubicRoots}) and summing over
$r=0,1,\ldots, \lfloor n/m\rfloor$, we find that 
\begin{equation}
\label{Eq:HomCountGen}
h_{\Gamma_m(3)}(n) = n! \sum_{r=0}^{\lfloor n/m\rfloor}
\sum_{s=0}^{\lfloor r/2\rfloor} \sum_{t=0}^{\lfloor r/3\rfloor}
\frac{r!\, h_{C_2}(n-mr) h_{C_3}(n-mr)}{2^s 3^t m^{r-s-2t} s!\, t!\,
(n-mr)!\,
(r-2s)!\, (r-3t)!},\quad m\geq5. 
\end{equation}
The cases where $m=2,3$ have to be treated separately. By arguments
similar to the ones above, one finds that 
\begin{equation}
\label{Eq:Gamma2HomCount}
h_{\Gamma_2(3)}(n) = n! \sum_{r=0}^{\lfloor n/4\rfloor}
\sum_{s=0}^{\lfloor 2r/3\rfloor} \frac{(2r)!\, h_{C_2}(n-4r)
h_{C_3}(n-4r)}{2^{2(r-s)} 3^s r!\, s!\, (n-4r)!\, (2r-3s)!}, 
\end{equation}
and that
\begin{equation}
\label{Eq:Gamma3HomCount}
h_{\Gamma_3(3)}(n) = n! \sum_{r=0}^{\lfloor n/9\rfloor}
\sum_{s=0}^{\lfloor 3r/2\rfloor} \frac{(3r)!\, h_{C_2}(n-9r)
h_{C_3}(n-9r)}{2^s 3^{2r-s} r!\, s!\, (n-9r)!\, (3r-2s)!}. 
\end{equation} 
Furthermore, it is well-known that, for a prime $p$,
$$
h_{C_p}(n)=\sum_{k=0}^{\fl{n/p}}\frac{n!} {p^k\, k!\,(n-pk)!},
$$
which allows us to make Formulae 
\eqref{Eq:HomCountGen}--\eqref{Eq:Gamma3HomCount}
completely explicit.

\section{Subgroup numbers for the homogeneous modular group}
\label{sec:SL2Z}

In this section we consider the problem of determining the behaviour
of the number of index-$n$-subgroups in the homogeneous modular group
$SL_2(\mathbb Z)$ modulo powers of~$2$. By a folklore result that goes
back at least to Dey \cite{Dey}, these subgroup numbers are in a direct
relation to numbers of permutation representations of $SL_2(\mathbb
Z)$. Our starting point is a recurrence with polynomial coefficients
for the latter numbers,
which is then translated into a Riccati-type differential equation for
the generating function $\sum _{n\ge0} ^{}s_{n+1}(SL_2(\mathbb
Z))\,z^n$ of the subgroup numbers. (Equation~\eqref{eq:Riccati16}
displays this equation when reduced modulo~$16$.) Our method from
Section~\ref{sec:method} is then applied to this differential
equation. Direct application already fails for the
modulus~$8$. Interestingly, if we instead apply our method with the
minimal polynomial for the modulus~$16$ given in 
Proposition~\ref{prop:minpol}, then our algorithm produces a
result for modulus~$8$
(see Theorems~\ref{thm:Sl2Z8} and \ref{thm:1}), but then 
fails at the level of modulus~$16$. By the enhancement of the method outlined
in Appendix~\ref{appD}, we are nevertheless able to treat the
subgroup numbers $s_{n}(SL_2(\Z))$ modulo~$16$ as well (see
Theorem~\ref{thm:Sl2Z16}).
In view of the already substantial computational effort involved in
the case of modulus~$16$, we did not try to push our analysis further
to higher powers of $2$. In particular, as opposed to the case of the
subgroup numbers of $PSL_2(\Z)$, it remains unclear whether 
it is possible to express the generating function
$\sum _{n\ge0} ^{}s_{n+1}(SL_2(\mathbb Z))\,z^n$, when the
coefficients are reduced modulo~$2^\ga$, as a polynomial in $\Phi(z)$
with coefficients that are Laurent polynomials in $z$ over the
integers for all $\ga\ge1$. We feel, however, that this should be the
case; see Conjecture~\ref{conj:SL2Z}.

\medskip
Let us start with the aforementioned result
(cf.\ \cite[Theorem~6.10]{Dey}, see
\cite[Prop.~1]{DM} for a conceptual proof, plus generalisations)
relating the numbers of subgroups of a finitely generated group to
the numbers of its permutation representations.

\begin{proposition}
Let $\Ga$ be a finitely generated group. Then we have
\begin{equation} \label{eq:Dey}
\sum_{n=0}^\infty \vert\Hom(\Ga,S_n)\vert\frac {z^n} {n!}=
\exp\left(\sum_{n=1}^\infty s_{n}(\Ga)\frac {z^n} {n}\right) .
\end{equation}
\end{proposition}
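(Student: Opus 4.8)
The plan is to use the standard orbit decomposition of permutation representations, combined with a count of the \emph{transitive} ones in terms of subgroups. Write $h_\Ga(n)=\vert\Hom(\Ga,S_n)\vert$ for $n\ge1$, set $h_\Ga(0)=1$, and let $t_\Ga(k)$ denote the number of \emph{transitive} actions of $\Ga$ on $\{1,2,\dots,k\}$, $k\ge1$. A homomorphism $\Ga\to S_n$ is the same datum as an action of $\Ga$ on $\{1,\dots,n\}$, and every such action decomposes uniquely as the disjoint union of its orbits, the action restricted to each orbit being transitive; conversely, a set partition of $\{1,\dots,n\}$ equipped with a transitive $\Ga$-action on each block glues back to a $\Ga$-action on $\{1,\dots,n\}$. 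The first step is to turn this bijection into a recursion.

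Concretely, I would focus on the orbit of the point $n$: if it has size $k$ (with $1\le k\le n$), then there are $\binom{n-1}{k-1}$ ways to pick the other $k-1$ points of that orbit, $t_\Ga(k)$ ways to choose a transitive action on it, and $h_\Ga(n-k)$ ways to choose an arbitrary $\Ga$-action on the remaining $n-k$ points. This yields
\[
h_\Ga(n)=\sum_{k=1}^{n}\binom{n-1}{k-1}\,t_\Ga(k)\,h_\Ga(n-k),\qquad n\ge1,
\]
which is exactly the recurrence satisfied by the coefficients of $\exp\bigl(\sum_{k\ge1}t_\Ga(k)z^k/k!\bigr)$ (equivalently, this identity of exponential generating functions follows from the ``exponential formula'' for combinatorial species, with $\Ga$-sets as the species and transitive $\Ga$-sets as its connected part). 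Hence
\[
\sum_{n\ge0}h_\Ga(n)\frac{z^n}{n!}=\exp\Bigl(\sum_{k\ge1}t_\Ga(k)\frac{z^k}{k!}\Bigr).
\]

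The second step is to evaluate $t_\Ga(k)$. Given a transitive action of $\Ga$ on $\{1,\dots,k\}$, the stabiliser $\stab(1)$ is a subgroup of index $k$; conversely, for a fixed index-$k$ subgroup $H$, a transitive action on $\{1,\dots,k\}$ with $\stab(1)=H$ is the same thing as a bijection $\Ga/H\to\{1,\dots,k\}$ sending the coset $H$ to $1$, and two such bijections inducing the same action differ by a $\Ga$-equivariant self-map of $\Ga/H$ fixing $H$, i.e.\ by the identity. So there are precisely $(k-1)!$ transitive actions on $\{1,\dots,k\}$ with a prescribed point stabiliser at $1$, and summing over the $s_k(\Ga)$ subgroups of index $k$ gives $t_\Ga(k)=(k-1)!\,s_k(\Ga)$. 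Substituting into the displayed identity and using $(k-1)!/k!=1/k$ turns the exponent into $\sum_{k\ge1}s_k(\Ga)z^k/k$, which is precisely \eqref{eq:Dey}.

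The only real subtlety is the bookkeeping in the first step: one must check that the assignment ``set partition plus transitive action on each block'' $\mapsto$ ``$\Ga$-action'' is a genuine bijection onto all $\Ga$-actions — that orbits of a $\Ga$-set form a canonical set partition and that no overcounting occurs when summing over the possible size $k$ of the orbit of $n$. Once this is granted, the remaining two steps are routine.
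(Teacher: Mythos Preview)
Your argument is correct and is the standard proof of this folklore identity. The paper does not give its own proof of this proposition: it states the result and cites Dey \cite{Dey} and Dress--M\"uller \cite{DM}, so there is nothing to compare on the level of argument; your orbit-decomposition/exponential-formula approach, together with the count $t_\Ga(k)=(k-1)!\,s_k(\Ga)$ of transitive actions via point stabilisers, is precisely the reasoning underlying those references.
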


We take $\Ga=\Ga_2(3)=SL_2(\mathbb Z)$ and combine \eqref{eq:Dey} with
\eqref{Eq:Gamma2HomCount}, the latter giving an explicit formula for 
the homomorphism numbers
$h_n:=\vert\Hom(SL_2(\mathbb Z),S_n)\vert$. Using the Guessing package
\cite{Guess}, we found
a recurrence of order $30$ for the sequence $(h_n/n!)_{n\ge0}$,
with coefficients that are polynomials 
in $n$ over $\mathbb Z$.
The validity of the recurrence was verified by
computing a certificate using Koutschan's {\sl Mathematica} package 
{\tt HolonomicFunctions} \cite{KoutAA}.\footnote{\label{foot:1}%
The certificate has 4~megabytes, and, to obtain it, required about
30~hours of computation time.
The coefficients of this
recurrence are polynomials in $n$ of degree~$34$. Interestingly, there
is a recurrence of order $32$ with leading coefficient~$1$.
Although we did not try to prove it, it is likely that the recurrence
of order $30$ is the recurrence of minimal order.}
However, this recurrence is not suitable for our purpose, for which
we require a recurrence with coefficients that are polynomials in~$n$
over $\mathbb Z$, and with leading coefficient~$n$.
A recurrence of this form, if it exists, must be a left multiple of
the recurrence operator corresponding to the minimal order recurrence.
The construction of such left multiples is known as {\it desingularisation},
and algorithms are known for this purpose~\cite{AvH}. This technique
can be used to eliminate factors from the leading coefficient of
the recurrence (whenever possible), but it cannot be used to ensure
that the leading coefficient be a monic polynomial.
The recurrence of order $32$ mentioned in Footnote~\ref{foot:1},
with leading coefficient $1$, could indeed be used for our purpose,
by simply multiplying it by $n$. However, since this recurrence has
high-degree polynomials as coefficients, we preferred to work with
a different recurrence with lower degree polynomials as coefficients. 
The price to pay is that the order of such a recurrence will be higher.
So, by an indeterminate Ansatz, we computed a
candidate for a recurrence of the desired form of order $50$, with polynomial
coefficients of degree at most $5$.\footnote{We used the function
{\tt LinSolveQ} of the Guessing package \cite{Guess}, which uses
modular arithmetic, in order to
solve the arising system of linear equations. {\sl
Mathematica}'s built-in linear system solver is not capable of
solving it on current hardware due to the huge numerators and denominators of rational numbers 
which arise during the computation.} To be precise, it is the uniquely
determined recurrence of the form
$$
\sum_{k=0}^{50}
\bigg(\sum_{i=0}^{5}a(k,i)n^i\bigg)
\frac {\vert\Hom(SL_2(\mathbb Z),S_{n-k})\vert} {(n-k)!}=0,\quad \quad n\ge50,
$$
where
\begin{align*}
a(0,0)&=a(0,2)=a(0,3)=a(0,4)=a(0,5)=0,\\
a(0,1)&=1,\\
a(50,5)&=47323476536606893277939021129424044201294092725261226600745838\backslash\\
&\quad \quad \quad \quad  993897087202045010603943040012232525, \\
a(50,4)&=
-853333370519051585059335896571817612918194491041969759097679\backslash\\
&\quad \quad \quad \quad  3078743106989966250706985019403282594096, \\
a(49,5)&= 
  2507660784286104701612089471873568042396155618028516886767837\backslash\\
&\quad \quad \quad \quad  559764248217845308468763736164634176, \\
a(49,4)&=
a(48,5)=
a(48,4)=
a(47,5)=
a(47,4)\\
&=
a(46,5)=
a(46,4)=
a(45,5)=
a(45,4)=0.
\end{align*}
Subsequently, we checked
that this recurrence is a left-multiple of the certified
recurrence of order $30$, thereby establishing validity of this
candidate recurrence of order $50$.
This last recurrence was then converted into a linear differential equation 
with polynomial coefficients for the series
$$H(z):=\sum_{n=0}^\infty h_n\frac {z^n} {n!}
=\sum_{n=0}^\infty \vert\Hom(SL_2(\mathbb Z),S_n)\vert\frac {z^n} {n!}.$$
Finally, this last mentioned differential equation can be
translated into a Riccati-type differential equation for
the generating function 
\begin{equation} \label{eq:Sdef}
S(z)=\sum _{n\ge0} ^{}s_{n+1}(SL_2(\mathbb Z))\,z^n
\end{equation}
for the subgroup numbers of $SL_2(\mathbb Z)$.
This is done by differentiating the relation \eqref{eq:Dey}, with
$\Ga=SL_2(\mathbb Z)$, several times and by dividing by $H(z)$. This leads
to relations of the form
\begin{equation} \label{eq:HS}
\frac {H^{(k)}(z)}
{H(z)}=P_k(S(z),S'(z),\dots),\quad \quad 
k=1,2,\dots,
\end{equation}
where $P_k(S(z),S'(z),\dots)$ is a polynomial in $S(z)$ and its
derivatives, which can be determined explicitly using the Fa\`a di Bruno
formula for derivatives of composite functions (cf.\ \cite[Sec.~3.4]{ComtAA};
but see also \cite{CraiAA,JohWAE}). 
Substituting these relations in the linear differential equation for
$H(z)$, one obtains the announced Riccati-type differential equation
for $S(z)$. It turns out that this differential equation has integral
coefficients, so that it is amenable to our method from
Section~\ref{sec:method}. The differential equation
cannot be displayed here since this would 
require about ten pages. Its reduction modulo~$16$ is written out in
\eqref{eq:Riccati16}. Our method from
Section~\ref{sec:method} with $\al=0$ applied to \eqref{eq:Riccati16}
does in fact {\it not\/} produce a result modulo~$8=2^{3\cdot 2^0}$ 
(it stops at the
level of modulus~$4$). If, however, we use the method from 
Section~\ref{sec:method} with the minimal polynomial for the
modulus~$16$ in place of the one for the modulus~$8$, then the method
goes through up to modulus~$8$ (but fails for modulus~$16$).
This yields the following theorem. It refines the parity result
\cite[Eq.~(6.3) with $\vert H\vert=1$, $q=3$, $m=2$]{KrMuAC}.

{
\allowdisplaybreaks
\begin{theorem} \label{thm:Sl2Z8}
Let $\Phi(z)=\sum _{n\ge0} ^{}z^{2^n}$.
Then we have
\begin{multline} \label{eq:Sl2Z8}
\sum _{n\ge0} ^{}s_{n+1}(SL_2(\mathbb Z))\,z^n\\[2mm]
=
4 z^{20}+4 z^{17}+4 z^{14}+4
   z^{12}+4
   z^{10}+4 z^9+6
   z^8+4 z^5+6 z^4+4
   z^2+4 z+6\\[1mm]
+\frac{7}{z^2}+\frac{3}{z^3}+\frac{6}{z^6
   }+\frac{6}{z^7}
+\frac{4}{z^8}+\frac{1}{z^9}+\frac{3}{z^{11}}+\frac{6}{z^{12}}\\[1mm]
+\left(
4 z^3
+4   z^2
+\frac{4}{z}
+\frac{6}{z^3}
+\frac{6}{z^4   }
+\frac{6}{z^6}
+\frac{2}{z^7}
+\frac{4}{z   ^8}
+\frac{4}{z^9}
+\frac{4}{z^{10}}
+\frac{6}{z^{12   }}
+\frac{2}{z^{13}}
\right)
   \Phi(z)\\[2mm]
+\left(
4
   z^8
+4 z^4+4
   z^3+6
   z^2+4
+\frac{4}{z}+\frac{6}{z^2}+\frac{2}{z^3}
+\frac{5}{z^4}
+\frac{2}{z^5}\right.\\[1mm]
\left.+\frac{6}{z^6
   }+\frac{1}{z^7}+\frac{4}{z^8}+\frac{6}{
   z^9}+\frac{4}{z^{10}}
+\frac{6}{z^{11}}+\frac{6}{z^{12}}+\frac{5}{z^{13}}
\right)
   \Phi^2(z)\\[2mm]
+\left(
4 z^2+\frac{4}{z^2}
+\frac{4}
   {z^3}+\frac{2}{z^4}+\frac{4}{z^5}+\frac{
   4}{z^6}+\frac{2}{z^7}+\frac{4}{z^9}+\frac{4
   }{z^{11}}+\frac{4}{z^{12}}+\frac{2}{z^{13}}
\right)
   \Phi^3(z)\\[2mm]
\text {\em modulo }8.
\end{multline}
\end{theorem}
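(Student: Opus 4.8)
The plan is to apply the method of Section~\ref{sec:method} to the Riccati-type differential equation satisfied by $S(z)=\sum_{n\ge0}s_{n+1}(SL_2(\mathbb Z))\,z^n$. As explained in the discussion preceding the theorem, this equation is produced by combining Dey's identity \eqref{eq:Dey} (differentiated repeatedly, with the Fa\`a di Bruno formula) with the guessed‑and‑certified polynomial recurrence for the homomorphism numbers $|\Hom(SL_2(\mathbb Z),S_n)|$ coming from \eqref{Eq:Gamma2HomCount}; it has integer coefficients, a unique power‑series solution, and its reduction modulo~$16$ is \eqref{eq:Riccati16}. In its basic form (with $\al=0$, reducing high powers of $\Phi(z)$ through the degree‑$4$ relation implied by the minimal polynomial for the modulus~$8$ and using an Ansatz of degree~$3$) the method stalls at the modulus~$4$. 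The remedy is to run the same iteration but reduce powers of $\Phi(z)$ instead by the degree‑$6$ minimal polynomial for the modulus~$16$ from Proposition~\ref{prop:minpol}, that is, by
\begin{equation*}
(t^2+t+z)\big(t^4+6t^3+(2z+3)t^2+(2z+6)t+2z+5z^2\big)=0\quad\text{modulo }16,
\end{equation*}
with the Ansatz $S(z)=\sum_{i=0}^{5}a_i(z)\Phi^i(z)$ modulo~$8$, the $a_i(z)$ being Laurent polynomials (it will emerge that $a_4(z)\equiv a_5(z)\equiv0$, so the final expression has degree~$3$ as in \eqref{eq:Sl2Z8}).

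First I would carry out the base step modulo~$2$: substitute $S(z)=\sum_{i=0}^{5}a_{i,1}(z)\Phi^i(z)$ into \eqref{eq:Riccati16} reduced modulo~$2$, use $\Phi'(z)\equiv1$ and $\Phi^2(z)+\Phi(z)+z\equiv0$ modulo~$2$ to write the left‑hand side as a polynomial of degree at most~$5$ in $\Phi(z)$ with Laurent‑polynomial coefficients, and compare coefficients of $\Phi^0(z),\dots,\Phi^5(z)$. This yields a system of six (differential) congruences modulo~$2$ in the $a_{i,1}(z)$, which by Lemma~\ref{lem:2x2diff} is equivalent to an ordinary linear system over the field $\mathbb F_2(z)$; solving it and discarding the branches that would force negative powers of $z$ into $S(z)$ determines the $a_{i,1}(z)$. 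Then I would perform the two lifting steps $\be\mapsto\be+1$, $\be=1,2$: put $a_{i,\be+1}(z)=a_{i,\be}(z)+2^{\be}b_{i,\be+1}(z)$, substitute the corresponding Ansatz into \eqref{eq:Riccati16}, divide by $2^{\be}$ (legitimate by the inductive hypothesis), reduce powers of $\Phi(z)$ by the displayed relation, compare coefficients of $\Phi^0(z),\dots,\Phi^5(z)$, and solve the resulting linear differential system modulo~$2$ for the $b_{i,\be+1}(z)$ (again via Lemma~\ref{lem:2x2diff}). Having reached $\be=2$, the polynomial $\sum_i a_{i,3}(z)\Phi^i(z)$ solves \eqref{eq:Riccati16} modulo~$8$, and by uniqueness of the power‑series solution it equals $S(z)$ modulo~$8$; this is the right‑hand side of \eqref{eq:Sl2Z8}.

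The step I expect to be the main obstacle is verifying the solvability of the linear (differential) systems in the two lifting steps. In the Catalan and $PSL_2(\mathbb Z)$ cases (Theorems~\ref{thm:Cat} and~\ref{thm:Unterg}) solvability was either automatic or reduced to the clean criterion of Lemma~\ref{lem:diff710}; here the Riccati equation for $SL_2(\mathbb Z)$ is far too bulky to display (roughly ten pages) and admits no such structural shortcut, so one must check at each step that the solution of the system over $\mathbb F_2(z)$ has no surviving denominator — in effect a (certified) computation with \eqref{eq:Riccati16}. This is exactly the delicate point that breaks down one step later, at the modulus~$16$, and that there forces the use of the enhancement of Appendix~\ref{appD} (see Theorem~\ref{thm:Sl2Z16} and Remark~\ref{rem:comp}).

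Finally, as an independent check one can verify \eqref{eq:Sl2Z8} a posteriori: substitute its right‑hand side into \eqref{eq:Riccati16}, reduce all powers $\Phi^i(z)$ with $i\ge4$ by the degree‑$4$ minimal polynomial for the modulus~$8$ from Proposition~\ref{prop:minpol}, and confirm that the outcome vanishes modulo~$8$. One cannot conclude merely by inspecting the coefficients of $\Phi^0(z),\dots,\Phi^3(z)$, since these are \emph{not} linearly independent over $(\mathbb Z/8\mathbb Z)[z,z^{-1}]$ — for instance $4\Phi^3(z)\equiv4(1-z)\Phi(z)+4z$ modulo~$8$; instead one re‑expands the whole expression in the series $H_{a_1,\dots,a_r}(z)$ with all $a_i$ odd, via the algorithm in the proof of Lemma~\ref{lem:Hbi}, and then applies Corollary~\ref{lem:Hind} to deduce that every coefficient vanishes modulo~$8$.
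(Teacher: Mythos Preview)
Your proposal is correct and follows the same approach as the paper: apply the method of Section~\ref{sec:method} to the Riccati-type differential equation \eqref{eq:Riccati16}, but with the degree-$6$ minimal polynomial for the modulus~$16$ (your displayed relation, the paper's \eqref{eq:minpol16}) in place of \eqref{eq:PhiRel}, and then record the outcome of the resulting computer calculation. The paper's own proof is in fact terser than yours --- it simply states that this ``yields the above result by means of a straightforward computer calculation'' --- and does not spell out the base and lifting steps or the a~posteriori verification you describe; one small point is that the final degree~$3$ in $\Phi(z)$ need not come from $a_4\equiv a_5\equiv0$ directly but may equally well arise from a subsequent reduction by the degree-$4$ minimal polynomial for the modulus~$8$ (cf.\ the footnote in Section~\ref{sec:method}).
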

}

\begin{proof}
{
\allowdisplaybreaks
The Riccati-type differential equation for 
$S(z)$ (as defined in \eqref{eq:Sdef})
modulo $16$ is\footnote{We display the differential equation modulo~$16$
  in order to prepare for Theorem~\ref{thm:Sl2Z16}.}
\begin{multline} \label{eq:Riccati16}
p_0(z)+ 
p_1(z)  S(z) + p_2(z) {{S(z)}^2} + p_3(z) {{S(z)}^3} +
p_4(z)  {{S(z)}^4} + p_5(z)  {{S(z)}^5} +
p_6(z)  S'(z) \\[2mm]
+ p_7(z) {{S'(z)}^2} + p_8(z) S(z) S'(z) 
+ p_{9}(z)  {{S(z)}^2} S'(z)  + p_{10}(z) {{S(z)}^3} S'(z)  \\[2mm]
+ p_{11}(z) S(z) {{S'(z)}^2}
+ p_{12}(z) S''(z) 
     + p_{13}(z) S(z) S''(z) + p_{14}(z) {{S(z)}^2} S''(z) \\[2mm]
    + p_{15}(z) S'(z) S''(z) 
  + p_{16}(z)  S'''(z)  + p_{17}(z)  S(z) S'''(z) 
  + p_{18}(z) S''''(z) =0\\[2mm]
\text {modulo }16,
\end{multline}
with coefficients $p_j(z)$, $j=0,1,\dots,18$ as displayed in
Appendix~\ref{appB}.

The differential equation \eqref{eq:Riccati16} has a unique solution
since comparison of coefficients of $z^{N}$ fixes the initial values,
and yields a recurrence for
the sequence $\big(s_n(SL_2(\mathbb Z))\big)_{n\ge1}$ 
which computes $s_{n+1}(SL_2(\mathbb Z))$ from terms
involving only $s_i(SL_2(\mathbb Z))$ with $i\le n$. 

Now we apply the method from Section~\ref{sec:method} with the
polynomial 
\begin{equation} \label{eq:minpol16}
(\Phi^2(z)+\Phi(z)+z)
(\Phi^4(z)+6\Phi^3(z)+(2z+3)\Phi^2(z)+(2z+6)\Phi(z)+2z+5z^2)
\end{equation}
in place of the polynomial on the left-hand side of \eqref{eq:PhiRel}
to the differential equation \eqref{eq:Riccati16}
(that is, in view of Proposition~\ref{prop:minpol} 
we are aiming at determining the subgroup numbers of
$s_n(SL_2(\mathbb Z))$ modulo~$16$).
This yields the above result by means of a straightforward computer calculation.
}
\end{proof}

If we want to know explicitly for which $n$ the subgroup number 
$s_{n}(SL_2(\mathbb Z))$ is congruent to a particular value modulo~$8$, 
then we should first apply the algorithm from Section~\ref{sec:extr}
(see \eqref{eq:Phipot} and the proof of Lemma~\ref{lem:Hbi}) in order
to express powers of $\Phi(z)$ on the right-hand side of \eqref{eq:Sl2Z8} 
in terms of the series $H_{a_1,\dots,a_r}(z)$. (The corresponding
expansions are in fact listed in \eqref{eq:Phi2} and \eqref{eq:Phi3}.)
The result is
{\allowdisplaybreaks
\begin{multline} \label{eq:Sl2Z8H}
\sum _{n\ge0} ^{}s_{n+1}(SL_2(\mathbb Z))\,z^n
=
\left(
\frac{4}{z^4}+
\frac{4}{z^7}+
\frac{4}{z^{13}}
\right)
    H_{3}(z)
+
 \left(
\frac{4}{z^4}+
\frac{4}{z^7}+
\frac{4}{z^{13}}
\right)
    H_{1,1,1}(z)\\[2mm]
+
\left(
4 z^2+
\frac{4}{z^2}+
\frac{4}
    {z^3}+
\frac{6}{z^4}+
\frac{4}{z^5}+
\frac{4}{z^6}+
\frac{6}{z^7}+
\frac{4}{z^9}+
\frac{4}{z^{11}}+
\frac{4}{z^{12}}+
\frac{6}{z^{13}}
\right)
    H_{1,1}(z)\\[2mm]
+
\left(
4
    z^4+
4 z^3+
6
    z^2+
4+
4
    z^8+
\frac{4}{z}+
\frac{6}{z^2}+
\frac{6}{z^3}+
\frac{5}{z^4}\right.\kern3cm\\[1mm]
\kern1cm
\left.+
\frac{2}{z^5}+
\frac{2}{z^6}+
\frac{1}{z^7}+
\frac{4}{z^8}+
\frac{6}{z^9}+
\frac{4} {z^{10}}+
\frac{6}{z^{11}}+
\frac{2}{z^{12}}+
\frac{5}{z^{13}}
\right) H_{1}(z)\\[2mm]
+
4 z^{20}+
4 z^{17}+
4
    z^{14}+
4 z^{12}+
4
    z^{10}+
6
    z^8+
2
    z^4+
6 z^3+
4 z^2+
2\\[1mm]
+ \frac{6}{z}+
\frac{1}{z^2}+
\frac{2}{z^4}+
\frac{6}{z^5}+
\frac{7}{z^6}+
\frac{2}{z^7}+
\frac{2}{z^8}+
\frac{5}{z^9}+
\frac{6}{z^{10}}+
\frac{1}{z^{11}}+
\frac{3}{z^{12}}\quad 
\text {modulo }8.
\end{multline}}%
From this expression, it is a routine (albeit tedious) task to
extract an explicit description
of the behaviour of the subgroup numbers of $SL_2(\mathbb Z)$
modulo~$8$.
Since the corresponding result can be stated within moderate amount of space, 
we present it in the next theorem.

\begin{theorem} \label{thm:1}
The subgroup numbers $s_n(SL_2(\mathbb Z))$ 
obey the following congruences modulo $8:$
\begin{enumerate}
\item[(i)] $s_n(SL_2(\mathbb Z))\equiv 1$~{\em(mod~$8$)} if, and only if, $n=1,2,4,10,$ or if 
$n$ is of the form $2^\si-3$ for some $\si\ge4;$
\vspace{2mm}
\item[(ii)]$s_n(SL_2(\mathbb Z))\equiv 2$~{\em(mod~$8$)} if, and only if, $n=7,12,17,$ or if 
$n$ is of one of the forms 
$$3\cdot 2^\si-3,\ 3\cdot 2^\si-6,\
3\cdot 2^\si-12,\quad \text {for some }\si\ge4;
$$
\item[(iii)]$s_n(SL_2(\mathbb Z))\equiv 4$~{\em(mod~$8$)} if, and only if,
$n=3,22,23,27,46,47,51,$ or if
$n$ is of one of the forms 
\begin{align} 
\label{eq:4si}
\kern1.1cm
&2^\si+6,\ 2^\si+7,\
2^\si+11,\
2^\si+12,\
2^\si+18,\
2^\si+21,\
\quad \text {for some }\si\ge5,\\[2mm]
\notag
&2^\si+2^\ta-2,\
2^\si+2^\ta+1,\
2^\si+2^\ta+3,\\
& \kern1.5cm
\text {for some $\si,\ta$ with }\si\ge6\text { and
}4\le\ta\le\si-1,
\label{eq:4sita}
\\[2mm]
\notag
&2^\si+2^\ta+2^\nu-12,\
2^\si+2^\ta+2^\nu-6,\
2^\si+2^\ta+2^\nu-3,\\
& \kern1.5cm
\text {for some $\si,\ta,\nu$ with }
\si\ge6,\ 5\le\nu\le\si-1,\text { and }3\le\ta\le\nu-1;
\label{eq:4sitanu}
\end{align}
\item[(iv)]$s_n(SL_2(\mathbb Z))\equiv 5$~{\em(mod~$8$)} if, and only if, $n=5,$ or if 
$n$ is of one of the forms 
$$2^\si-6,\ 2^\si-12,\quad \text {for some }\si\ge5;
$$
\item[(v)]$s_n(SL_2(\mathbb Z))\equiv 6$~{\em(mod~$8$)} if, and only if,
$n=6,11,14,18,19,21,33,34,35,37,$ or if 
$n$ is of one of the forms 
\begin{align} 
\label{eq:6siA}
\kern1.1cm
&2^\si-2,\ 2^\si-4,\
\quad \text {for some }\si\ge5,\\[2mm]
\notag
\kern1.1cm
&2^\si+1,\ 
2^\si+2,\ 
2^\si+3,\ 
2^\si+4,\ 
2^\si+5,\ 
2^\si+10,\ 
2^\si+13,\\
\label{eq:6siB}
& \kern3cm
\text {for some }\si\ge6,\\[2mm]
\notag
&2^\si+2^\ta-3,\
2^\si+2^\ta-6,\
2^\si+2^\ta-12,\\
& \kern3cm
\text {for some $\si,\ta$ with }\si\ge7\text { and
}5\le\ta\le\si-2;
\label{eq:6sita}
\end{align}
\item[(vi)]in the cases not covered by items {\em(i)}--{\em(v),}
$s_n(SL_2(\mathbb Z))$ is divisible by $8;$
in particular, 
$s_n(SL_2(\mathbb Z))\not\equiv 3,7$~{\em(mod~$8$)} for all $n$.
\end{enumerate}
\end{theorem}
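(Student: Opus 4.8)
The starting point is the explicit expansion \eqref{eq:Sl2Z8H}, which represents the generating function $\sum_{n\ge0}s_{n+1}(SL_2(\mathbb Z))z^n$ modulo $8$ as a $\mathbb Z[z,z^{-1}]$-linear combination of the series $1$, $H_1(z)=\Phi(z)$, $H_{1,1}(z)$, $H_3(z)$ and $H_{1,1,1}(z)$. The plan is to read off the coefficient of $z^{n-1}$ from this expression for each residue class. By the coefficient-extraction recipe of Remark~\ref{rem:eff}, a monomial $z^M$ occurs in $H_{a_1,\dots,a_r}(z)$ (all $a_i$ odd) for a unique choice of exponents, if at all; in particular $z^M$ occurs in $\Phi(z)$ exactly when $M$ is a power of $2$, in $H_{1,1}(z)$ exactly when the binary expansion of $M$ has digit sum $2$, in $H_{1,1,1}(z)$ exactly when it has digit sum $3$, and in $H_3(z)$ exactly when $M=3\cdot 2^j$ for some $j\ge0$. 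Consequently, writing $M=n-1$, the value $s_n(SL_2(\mathbb Z))$ modulo $8$ equals the coefficient of $z^M$ in the polynomial part of \eqref{eq:Sl2Z8H} plus, for each of the four series $H$, a sum $\sum_k\gamma^{(H)}_k$ over those shifts $k$ for which $M-k$ lies in the support of $H$, where $\gamma^{(H)}_k$ is the coefficient of $z^k$ in the Laurent polynomial multiplying $H$ in \eqref{eq:Sl2Z8H}.

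The second step exploits that every Laurent polynomial occurring in \eqref{eq:Sl2Z8H} has its exponents confined to the bounded window $[-13,20]$. Hence, once $M$ exceeds $20$, the polynomial part contributes nothing, and a term $c(z)H(z)$ contributes to $z^M$ only when $M$ lies within distance $20$ of an element of the support of $H$. This pins the ``support modulo $8$'' of the generating function down to a bounded set together with finitely many explicit families: integers of the form $2^\sigma\pm(\text{bounded})$, of the form $2^\sigma+2^\tau\pm(\text{bounded})$ with $\sigma,\tau$ sufficiently far apart, and of the form $2^\sigma+2^\tau+2^\nu\pm(\text{bounded})$ with $\sigma,\tau,\nu$ sufficiently far apart. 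The precise thresholds ($\sigma\ge4,5,6,7$ and $4\le\tau\le\sigma-1$, $5\le\tau\le\sigma-2$, etc.) are exactly the conditions under which the ``top cluster'' of bits of $M$ and the bounded ``tail'' do not interfere, and under which the asymptotic description meets the finite check below without overlap or gap.

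The third step is to run through these families one at a time. For each family the parameters $\sigma$ (or $\sigma,\tau$, or $\sigma,\tau,\nu$) control only the position of a few high bits, while the residue of $[z^M]$ depends only on the bounded tail of $M$ and on which of $\Phi$, $H_{1,1}$, $H_3$, $H_{1,1,1}$ contributes through which shift $k$; this is a finite computation modulo $8$ in each case, carried out by evaluating the relevant shifted coefficients $\gamma^{(H)}_k$ and summing. Collecting the results produces, for each residue $r\in\{1,2,4,5,6\}$, precisely the list of families in items (i)--(v); the families giving residue $0$ together with all $M$ in no family at all give item (vi); and, since no combination of contributions ever sums to $3$ or $7$ modulo $8$, those two residues never occur. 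Finally, a direct computation of $s_n(SL_2(\mathbb Z))$ for $n$ up to a modest explicit bound (slightly above $2^7$, so that every family has entered its stable regime) disposes of the sporadic values $n=1,2,3,4,5,\dots$ listed explicitly in the theorem and confirms the starting indices $\sigma\ge4$, etc.

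The main obstacle is not conceptual --- given \eqref{eq:Sl2Z8H} and Remark~\ref{rem:eff} everything is elementary --- but organizational: there are a good many bit-pattern cases, each requiring correct bookkeeping of which shifts $k$ bring $M-k$ into which support, and the real delicacy lies in getting every threshold inequality in \eqref{eq:4si}--\eqref{eq:6sita} exactly right. In particular one must handle with care the regime where a high bit $2^\tau$ of $M$ is only a bounded distance above the tail (so that the two interact and the count changes), and the transition between the finite check and the stable families, which is where the precise lower bounds on $\sigma$ and the two-sided bounds on $\tau,\nu$ originate.
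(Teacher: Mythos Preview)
Your proposal is correct and follows exactly the approach the paper takes: the paper derives \eqref{eq:Sl2Z8H} from Theorem~\ref{thm:Sl2Z8} via the expansions \eqref{eq:Phi2}--\eqref{eq:Phi3}, and then simply states that ``from this expression, it is a routine (albeit tedious) task to extract an explicit description'' using the coefficient-extraction procedure of Section~\ref{sec:extr}. Your write-up is in fact more detailed than the paper's own account, which gives no proof beyond that sentence.
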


As we already said earlier, the method from Section~\ref{sec:method} with the 
polynomial in \eqref{eq:minpol16}
in place of the polynomial on the left-hand side of \eqref{eq:PhiRel}
applied to the differential equation \eqref{eq:Riccati16}
does not actually produce a result modulo~$16$ (although this is what
it would be designed to). It only produces the result modulo~$8$
given in Theorem~\ref{thm:Sl2Z8}
since, at the mod-16-level, the arising system of equations 
has no polynomial solutions. Nevertheless, by applying the enhanced
method from Appendix~\ref{appD} to this last system of equations,
a solution modulo~$16$ can still be found, the result being displayed
in our next theorem.

\begin{theorem} \label{thm:Sl2Z16}
Let $\Phi(z)=\sum _{n\ge0} ^{}z^{2^n}$.
Then we have
{\allowdisplaybreaks
\begin{multline} \label{eq:Sl2Z16}
\sum _{n\ge0} ^{}s_{n+1}(SL_2(\mathbb Z))\,z^n\\[2mm]
=
8 z^{74}+8 z^{71}+8 z^{68}+8
   z^{67}+8 z^{62}+8 z^{61}+8
   z^{57}+8 z^{56}+8 z^{54}+8
   z^{50}+8 z^{48}+8 z^{47}\\[1mm]
+8 z^{45}
+8 z^{44}
+8 z^{43}+8
   z^{42}+8 z^{41}+8 z^{40}+8
   z^{38}+8 z^{35}+8 z^{31}+8
   z^{26}+8 z^{24}+8 z^{21}\\[1mm]
+12 z^{20}+12 z^{17}
+8 z^{16}+8
   z^{15}
+4 z^{14}+4 z^{12}+12
   z^9+14 z^8+8 z^7+12 z^6+16
   z^5\\[1mm]
+12 z^4+12 z^3+8 z^2
+8+\frac{10}{z}+\frac{12}{z
   ^2}
+\frac{3}{z^3}+\frac{14}{z^4}
   +\frac{9}{z^5}+\frac{4}{z^6}
+\frac{12}{z^7}+\frac{4}{z^8}+\frac{3}{z^9}+\frac{6}{z^{10}}\\[1mm]
+\bigg(8 z^{73}+8
   z^{72}+8 z^{71}+8 z^{69}+8
   z^{68}+8 z^{67}+8 z^{66}+8
   z^{64}+8 z^{63}+8 z^{62}+8
   z^{61}\\[1mm]
+8 z^{60}+8 z^{59}+8
   z^{55}+8 z^{54}+8 z^{49}+8
   z^{47}+8 z^{41}+8 z^{38}+8
   z^{36}+8 z^{35}+8 z^{32}+8
   z^{30}\\[1mm]
+8 z^{29}+8 z^{28}+8
   z^{27}
+8 z^{24}+8 z^{23}+8
   z^{21}+8 z^{17}+8 z^{14}+8
   z^{13}+8 z^{12}+8 z^9+8 z^8\\[1mm]
+8 z^7+8 z^5
+12 z^4+8
   z^3+4+\frac{16}{z}+\frac{6}{z^2
   }
+\frac{12}{z^3}+\frac{8}{z^4}+
   \frac{6}{z^5}+\frac{4}{z^7}+\frac{4}{z^9}
+\frac{8}{z^{10}}+\frac{14}{z^{11}}\bigg)
   \Phi(z)\\[1mm]
+\bigg(8 z^{72}+8
   z^{69}+8 z^{68}+8 z^{66}+8
   z^{65}+8 z^{62}+8 z^{61}+8
   z^{58}+8 z^{57}+8 z^{56}+8
   z^{55}\\[1mm]
+8 z^{53}+8 z^{51}+8
   z^{50}+8 z^{49}+8 z^{38}+8
   z^{37}+8 z^{36}+8 z^{35}+8
   z^{28}+8 z^{25}+8 z^{24}+8
   z^{22}\\[1mm]
+8 z^{18}+8 z^{17}+8
   z^{14}+8 z^{11}+8 z^{10}+8
   z^8+8 z^4+12 z^3+8
   z^2+8+\frac{4}{z}+\frac{8}{z^2}
   +\frac{10}{z^3}\\[1mm]
+\frac{4}{z^4}+\frac{10}{z^6}+\frac{12}{z^8}
+\frac{12}{z^{10}}+\frac{8}{z^{11}}
   +\frac{2}{z^{12}}\bigg)
   \Phi^2(z)\\[1mm]
+\bigg(8 z^{72}+8
   z^{69}+8 z^{67}+8 z^{66}+8
   z^{61}+8 z^{60}+8 z^{56}+8
   z^{55}+8 z^{50}+8 z^{47}+8
   z^{46}\\[1mm]
+8 z^{45}+8 z^{42}+8
   z^{40}+8 z^{37}+8 z^{33}+8
   z^{32}+8 z^{31}+8 z^{30}+8
   z^{28}+8 z^{25}+8 z^{20}+8
   z^{19}\\[1mm]
+8 z^{17}+8 z^{16}+8
   z^{13}
+8 z^{12}+8 z^{10}+8
   z^9+8 z^8+8 z^3+12 z^2+8
   z+\frac{8}{z}+\frac{4}{z^2}+\frac{4}{z^3}+\frac{10}{z^4}\\[1mm]
+\frac
   {12}{z^5}+\frac{12}{z^6}+\frac{1
   8}{z^7}+\frac{8}{z^8}+\frac{12}{
   z^9}
+\frac{8}{z^{10}}+\frac{4}{
   z^{11}}+\frac{12}{z^{12}}+\frac
   {10}{z^{13}}\bigg)
   \Phi^3(z)\\[1mm]
+\bigg(8 z^{72}+8
   z^{71}+8 z^{70}+8 z^{67}+8
   z^{65}+8 z^{64}+8 z^{63}+8
   z^{60}+8 z^{59}+8 z^{58}+8
   z^{57}\\[1mm]
+8 z^{56}+8 z^{54}+8
   z^{53}+8 z^{51}+8 z^{49}+8
   z^{48}+8 z^{45}+8 z^{40}+8
   z^{37}+8 z^{36}+8 z^{34}+8
   z^{33}\\[1mm]
+8 z^{32}
+8 z^{30}+8
   z^{29}
+8 z^{28}+8 z^{25}+8
   z^{24}+8 z^{21}+8 z^{18}+8
   z^{17}+8 z^{13}+8 z^{11}+8
   z^9\\[1mm]
+12 z^8+8 z^7+8 z^6+8 z^5
+12
   z^4+12 z^3+6 z^2+8
   z+12
+\frac{4}{z}+\frac{10}{z^2}
   +\frac{14}{z^3}+\frac{1}{z^4}\\[1mm]
+\frac{14}{z^5}+\frac{10}{z^6}
+\frac{5}{z^7}+\frac{12}{z^8}+\frac{
   14}{z^9}+\frac{12}{z^{10}}+\frac
   {2}{z^{11}}+\frac{2}{z^{12}}+\frac{1}{z^{13}}\bigg)
   \Phi^4(z)\\[1mm]
+\bigg(8
   z^{67}+8 z^{65}+8 z^{63}+8
   z^{62}+8 z^{58}+8 z^{57}+8
   z^{53}+8 z^{52}+8 z^{50}+8
   z^{48}\\[1mm]
+8 z^{46}
+8 z^{44}+8
   z^{43}
+8 z^{40}+8 z^{39}+8
   z^{37}+8 z^{34}+8 z^{33}+8
   z^{28}+8 z^{26}\\[1mm]
+8 z^{23}+8
   z^{20}+8 z^{17}+8 z^{15}+8
   z^{13}
+8 z^{11}
+8 z^9+8 z^8
+8 z^7+8 z^5\\[1mm]
+8 z^4+4 z^2+8
   z+\frac{4}{z^2}+\frac{8}{z^3}+\frac{14}{z^4}+\frac{12}{z^5}
+\frac{14}{z^7}+\frac{4}{z^9}+\frac
   {4}{z^{11}}+\frac{6}{z^{13}}\bigg) \Phi^5(z)\\[2mm]
\text {\em modulo }16.
\end{multline}}%
\end{theorem}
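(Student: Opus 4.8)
The plan is to reduce everything to the Riccati-type differential equation \eqref{eq:Riccati16} for the generating function $S(z)$ of \eqref{eq:Sdef}, whose coefficients $p_j(z)$ are recorded in Appendix~\ref{appB}. As already observed in the proof of Theorem~\ref{thm:Sl2Z8}, this differential equation has a unique power series solution, since comparison of coefficients of $z^N$ recovers $s_{n+1}(SL_2(\Z))$ from the subgroup numbers of smaller index. Hence it suffices to exhibit a polynomial in $\Phi(z)$ with Laurent polynomial coefficients whose reduction modulo $16$ satisfies \eqref{eq:Riccati16}; by uniqueness, that polynomial must then represent $S(z)$ modulo $16$.

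First I would run the method of Section~\ref{sec:method}, but --- as in the proof of Theorem~\ref{thm:Sl2Z8} --- using the minimal polynomial \eqref{eq:minpol16} for the modulus $16$ from Proposition~\ref{prop:minpol} in place of the relation \eqref{eq:PhiRel}, so that the Ansatz is $S(z)=\sum_{i=0}^{5}a_i(z)\Phi^i(z)$ modulo $16$ and powers $\Phi^k(z)$ with $k\ge6$ get reduced. The base step modulo $2$ and the iterative steps modulo $4$ and modulo $8$ go through exactly as in the proof of Theorem~\ref{thm:Sl2Z8}, reproducing \eqref{eq:Sl2Z8}. The step from modulus $8$ to modulus $16$ is where the difficulty lies: comparing coefficients of $\Phi^i(z)$, $i=0,1,\dots,5$, produces a linear system over $\Z/2\Z$ that has no solution in Laurent polynomials.

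At this point I would invoke the enhancement of the method from Appendix~\ref{appD}. The underlying reason for the failure (Remark~\ref{rem:comp}, made precise for this modulus by Lemma~\ref{lem:Null16}) is that a polynomial in $\Phi(z)$ with Laurent polynomial coefficients can vanish modulo $16$ as a formal Laurent series without all of its $\Phi$-coefficients vanishing modulo $16$; in other words, the map sending such a polynomial to its reduction modulo $16$ has a nontrivial, explicitly describable kernel. Accordingly, I would relax the requirement ``every coefficient of $\Phi^i(z)$ vanishes modulo $16$'' to the weaker one ``the whole polynomial in $\Phi(z)$ vanishes modulo $16$ as a Laurent series'', which amounts to adjoining the generators of that kernel (from Lemma~\ref{lem:Null16}) to the linear system. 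The enhanced system turns out to be solvable, and solving it yields the Laurent polynomials $a_i(z)$, $0\le i\le5$, displayed on the right-hand side of \eqref{eq:Sl2Z16}.

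Finally I would verify the output independently by substituting the right-hand side of \eqref{eq:Sl2Z16} for $S(z)$ in \eqref{eq:Riccati16}, expanding, reducing powers $\Phi^k(z)$ with $k\ge6$ by means of \eqref{eq:minpol16}, and confirming that the result vanishes modulo $16$ --- here the coefficient extraction and the zero test are carried out via the algorithm of Section~\ref{sec:extr} together with Lemma~\ref{lem:Null16}. By the uniqueness noted above, this identifies the reduction of $S(z)$ modulo $16$ with the displayed polynomial in $\Phi(z)$. The main obstacle throughout is this breakdown of naive coefficient comparison at the mod-$16$ level, and correspondingly the need for the precise description of the kernel of reduction modulo $16$ provided by Lemma~\ref{lem:Null16} and exploited by the enhanced method of Appendix~\ref{appD}; everything else is, in principle, routine (if computationally heavy) symbolic computation.
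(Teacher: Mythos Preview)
Your proposal is correct and follows essentially the same approach as the paper: start from the Riccati-type differential equation \eqref{eq:Riccati16} (whose unique power series solution is $S(z)$), run the method of Section~\ref{sec:method} with the minimal polynomial \eqref{eq:minpol16} for modulus~$16$, observe that it succeeds through modulus~$8$ but that naive coefficient comparison fails at the mod-$16$ step, and then invoke the enhancement of Appendix~\ref{appD} (built on Lemma~\ref{lem:Null16}) to solve the resulting system and produce \eqref{eq:Sl2Z16}. Your added substitution check is a sensible independent verification, though the paper does not make it explicit.
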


We did not attempt to push this analysis further to moduli $32,$ $64,$ etc., 
since the computational effort seemed immodest. 
With the (not very substantial) evidence of Theorems~\ref{thm:Sl2Z8}
and \ref{thm:Sl2Z16} (but see Remark~\ref{rem:Ga3}), 
we still expect the enhanced method
to be successful for any given $2$-power.

\begin{conjecture} \label{conj:SL2Z}
Let $\Phi(z)=\sum _{n\ge0} ^{}z^{2^n}$, and
let $\ga$ be a positive integer.
Then the generating function 
$\sum _{n\ge0} ^{}s_{n+1}(SL_2(\mathbb Z))\,z^n$, 
reduced modulo $2^\ga,$ 
can be expressed as a polynomial in $\Phi(z)$ 
with coefficients that are Laurent polynomials in
$z$ over the integers.
\end{conjecture}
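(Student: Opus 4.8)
The plan is to run the iterative scheme of Section~\ref{sec:method}, with the enhancement of Appendix~\ref{appD} kept switched on, on the Riccati-type differential equation for $S(z)=\sum_{n\ge0}s_{n+1}(SL_2(\mathbb Z))z^n$, and to prove that the scheme never stalls. Concretely: fix $\ga$; since $2^\ga$ divides $2^{3\cdot 2^\al}$ for $\al$ large, it suffices to reach moduli of the shape $2^{3\cdot 2^\al}$. One starts from the differential equation $\mathcal R\big(z;S(z),S'(z),\dots,S''''(z)\big)=0$ with integer coefficients produced in Section~\ref{sec:SL2Z} --- via the relations \eqref{eq:HS} obtained by differentiating the Dey identity \eqref{eq:Dey} and feeding in the explicit homomorphism count \eqref{Eq:Gamma2HomCount} --- whose reduction modulo~$16$ is \eqref{eq:Riccati16}. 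One makes the Ansatz \eqref{eq:Ansatz}, reduces powers of $\Phi(z)$ via \eqref{eq:PhiRel} (or, better, via a minimal polynomial for the target modulus), and the whole problem becomes: show that a solution modulo~$2^\be$ lifts to one modulo~$2^{\be+1}$ for every $\be=1,\dots,3\cdot2^\al-1$. Since the Ansatz is by construction a polynomial in $\Phi(z)$ with Laurent-polynomial coefficients, nothing further would remain.

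The base step modulo~$2$ runs exactly as in the proofs of Theorems~\ref{thm:Unterg} and \ref{thm:Sl2Z8}: using $\Phi'(z)\equiv1$, $\Phi^{2i}(z)\equiv(\Phi^i(z))^2$ and \eqref{eq:PhiRel2}, the mod-$2$ congruence becomes polynomial in $\Phi(z)$; comparison of coefficients forces all $a_{i,1}(z)$ to vanish except $a_{0,1}(z)$ and one further coefficient, which are then pinned down by two small congruences, the admissible solution being singled out because $S(z)$ contains no negative power of $z$ and is not a polynomial. For the lifting step one puts $a_{i,\be+1}(z)=a_{i,\be}(z)+2^\be b_{i,\be+1}(z)$ as in \eqref{eq:Ansatz2a}, substitutes into $\mathcal R$, divides by~$2^\be$ (legitimate by the inductive hypothesis), and compares coefficients of $\Phi^k(z)$. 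The nonlinear terms of $\mathcal R$ contribute to the $b$-part only through the mod-$2$ linearisation of $\mathcal R$ along the base solution $S_\be(z)$; the resulting system for the $b_{i,\be+1}(z)$ is a linear differential system modulo~$2$ of the type handled by Lemma~\ref{lem:2x2diff} (should higher-derivative terms survive modulo~$2$, one uses the obvious extension of that lemma), which reduces to an ordinary $\mathbb Z/2\mathbb Z$-linear system in the even and odd parts of the unknowns. Two structural ingredients are then required: a solvability criterion for the relevant linear differential operator over $\mathbb Z/2\mathbb Z$ in the spirit of Lemma~\ref{lem:diff710} --- obtained by diagonalising that operator on the monomial basis $\{z^m\}$, whose image depends on $m$ only through residues modulo a fixed power of~$2$ --- and a scheme for resolving the coupling between different $b_i$'s (as in \eqref{eq:bdiff} for $PSL_2(\mathbb Z)$: solve first for those indices that decouple, then feed the answers into the remaining equations).

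The genuine difficulty --- and the reason the statement is only a conjecture --- is to guarantee that the forcing terms $\mathrm{Pol}_i(z)$ arising at each step always lie in the image of the operator. For $PSL_2(\mathbb Z)$ this was achieved in Theorem~\ref{thm:Unterg} by propagating a divisibility invariant (the $a_{i,\be}(z)$, and $a_{0,\be}(z)-1-2z^2$, all divisible by $1-z^3$), which forced $\mathrm{Pol}_i(z)$ to be divisible by $(1-z^3)^2$ and hence in the image by Corollary~\ref{cor:diff710}. For $SL_2(\mathbb Z)$ no analogue of this invariant can work as stated: as Remark~\ref{rem:comp} and Lemma~\ref{lem:Null16} show, a polynomial in $\Phi(z)$ may vanish as a Laurent series modulo~$2^\be$ without its $\Phi$-coefficients vanishing, so comparison of coefficients of powers of $\Phi(z)$ is lossy, and indeed the unenhanced method already fails at the modulus~$8$. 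The enhancement of Appendix~\ref{appD} repairs this by allowing, at stage~$\be$, a correction to $b_{i,\be+1}(z)$ drawn from the finite-dimensional space of $\Phi$-coefficient vectors of such ``hidden zeros'' modulo~$2^\be$. A complete proof would therefore have to be an induction asserting that the data $(a_{i,\be}(z))_i$, together with the accumulated hidden-zero corrections, always lie in --- and are propagated within --- an explicitly described finite-dimensional ``defect space'' modulo which $\mathrm{Pol}_i(z)$ lands in the image of the operator. Producing such a description valid for \emph{all} $\be$ --- rather than case-checking small moduli as Appendix~\ref{appD} does for~$16$ --- is precisely the obstacle; the parallel \emph{failure} of even the enhanced method for $\Gamma_3(3)$ at the modulus~$16$, which provably forces the corresponding generating function out of the class described in the conjecture, is a warning that positive results of this kind are delicate and specific to $SL_2(\mathbb Z)$.
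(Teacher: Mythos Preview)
The statement you are addressing is a \emph{conjecture} in the paper, not a theorem: there is no proof in the paper to compare against. The authors explicitly say they did not push the computation beyond modulus~$16$, that the supporting evidence is ``not very substantial'', and that they merely ``expect'' the enhanced method to succeed for all $2$-powers.

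Your write-up is not a proof either, and you are candid about this: you correctly identify the obstruction --- the need to show that at every stage~$\be$ the forcing terms $\mathrm{Pol}_i(z)$ lie in the image of the relevant mod-$2$ operator, possibly after the hidden-zero corrections of Appendix~\ref{appD} --- and you correctly note that the $(1-z^3)$-divisibility invariant that made Theorem~\ref{thm:Unterg} work for $PSL_2(\mathbb Z)$ has no known analogue here. Your diagnosis matches the paper's own: the unenhanced method fails at modulus~$8$, the enhanced method reaches~$16$ (Theorem~\ref{thm:Sl2Z16}), and nothing is known beyond that. The $\Gamma_3(3)$ failure you cite is exactly the warning the paper gives in Remark~\ref{rem:Ga3}. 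So your proposal is an accurate summary of the strategy and its gap, not a proof; labeling it as such would be the honest move.
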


\section{Subgroup numbers for the lift $\Ga_3(3)$}
\label{sec:Ga3}

Continuing in the spirit of the previous section, we now
consider the number of index-$n$-subgroups in the lift
$\Ga_3(3)$ (of the Hecke group $\mathfrak H(3)\cong PSL_2(\mathbb Z)$) 
modulo powers of~$2$. We shall see that, again, our method from
Section~\ref{sec:method} already fails for 
modulus~$8$. 
While this can again be overcome by, instead, designing the computation
so that the target is modulus~$16$, the method then fails at the
level of modulus~$16$.
Moreover, for modulus~$16$, even the enhancement of the
method described in Appendix~\ref{appD} fails (see Remark~\ref{rem:Ga3}). 
This means that a new phenomenon, not
covered by our Ansatz, arises in the behaviour of the subgroup
numbers at the level of modulus~$16$. It would
be of great interest to find an explicit description of the hidden scheme
behind the mod-$16$ behaviour of the number of subgroups of index $n$
in $\Ga_3(3)$, and, more generally, of the behaviour modulo
{\it any} power of $2$.

We take $\Ga=\Ga_3(3)$ in \eqref{eq:Dey} and combine the resulting
formula with
\eqref{Eq:Gamma3HomCount}, the latter giving an explicit formula for 
the homomorphism numbers
$h_n:=\vert\Hom(\Ga_3(3),S_n)\vert$. Using the Guessing package
\cite{Guess}, we found
a recurrence of order $42$ for the sequence $(h_n/n!)_{n\ge0}$,
with coefficients that are polynomials 
in $n$ over $\mathbb Z$.
The validity of the recurrence was verified by
computing a certificate using Koutschan's {\sl Mathematica} package 
{\tt HolonomicFunctions} \cite{KoutAA}.\footnote{\label{foot:2}%
The computation took about one week, producing a certificate of 
28~megabytes. The coefficients of this
recurrence are polynomials in $n$ of degree up to~$105$.
In this case, we were not able to find a recurrence
with leading coefficient~$1$. (It may still exist.) The best that we
found in this direction was a recurrence with leading coefficient a
$1828$-digit number.
Again, although we did not try to prove it, it is likely that the recurrence
of order $42$ is the recurrence of minimal order.}
However, again, this recurrence is not suitable for our purpose, for which
we require a recurrence with coefficients that are polynomials in~$n$
over $\mathbb Z$, and with leading coefficient~$n$.
By an indeterminate Ansatz, we computed a
candidate for a recurrence of the desired form of order $60$, with polynomial
coefficients of degree at most $10$.\footnote{Again, we used the function
{\tt LinSolveQ} of the Guessing package \cite{Guess} in order to
solve the arising system of linear equations.}
To be precise, it is the uniquely determined recurrence of the form
$$
\sum_{k=0}^{60}
\bigg(\sum_{i=0}^{10}b(k,i)n^i\bigg)
\frac {\vert\Hom(\Ga_3(3),S_{n-k})\vert} {(n-k)!}=0,\quad \quad n\ge60,
$$
where
{\allowdisplaybreaks
\begin{align*}
b(0,0)&=b(0,2)=b(0,3)=b(0,4)=b(0,5)\\
&=b(0,6)=b(0,7)=b(0,8)=b(0,9)=b(0,10)=0,\\
b(0,1)&=1,\\
b(60,8)&=9649124343496238177846526221678676069879148435557456840677
\backslash\\
&\quad \quad \quad \quad 68567400990643919180258204664996863270960793634431477
\backslash\\
&\quad \quad \quad \quad 96875828563496094243333614632539311543926582958877938
\backslash\\
&\quad \quad \quad \quad 09887854513738722474642524334737161421912431106592005
\backslash\\
&\quad \quad \quad \quad 22984304410147101964876864298627928130880022459406799
\backslash\\
&\quad \quad \quad \quad 539461032349694733915947489297372243661012,\\
b(60,10)&=
b(60,9)=
b(60,4)\\
&=
b(59,10)=
b(59,9)=
b(59,8)=
b(59,7)=
b(59,6)=
b(59, 5)\\
&=
b(59, 4)= 
b(59, 3)= 
b(59, 2)= 
b(59, 1)= 
b(59, 0)\\
&= 
b(58,10)=
b(58,9)=
b(58,8)=
b(58,7)=
b(58,6)\\
&=
b(58, 5)= 
b(58, 4)\\
&= 
b(57,10)=
b(57,9)=
b(57,8)=
b(57,7)=
b(57,6)=
b(57, 5)= 
b(57, 4)\\
&= 
b(56,10)=
b(56,9)=
b(56,8)=
b(56,7)=
b(56,6)=
b(56,5)=
b(56,4)\\
&=
b(55,10)=
b(55,9)=
b(55,8)=
b(55,7)=
b(55,6)=
b(55, 5)= 
b(55, 4)\\
&= 
b(54,10)=
b(54,9)=
b(54,8)=
b(54,7)=
b(54,6)=
b(54, 5)= 
b(54, 4)\\
&= 
b(53,10)=
b(53,9)=
b(53,8)=
b(53,7)=
b(53,6)=
b(53,5)=
b(53,4)\\
&=
b(52,10)=
b(52,9)=
b(52,8)\\
&=
b(50,7)=
b(50,6)\\
&=
b(49,10)=
b(49,9)=
b(49,8)=
b(49,7)=
b(49,6)=
b(49,5)=
b(49,4)\\
&=
b(1,7)=
0.
\end{align*}}%
Subsequently, we checked
that this recurrence is a left-multiple of the certified
recurrence of order $42$, thereby establishing validity of this
candidate recurrence of order $60$.
This last recurrence was then converted into a linear differential equation 
with polynomial coefficients for the series
$$H(z):=\sum_{n=0}^\infty h_n\frac {z^n} {n!}
=\sum_{n=0}^\infty \vert\Hom(\Ga_3(3),S_n)\vert\frac {z^n} {n!}.$$
Finally, this last mentioned differential equation can be
translated into a Riccati-type differential equation for
the generating function 
\begin{equation} \label{eq:SGa3def}
S(z)=\sum _{n\ge0} ^{}s_{n+1}(\Ga_3(3))\,z^n
\end{equation}
for the subgroup numbers of $\Ga_3(3)$ in the same way
as we obtained \eqref{eq:Riccati16} in the previous section.
It turns out that this differential equation has integral
coefficients, so that it is amenable to our method from
Section~\ref{sec:method}. The differential equation
cannot be displayed here since this would 
require about 100 pages.\footnote{The integers appearing as coefficients have up
to 320 digits.}
 Its reduction modulo~$16$ is written out in
\eqref{eq:Ga3Riccati16}. By applying our method from
Section~\ref{sec:method} with the minimal polynomial for the modulus~$16$
(!) in place of the polynomial on the left-hand side of \eqref{eq:PhiRel} 
to \eqref{eq:Ga3Riccati16}, we
obtain the following theorem. It refines the parity result
\cite[Eq.~(6.3) with $\vert H\vert=1$, $q=3$, $m=3$]{KrMuAC}.

{
\allowdisplaybreaks
\begin{theorem} \label{thm:Ga38}
Let $\Phi(z)=\sum _{n\ge0} ^{}z^{2^n}$.
Then we have
\begin{multline} \label{eq:Ga38}
\sum _{n\ge0} ^{}s_{n+1}(\Ga_3(3))\,z^n\\[2mm]
=
4 z^{62}+
4 z^{53}+
4 z^{44}+
4 z^{35}+
6 z^{26}+
4 z^{20}+
4 z^{14}+
4 z^{12}+
4 z^{11}+
4 z^{10}\\[1mm]
+ 4 z^9+
4 z^5+
6 z^4+
4 z^3+
4 z^2+
4 z+
6+
\frac{7}{z^2}+
\frac{7}{z^3}+
\frac{3}{z^5}+
\frac{6}{z^6}\\[2mm]
+
 \left(
4 z^3+
4 z^2+
\frac{4}{z}+
\frac{6}{z^3}+
\frac{6}{z^4}+
\frac{6}{z^6}+
\frac{2}{z^7}
\right)  \Phi(z)\\[2mm]
+
\left(
4 z^8+
4 z^4+
4 z^3+
6 z^2+
4+
\frac{4}{z}+
\frac{6}{z^2}+
\frac{2}{z^3}+
\frac{5}{z^4}+
\frac{6}{z^5}+
\frac{6}{z^6}+
\frac{5}{z^7}
\right) \Phi^2(z)\\[2mm]
+
\left(
4 z^2+
\frac{4}{z^2}+
\frac{4}{z^3}+
\frac{2}{z^4}+
\frac{4}{z^5}+
\frac{4}{z^6}+
\frac{2}{z^7}
\right) \Phi^3(z)\quad \quad 
\text {\em modulo }8.
\end{multline}
\end{theorem}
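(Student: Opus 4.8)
The plan is to follow verbatim the route taken in the proof of Theorem~\ref{thm:Sl2Z8}. First I would record the Riccati-type differential equation for $S(z)=\sum_{n\ge0}s_{n+1}(\Ga_3(3))\,z^n$ that was produced in the preamble above --- obtained from the order-$60$ recurrence for $\vert\Hom(\Ga_3(3),S_n)\vert/n!$ by feeding it through \eqref{eq:Dey}, converting to a linear differential equation for $H(z)$, and then dividing by $H(z)$ after repeated differentiation --- reduced modulo $16$; this is the displayed equation \eqref{eq:Ga3Riccati16}. Exactly as in Section~\ref{sec:SL2Z}, comparison of coefficients of $z^N$ in \eqref{eq:Ga3Riccati16} fixes the initial values $s_1(\Ga_3(3)),s_2(\Ga_3(3)),\dots$ and yields a recurrence expressing $s_{n+1}(\Ga_3(3))$ in terms of the $s_i(\Ga_3(3))$ with $i\le n$. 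Hence the power-series solution of \eqref{eq:Ga3Riccati16} is unique, and it suffices to exhibit \emph{one} polynomial in $\Phi(z)$ with coefficients in $\Z[z,z^{-1}]$ that solves \eqref{eq:Ga3Riccati16} modulo~$8$.

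Second, I would run the algorithm of Section~\ref{sec:method}, but --- just as for $SL_2(\Z)$, and because direct application of that method already stalls below modulus~$8$ here --- with the minimal polynomial \eqref{eq:minpol16} for the modulus~$16$, namely $(\Phi^2(z)+\Phi(z)+z)(\Phi^4(z)+6\Phi^3(z)+(2z+3)\Phi^2(z)+(2z+6)\Phi(z)+2z+5z^2)$, used in place of the polynomial on the left-hand side of \eqref{eq:PhiRel}. Concretely: substitute the Ansatz $S(z)=\sum_{i\ge0}a_i(z)\Phi^i(z)$ with undetermined Laurent polynomials $a_i(z)$ into \eqref{eq:Ga3Riccati16}, use the degree-$6$ relation implied by \eqref{eq:minpol16} to reduce every power $\Phi^k(z)$ with $k\ge6$, compare coefficients of $\Phi^i(z)$, and solve the resulting system of modular differential congruences successively modulo $2$, $4$, and $8$. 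By Lemma~\ref{lem:2x2diff}, each such step is equivalent to solving an ordinary linear system over $\Z/2\Z$, which is mechanical.

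Third, I would observe --- as the preamble already announces --- that this procedure, although tuned for the modulus~$16$, in fact terminates one step early: at the modulus~$16$ level the arising linear system over $\Z/2\Z$ has no Laurent-polynomial solution (this is the content of Remark~\ref{rem:Ga3}, where it is moreover noted that even the enhancement of Appendix~\ref{appD} does not repair the situation), whereas modulo~$8$ it does, and the unique solution is precisely the right-hand side of \eqref{eq:Ga38}. To certify the output independently of the search, one substitutes the right-hand side of \eqref{eq:Ga38} back into \eqref{eq:Ga3Riccati16}, reduces all powers of $\Phi(z)$ of exponent exceeding $7$ by means of the relation \eqref{eq:rel4} (the minimal polynomial for the modulus~$8$), and checks that the resulting Laurent series vanishes modulo~$8$; uniqueness of the solution of \eqref{eq:Ga3Riccati16} then gives the claim.

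The main obstacle is not the final mod-$8$ linear algebra, which is routine, but the two inputs it rests on. One is producing and rigorously certifying the order-$60$ recurrence for $\vert\Hom(\Ga_3(3),S_n)\vert/n!$ with polynomial coefficients and leading coefficient~$n$ (the certified minimal recurrence has order $42$ with coefficients of degree up to~$105$, and the resulting Riccati equation has integer coefficients with up to $320$ digits, so every intermediate object is unwieldy), which is what forces the $100$-page differential equation to be handled entirely symbolically and only recorded modulo~$16$. The other is the subtlety that one must carry out the reduction against the modulus-$16$ minimal polynomial \eqref{eq:minpol16} rather than the modulus-$8$ one: unlike for $PSL_2(\Z)$, the naive Ansatz from Section~\ref{sec:method} fails for $\Ga_3(3)$ already at the modulus~$8$, so getting even the mod-$8$ result requires this detour.
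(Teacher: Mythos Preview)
Your proposal is correct and follows essentially the same route as the paper's own proof: display the Riccati-type equation \eqref{eq:Ga3Riccati16} modulo~$16$, note uniqueness of its power-series solution, and then run the method of Section~\ref{sec:method} with the modulus-$16$ minimal polynomial \eqref{eq:minpol16} in place of \eqref{eq:PhiRel}, obtaining the mod-$8$ expression by computer calculation. One small slip: in your verification step you say you reduce powers of $\Phi(z)$ of exponent exceeding~$7$ via \eqref{eq:rel4}, but \eqref{eq:rel4} is the degree-$4$ relation, so it reduces exponents exceeding~$3$ (indeed the displayed result \eqref{eq:Ga38} only involves $\Phi^0,\dots,\Phi^3$); you may have been thinking of \eqref{eq:PhiRel} with $\alpha=1$.
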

}

\begin{proof}
{
\allowdisplaybreaks
The Riccati-type differential equation for 
$S(z)$ (as defined in \eqref{eq:SGa3def})
modulo $16$ is\footnote{We display the differential equation modulo~$16$
  in order to prepare for Remark~\ref{rem:Ga3}.}
{\allowdisplaybreaks
\begin{multline} \label{eq:Ga3Riccati16}
q_0(z)
+q_1(z)S(z) 
+q_2(z)S(z) {S'(z)} 
+q_3(z)S(z) {S'(z)}^2 
+q_4(z)S(z) {S'(z)}^3 \\[2mm]
+q_5(z)S(z) {S'(z)}^4 
+q_6(z)S(z) {S''(z)} 
+q_7(z)S(z) {S''(z)}^2 
+q_8(z)S(z) {S'''(z)} \\[2mm]
+q_9(z)S(z) {S'''(z)}^2 
+q_{10}(z)S(z) {S'''''(z)} 
+q_{11}(z)S(z) {S'(z)} {S''(z)} \\[2mm]
+q_{12}(z)S(z) {S'(z)} {S'''(z)} 
+q_{13}(z)S(z) {S'(z)}^2 {S'''(z)} 
+q_{14}(z)S(z)^2 
+q_{15}(z)S(z)^2 {S'(z)} \\[2mm]
+q_{16}(z)S(z)^2 {S'(z)}^2 
+q_{17}(z)S(z)^2 {S'(z)}^3 
+q_{18}(z)S(z)^2 {S'(z)}^4 
+q_{19}(z)S(z)^2 {S''(z)} \\[2mm]
+q_{20}(z)S(z)^2 {S'''(z)} 
+q_{21}(z)S(z)^2 {S'''(z)}^2 
+q_{22}(z)S(z)^2 {S''''(z)} 
+q_{23}(z)S(z)^2 {S'(z)} {S''(z)}\\[2mm] 
+q_{24}(z)S(z)^2 {S'(z)}^2 {S''(z)} 
+q_{25}(z)S(z)^2 {S'(z)} {S'''(z)} 
+q_{26}(z)S(z)^2 {S'(z)}^2 {S'''(z)}\\[2mm] 
+q_{27}(z)S(z)^3 
+q_{28}(z)S(z)^3 {S'(z)} 
+q_{29}(z)S(z)^3 {S'(z)}^2 
+q_{30}(z)S(z)^3 {S'(z)}^3 \\[2mm]
+q_{31}(z)S(z)^3 {S''(z)} 
+q_{32}(z)S(z)^3 {S'''(z)} 
+q_{33}(z)S(z)^3 {S'(z)} {S'''(z)} 
+q_{34}(z)S(z)^4 \\[2mm]
+q_{35}(z)S(z)^4 {S'(z)} 
+q_{36}(z)S(z)^4 {S'(z)}^2 
+q_{37}(z)S(z)^4 {S'(z)}^3 
+q_{38}(z)S(z)^4 {S''(z)}\\[2mm] 
+q_{39}(z)S(z)^4 {S'''(z)} 
+q_{40}(z)S(z)^4 {S'(z)} {S''(z)} 
+q_{41}(z)S(z)^4 {S'(z)} {S'''(z)} 
+q_{42}(z)S(z)^5 \\[2mm]
+q_{43}(z)S(z)^5 {S'(z)} 
+q_{44}(z)S(z)^5 {S'(z)}^2 
+q_{45}(z)S(z)^5 {S''(z)} 
+q_{46}(z)S(z)^5 {S'''(z)}\\[2mm] 
+q_{47}(z)S(z)^6 
+q_{48}(z)S(z)^6 {S'(z)} 
+q_{49}(z)S(z)^6 {S'(z)}^2 
+q_{50}(z)S(z)^6 {S''(z)} \\[2mm]
+q_{51}(z)S(z)^6 {S'''(z)} 
+q_{52}(z)S(z)^7 
+q_{53}(z)S(z)^7 {S'(z)} 
+q_{54}(z)S(z)^8 
+q_{55}(z)S(z)^8 {S'(z)} \\[2mm]
+q_{56}(z)S(z)^9 
+q_{57}(z)S(z)^{10} 
+q_{58}(z){S'(z)} 
+q_{59}(z){S'(z)} {S''(z)} 
+q_{60}(z){S'(z)} {S'''(z)}\\[2mm]
+q_{61}(z){S'(z)} {S'''(z)}^2 
+q_{62}(z){S'(z)} {S''''(z)} 
+q_{63}(z){S'(z)}^2 
+q_{64}(z){S'(z)}^2 {S''(z)} \\[2mm]
+q_{65}(z){S'(z)}^2 {S'''(z)} 
+q_{66}(z){S'(z)}^3 
+q_{67}(z){S'(z)}^3 {S''(z)} 
+q_{68}(z){S'(z)}^3 {S'''(z)} \\[2mm]
+q_{69}(z){S'(z)}^4 
+q_{70}(z){S'(z)}^5 
+q_{71}(z){S''(z)} 
+q_{72}(z){S''(z)} {S'''(z)} 
+q_{73}(z){S''(z)}^2 \\[2mm]
+q_{74}(z){S'''(z)} 
+q_{75}(z){S'''(z)}^2 
+q_{76}(z){S''''(z)} 
+q_{77}(z){S'''''(z)} 
=0\\[2mm]
\text {modulo }16,
\end{multline}}%
with coefficients $q_j(z)$, $j=0,1,\dots,77$ as displayed in
Appendix~\ref{appC}.

The differential equation \eqref{eq:Ga3Riccati16} has a unique solution
since comparison of coefficients of $z^{N}$ fixes the initial values,
and yields a recurrence for
the sequence $\big(s_n(\Ga_3(3))\big)_{n\ge1}$ 
which computes $s_{n+1}(\Ga_3(3))$ from terms
involving only $s_i(\Ga_3(3))$ with $i\le n$. 

Now we apply the method from Section~\ref{sec:method} with the
polynomial in \eqref{eq:minpol16} in place 
of the polynomial on the left-hand side of \eqref{eq:PhiRel} to
the differential equation \eqref{eq:Ga3Riccati16}. 
This yields the above result by means of a straightforward computer 
calculation.\footnote{The calculation being straightforward, it nevertheless
required a machine with substantial amount of memory
(we had available 32~gigabytes of memory, of which almost 50\% were used).}
}
\end{proof}

Also here, if we want to know criteria in terms of $n$ when
a subgroup number $s_n(\Ga_3(3))$ is congruent to a particular value 
modulo~$8$, then we must first
apply the algorithm from Section~\ref{sec:extr} to
the right-hand side of \eqref{eq:Ga38}. This leads to the identity
\begin{multline} \label{eq:Ga3S}
\sum _{n\ge0} ^{}s_{n+1}(\Ga_3(3))\,z^n
=
\left(
\frac{4}{z^4}+
\frac{4}{z^7}
\right) H_{3}(z)
+
 \left(
\frac{4}{z^4}+
\frac{4}{z^7}
\right)
    H_{1,1,1}(z)\\[2mm]
    +
\left(
4 z^2+
\frac{4}{z^2}+
\frac{4}{z^3}+
\frac{6}{z^4}+
\frac{4}{z^5}+
\frac{4}{z^6}+
\frac{6}{z^7}
\right)
    H_{1,1}(z)\\[2mm]
+
\left(
4
    z^8+
4 z^4+
4
    z^3+
6 z^2+
4+
\frac{4}{z}+
\frac{6}{z^2}+
\frac{6}{z^3}+
\frac{5}{z^4}+
\frac{6}{z^5}+
\frac{2}{z^6}+
\frac{5}{z^7}
\right) H_{1}(z)\\[2mm]
+
4
    z^{62}+
4 z^{53}+
4 z^{44}+
4 z^{35}+
6 z^{26}+
4 z^{20}+
4 z^{14}+
4 z^{12}+
4
    z^{11}+
4 z^{10}\\[1mm]
+
2 z^4+
2
    z^3+
4 z^2+
2+
\frac{6}{z}+
\frac{1}{z^2}+
\frac{4}{z^3}+
\frac{6}{z^4}+
\frac{1}{z^5}+
\frac{3}{z^6}\quad 
\text {modulo }8,
\end{multline}
from which we can extract the following explicit description of the
behaviour of the subgroup numbers of $\Ga_3(3)$ modulo~$8$.

\begin{theorem} \label{thm:Ga3}
The subgroup numbers $s_n(\Ga_3(3))$ 
obey the following congruences modulo $8:$
\begin{enumerate}
\item[(i)] $s_n(\Ga_3(3))\equiv 1$~{\em(mod~$8$)} if, and only if, 
$n=1,2,10,$ or if 
$n$ is of the form $2^\si-3$ for some $\si\ge4;$
\vspace{2mm}
\item[(ii)] $s_n(\Ga_3(3))\equiv 2$~{\em(mod~$8$)} if, and only if, 
$n=7, 9, 17, 18, 27, 42,$ or if 
$n$ is of one of the forms 
$$3\cdot 2^\si-3,\ 3\cdot 2^\si-6,\quad \text {for some }\si\ge5;
$$
\item[(iii)] $s_n(\Ga_3(3))\equiv 4$~{\em(mod~$8$)} if, and only if,
$n=3, 12, 22, 23, 36, 38, 39, 43, 46, 49, 50,\break 51, 53, 54, 63,$ or if 
$n$ is of one of the forms 
\begin{align} 
\notag
\kern1.1cm
&2^\si+6,\ 2^\si+7,\
2^\si+11,\
2^\si+14,\
2^\si+17,\
2^\si+18,\
2^\si+19,\
2^\si+21,\\
\label{eq:4siGa3}
& \kern1.5cm
\text {for some }\si\ge6,\\[1mm]
\notag
&2^\si+2^\ta-2,\
2^\si+2^\ta+1,\
2^\si+2^\ta+2,\
2^\si+2^\ta+3,\
2^\si+2^\ta+5,\
2^\si+2^\ta+10,\\
&2^\si+2^\ta+13,\
\quad \text {for some $\si,\ta$ with }\si\ge6\text { and
}5\le\ta\le\si-1,
\label{eq:4sitaGa3}
\\[1mm]
\notag
&2^\si+2^\ta+2^\nu-6,\
2^\si+2^\ta+2^\nu-3,\\
& \kern1.5cm
\text {for some $\si,\ta,\nu$ with }
\si\ge7,\ 6\le\nu\le\si-1,\text { and }5\le\ta\le\nu-1;
\label{eq:4sitanuGa3}
\end{align}
\item[(iv)] $s_n(\Ga_3(3))\equiv 5$~{\em(mod~$8$)} if, and only if, $n=5,$ or if 
$n$ is of the form 
$2^\si-6$ for some $\si\ge5;$
\vspace{2mm}
\item[(v)] $s_n(\Ga_3(3))\equiv 6$~{\em(mod~$8$)} if, and only if,
$n=6,11,$ or if 
$n$ is of one of the forms 
\begin{align} 
\label{eq:6siAGa3}
\kern1.1cm
&2^\si-2,\ 2^\si+3,\ 2^\si+4,\
\quad \text {for some }\si\ge4,\\[1mm]
\label{eq:6siAAGa3}
\kern1.1cm
&2^\si+1,\ 2^\si+2,\ 2^\si+13,\
\quad \text {for some }\si\ge5,\\[1mm]
\label{eq:6siBGa3}
\kern1.1cm
&2^\si+10,\
\quad \text {for some }\si\ge6,\\[1mm]
\notag
&2^\si+2^\ta-6,\
2^\si+2^\ta-3,\\
& \kern3cm
\text {for some $\si,\ta$ with }\si\ge7\text { and
}5\le\ta\le\si-2;
\label{eq:6sitaGa3}
\end{align}
\item[(vi)] in the cases not covered by items {\em(i)}--{\em(v),}
$s_n(\Ga_3(3))$ is divisible by $8;$
in particular, $s_n(\Ga_3(3))\not\equiv 3,7$~{\em(mod~$8$)} for all $n$.
\end{enumerate}
\end{theorem}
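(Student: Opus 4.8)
The plan is to derive Theorem~\ref{thm:Ga3} from Theorem~\ref{thm:Ga38} by a coefficient-extraction argument of exactly the type that turns \eqref{eq:Sl2Z8H} into Theorem~\ref{thm:1}. First I would rewrite the right-hand side of \eqref{eq:Ga38} in terms of the series $H_{a_1,\dots,a_r}(z)$ with all $a_i$ odd: using \eqref{eq:Phi2} and \eqref{eq:Phi3} together with $\sum_{n\ge0}z^{3\cdot2^n}=H_3(z)$, the powers $\Phi^2(z)$ and $\Phi^3(z)$ get expressed through $1$, $H_1(z)=\Phi(z)$, $H_{1,1}(z)$, $H_{1,1,1}(z)$ and $H_3(z)$, and collecting terms modulo~$8$ produces the identity \eqref{eq:Ga3S}. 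Since, by Corollary~\ref{lem:Hind}, these five series are linearly independent over $\Z[z,z^{-1}]$, the representation \eqref{eq:Ga3S} is canonical and nothing is lost.

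Next I would read off the coefficient of $z^n$ in \eqref{eq:Ga3S} modulo~$8$ using Remark~\ref{rem:eff}. The point is that coefficient extraction from these particular series is completely transparent: $\coef{z^M}H_1(z)=1$ iff $M$ is a power of~$2$; $\coef{z^M}H_{1,1}(z)=1$ iff $s(M)=2$; $\coef{z^M}H_{1,1,1}(z)=1$ iff $s(M)=3$; and $\coef{z^M}H_3(z)=1$ iff $M=3\cdot2^k$ for some $k\ge0$ (i.e.\ the binary word of $M$ is $11$ followed by a block of $0$'s). Hence a term $c\,z^{e}H_{\mathbf a}(z)$ of \eqref{eq:Ga3S} contributes $c$ to $\coef{z^n}$ precisely when $n-e$ has the corresponding binary shape. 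All the Laurent-polynomial coefficients in \eqref{eq:Ga3S} use only exponents $e$ in a bounded window (here $-7\le e\le 8$), and the free Laurent polynomial has degree below an explicit $N_0$ (here $N_0=63$ works). Therefore, for $n\ge N_0$, the residue of $\coef{z^n}$ modulo~$8$ is a function of the binary expansion of $n$ alone: one adds up the boundedly many coefficients indexed by those $e$ for which $n-e$ is a power of~$2$, has digit sum~$2$, has digit sum~$3$, or equals $3\cdot2^k$.

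The bulk of the work is then a finite case analysis over the binary shape of $n$. For $|e|$ small, $n-e$ can be a power of~$2$ only when $n=2^{\si}-c$ or $n=2^{\si}+c$ with $c$ bounded, so the $H_1$-terms of \eqref{eq:Ga3S} yield precisely the one-parameter families in items (i) and (iv) and in \eqref{eq:6siAGa3}, \eqref{eq:6siAAGa3}, \eqref{eq:6siBGa3} and \eqref{eq:4siGa3}; the $H_{1,1}$-terms, which fire when $s(n-e)=2$, yield the two-parameter families $2^{\si}+2^{\ta}+c$, e.g.\ \eqref{eq:4sitaGa3} and \eqref{eq:6sitaGa3}; the $H_{1,1,1}$-terms yield the three-parameter families \eqref{eq:4sitanuGa3}; and the $H_3$-terms yield the families $3\cdot2^{\si}-c$ in item (ii) and some entries of item (iii). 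For a given large $n$ I would sum, modulo~$8$, the coefficients attached to whichever of these shapes $n$ matches --- being careful with the finitely many overlaps (for instance, if $c$ is itself a power of~$2$ then $2^{\si}+c$ is read both by an $H_1$-term and by an $H_{1,1}$-term) --- and record, for each residue $r\in\{1,2,4,5,6\}$, the list of shapes with $\coef{z^n}\equiv r$; the residues $3$ and $7$ do not arise and all other $n$ give $\coef{z^n}\equiv0$. Finally the finitely many $n<N_0$ are matched directly against \eqref{eq:Ga3S}; these account for the sporadic values listed in the theorem ($n=1,2,10$ in (i); $n=7,9,17,18,27,42$ in (ii); the explicit list in (iii); $n=5$ in (iv); $n=6,11$ in (v)) and for the fact that the generic families only start at the indicated parameter bounds.

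The difficulty here is organisational rather than mathematical. The hard part will be to carry out the enumeration over binary shapes so scrupulously that no family is dropped, the coefficients are summed correctly whenever several shapes apply simultaneously, and the cut between the generic description (valid once the free parameters exceed explicit thresholds) and the genuinely exceptional small values of $n$ is placed exactly right. In practice I would do this with a computer-assisted sweep over the residues of $n$ modulo a suitable power of~$2$, exactly as for Theorem~\ref{thm:1}, and then transcribe the output into the human-readable form stated above.
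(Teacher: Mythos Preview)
Your approach is correct and is essentially the same as the paper's: the paper derives the identity \eqref{eq:Ga3S} from Theorem~\ref{thm:Ga38} by the coefficient-extraction algorithm of Section~\ref{sec:extr} (exactly as you describe via \eqref{eq:Phi2}--\eqref{eq:Phi3}), and then simply states that from \eqref{eq:Ga3S} the explicit description in Theorem~\ref{thm:Ga3} follows by routine (if tedious) coefficient reading. Your write-up actually supplies more detail on the extraction step than the paper does; the only minor quibble is that the free Laurent polynomial in \eqref{eq:Ga3S} has top degree~$62$, so the sporadic value $n=63$ still needs a direct check (indeed it appears in item~(iii)), and your cutoff $N_0$ should be taken a bit larger to absorb this.
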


\begin{remarknu} \label{rem:Ga3}
\allowdisplaybreaks
In the application of the method from Section~\ref{sec:method} in the
proof of Theorem~\ref{thm:Ga3}, when we arrive at 
the mod-8-level, we obtain 
\begin{multline} \label{eq:Ga316}
\sum _{n\ge0} ^{}s_{n+1}(\Ga_3(3))\,z^n
=
4 z^{62}+
4 z^{53}+
4 z^{44}+
4 z^{35}+
6 z^{26}+
4 z^{20}\\
+
4 z^{14}+
4 z^{12}+
4 z^{11}+
4 z^9+
4 z^6+
4 z^4+
\frac{2}{z}+
\frac{4}{z^2}+
\frac{3}{z^3}+
\frac{6}{z^4}\\[2mm]
+
\left(
4 z^4+
4+
\frac{6}{z^2}+
\frac{4}{z^3}+
\frac{6}{z^5}
\right) \Phi(z)
+
\left(
4 z^3+
\frac{4}{z}+
\frac{2}{z^3}+
\frac{4}{z^4}+
\frac{2}{z^6}
\right)
    \Phi^2(z)\\[2mm]
+
\left(
4 z^2+
\frac{4}{z^2}+
\frac{4}{z^3}+
\frac{2}{z^4}+
\frac{4}{z^5}+
\frac{4}{z^6}+
\frac{2}{z^7}
\right) \Phi^3(z)\\[2mm]
+
\left(
4 z^8+
4 z^4+
4 z^3+
6 z^2+
12+
\frac{4}{z}+
\frac{2}{z^2}+
\frac{6}{z^3}+
\frac{1}{z^4}+
\frac{2}{z^5}+
\frac{2}{z^6}+
\frac{1}{z^7}
\right) \Phi^4(z)\\[2mm]
+
\left(
4 z^2+
\frac{4}{z^2}+
\frac{6}{z^4}+
\frac{4}{z^5}+
\frac{6}{z^7}
\right) \Phi^5(z)\quad 
\text{modulo }8.
\end{multline}
However, the system of equations for the next level, the mod-16-level,
has no polynomial solutions.\footnote{The corresponding computation 
took almost 5 hours, using 94\% of the 32~gigabytes of memory of the
machine on which the computation was performed.} 
Even the enhancement of our method described in Appendix~\ref{appD}
fails. (There are actually several problems arising.
It turns out that, due to the reduction modulo~$2$, the variables
$b_i(z)$ expressed in
\eqref{eq:bi(z)} do not solve the system \eqref{eq:abcd} unless one
puts further restrictions on
$a^{(o)}(z),a^{(e)}(z),\dots,d^{(o)}(z),d^{(e)}(z)$. But even if we
ignore that and continue to follow the procedure described in
Appendix~\ref{appD}, then a contradiction arises at a later point: 
one of the factors of the polynomial $P(z)$ turns
out to be $(1+z)^{10}$, and the congruence \eqref{eq:Pjmj} with
$P_j^{m_j}(z)=(1+z)^{10}$ has no solution.) This {\it proves}
that it is impossible to find a polynomial in $\Phi(z)$ 
with coefficients that are Laurent polynomials in
$z$ over the integers which agrees with the generating function
for the subgroup numbers of $\Ga_3(3)$ modulo~$16$.
\end{remarknu}

%
%

\section{A variation I: free subgroup numbers for lifts of Hecke groups}
\label{sec:Hecke}

In this section, we consider the functional equation
\begin{equation} \label{eq:2meq}
zf^{2^h}(z)-f(z)+1=0,
\end{equation}
which generalises the functional equation \eqref{eq:CatEF} for the
generating function of Catalan numbers. It is easy to see that this
equation has a unique formal power series solution. The
coefficients of this uniquely determined series can be calculated
explicitly by means of the Lagrange inversion formula, the result
being 
\begin{equation} \label{eq:FCat} 
\coef{z^n}f(z)= \frac {1} {n}\binom {2^hn}{n-1},
\end{equation}
but this will not be relevant here.\footnote{See
Footnote~\ref{F:Lucas}.}
Again, the numbers in \eqref{eq:FCat} are special instances of
numbers that are now commonly known as {\it Fu\ss--Catalan numbers}
(cf.\ the paragraph containing \eqref{eq:EqH7}).

Our aim is to determine the coefficients of $f(z)$ modulo powers of $2$.
Our solution of this problem is that, again, 
the series $f(z)$ can be expressed as a polynomial in a ``basic" series. 
Here, this basic series is
\begin{equation} \label{eq:Phim}
\Phi_h(z)=\sum _{n\ge0} ^{}z^{2^{nh}/(2^h-1)}.
\end{equation}
It will turn out (see Corollary~\ref{thm:freeHecke}) that an adaptation
of the proof of the theorem below will allow us to treat as well
the behaviour, modulo powers of $2$, of free subgroup numbers of 
lifts of Hecke groups $\mathfrak H(q)$, with $q$ a Fermat prime.

The theorem below, in a certain sense, extends Theorem~\ref{thm:Cat}.
It does not, however, reduce to it for $h=1$, due to the choice that,
in the proof below,
the reductions in our algorithm are based on
the polynomial relation \eqref{eq:Phim2al} for the basic series
$\Phi_h(z)$, which, for $h=1$, is ``weaker" than the relation
\eqref{eq:PhiRel} which is used in the proof of Theorem~\ref{thm:Cat}. 

\begin{theorem} \label{thm:2mf}
For a positive integer $h,$ 
let $\Phi_h(z)=\sum _{n\ge0} ^{}z^{2^{nh}/(2^h-1)},$ and
let $\al$ be a further positive integer.
Then the unique solution $f(z)$ to \eqref{eq:2meq}{\em,}
reduced modulo $2^{2^{\al h}},$ 
can be expressed as a polynomial in $\Phi_h(z)$ of degree at most
$2^{(\al+1)h}-1$ with coefficients that are Laurent polynomials in 
$z^{1/(2^h-1)}$ over the integers.
\end{theorem}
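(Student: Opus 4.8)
The plan is to adapt the method from Section~\ref{sec:method} to the functional equation \eqref{eq:2meq}, replacing the basic series $\Phi(z)$ by $\Phi_h(z)$ throughout. First I would record the basic algebraic facts about $\Phi_h(z)$. From the definition \eqref{eq:Phim} one checks directly that $\Phi_h(z)=z^{1/(2^h-1)}+\Phi_h(z^{2^h})$, which, upon raising to the $2^h$-th power modulo $2$, yields the mod-$2$ relation $\Phi_h^{2^h}(z)=\Phi_h(z)+z^{2^h/(2^h-1)}$ (the analogue of \eqref{eq:Phi2}); more generally one obtains a polynomial relation for $\Phi_h(z)$ modulo any prescribed $2$-power, and I would isolate the one needed here, namely a monic polynomial identity
\begin{equation*}
\big(\Phi_h^{2^h}(z)+\Phi_h(z)+z^{2^h/(2^h-1)}+(\text{higher }2\text{-power corrections})\big)^{2^{(\al-1)h}}=0\quad\text{modulo }2^{2^{\al h}}
\end{equation*}
which plays the role of \eqref{eq:PhiRel}; call its consequence \eqref{eq:Phim2al}, reducing powers $\Phi_h^k(z)$ with $k\ge 2^{(\al+1)h}$. (The exponent bookkeeping $2^{2^{\al h}}$ versus $2^{2^{(\al-1)h}\cdot 2^h}$ here mirrors $v_2(d!)$-type counting, analogous to Lemma~\ref{lem:minpol}, but now in base $2^h$.)

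Next I would set up the Ansatz $f(z)=\sum_{i=0}^{2^{(\al+1)h}-1}a_i(z)\Phi_h^i(z)$ modulo $2^{2^{\al h}}$, with the $a_i(z)$ Laurent polynomials in $z^{1/(2^h-1)}$, and run the iterative lifting of Section~\ref{sec:method}: solve modulo $2$ first, then lift one $2$-power at a time. The base step modulo $2$ amounts to substituting the Ansatz into $zf^{2^h}(z)-f(z)+1=0$, using $\Phi_h'(z)$ has no role (no derivatives appear — this is a pure functional equation, so the analysis is in fact simpler than for Catalan numbers, where \eqref{eq:CatEF} is quadratic; here $f^{2^h}$ is an exact $2^h$-th power so modulo $2$ it is $\sum a_i^{2^h}(z)\Phi_h^{2^h i}(z)$), and reducing $\Phi_h^{2^h i}(z)$ via the mod-$2$ relation. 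Comparing coefficients of $\Phi_h^k(z)$ for $0\le k<2^{(\al+1)h}$, a Frobenius-type argument (exactly as in the proof of Theorem~\ref{thm:Cat}: all $a_{i,1}(z)$ with $i\not\equiv 0\bmod 2^{(\al+1)h-?}$ vanish, then an inductive descent on the $2^h$-adic valuation of the index kills all but the indices $0$ and $2^{\al h}$) pins down the unique mod-$2$ solution, with the "exceptional" coefficient forced by the known fact that $f(z)$ has nonnegative exponents and a constant term $1$. Then for the lifting step $a_{i,\be+1}(z)=a_{i,\be}(z)+2^\be b_{i,\be+1}(z)$: substituting into \eqref{eq:2meq} and dividing by $2^\be$, the term $z f^{2^h}$ contributes, modulo $2$, only through the linear-in-$b$ part $2^\be\cdot z\cdot(\text{cross terms})$, but since $2^h\ge 2$ the binomial $\binom{2^h}{1}=2^h$ is even, so actually the only surviving linear contribution of the $b_i$'s is through the $-f(z)$ term; thus the system becomes $b_{i,\be+1}(z)+\mathrm{Pol}_i(z)=0$ modulo $2$, trivially and uniquely solvable. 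This is the crucial simplification that makes the theorem go through for every $2$-power, just as in Theorem~\ref{thm:Cat}.

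The main obstacle I anticipate is purely the bookkeeping of fractional exponents: the coefficients live in $\Z[z^{1/(2^h-1)}]$ rather than $\Z[z^{\pm1}]$, so one must check at each stage that the Ansatz is closed under the operations performed — multiplication by $z$, the reduction \eqref{eq:Phim2al}, and the extraction of coefficients of $\Phi_h^k(z)$ — and that no denominators beyond $2^h-1$ in the exponent creep in. Concretely, since $\Phi_h(z)^{2^h}$ reduces to $\Phi_h(z)+z^{2^h/(2^h-1)}$ and $z\cdot z^{j/(2^h-1)}=z^{(j+2^h-1)/(2^h-1)}$, the lattice $\tfrac{1}{2^h-1}\Z$ of exponents is preserved, so this works, but it requires care to phrase cleanly; the linear independence needed to justify "comparing coefficients of $\Phi_h^k(z)$" is the analogue of Corollary~\ref{lem:Hind}, over the ring $\Z[z^{\pm 1/(2^h-1)}]$, and I would either prove the corresponding version of Lemma~\ref{lem:aiodd} in base $2^h$ or reduce to it by the substitution $z\mapsto z^{2^h-1}$.

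Finally I would assemble: having pushed the iteration through $\be=1,2,\dots,2^{\al h}-1$, setting $a_i(z)=a_{i,2^{\al h}}(z)$ gives the desired representation of $f(z)$ modulo $2^{2^{\al h}}$ as a polynomial in $\Phi_h(z)$ of degree at most $2^{(\al+1)h}-1$ with Laurent-polynomial-in-$z^{1/(2^h-1)}$ coefficients, which is the assertion of Theorem~\ref{thm:2mf}. I would remark that for $h=1$ the relation \eqref{eq:Phim2al} used here is genuinely weaker than \eqref{eq:PhiRel} (it uses a power of $\Phi_1^2+\Phi_1+z$ rather than a power of the degree-$4$ minimal polynomial modulo $8$), which is why the theorem does not literally specialize to Theorem~\ref{thm:Cat}, only parallels it.
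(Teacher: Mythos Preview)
Your approach is essentially the paper's: make an Ansatz as a polynomial in $\Phi_h(z)$, reduce high powers via a $2^{\al h}$-th power of the mod-$2$ relation for $\Phi_h$, solve the base step by a Frobenius descent, and observe that in the lifting step the $zf^{2^h}$ term contributes nothing linear in the $b_{i,\be+1}$'s modulo~$2$ (since $\binom{2^h}{1}=2^h$ is even), leaving the trivially solvable system $b_{i,\be+1}(z)+\mathrm{Pol}_i(z)\equiv 0$~mod~$2$.

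Two small corrections and one simplification. First, the mod-$2$ relation is $\Phi_h^{2^h}(z)+\Phi_h(z)+z^{1/(2^h-1)}\equiv 0$, not $+z^{2^h/(2^h-1)}$ (the missing term is the $n=0$ summand). Second, the reduction relation you need is the \emph{$2^{\al h}$-th} power of this, not the $2^{(\al-1)h}$-th, and there are no ``higher $2$-power corrections'' required: the bare mod-$2$ polynomial, raised to that power, already vanishes modulo $2^{2^{\al h}}$ and has degree $2^{(\al+1)h}$, exactly what is needed. Third, the fractional-exponent bookkeeping you flag as the main obstacle is handled in the paper by the substitution $z\mapsto z^{2^h-1}$ made \emph{at the very outset}: writing $\tilde f(z)=f(z^{2^h-1})$ and $\tilde\Phi_h(z)=\sum_{n\ge0}z^{2^{nh}}$, one works entirely with integer exponents, the relation becomes $\tilde\Phi_h^{2^h}+\tilde\Phi_h+z\equiv 0$~mod~$2$, and the issue evaporates. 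You mention this substitution as a fallback; the paper simply does it first.
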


\begin{proof}
For ease of notation, we replace $z$ by $z^{2^h-1}$ in
\eqref{eq:2meq}, thereby obtaining the equation
\begin{equation} \label{eq:2meqA}
z^{2^h-1}\tilde f^{2^h}(z)-\tilde f(z)+1=0,
\end{equation}
with $\tilde f(z)=f(z^{2^h-1})$. We now have
to prove that, modulo $2^{2^{\al h}}$, the series $\tilde
f(z)$ can be expressed as a polynomial in
\begin{equation} \label{eq:Phimdef} 
\tilde \Phi_h(z)=\sum _{n=0} ^{\infty}z^{2^{nh}}
\end{equation}
of degree at most $2^{(\al+1)h}-1$ with coefficients that are Laurent 
polynomials in $z$.

It is readily verified that
\begin{equation} \label{eq:Phim2}
\tilde\Phi_h^{2^h}(z) +\tilde\Phi_h(z)+z=0\quad \text{modulo 2},
\end{equation}
whence
\begin{equation} \label{eq:Phim2al}
\left(\tilde\Phi_h^{2^h}(z) +\tilde\Phi_h(z)+z\right)^{2^{\al h}}=0\quad 
\text{modulo }2^{2^{\al h}}.
\end{equation}
We modify our Ansatz \eqref{eq:Ansatz} to
\begin{equation} \label{eq:Ansatzm}
\tilde f(z)=\sum _{i=0} ^{2^{(\al+1)h}-1}a_i(z)\tilde\Phi_h^i(z)\quad \text 
{modulo
}2^{2^{\al h}},
\end{equation}
where the $a_i(z)$'s
are (at this point) undetermined Laurent polynomials in $z$.

Next, we gradually find approximations $a_{i,\be}(z)$ to $a_i(z)$ such that
\eqref{eq:2meqA} holds modulo $2^\be$, for $\be=1,2,\dots,
2^{\al h}$. To start the procedure, we consider the differential equation
\eqref{eq:2meqA} modulo $2$, with
\begin{equation} \label{eq:AnsatzA1}
\tilde f(z)=\sum _{i=0} ^{2^{(\al+1)h}-1}a_{i,1}(z)\tilde\Phi_h^i(z)\quad \text {modulo
}2.
\end{equation}
We substitute the Ansatz \eqref{eq:AnsatzA1} in \eqref{eq:2meqA}, reduce
high powers of $\tilde\Phi_h(z)$ by using Relation~\eqref{eq:Phim2al}, 
reduce the resulting expression modulo~2, thereby
taking advantage of the elementary fact that $\tilde\Phi_h'(z)=1$
modulo $2$, and we finally see that the left-hand side of
\eqref{eq:2meqA} becomes a polynomial in
$\tilde\Phi_h(z)$ of degree at most $2^{(\al+1)h}-1$ 
with coefficients that are Laurent polynomials in $z$. 
Now we compare coefficients of powers $\tilde\Phi_h^k(z)$,
$k=0,1,\dots,2^{(\al+1)h}-1$. This yields a
system of $2^{(\al+1)h}$ equations (modulo~$2$)
for the unknown Laurent polynomials $a_{i,1}(z)$,
$i=0,1,\dots,2^{(\al+1)h}-1$. Since we have already done similar
computations several times before, we content ourselves with stating
the result: all Laurent polynomials $a_{i,1}(z)$ must be zero, except for
$a_{0,1}(z)$ and $a_{2^{\al h},1}(z)$, which are given by
\begin{align} \notag
a_{0,1}(z)&=
\sum _{k=0} ^{{\al}-1}z^{2^{kh}-1},\\[2mm]
\label{eq:Ansatza1}
a_{2^{\al h},1}(z)&=z^{-1}.
\end{align}
 
\medskip
After we have completed the ``base  step," we now proceed with the
iterative steps described in Section~\ref{sec:method}. Our Ansatz
here (replacing the corresponding one in
\eqref{eq:Ansatz2}--\eqref{eq:Ansatz2b}) is
\begin{equation} \label{eq:AnsatzA2}
\tilde f(z)
=\sum _{i=0} ^{2^{(\al+1)h}-1}a_{i,\be+1}(z)\tilde\Phi_h^i(z)\quad 
\text{modulo }2^{\be+1},
\end{equation}
with
\begin{equation} \label{eq:AnsatzA2a}
a_{i,\be+1}(z):=a_{i,\be}(z)+2^{\be}b_{i,\be+1}(z),\quad 
i=0,1,\dots,2^{(\al+1)h}-1,
\end{equation}
where the
coefficients $a_{i,\be}(z)$ are supposed to provide a solution
$$\tilde f_{\be}(z)=\sum _{i=0}
^{2^{(\al+1)h}-1}a_{i,\be}(z)\tilde\Phi_h^i(z)$$ 
to
\eqref{eq:2meqA} modulo~$2^\be$. This Ansatz, substituted in
\eqref{eq:2meqA}, produces the congruence
\begin{equation} \label{eq:iter}
z^{2^h-1}\tilde f_{\be}^{2^h}(z)-\tilde f_{\be}(z)
+2^\be\sum _{i=0} ^{2^{(\al+1)h}-1}b_{i,\be+1}(z)\tilde\Phi_h^i(z)
+1=0
\quad 
\text {modulo }2^{\be+1}.
\end{equation}
By our assumption on $\tilde f_{\be}(z)$, we may divide by $2^\be$.
Comparison of powers of $\tilde\Phi_h(z)$ then yields a system of congruences
of the form
\begin{equation} \label{eq:bi+1} 
b_{i,\be+1}(z)+\text {Pol}_i(z)=0\quad 
\text {modulo }2,\quad \quad 
i=0,1,\dots,2^{(\al+1)h}-1,
\end{equation}
where $\text {Pol}_i(z)$, $i=0,1,\dots,2^{(\al+1)h}-1$, are certain
Laurent polynomials with integer coefficients. This system being trivially
uniquely solvable, we have proved that, for an arbitrary positive
integer $\al$, the modified algorithm that we have presented here
will produce a solution $\tilde f_{{
2^{\al h}}}(z)$ to \eqref{eq:2meqA} modulo $2^{2^{\al h}}$ which is a
polynomial in $\tilde\Phi_h(z)$ with coefficients that are Laurent polynomials in
$z$.
\end{proof}

It has been shown in \cite{MuHecke} that the parity pattern of free
subgroup numbers in Hecke groups $\mathfrak H(q)$, $q$ a Fermat prime,
coincides with the parity pattern of (special) Fu\ss--Catalan numbers.
More precisely, 
let $f_\la^{(q)}$ denote the number of free subgroups of index $2q\la$ in the
Hecke group $\mathfrak H(q)$. (For indices not divisible by $2q$, no free
subgroups exist in $\mathfrak H(q)$.) Then (see \cite[Eq.~(37)]{MuHecke})
$$
f_\la^{(q)}=\frac {1} {\la}\binom {(q-1)\la}{\la-1}\quad \text{modulo }2.
$$ 
The reader should keep in mind that $q-1$ is a $2$-power.
Theorem~\ref{thm:2mf} says that the generating function for the
Fu\ss--Catalan numbers \eqref{eq:FCat}, when reduced modulo a 
given power of $2$, can be expressed as a polynomial in $\Phi_h(z)$.
We are now going to show that the same is true for the
generating function for free subgroup numbers in the Hecke group
$\mathfrak H(q)$, although the equation it satisfies is different from
the functional equation \eqref{eq:2meq} for the generating function of
Fu\ss--Catalan numbers.
In the corollary below, we present actually a more general result:
even the generating function for free subgroup numbers of the lift
$\Ga_m(q)$, when reduced modulo a 
given power of $2$, can be expressed as a polynomial in $\Phi_h(z)$
in the case where $q$ is a Fermat prime.
In a certain sense, this extends Theorem~\ref{thm:freie-m}, although
it does not reduce to it for $h=1$. Again, the reason lies in the choice 
that, in the proof below, the reductions in our algorithm are based on
the polynomial relation \eqref{eq:Phim2al} for the basic series
$\Phi_h(z)$, which, for $h=1$, is ``weaker" than the relation
\eqref{eq:PhiRel} which is used in the proof of Theorem~\ref{thm:freie-m}. 
On the other hand, the corollary does largely extend
the parity result \cite[Cor.~A']{MuHecke}.

\begin{corollary} \label{thm:freeHecke}
Let $q=2^{2^{f}}+1$ be a Fermat prime, and
let $\ga$ be some positive integer. 
Then, for every positive integer $m,$ 
the generating function $F_m(q;z)=1+
\sum_{\lambda\geq1}
f_{\lambda}^{(q)}(m) z^\lambda$ of free
subgroup numbers of $\Ga_m(q),$
when reduced modulo $2^{\ga},$ 
can be expressed as a polynomial in $\Phi_{2^f}(z)$ of degree at most
$2^{2^f}\ga-1$ with coefficients that are Laurent polynomials in
$z^{1/(q-2)},$ where the series $\Phi_h(z)$ is defined as in
\eqref{eq:Phim}.
\end{corollary}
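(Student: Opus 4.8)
The plan is to mimic the proof of Theorem~\ref{thm:freie-m}, but using the ``weaker'' basic relation \eqref{eq:Phim2al} in place of \eqref{eq:PhiRel}, exactly as was done in the proof of Theorem~\ref{thm:2mf}. The first step is to recall from \cite[Eqs.~(18),~(36)]{MuHecke} (combined with Part~(i) of Lemma~\ref{Lem:ACompare} and the zeta-invariants \eqref{Eq:ZetaGamma_m(q)}) that $F_m(q;z)$ satisfies a Riccati-type differential equation with integer coefficients --- this is precisely the equation \eqref{Eq:FmqDiffEq} rewritten in terms of $F_m(q;z)=1+zG_m(q;z)$. Since $q$ is a Fermat prime, say $q-1=2^{2^f}$, the order and the polynomial degrees of this equation are bounded in terms of $q$ only, and, crucially, the equation is of the Riccati type to which the method of Section~\ref{sec:method} (in its $\Phi_h$-variant) applies.

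Next I would substitute $z\to z^{q-2}$ (note $q-2=2^{2^f}-1=2^h-1$ with $h=2^f$) so that the fractional powers $z^{1/(q-2)}$ become ordinary powers, and set up the Ansatz
\[
\tilde F_m(z)=\sum_{i=0}^{2^{2^f}\ga-1}a_i(z)\,\tilde\Phi_{2^f}^i(z)\quad\text{modulo }2^\ga,
\]
with $\tilde\Phi_h(z)=\sum_{n\ge0}z^{2^{nh}}$ as in \eqref{eq:Phimdef}. The base step ($\be=1$) proceeds by substituting $\tilde F_m(z)=\sum a_{i,1}(z)\tilde\Phi_h^i(z)$ modulo~$2$, using $\tilde\Phi_h'(z)=1$ modulo~$2$ and the mod-$2$ relation \eqref{eq:Phim2} to reduce high powers of $\tilde\Phi_h(z)$, and comparing coefficients of powers of $\tilde\Phi_h(z)$. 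Here, as in Proposition~\ref{Prop:flambdaq2Part}, one should keep in mind that for $m$ even $f_\lambda^{(q)}(m)$ is eventually zero modulo any $2$-power, so in that case the polynomial in $\Phi_{2^f}(z)$ is simply a Laurent polynomial (degree~$0$ in $\Phi_{2^f}$); thus one only needs to treat $m$ odd. For $m$ odd, the mod-$2$ reduction of the differential equation is expected to coincide (after the substitution) with the mod-$2$ reduction of \eqref{eq:2meqA}, or at worst to be a closely related quadratic congruence whose only Laurent-polynomial solution is, as in \eqref{eq:Ansatza1}, $a_{0,1}(z)=\sum_{k=0}^{\al-1}z^{2^{kh}-1}$, $a_{2^{\al h},1}(z)=z^{-1}$, all other $a_{i,1}(z)$ vanishing; this uses the known mod-$2$ pattern $f_\lambda^{(q)}\equiv\frac1\lambda\binom{(q-1)\lambda}{\lambda-1}$ from \cite[Eq.~(37)]{MuHecke} (which forbids the spurious solution where $\tilde F_m$ reduces to a polynomial or acquires negative powers of $z$).

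For the iterative step, given a mod-$2^\be$ solution I would put $a_{i,\be+1}(z):=a_{i,\be}(z)+2^\be b_{i,\be+1}(z)$, substitute into the differential equation, divide by $2^\be$, reduce high powers of $\tilde\Phi_h(z)$ by \eqref{eq:Phim2al}, and compare coefficients of powers of $\tilde\Phi_h(z)$. As in the proofs of Theorems~\ref{thm:freie-m} and~\ref{thm:2mf}, the key point is that because the ``new unknowns'' enter \emph{linearly} and only through $b_{i,\be+1}(z)$ and its first derivative, and because $\Phi_h'(z)=1$ modulo~$2$, the resulting system modulo~$2$ has the trivially solvable form $b_{i,\be+1}(z)+\text{Pol}_i(z)=0$; hence a solution exists at every level up to $\be=2^{\al h}-1$, choosing $\ga\le 2^{\al h}$ so that $2^\ga\mid 2^{2^{\al h}}$. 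Finally, undoing the substitution $z\to z^{q-2}$ gives the claimed polynomial in $\Phi_{2^f}(z)$ with coefficients that are Laurent polynomials in $z^{1/(q-2)}$. The main obstacle I anticipate is purely bookkeeping: verifying that after the specialisation $q=2^{2^f}+1$ the multi-sum differential equation \eqref{Eq:FmqDiffEq} really does reduce, modulo~$2$, to something whose $\tilde\Phi_h$-expansion behaves like that of \eqref{eq:2meqA} --- in particular that the nonlinear terms $\prod_j G_m(q;z)^{(\mu_j-1)}$ collapse appropriately modulo~$2$ (only the $\nu=q-1$, all $\mu_j=1$ term and low-$\nu$ terms survive, with the binomial/factorial prefactors controlled by Lemma~\ref{Lem:ACompare}(ii) and Legendre's formula) --- so that the base step produces the claimed unique solution rather than an inconsistent system; once the base step is secured, the inductive step is routine exactly as before.
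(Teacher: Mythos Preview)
Your proposal is correct and follows essentially the same route as the paper. The ``main obstacle'' you flag---that the mod-$2$ reduction of the Riccati equation \eqref{Eq:FmqDiffEq} collapses to the Fu\ss--Catalan functional equation $zF_m^{q-1}(q;z)-F_m(q;z)+1=0$---is precisely what the paper handles by invoking \cite[Prop.~2]{MuHecke}, so no new work is needed there; two small points you omit are that the substitution $z\mapsto z^{q-2}$ introduces powers of $q-2$ in denominators (harmless, since $q-2$ is odd and hence invertible modulo~$2^\gamma$), and that once the result is established for moduli of the special form $2^{2^{\alpha 2^f}}$ it follows immediately for all~$2^\gamma$.
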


\begin{proof}
In view of Proposition~\ref{Prop:flambdaq2Part}, the assertion
is trivially true for even $m$, the polynomial in $\Phi_{2^f}(z)$ being a
polynomial of degree zero in this case. 
We may thus assume from now on that $m$ is odd.

Equation \eqref{Eq:FmqDiffEq} provides a Riccati-type differential
equation for $F_m(q;z)=1+\break zG_m(q;z)$.
Moreover, this equation, considered modulo~2, is the same for every
odd $m$. Namely, we have
\begin{multline*}
\tfrac {1} {z}\big(F_{m}(q;z)-1\big) = A_0(\mathfrak{H}(q))
 + \sum_{\mu=1}^{q-1}
\sum_{\nu=1}^\mu
\underset{\mu_1+\cdots+\mu_\nu=\mu}{\sum_{\mu_1,\ldots,\mu_\nu>0}}
\binom{\mu}{\mu_1,\ldots,\mu_\nu}\big(\nu!\, (2q)^\nu\big)^{-1}
A_\mu(\mathfrak{H}(q)) z^\mu \\[2mm]
\times\prod_{j=1}^\nu
\Big(\tfrac {1} {z}\big(F_{m}(q;z)-1\big)\Big)^{(\mu_j-1)} \quad \quad 
\text {modulo }2.
\end{multline*}
Moreover, it is shown in \cite[Prop.~2]{MuHecke}
that, modulo~2, this differential equation reduces to
\begin{equation} \label{eq:sq2} 
zF_m^{q-1}(q;z)-F_m(q;z)+1=0\quad 
\text{modulo }2.
\end{equation}
The latter statement means that reduction of coefficients modulo~2
and usage of the simple fact that 
\begin{equation} \label{eq:sq''2} 
F_m''(q;z)=0\quad \text{modulo }2
\end{equation}
leads from the original differential equation
\eqref{Eq:FmqDiffEq} for $F_m(q;z)=1+G_m(q;z)$ to the congruence
\eqref{eq:sq2}. With $q-1$ being a power of $2$ by assumption, we observe
that, disregarding the restriction to modulus~2, 
Equation~\eqref{eq:sq2} is the special case of \eqref{eq:2meq} where
$h=2^f$. In particular,
if, for the moment, we assume that $\ga=2^{\al 2^f}$, for some positive
integer $\al$, then we
see that the base step of the Ansatz outlined (and applied) in the
proof of Theorem~\ref{thm:2mf} (with $h=2^f$) can be successfully performed here: it would
yield exactly the same result as there, namely \eqref{eq:AnsatzA1}
with the Laurent polynomials $a_{i,1}(z)$ being given in the paragraph
containing \eqref{eq:Ansatza1}. 

However, also the subsequent iterative steps would just work in the
same way as in the preceding proof! Indeed, transform the Riccati-type
differential equation \eqref{Eq:FmqDiffEq} 
for $F_m(q;z)$ by the substitution $z\mapsto
z^{q-2}$ (in analogy with the substitution leading to
\eqref{eq:2meqA}). This yields a Riccati-type differential equation
for $F_m(q;z^{q-2})$. A fine point to be observed here is that, in this
equation, the coefficients will not necessarily be integral; due to
the substitution rule for differentials, denominators that are powers of
$(q-2)$ may occur. As in the proof of Theorem~\ref{thm:2mf},
we now continue with the Ansatz
\begin{equation} \label{eq:Ansatzs2}
F_m(q;z^{q-2})
=\sum _{i=0} ^{2^{(\al+1)2^f}-1}a_{i,\be+1}(z)\tilde\Phi_{2^f}^i(z)\quad 
\text{modulo }2^{\be+1},
\end{equation}
with
\begin{equation} \label{eq:Ansatzs2a}
a_{i,\be+1}(z):=a_{i,\be}(z)+2^{\be}b_{i,\be+1}(z),\quad 
i=0,1,\dots,2^{(\al+1)2^f}-1
\end{equation}
(which is analogous to \eqref{eq:AnsatzA2}--\eqref{eq:AnsatzA2a}),
where the
coefficients $a_{i,\be}(z)$ are supposed to provide a solution
$$F_{m,\be}(q;z)=\sum _{i=0}
^{2^{(\al+1)2^f}-1}a_{i,\be}(z)\tilde\Phi_{2^f}^i(z)$$ 
to the differential equation for $F_m(q;z^{q-2})$ modulo~$2^\be$.
The fact that reduction
modulo~2 and usage of \eqref{eq:sq''2} leads from the original
differential equation for $F_m(q;z)$ to \eqref{eq:sq2} implies that
substitution of the Ansatz \eqref{eq:Ansatzs2}--\eqref{eq:Ansatzs2a}
in the differential equation for $F_m(q;z^{q-2})$
yields an equation
completely analogous to \eqref{eq:iter}, namely
\begin{multline*} 
z^{q-2}F_{m,\be}^{q-1}(q;z)-F_{m,\be}(q;z)
+2^\be\sum _{i=0} ^{2^{\al+h}-1}b_{i,\be+1}(z)\tilde\Phi_h^i(z)\\[2mm]
+1+T(z,F_{m,\be}(q;z))=0
\quad 
\text {modulo }2^{\be+1}.
\end{multline*}
Here, $T(z,F_{m,\be}(q;z))$ consists only of terms that may depend on
$F_{m,\be}(q;z)$ but {\it do not\/} depend on the
$b_{i,\be+1}(z)$'s.
The rest of the procedure is then as in the preceding proof:
we divide by $2^\be$,
compare powers of $\tilde\Phi_h(z)$, and obtain a system of congruences
of the form \eqref{eq:bi+1}, which is trivially solvable.
The powers of $(q-2)$ that may appear in the denominators of the
coefficients in the polynomials involved here are disposed of by
interpreting them appropriately as elements of $\mathbb Z/2^\ga\mathbb
Z$. 

Finally, if we are able to express $F_m(q;z)$ as a polynomial in
$\Phi_{2^f}(z)$ modulo $2^\ga=2^{2^{\al 2^f}}$ for all $\al$,
then the same assertion must hold for {\it every} $\ga$.
\end{proof}

In order to illustrate the algorithm described in the last proof, let
us consider the case of the Hecke group $\mathfrak H(5)$, that is,
the case of Corollary~\ref{thm:freeHecke} where $f=1$. 
The Riccati-type differential equation for the series 
$G_m(z):=G_m(5;z)=
\sum_{\la=0}^\infty f_{\la+1}^{(5)}(m)z^\la$ that one obtains from
\eqref{Eq:FmqDiffEq}
in this special case reads
\begin{multline*}
G_m(z)
=189m^4
+4600m^3
   z G_m(z)
+1430 m^2z^2 G_m^2(z)
+80 mz^3
   G_m^3(z)\\[2mm]
+z^4 G_m(z)^4
+14300m^3 z^2
   G_m'(z)
+2400 m^2z^3
   G_m(z)G_m'(z)
+60m
   z^4 G_m^2(z) G_m'(z)\\[2mm]
+300m^2 z^4 \big(G_m'(z)\big)^2
+8000 m^3z^3
   G_m''(z)
+400 m^2z^4
   G_m(z)G_m''(z)
+1000 m^3z^4 G_m'''(z).
\end{multline*}

Since $G_m(z)=G_m(5;z)=\frac {1} {z}\big(F_m(5;z)-1\big)$, we obtain
the differential equation
\begin{multline} \label{eq:F5}
1+ (189 m^4  -  300 m^3 + 130 m^2 - 20 m +1) z\\[2mm]
+((300m^3-260m^2+60m-4)z-1) F_m(5;z)
+(130m^2-60m+6)z
   F_m^2(5;z)\\[2mm]
+(20m-4)z F_m^3(5;z)
+z
   F_m^4(5;z)
+(4300m^3-1000m^2+60m)z^2 F_m'(5;z)\\[2mm]
+(1000m^2-120m)z^2 
   F_m(5;z) F_m'(5;z)
+60m
   z^2 F_m(5;z)^2 F_m'(5;z)
+300m^2 z^3 F_m'(5;z)^2\\[2mm]
+(5000m^3-400m^2)z^3 
   F_m''(5;z)
+400m^2 z^3
   F_m(5;z) F_m''(5;z)
+1000 m^3z^4 F_m'''(5;z)=0
\end{multline}
for $F_m(5;z)$.
We have implemented the algorithm described in the proof of
Theorem~\ref{thm:freeHecke} for this differential equation.
For the modulus $16$, it produces the following result.
(It is independent of $m$ because of the high divisibility of the
coefficients in the differential equation \eqref{eq:F5} by powers of
$2$. The parameter $m$ will show up for
$2$-powers higher than $2^4=16$.)

\begin{theorem} \label{cor:freiH5}
Let $\Phi_2(z)=\sum _{n\ge0} ^{}z^{4^{n}/3},$ as before.
Then, for all positive odd integers $m$, the generating function 
$F_m(5;z)=1+\sum_{\la\ge1} f_\la^{(5)}(m)z^\la$ for the free subgroup
numbers of\/ $\Ga_m(5)$ satisfies
\begin{multline} 
F_m(5;z)=
4z+1
+12 z^{2/3}
   \Phi_2(z)
+10 z^{1/3} \Phi_2^2(z)
+12 \Phi_2^3(z)
+\left(4
   z^{2/3}+{7}{z^{-1/3}}\right) \Phi_2^4(z)\\[2mm]
+4 z^{1/3}
   \Phi_2^5(z)
+4  \Phi_2^6(z)
+{12{z^{-1/3}}
   \Phi_2^7(z)}
+8 z^{1/3}
   \Phi_2^8(z)
+4
   \Phi_2^9(z)\\[2mm]
+{2{z^{-1/3}}
   \Phi_2^{10}(z)}
+12
   \Phi_2^{12}(z)
+{12{z^{-1/3}}
   \Phi_2^{13}(z)}\quad 
\text{\em modulo }16.
\end{multline}
\end{theorem}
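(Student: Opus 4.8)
The plan is to derive the displayed congruence as a direct instance of Corollary~\ref{thm:freeHecke}, specialised to $q=5$ (so, in the notation there, $f=1$ and $h=2^f=2$), applied to the differential equation \eqref{eq:F5}. Recall that \eqref{eq:F5} is obtained from the general Riccati-type equation \eqref{Eq:FmqDiffEq} by putting $q=5$, computing the relevant invariants $A_\mu(\mathfrak H(5))$, and substituting $G_m(5;z)=\frac1z\big(F_m(5;z)-1\big)$; moreover \eqref{eq:F5} has a unique formal power series solution $F_m(5;z)$, since comparison of the coefficient of $z^N$ on both sides expresses $\coef{z^{N}}F_m(5;z)$ in terms of lower-index coefficients. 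Hence it suffices to show that the algorithm of Corollary~\ref{thm:freeHecke} produces the displayed polynomial in $\Phi_2(z)$ (equivalently, that this polynomial satisfies \eqref{eq:F5} modulo~$16$).

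The next step is to run that algorithm. Replacing $z$ by $z^{3}$ in \eqref{eq:F5} (where $3=2^{h}-1=q-2$) turns it into a Riccati-type equation for $\tilde F_m(z):=F_m(5;z^{3})$; the powers of~$3$ that appear in the coefficients after this substitution are units modulo~$16$ and are interpreted as such. One then makes the Ansatz $\tilde F_m(z)=\sum_{i=0}^{15}a_i(z)\,\tilde\Phi_2^{\,i}(z)$ modulo~$16$, with $\tilde\Phi_2(z)=\sum_{n\ge0}z^{4^{n}}=\Phi_2(z^{3})$ (this is the case $h=2$, $\al=1$ of Theorem~\ref{thm:2mf}), and determines successive approximations $a_{i,\be}(z)$ solving the equation modulo~$2^{\be}$ for $\be=1,2,3,4$. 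In the base step one uses $\tilde\Phi_2'(z)\equiv1\pmod 2$ and the mod-$2$ relation $\tilde\Phi_2^{4}(z)+\tilde\Phi_2(z)+z\equiv0$ from \eqref{eq:Phim2}; since the mod-$2$ reduction of the substituted equation \eqref{eq:F5} is precisely the functional congruence $z^{3}\tilde F_m^{4}(z)-\tilde F_m(z)+1\equiv0\pmod 2$ of \eqref{eq:sq2}, the analysis of Theorem~\ref{thm:2mf} applies verbatim and forces $a_{i,1}(z)\equiv0$ for all $i$ except $a_{0,1}(z)\equiv1$ and $a_{4,1}(z)\equiv z^{-1}$, which, translated back by $z\mapsto z^{1/3}$, is exactly the mod-$2$ reduction $1+z^{-1/3}\Phi_2^{4}(z)$ of the claimed formula. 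In the iterative steps $\be\to\be+1$ ($\be=1,2,3$) one sets $a_{i,\be+1}(z)=a_{i,\be}(z)+2^{\be}b_{i,\be+1}(z)$, uses $\tilde\Phi_2'(z)\equiv1+4z^{3}\pmod{16}$, reduces powers $\tilde\Phi_2^{k}(z)$ with $k\ge16$ by means of \eqref{eq:Phim2al}, i.e.\ $\big(\tilde\Phi_2^{4}(z)+\tilde\Phi_2(z)+z\big)^{4}\equiv0\pmod{16}$, divides by~$2^{\be}$, reduces modulo~$2$, and compares coefficients of $\tilde\Phi_2^{k}(z)$; because the only $b$-dependent contribution that survives is $-\sum_i b_{i,\be+1}(z)\tilde\Phi_2^{\,i}(z)$ (every other $b$-dependent term carries an extra factor of~$2$ from the $2$-divisibility of the coefficients of \eqref{eq:F5}, exactly as in the proof of Corollary~\ref{thm:freeHecke}), the arising system has the form $b_{i,\be+1}(z)+\mathrm{Pol}_i(z)\equiv0\pmod 2$ and is uniquely solvable. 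After the last iterative step one has a solution modulo~$16$; undoing $z\mapsto z^{1/3}$ yields a polynomial in $\Phi_2(z)$ of degree $\le15$ with coefficients that are Laurent polynomials in $z^{1/3}$, which a routine (but lengthy) computer calculation identifies with the right-hand side of the displayed congruence.

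It remains to explain why the outcome does not depend on the odd integer~$m$. Inspecting \eqref{eq:F5}, every $m$-dependent coefficient multiplying a derivative $F_m'(5;z),F_m''(5;z),F_m'''(5;z)$ or a product involving one is divisible by~$4$ (in fact by~$4$ or~$8$), and every $m$-dependent coefficient multiplying a power $F_m^{j}(5;z)$ with $j\ge1$ is divisible by~$2$; moreover $m^{2}\equiv1\pmod 8$ and $m^{4}\equiv1\pmod{16}$ for every odd~$m$. A routine check of $2$-adic valuations then shows that, after the divisions by $2^{\be}$ and reduction modulo~$2$ in the base and iterative steps, each of the linear systems that arises is independent of~$m$; hence the algorithm returns a single formula valid for all odd~$m$. (The dependence on~$m$ surfaces only modulo $2^{5}=32$ and higher.)

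The conceptual content is entirely carried by Corollary~\ref{thm:freeHecke}: existence of a solution at the mod-$2$ level is guaranteed because the mod-$2$ reduction of \eqref{eq:F5} is the functional equation \eqref{eq:sq2}, handled as in Theorem~\ref{thm:2mf}, and every subsequent system is trivially solvable. The only genuine obstacles are bookkeeping ones — keeping track of the fractional exponents $z^{k/3}$ introduced by $z\mapsto z^{3}$ and its inverse, verifying the $m$-independence via the valuation count above, and carrying out the explicit manipulations modulo~$16$, which is the laborious part and is done by the computer implementation mentioned in the text. An equally valid alternative is to bypass the algorithm: substitute the displayed expression (after $z\mapsto z^{3}$) into \eqref{eq:F5}, expand the powers and derivatives, reduce powers of $\tilde\Phi_2(z)$ exceeding~$15$ via $\big(\tilde\Phi_2^{4}(z)+\tilde\Phi_2(z)+z\big)^{4}\equiv0\pmod{16}$, and check that every coefficient vanishes modulo~$16$ — a finite but tedious verification.
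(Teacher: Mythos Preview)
Your proposal is correct and follows essentially the same approach as the paper: the paper simply states that the displayed formula is the output of the algorithm from the proof of Corollary~\ref{thm:freeHecke} applied to the differential equation~\eqref{eq:F5}, and notes parenthetically that the result is independent of $m$ because of the high $2$-divisibility of the coefficients in~\eqref{eq:F5}. You have spelled out the mechanics of that algorithm (the substitution $z\mapsto z^3$, the base step via~\eqref{eq:sq2}, the iterative steps, and the reduction relation~\eqref{eq:Phim2al}) and given a more explicit account of the $m$-independence, but the underlying argument is identical.
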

Clearly, coefficient extraction from powers of $\Phi_2(z)$ 
(and, more generally, from powers of $\Phi_h(z)$) can be accomplished
by appropriately adapting the results in Section~\ref{sec:extr}.

\section{A variation II: 
subgroup numbers for Hecke groups}
\label{sec:H5}

In Section~\ref{sec:PSL2Z}, we proved that the generating function
for the subgroup numbers of
the inhomogeneous modular group $PSL_2(\mathbb Z)\cong \mathfrak
H(3)$, when reduced modulo a power of $2$, can always be expressed
as a polynomial in the basic series $\Phi(z)$ with coefficients that
are Laurent polynomials in $z$. 
Here, we discuss possible extensions of this result to Hecke groups 
$\mathfrak H(q)$, where $q$ is a Fermat prime. 
Again, we have to modify the original method from
Section~\ref{sec:method} by using the series $\Phi_h(z)$ defined in
\eqref{eq:Phim} (for suitable $h$) instead of $\Phi(z)$.
We conjecture (see Conjecture~\ref{conj:Hq}) that
this variation of our method will be successful for
arbitrary Fermat primes $q$. If $q=5$, we are actually able to
demonstrate this conjecture, thereby largely refining the $q=5$ case of
\cite[Theorem~B]{MuHecke}.

\begin{theorem} \label{thm:H5}
With notation from the previous section, 
let $\Phi_2(z)=\sum _{n\ge0} ^{}z^{4^{n}/3},$ and
let $\al$ be a positive integer.
Then the generating function $S(z)=S_{\mathfrak H(5)}(z)$
{\em(}see the first paragraph of Section~{\em\ref{sec:PSL2Z}} 
for the definition{\em)}, 
reduced modulo $2^{4^{\al}},$ 
can be expressed as a polynomial in $\Phi_2(z)$ of degree at most
$4^{\al+1}-1$ with coefficients that are Laurent polynomials in $z^{1/3}$
over the integers.
\end{theorem}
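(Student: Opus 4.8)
The plan is to follow the template of the proofs of Theorems~\ref{thm:Unterg} and \ref{thm:2mf}, with the basic series $\Phi(z)$ replaced by $\Phi_2(z)$ from \eqref{eq:Phim}. First I would produce the relevant differential equation. Since $\mathfrak H(5)=C_2*C_5$ is a free product, a homomorphism into $S_n$ is freely determined by its restrictions to the two factors, so $h_{\mathfrak H(5)}(n):=\vert\Hom(\mathfrak H(5),S_n)\vert=h_{C_2}(n)\,h_{C_5}(n)$ is a product of two holonomic sequences (with exponential generating functions $\exp(z+z^2/2)$ and $\exp(z+z^5/5)$, respectively), hence holonomic itself, and hence so is $h_{\mathfrak H(5)}(n)/n!$. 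Then, exactly as in Sections~\ref{sec:SL2Z} and \ref{sec:Ga3} — converting the recurrence into a linear differential equation for $H(z)=\sum_{n\ge0}h_{\mathfrak H(5)}(n)z^n/n!$ (desingularising, or using an indeterminate Ansatz, so that the leading coefficient is $n$), then differentiating the relation \eqref{eq:Dey} with $\Ga=\mathfrak H(5)$ and eliminating the derivatives of $H$ via the Fa\`a di Bruno formula — one arrives at a Riccati-type differential equation for $S(z)=S_{\mathfrak H(5)}(z)$ with integer coefficients, to which the method of Section~\ref{sec:method} applies.

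Next I would run the $\Phi_2$-variant of the method. After the substitution $z\mapsto z^3$ (harmless modulo powers of $2$, since the powers of $3=q-2$ that then appear in denominators are units, as in the proof of Corollary~\ref{thm:freeHecke}), I seek $\tilde S(z)=S(z^3)$ in the form $\sum_{i=0}^{4^{\al+1}-1}a_i(z)\tilde\Phi_2^i(z)$ modulo $2^{4^\al}$, where $\tilde\Phi_2(z)=\sum_{n\ge0}z^{4^n}$, reducing powers $\tilde\Phi_2^k(z)$ with $k\ge4^{\al+1}$ by means of the case $h=2$ of \eqref{eq:Phim2al}, namely $\bigl(\tilde\Phi_2^4(z)+\tilde\Phi_2(z)+z\bigr)^{4^\al}=0$ modulo $2^{4^\al}$. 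For the base step modulo $2$ one substitutes the Ansatz, uses $\tilde\Phi_2'(z)=1$ modulo $2$, and compares coefficients of powers of $\tilde\Phi_2(z)$; here I would invoke \cite[Theorem~12]{MuHecke}, whose effect is that the mod-$2$ reduction of the equation takes a special form (reflecting the relation to $C_5*C_5$ through $z\mapsto z^2$), so that the resulting mod-$2$ system has a unique solution, with all $a_{i,1}(z)$ vanishing apart from two explicit ones.

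The iterative step from modulus $2^\be$ to modulus $2^{\be+1}$ proceeds as in Section~\ref{sec:method}: writing $a_{i,\be+1}(z)=a_{i,\be}(z)+2^\be b_{i,\be+1}(z)$, dividing the substituted equation by $2^\be$, and comparing coefficients of powers of $\tilde\Phi_2(z)$ produces a system of linear differential equations modulo $2$ for the $b_{i,\be+1}(z)$, of the shape $b_{i,\be+1}(z)+\ell(z)\,b_{i+1,\be+1}(z)+\ell_2(z)\,b'_{i,\be+1}(z)+\mathrm{Pol}_i(z)=0$ with Laurent polynomials $\ell,\ell_2,\mathrm{Pol}_i$; by Lemma~\ref{lem:2x2diff} this is equivalent to an ordinary linear system over $\mathbb F_2(z)$, and the point is that the denominators always cancel. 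This is the main obstacle: I would have to establish the analogue of Lemma~\ref{lem:diff710} for the particular first-order operator $a\mapsto a+\ell_2 a'$ occurring here, and identify a divisibility invariant of the $a_{i,\be}(z)$ — playing the role that divisibility by $1-z^3$ plays in the proof of Theorem~\ref{thm:Unterg} — which forces each $\mathrm{Pol}_i(z)$ to meet the resulting solvability criterion and is inherited by the new solutions $b_{i,\be+1}(z)$. Since for $SL_2(\mathbb Z)$ and $\Ga_3(3)$ no such invariant exists (direct application of the method fails there, and for $\Ga_3(3)$ even the enhancement of Appendix~\ref{appD} fails), the delicate point is to verify, by an explicit analysis of the mod-$2$ reduction of the $\mathfrak H(5)$ equation and of the operator $a\mapsto a+\ell_2 a'$, that the requisite invariant really is present here. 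Once this is done the procedure pushes through to modulus $2^{4^\al}$ and yields the asserted polynomial in $\Phi_2(z)$ of degree at most $4^{\al+1}-1$ with Laurent-polynomial coefficients in $z^{1/3}$, completing the proof.
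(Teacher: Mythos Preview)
Your overall strategy is right --- derive a Riccati-type equation for $S(z)$ from \eqref{eq:Dey} and a holonomic recurrence for $h_{C_2}(n)h_{C_5}(n)/n!$, pass to $\tilde S(z)=S(z^3)$, and run the $\tilde\Phi_2$-variant of Section~\ref{sec:method} with the reduction \eqref{eq:Phim2al} for $h=2$. This is exactly what the paper does, and the paper even writes out the relevant differential equation explicitly.

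However, you misidentify the crux of the iterative step. You anticipate having to find a divisibility invariant of the $a_{i,\be}(z)$ (the analogue of the $1-z^3$ factor in Theorem~\ref{thm:Unterg}) in order to meet a solvability criterion for the mod-$2$ operator $a\mapsto a+\ell_2(z)a'$. In fact, after the substitution $z\mapsto z^3$ the operator that appears is $a\mapsto a+z^{16}a'$, and this equation is solvable for \emph{every} Laurent polynomial right-hand side: since $a'(z)\equiv z^{-1}a^{(o)}(z)$ modulo~$2$, the equation $a+z^{16}a'=r$ splits into $a^{(o)}=r^{(o)}$ and $a^{(e)}=r^{(e)}+z^{15}r^{(o)}$. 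So no solvability criterion and no invariant are needed at all --- the $\mathfrak H(5)$ case is \emph{easier} than the $PSL_2(\mathbb Z)$ case here, not harder. The paper simply invokes Lemma~\ref{lem:2x2diff} and is done with the inductive step.

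Two smaller corrections. First, the base step does not rely on \cite[Theorem~12]{MuHecke}; the paper just substitutes the Ansatz and computes directly. Second, the mod-$2$ solution has \emph{three} nonvanishing coefficients, not two: besides $a_{0,1}(z)$, both $a_{4^{\al},1}(z)=z^{-13}$ and $a_{2\cdot 4^{\al},1}(z)=z^{-8}$ are nonzero.
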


\begin{proof}
Let
$$
h(n)=\frac {1} {n!}h_{C_2}(n)h_{C_5}(n).
$$
Using the routine to compute recurrences for the Hadamard product of
recursive sequences, implemented in
{\tt gfun} \cite{SaZiAA} and {\tt GeneratingFunctions} \cite{MallAA}
(cf.\ \cite[Theo\-rem~6.4.12]{StanBI} for the theoretical background),
one obtains the recurrence 
{\allowdisplaybreaks
\begin{multline} \label{eq:h5-1}
  n (16 - 72 n + 174 n^2 - 155 n^3 + 65 n^4 - 13 n^5 + n^6) h(n)\\[2mm]
- (184 - 620 n + 854 n^2 - 555 n^3 + 177 n^4 - 25 n^5 + 
     n^6) h(n-1) \\[2mm]
- (856 - 
     1636 n + 1250 n^2 - 479 n^3 + 101 n^4 - 13 n^5 + n^6) h(
     n-2) \\[2mm]
- 4 (n-6) (n-3)^2 (-7 + 3 n) h(n-3) 
+ 
  8 (n-7) (n-4) (3 n-7) h(n-4) \\[2mm]
- (1136 - 856 n + 1292 n^2 - 
     2930 n^3 + 3115 n^4 - 1718 n^5 + 516 n^6 - 80 n^7 + 5 n^8) h(
      n-5) \\[2mm]
- 
  2 (1856 - 5376 n + 6828 n^2 - 4868 n^3 + 2174 n^4 - 651 n^5 + 
     133 n^6 - 17 n^7 + n^8) h(n-6) \\[2mm]
- 
  4 (n-6) (n-5) (n-3)^2 (n-2) (3 n-7) h(n-7) 
+ 
 8 (n-7) (n-6) (n-4) (n-3) (3 n-7) h(n-8)\\[2mm] 
- 
  16 (n-8) (n-7) (n-5) (-7 + 3 n) h(n-9) \\[2mm]
-(n-9) (n-8) (n-6) (n-3) (16 + 12 n - 16 n^2 - 5 n^3 + 
     15 n^4 - 7 n^5 + n^6) h(n-10) 
=0
\end{multline}}%
for the sequence $\big(h(n)\big)_{n\ge0}$.
Since the leading coefficient (i.e., the coefficient of $h(n)$) is not
$n$, this recurrence is not suitable for being translated into a
Riccati-type differential equation with integral coefficients 
via \eqref{eq:Dey}, to which we can apply our method from 
Section~\ref{sec:method}. 
Using Euclidean division of difference operators, 
one can see that we also have 
\begin{multline} \label{eq:h5-2}
n
   h(n)
-h(n-1)
-h(
   n-2)
-\left(5
   n^2-11 n-44\right)
   h(n-5)
-2 (n-4)
   (n-2)
   h(n-6)\\[2mm]
+12
   h(n-7)
-4
   h(n-8)
-\left(n^4-2
   0 n^3+95 n^2+260
   n-2000\right)
   h(n-10)\\[2mm]
+4 (9 n-85)
   h(n-11)
-8 \left(n^2-19 n+89\right)
   h(n-12)\\[2mm]
+4 (n-14)
   (n-13) (n-11) (n-7)
   h(n-15)\\[2mm]
-4 (n-15)
   (n-14) (n-12) (n-9)
   h(n-16)
=0.
\end{multline}
If we now apply the procedure of converting such a recurrence for
homomorphism numbers (divided by $n!$) into a Riccati-type
differential equation for the generating function of the
corresponding subgroup numbers as explained in the paragraph containing
\eqref{eq:Sdef}, then we obtain the
differential equation
\begin{multline} \label{eq:diffH5}
224
   z^{15}-256 z^{14}+40
   z^{11}-56 z^{10}+100 z^9+4
   z^7-12 z^6+16 z^5+26
   z^4+z+1\\[2mm]
+   \left(736 z^{16}-824
   z^{15}+48 z^{12}-36
   z^{11}+276 z^{10}+14 z^6+44
   z^5-1\right)S(z)\\[2mm]
+   \left(448 z^{17}-488
   z^{16}+8 z^{13}+162
   z^{11}+2 z^7+5
   z^6\right)S^2(z)\\[2mm]
+ \left(80 z^{18}-84
   z^{17}+26
   z^{12}\right)S^3(z)
+ \left(4 z^{19}-4
   z^{18}+z^{13}\right)S^4(z)\\[2mm]
+   \left(448 z^{17}-488
   z^{16}+8 z^{13}+162
   z^{11}+2 z^7+5
   z^6\right)S'(z)\\[2mm]
+   \left(12 z^{19}-12 z^{18}+3
   z^{13}\right)(S')^2(z)
+ \left(240
   z^{18}-252 z^{17}+78
   z^{12}\right)S(z)
   S'(z)\\[2mm]
+\left(24
   z^{19}-24 z^{18}+6
   z^{13}\right)S^2(z)
   S'(z) 
+   \left(80 z^{18}-84
   z^{17}+26
   z^{12}\right)S''(z)\\[2mm]
+   \left(16 z^{19}-16 z^{18}+4
   z^{13}\right)S(z)S''(z)
+   \left(4 z^{19}-4
   z^{18}+z^{13}\right)S'''(z)
=0.
\end{multline}
For convenience, we replace $z$ by $z^3$ in 
\eqref{eq:diffH5}. Writing $\tS(z)=S(z^3)$, the above differential
equation translates into
{\allowdisplaybreaks
\begin{multline} \label{eq:diffH5A}
224
   z^{45}-256 z^{42}+40
   z^{33}-56 z^{30}+100
   z^{27}+4 z^{21}-12
   z^{18}+16 z^{15}+26
   z^{12}+z^3+1\\[2mm]
+   \left(736 z^{48}-824
   z^{45}+48 z^{36}-36
   z^{33}+276 z^{30}+14
   z^{18}+44
   z^{15}-1\right)\tS(z)\\[2mm]
+   \left(448 z^{51}-488
   z^{48}+8 z^{39}+162
   z^{33}+2 z^{21}+5
   z^{18}\right)\tS^2(z)\\[2mm]
+ \left(80 z^{54}-84
   z^{51}+26
   z^{36}\right)\tS^3(z)
+ \left(4 z^{57}-4
   z^{54}+z^{39}\right)\tS^4(z)\\[2mm]
-\frac{1}{27}
  z^{16} \left(96
   z^{36}-3688 z^{33}+3928
   z^{30}-72 z^{21}+24
   z^{18}-1312 z^{15}-18
   z^3-45\right)
    \tS'(z)\\[2mm]
+\frac{1}{3}
   z^{35} \left(4
   z^{18}-4 z^{15}+1\right)
   (\tS')^2(z)
+ \left(80
   z^{52}-84 z^{49}+26
   z^{34}\right)\tS(z)
   \tS'(z)\\[2mm]
+ \left(8 z^{55}-8
   z^{52}+2
   z^{37}\right)\tS^2(z)
   \tS'(z)
+\frac{4}{9}
  z^{32} \left(4 z^{21}+14
   z^{18}-19
   z^{15}+z^3+6\right)
    \tS''(z)\\[2mm]
+\frac{1}{27} z^{33}
    \left(4 z^{18}-4
   z^{15}+1\right)\tS'''(z)
=0.
\end{multline}}%
We have to prove that, modulo $2^{4^{\al}}$, the series 
$\tS(z)$ can be expressed as a polynomial in
$\tilde\Phi_2(z)$ (as defined in \eqref{eq:Phimdef})
of degree at most $4^{\al+1}-1$ with coefficients that are Laurent 
polynomials in $z$.

We make the Ansatz 
\begin{equation} \label{eq:AnsatzH5}
\tS(z)=\sum _{i=0} ^{4^{\al+1}-1}a_i(z)\tilde\Phi_2^i(z)\quad \text {modulo
}2^{4^{\al}},
\end{equation}
where the $a_i(z)$'s
are (at this point) undetermined Laurent polynomials in $z$.

Next we gradually find approximations $a_{i,\be}(z)$ to $a_i(z)$ such that
\eqref{eq:diffH5A} holds modulo $2^\be$, for $\be=1,2,\dots,
4^{\al}$. To start the procedure, we consider the differential equation
\eqref{eq:diffH5A} modulo $2$, with
\begin{equation} \label{eq:AnsatzH51}
\tS(z)=\sum _{i=0} ^{4^{\al+1}-1}a_{i,1}(z)\tilde\Phi_2^i(z)\quad \text {modulo
}2.
\end{equation}
We substitute the Ansatz \eqref{eq:AnsatzH51} in \eqref{eq:diffH5A}, reduce
high powers of $\tilde\Phi_2(z)$ by using the relation
\eqref{eq:Phim2al} with $h=2$, 
and reduce the resulting expression modulo~2, thereby
taking advantage of the elementary fact that $\tilde\Phi_2'(z)=1$
modulo $2$.
The powers of $3$ that appear in the denominators of the
coefficients in the polynomials involved here are disposed of by
interpreting them appropriately as elements of $\mathbb Z/2^{4^\al}\mathbb
Z$. 
We finally see that the left-hand side of
\eqref{eq:diffH5A} becomes a polynomial in
$\tilde\Phi_2(z)$ of degree at most $4^{\al+1}-1$ 
with coefficients that are Laurent polynomials in $z$. 
Now we compare coefficients of powers $\tilde\Phi_2^k(z)$ for
$k=0,1,\dots,4^{\al+1}-1$. This yields a
system of $4^{\al+1}$ equations (modulo~$2$)
for the unknown Laurent polynomials $a_{i,1}(z)$,
$i=0,1,\dots,4^{\al+1}-1$. Since we have already done similar
computations several times before, we content ourselves with stating
the result: all Laurent polynomials $a_{i,1}(z)$ must be zero, except for
\begin{align} \notag
a_{0,1}(z)&=
z^{-9}+z^{-13}\sum _{k=1} ^{{\al}-1}z^{4^{k}}
+z^{-8}\sum _{k=1} ^{{\al}-1}z^{2\cdot 4^{k}}
,\\[2mm]
\notag
a_{4^{\al },1}(z)&=z^{-13},\\[2mm]
\label{eq:AnsatzH5a1}
a_{2\cdot 4^{\al },1}(z)&=z^{-8}.
\end{align}
 
\medskip
After we have completed the ``base  step," we now proceed with the
iterative steps described in Section~\ref{sec:method}. Our Ansatz
here (replacing the corresponding one in
\eqref{eq:Ansatz2}--\eqref{eq:Ansatz2b}) is
\begin{equation} \label{eq:AnsatzH5A2}
\tS(z)
=\sum _{i=0} ^{4^{\al+1}-1}a_{i,\be+1}(z)\tilde\Phi_2^i(z)\quad 
\text{modulo }2^{\be+1},
\end{equation}
with
\begin{equation} 
a_{i,\be+1}(z):=a_{i,\be}(z)+2^{\be}b_{i,\be+1}(z),\quad 
i=0,1,\dots,4^{\al+1}-1,
\end{equation}
where the
coefficients $a_{i,\be}(z)$ are supposed to provide a solution
$$\tS_{\be}(z)=\sum _{i=0}
^{4^{\al+1}-1}a_{i,\be}(z)\tilde\Phi_2^i(z)$$ 
to
\eqref{eq:diffH5A} modulo~$2^\be$. This Ansatz, substituted in
\eqref{eq:diffH5A}, produces a congruence of the form
\begin{multline} \label{eq:iterH5}
T(\tS_{\be}(z))
+2^\be\sum _{i=0} ^{4^{\al+1}-1}
\big(b_{i,\be+1}(z)
+z^{16}b'_{i,\be+1}(z)
+(i+1)b_{i+1,\be+1}(z)\big)
\tilde\Phi_2^i(z)
+1=0
\\ 
\text {modulo }2^{\be+1},
\end{multline}
where $T(\tS_{\be}(z))$ represents terms that only depend on
$\tS_{\be}(z)$. Inductively, we have already computed $\tS_\be(z)$,
and we know that $T(\tS_{\be}(z))$ must be divisible by $2^\be$.
Comparison of powers of $\tilde\Phi_2(z)$ then yields a system of congruences
that is equivalent to a system of the form
\begin{equation*} 
b_{i,\be+1}(z)+z^{16}b'_{i,\be+1}(z)+\text {Pol}_i(z)=0\quad 
\text {modulo }2,\quad \quad 
i=0,1,\dots,4^{\al+1}-1,
\end{equation*}
where $\text {Pol}_i(z)$, $i=0,1,\dots,4^{\al+1}-1$, are certain
Laurent polynomials with integer coefficients. By
Lemma~\ref{lem:2x2diff}, these equations are solvable for any
polynomials $\text {Pol}_i(z)$. Thus,
we have proved that, for an arbitrary positive
integer $\al$, the modified algorithm that we have presented here
will produce a solution $\tS_{{
4^{\al }}}(z)$ to \eqref{eq:diffH5A} modulo $2^{4^{\al}}$ which is a
polynomial in $\tilde\Phi_2(z)$ of degree at most $4^{\al+1}-1$ 
with coefficients that are Laurent polynomials in $z$.
\end{proof}

Again, we have implemented the algorithm described in the above proof.
For $\al=1$, that is, for the modulus $16$, we obtain the following
result.

{\allowdisplaybreaks
\begin{theorem} \label{cor:H5}
Let $\Phi_2(z)=\sum _{n\ge0} ^{}z^{4^{n}/3},$ as before.
Then, for the generating function 
$S_{\mathfrak H(5)}(z)=
\sum_{n=0}^\infty s_{n+1}(\mathfrak H(5))z^n$ for the subgroup
numbers of $\mathfrak H(5),$ we have
\begin{multline} 
S_{\mathfrak H(5)}(z)=
8 z^{12}+4 z^{9}+8
   z^{7}+8 z^{5}+2
   z^{4}
+8
   z^2+4 z+14+\frac{7}{z^3}\\[2mm]
+\left(
8
   z^{20/3}
+8
   z^{5/3}
+\frac{8}{
   z^{1/3}}
+\frac{8}{z^{4/3}}
+\frac{12}{z^{10/3}}
\right) \Phi_2(z)\\[2mm]
+\left(
8
   z^{19/3}
+8
   z^{4/3}
+\frac{12}{z^{2/3}}
+\frac{8}{z^{5/3}}
+\frac{10}{z^{11/3}}
\right)
   \Phi_2^2(z)
+\left(
8
   z^{6}
+8
   z
+\frac{8}{z}
+\frac{12}{z^{4}}
\right)
   \Phi_2^3(z)\\[2mm]
+\left(
8 z^{32/3}+8
   z^{20/3}
+8
   z^{11/3}
+8
   z^{8/3}
+8
   z^{5/3}
+\frac{6}{z^{4/3}}
+\frac{8}{z^{7/3}}
+\frac{12}{z^{10/3}}
+\frac{7}{z^{13/3}}
\right)
   \Phi_2^4(z)\\[2mm]
\left(
+\frac{8}{z^{2/3}}
+\frac{8}{z
   ^{5/3}}
+\frac{8}{z^{8/3}}
+\frac{4}{z^{11/3
   }}
\right)
   \Phi_2^5(z)
+\left(
8
   z^{6}
+8
   z
+\frac{8}{z}
+\frac{12}{z^2}
+\frac{4}{z^{4}}
\right)
   \Phi_2^6(z)\\[2mm]
+\left(
\frac{
   8}{z^{4/3}}
+\frac{8}{z^{7/3}}
+\frac{12}{z
   ^{13/3}}
\right)
   \Phi_2^7(z)\\[2mm]
+\left(
8 z^{22/3}
+8
   z^{19/3}
+8
   z^{16/3}
+8
   z^{10/3}
+12
   z^{7/3}
+8
   z^{4/3}\right.\kern4cm\\[1mm]
\kern4cm\left.
+\frac{4}{z^{2/3}}
+\frac{8}{z^{5/3}}
+\frac{5}{z^{8/3}}
+\frac{2}{z^{11/3}}
\right)
   \Phi_2^8(z)\\[2mm]
+\left(
8
   z^{6}
+8
   z
+\frac{8}{z}
+\frac{4}{z^{4}}
+\frac{8}{z^2}
\right)
   \Phi_2^9(z)
+\left(
\frac{12}{z^{4/3}}
+\frac
   {2}{z^{13/3}}
\right)
   \Phi_2^{10}(z)
+\frac{8
  }{z^{8/3}} \Phi_2^{11}(z)\\[2mm]
+\left(
8 z^{6}
+8
   z^{4}
   +8
   z
+8
+\frac{8}{z}
+\frac{12}{z^2}
+\frac{12}{z^3}
+\frac{12}{z^{4}}
\right)
   \Phi_2^{12}(z)\\[2mm]
+\left(
\frac{8}{
   z^{4/3}}
+\frac{8}{z^{7/3}}
+\frac{12}{z^{13/3}}
\right)
   \Phi_2^{13}(z)
+\left(
\frac{4}{z^{8/3}}
+\frac{8}{z^
   {11/3}}
\right)
   \Phi_2^{14}(z)
\quad 
\text{\em modulo }16.
\end{multline}
\end{theorem}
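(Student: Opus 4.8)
The statement is precisely what the algorithm in the proof of Theorem~\ref{thm:H5} outputs when specialised to $\al=1$, so the plan is to run that algorithm to the modulus $2^{4^1}=16$ and then to confirm the outcome by a direct substitution check. For $\al=1$ the degree bound is $4^{\al+1}-1=15$, so the Ansatz \eqref{eq:AnsatzH5} takes the form $\tS(z)=\sum_{i=0}^{15}a_i(z)\tilde\Phi_2^i(z)$ modulo $16$, where $\tS(z)=S(z^3)$ and $\tilde\Phi_2(z)=\sum_{n\ge0}z^{4^n}$. The base step is the mod-$2$ analysis already carried out in the proof of Theorem~\ref{thm:H5}: for $\al=1$ the two sums over $k$ in \eqref{eq:AnsatzH5a1} are empty, so the only nonzero coefficients modulo~$2$ are $a_{0,1}(z)=z^{-9}$, $a_{4,1}(z)=z^{-13}$ and $a_{8,1}(z)=z^{-8}$.

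First I would perform the three lifting steps $\be=1\to2\to3\to4$ exactly as in the proof of Theorem~\ref{thm:H5}. Given $\tS_\be(z)=\sum_{i=0}^{15}a_{i,\be}(z)\tilde\Phi_2^i(z)$ solving \eqref{eq:diffH5A} modulo~$2^\be$, set $\tS_{\be+1}(z)=\tS_\be(z)+2^\be\sum_{i=0}^{15}b_{i,\be+1}(z)\tilde\Phi_2^i(z)$, substitute into \eqref{eq:diffH5A}, divide by $2^\be$, reduce every power $\tilde\Phi_2^k$ with $k\ge16$ using $(\tilde\Phi_2^4(z)+\tilde\Phi_2(z)+z)^4\equiv0$ modulo~$16$ (the case $h=2$, $\al=1$ of \eqref{eq:Phim2al}), and compare coefficients of $\tilde\Phi_2^k$ for $0\le k\le15$. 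As recorded in that proof, each step reduces --- after the linearisation of Lemma~\ref{lem:2x2diff} applied to the differential condition $b_{i,\be+1}(z)+z^{16}b_{i,\be+1}'(z)+\mathrm{Pol}_i(z)\equiv0$ appearing in \eqref{eq:iterH5} --- to a solvable linear system over $\Z/2\Z$, the powers of $3$ in denominators being treated as units in $\Z/16\Z$. After the final step one substitutes $z\mapsto z^{1/3}$, which turns $\tilde\Phi_2(z)$ into $\Phi_2(z)$ and the integer-exponent coefficients into Laurent polynomials in $z^{1/3}$, and collecting terms yields the displayed formula.

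The rigorous core is then the verification: substitute the displayed right-hand side for $S(z)$ into the Riccati-type differential equation \eqref{eq:diffH5}, expand, reduce every power $\Phi_2^k$ with $k\ge16$ by the modulus-$16$ relation for $\Phi_2$ (equivalently, by the relation for $\tilde\Phi_2$ after $z\mapsto z^3$), and check that all coefficients of the monomials $z^{a/3}\Phi_2^k$ vanish modulo~$16$. Since \eqref{eq:diffH5} has a unique formal power series solution (comparison of coefficients of $z^N$ fixes the initial values and provides a recurrence), this identifies the expression with $S_{\mathfrak H(5)}(z)$ modulo~$16$. I do not expect a genuine conceptual obstacle, as Theorem~\ref{thm:H5} already guarantees that the algorithm succeeds at this modulus; the only real difficulty is the size of the computation --- before reduction the intermediate data involve powers of $\tilde\Phi_2$ far above the threshold $16$ and exponents of $z$ ranging from roughly $-13$ to well over $50$, and one must track the fractional $z^{1/3}$-exponents and the $\Z/16\Z$-arithmetic of the powers of $3$ without slips. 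For this reason the computation is best carried out with a computer algebra implementation, as the authors indicate.
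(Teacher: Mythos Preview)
Your proposal is correct and follows essentially the same approach as the paper: the paper presents Theorem~\ref{cor:H5} as the output of the algorithm from the proof of Theorem~\ref{thm:H5} (``we have implemented the algorithm described in the above proof. For $\al=1$, that is, for the modulus $16$, we obtain the following result''), and you have correctly identified the base step from \eqref{eq:AnsatzH5a1} with $\al=1$, the three lifting steps, the reduction relation \eqref{eq:Phim2al} with $h=2$ and $\al=1$, and the final back-substitution $z\mapsto z^{1/3}$. Your additional verification step (substituting the result into \eqref{eq:diffH5} and invoking uniqueness of the power-series solution) is a sound way to certify the computer output, in line with the paper's general philosophy.
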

}

\medskip
We conjecture that Theorems~\ref{thm:Unterg} and \ref{thm:H5} 
extend to any Hecke
group $\mathfrak H(q)$, where $q$ is a Fermat prime
(note that $PSL_2(\mathbb Z)\cong C_2*C_3=\mathfrak H(3)$). 

\begin{conjecture} \label{conj:Hq}
For a positive integer $h,$
let $\Phi_h(z)=\sum _{n\ge0} ^{}z^{2^{nh}/(2^h-1)}$.
Let $\al$ be a further positive integer, and let
$q=2^{2^{f}}+1$ be a Fermat prime.
Then the generating function $S_{\mathfrak H(q)}(z)$
{\em(}see the first paragraph of Section~{\em\ref{sec:PSL2Z}} 
for the definition{\em),} 
reduced modulo $2^{2^{\al 2^f}},$ 
can be expressed as a polynomial in $\Phi_{2^f}(z)$ of degree at most
$2^{(\al+1)2^f}-1$ with coefficients that are Laurent polynomials in
$z^{1/(q-2)}$ over the integers.
\end{conjecture}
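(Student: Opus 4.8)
The plan is to follow, for a general Fermat prime $q=2^{2^f}+1$, the route that was used for $q=3$ in the proof of Theorem~\ref{thm:Unterg} and for $q=5$ in the proof of Theorem~\ref{thm:H5}. The first task is to produce a Riccati-type differential equation with integer coefficients for $S_{\mathfrak H(q)}(z)$. Since $\mathfrak H(q)=C_2*C_q$ is a free product, a homomorphism into $S_n$ is freely determined by its restrictions to the two factors, so $\vert\Hom(\mathfrak H(q),S_n)\vert=\vert\Hom(C_2,S_n)\vert\cdot\vert\Hom(C_q,S_n)\vert$, and hence the normalised homomorphism numbers $h(n)=\frac{1}{n!}\vert\Hom(\mathfrak H(q),S_n)\vert$ form a $P$-recursive sequence; an explicit recurrence can be computed by the packages already employed in Section~\ref{sec:H5}. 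As in Sections~\ref{sec:SL2Z} and \ref{sec:H5}, one then massages this recurrence --- by desingularisation and/or Euclidean division of difference operators --- into one whose leading coefficient is $n$ and whose other coefficients are polynomials in $n$ over $\mathbb Z$, feeds the result into the relation \eqref{eq:Dey}, and uses the Fa\`a di Bruno expansion to obtain the desired differential equation for $S_{\mathfrak H(q)}(z)$. Finally, substituting $z\to z^{q-2}$ clears the denominator $q-2=2^{2^f}-1$ that occurs in the exponents of $\Phi_{2^f}(z)$.

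With this differential equation in hand, one applies the $\Phi_h$-variation of the method with $h=2^f$, exactly as in the proof of Theorem~\ref{thm:H5}: one makes the Ansatz $\tilde S(z)=\sum_{i=0}^{2^{(\al+1)2^f}-1}a_i(z)\tilde\Phi_{2^f}^i(z)$ modulo $2^{2^{\al 2^f}}$, reduces high powers of $\tilde\Phi_{2^f}(z)$ by means of the relation \eqref{eq:Phim2al}, and determines the $a_{i,\be}(z)$ successively for $\be=1,2,\dots,2^{\al 2^f}$. For the base step ($\be=1$) one needs the mod-$2$ reduction of the Riccati equation together with the list of nonzero coefficients $a_{i,1}(z)$; here one should invoke the analysis of the parity of $s_n(\mathfrak H(q))$ in \cite[Theorem~B]{MuHecke} (and the relation between $\mathfrak H(q)$ and $C_q*C_q$ modulo $2$ alluded to at the end of Section~\ref{sec:method}), and one expects, in analogy with \eqref{eq:AnsatzH5a1}, that only a few of the $a_{i,1}(z)$, at indices that are small multiples of $2^{\al 2^f}$, are nonzero. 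For the iterative step ($\be\to\be+1$) one exploits that, modulo $2$, $\tilde\Phi_{2^f}'(z)=1$ and $\tilde\Phi_{2^f}^{(k)}(z)=0$ for $k\ge2$; comparison of coefficients of powers of $\tilde\Phi_{2^f}(z)$ then reduces, after elimination of the (even) coupling terms $(i+1)b_{i+1,\be+1}$, to a family of first-order linear differential equations $b_{i,\be+1}(z)+z^{e}b_{i,\be+1}'(z)+\mathrm{Pol}_i(z)=0$ modulo $2$ for a suitable exponent $e$, whose solvability in Laurent polynomials is governed by a $q$-analogue of Lemma~\ref{lem:2x2diff}, respectively of Lemma~\ref{lem:diff710} and Corollary~\ref{cor:diff710}.

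The main obstacle is twofold. First, in contrast with the cases $q=3,5$, the mod-$2$ reduction of the Riccati equation for $\mathfrak H(q)$ has not been written out for general Fermat $q$, so the base step needs a uniform treatment; \cite[Theorem~B]{MuHecke} supplies the required information on the coefficient level, but one must also control the differential structure. Second --- and this is the \emph{real} difficulty --- one has to show that the linear system arising at every iterative level is solvable in Laurent polynomials; in the case $q=3$ (Theorem~\ref{thm:Unterg}) this rested on an inductively maintained divisibility by $(1-z^3)^2$, and there is no a priori reason why an analogous divisibility property should persist for all Fermat primes. That such schemes can genuinely fail at a finite level is shown by the $\Ga_3(3)$ example in Remark~\ref{rem:Ga3}; the expectation that this does not occur for the genuine free products $\mathfrak H(q)$ (as opposed to the amalgamated products $\Ga_m(q)$) is exactly what keeps Conjecture~\ref{conj:Hq} a conjecture. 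Once the procedure has been shown to go through for every $\al$, i.e.\ for every modulus of the form $2^{2^{\al 2^f}}$, the statement for an arbitrary modulus $2^\ga$ follows by the weak-monotonicity argument employed at the end of the proof of Corollary~\ref{thm:freeHecke}.
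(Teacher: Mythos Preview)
The statement is a \emph{conjecture}, not a theorem: the paper offers no proof, only the heuristic discussion immediately following its statement. Your proposal is not a proof either, and you are fully aware of this --- you correctly identify the two open questions (completing the base step uniformly in $q$, and solvability of the iterative linear systems in Laurent polynomials) and explicitly say that the second ``is exactly what keeps Conjecture~\ref{conj:Hq} a conjecture.'' So there is nothing to correct on the mathematical side: you have accurately diagnosed the situation.

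Your outline of the intended approach matches the paper's own discussion almost point for point: derive a recurrence for the Hadamard product $h_{C_2}(n)h_{C_q}(n)/n!$, convert via \eqref{eq:Dey} and Fa\`a di Bruno to a Riccati-type differential equation for $S_{\mathfrak H(q)}(z)$, substitute $z\to z^{q-2}$, and run the $\Phi_{2^f}$-variant of the method. The paper likewise singles out the base step (deemed ``highly probable'' to succeed, given \cite[Theorem~B]{MuHecke}) and the iterative step (``remains entirely open'') as the two hurdles, and cites the $\Gamma_3(3)$ failure as the cautionary example --- exactly as you do. The only minor addition in the paper's discussion that you do not mention explicitly is that the enhancement of Appendix~\ref{appD} might be needed even if the plain method stalls; otherwise your summary is faithful to, and in places more detailed than, the paper's own.
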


Note that Theorems~\ref{thm:Unterg} and \ref{thm:H5} are the special
cases corresponding to $f=0$ and $f=1$, respectively.
In particular, we conjecture
that the obvious extension of the algorithm
described in the proofs of the two theorems would be successful
modulo any $2$-power. In more detail, given a Fermat prime $q$, 
the first step consists in deriving a recurrence relation for the
Hadamard product of the sequences $\big(h_{C_2}(n)\big)_{n\ge0}$ 
and $\big(h_{C_q}(n)/n!\big)_{n\ge0}$. By the procedure explained
in the paragraph containing \eqref{eq:Sdef} (where we now use
\eqref{eq:Dey} with $\Ga=\mathfrak H(q)$), this leads to a
Riccati-type differential equation for the generating function
$\sum _{n\ge0} ^{}s_{n+1}(\mathfrak H(q))\,z^n$ for the subgroup
numbers of $\mathfrak H(q)$. The open questions are whether it
will be possible to complete the base step, and whether
it will always be possible to carry out the subsequent iterative steps
in (the variation of) our method or, if necessary, its enhancement
outlined in Appendix~\ref{appD}.
Given the description of the parity pattern of the subgroup
numbers proved in \cite[Theorem~B]{MuHecke}, it is highly probable
that the first question has a positive answer. What the answer
to the second question is, remains entirely open.
(The reader should recall that, in Section~\ref{sec:Ga3}, we met 
a case where our method worked initially, but then stopped to work for
modulus $16$).

\appendix

\section{Expansions of powers of the $2$-power series $\Phi(z)$}
\label{appA}

Recall the notation
$$
H_{a_1,a_2,\dots,a_r}(z)=\sum_{n_1>n_2>\dots>n_r\ge0}
z^{a_12^{n_1}+a_22^{n_2}+\dots+a_r2^{n_r}}
$$
from Section~\ref{sec:extr}. In this appendix we list the expansions
of $\Phi^K(z)$ for $K=5,6,7,8$ in terms of the series
$H_{a_1,a_2,\dots,a_r}(z)$ where all $a_i$'s are odd, obtained
by using the algorithm described in Section~\ref{sec:extr}
(see \eqref{eq:Phipot} and the proof of Lemma~\ref{lem:Hbi}).
Namely, we have
{\allowdisplaybreaks
\begin{multline*}
\Phi^5(z)=
16 H_{5}(z)
-40   H_{3,1,1}(z)
-40   H_{1,3,1}(z)
-40   H_{1,1,3}(z)
+120   H_{1,1,1,1,1}(z)\\[2mm]
-80   H_{3,1}(z)
-80 H_{1,3}(z)
+240   H_{1,1,1,1}(z)
+(20   z 
-90  ) H_{3}(z)\\[2mm]
-(60 z   -270  ) H_{1,1,1}(z)
-(120 z -190)   H_{1,1}(z)\\[2mm]
+(25 z^2   
-125 z 
+75) H_{1}(z)
+50   z^2-75 z,
\end{multline*}
\begin{multline*}
\Phi^6(z)=
96   H_{5,1}(z)
+96   H_{1,5}(z)
+80 H_{3,3}(z)
-240   H_{3,1,1,1}(z)
-240   H_{1,3,1,1}(z)\\[2mm]
-240   H_{1,1,3,1}(z)
-240   H_{1,1,1,3}(z)
+720   H_{1,1,1,1,1,1}(z)
+240   H_{5}(z)\\[2mm]
-600   H_{3,1,1}(z)
-600   H_{1,3,1}(z)
-600   H_{1,1,3}(z)
+1800   H_{1,1,1,1,1}(z)\\[2mm]
+(120 z   
-840  ) H_{3,1}(z)
+(120 z   
-840  ) H_{1,3}(z)
-(360 z -2520  ) H_{1,1,1,1}(z)\\[2mm]
+(300 z 
-764) H_{3}(z)
-(900 z   -2340  ) H_{1,1,1}(z)
+(150 z^2 
-1200 z   
+1470 )  H_{1,1}(z)\\[2mm]
+(375 z^2   
-1020 z   
+525  ) H_{1}(z)
-61 z^3+495 z^2-525   z,
\end{multline*}
\begin{multline*}
\Phi^7(z)=
-272   H_{7}(z)
+672   H_{5,1,1}(z)
+672   H_{1,5,1}(z)
+672   H_{1,1,5}(z)\\[2mm]
+560   H_{3,3,1}(z)
+560   H_{3,1,3}(z)
+560   H_{1,3,3}(z)
+2016   H_{5,1}(z)
+2016   H_{1,5}(z)
+1680   H_{3,3}(z)\\[2mm]
-1680   H_{3,1,1,1,1}(z)
-1680   H_{1,3,1,1,1}(z)
-1680   H_{1,1,3,1,1}(z)
-1680   H_{1,1,1,3,1}(z)
-1680   H_{1,1,1,1,3}(z)\\[2mm]
+5040   H_{1,1,1,1,1,1,1}(z)
-5040   H_{3,1,1,1}(z)
-5040   H_{1,3,1,1}(z)
-5040   H_{1,1,3,1}(z)
-5040   H_{1,1,1,3}(z)\\[2mm]
+15120   H_{1,1,1,1,1,1}(z)
-(336 z   -3360) H_{5}(z)
+(840 z   
-8400  ) H_{3,1,1}(z)
+(840 z   
-8400  ) H_{1,3,1}(z)\\[2mm]
+(840 z   
-8400  ) H_{1,1,3}(z)
-(2520 z   -25200  ) H_{1,1,1,1,1}(z)
+(2520 z   
-9408  ) H_{3,1}(z)\\[2mm]
+(2520 z   
-9408  ) H_{1,3}(z)
-(7560 z   -28560  ) H_{1,1,1,1}(z)
-(350 z^2   -4060 z   +7434  ) H_{3}(z)\\[2mm]
+(1050 z^2   
-12180 z   
+23310  ) H_{1,1,1}(z)
+(3150 z^2 
-13020 z   
+13230  ) H_{1,1}(z)\\[2mm]
-(427   z^3 -5040 z^2   +9555 z   -4347) H_{1}(z)
-1281 z^3
+5208   z^2-4347 z,
\end{multline*}
\begin{multline*}
\Phi^8(z)=
-2176   H_{7,1}(z)
-2176   H_{1,7}(z)
-1792   H_{5,3}(z)
-1792   H_{3,5}(z)\\[2mm]
+5376   H_{5,1,1,1}(z)
+5376   H_{1,5,1,1}(z)
+5376   H_{1,1,5,1}(z)
+5376   H_{1,1,1,5}(z)
+4480   H_{3,3,1,1}(z)\\[2mm]
+4480   H_{3,1,3,1}(z)
+4480   H_{3,1,1,3}(z)
+4480   H_{1,3,3,1}(z)
+4480   H_{1,3,1,3}(z)
+4480   H_{1,1,3,3}(z)\\[2mm]
-13440   H_{3,1,1,1,1,1}(z)
-13440   H_{1,3,1,1,1,1}(z)
-13440   H_{1,1,3,1,1,1}(z)
-13440   H_{1,1,1,3,1,1}(z)\\[2mm]
-13440   H_{1,1,1,1,3,1}(z)
-13440   H_{1,1,1,1,1,3}(z)
+40320   H_{1,1,1,1,1,1,1,1}(z)
-7616 H_{7}(z)\\[2mm]
+18816   H_{5,1,1}(z)
+18816   H_{1,5,1}(z)
+18816   H_{1,1,5}(z)\\[2mm]
+15680   H_{3,3,1}(z)
+15680   H_{3,1,3}(z)
+15680   H_{1,3,3}(z)
-47040   H_{3,1,1,1,1}(z)\\[2mm]
-47040   H_{1,3,1,1,1}(z)
-47040   H_{1,1,3,1,1}(z)
-47040   H_{1,1,1,3,1}(z)
-47040   H_{1,1,1,1,3}(z)\\[2mm]
+141120   H_{1,1,1,1,1,1,1}(z)
-(2688 z   -36288  ) H_{5,1}(z)
-(2688 z   -36288  ) H_{1,5}(z)\\[2mm]
-(2240 z   -30240  ) H_{3,3}(z)
+(6720 z   
-90720  ) H_{3,1,1,1}(z)
+(6720 z   
-90720  ) H_{1,3,1,1}(z)\\[2mm]
+(6720 z   
-90720  ) H_{1,1,3,1}(z)
+(6720 z   
-90720  ) H_{1,1,1,3}(z)
-(20160 z   -272160  ) H_{1,1,1,1,1,1}(z)\\[2mm]
-(9408 z   -47264  ) H_{5}(z)
+(23520 z   
-119504  ) H_{3,1,1}(z)
+(23520 z   
-119504  ) H_{1,3,1}(z)\\[2mm]
+(23520 z   
-119504  ) H_{1,1,3}(z)
-(70560 z   -361200  ) H_{1,1,1,1,1}(z)\\[2mm]
-(2800 z^2   -44240 z   +115304  ) H_{3,1}(z)
-(2800 z^2   -44240 z   +115304  ) H_{1,3}(z)\\[2mm]
+(8400 z^2   
-132720 z   
+355320  ) H_{1,1,1,1}(z)
-(9800 z^2   -55832 z   +80892  ) H_{3}(z)\\[2mm]
+(29400 z^2   
-168840 z   
+260820  ) H_{1,1,1}(z)
-(3416 z^3 -55020 z^2   +154980 z   -135982  ) H_{1,1}(z)\\[2mm]
-(11956 z^3 -68474 z^2   +101206 z   -41245  ) H_{1}(z)
+1385   z^4 -22358 z^3 +59961   z^2 -41245 z.
\end{multline*}
}

\section{The coefficients in the differential equation \eqref{eq:Riccati16}}
\label{appB}

Here we provide explicit expressions for 
the coefficients in the Riccati-type differential equation 
\eqref{eq:Riccati16}, when reduced modulo~16:
{\Small\allowdisplaybreaks
\begin{align*}
p_0(z)&=8 z^{47}+8 z^{46}+12 z^{45}+4
   z^{43}+12 z^{41}+12
   z^{40}+4 z^{39}+12 z^{38}+8
   z^{37}+4 z^{36}+4 z^{34}+2
   z^{33}\\
&\kern.5cm
+11 z^{31}+6
   z^{30}+14 z^{29}+14
   z^{28}+13 z^{27}+6 z^{26}+9
   z^{25}+11 z^{24}+4 z^{23}+7
   z^{22}+9 z^{21}+z^{20}\\
&\kern.5cm
+7
   z^{19}+15 z^{18}+14
   z^{17}+12 z^{16}+11
   z^{15}+z^{14}+10 z^{13}+5
   z^{12}+2 z^{11}+8 z^{10}+9
   z^9+15 z^8\\
&\kern.5cm
+5 z^7+13 z^6+4
   z^4+14 z^3+z^2+11
   z+15,\\
p_1(z)&=8
   z^{48}+12 z^{46}+12
   z^{45}+4 z^{43}+12 z^{42}+4
   z^{41}+4 z^{40}+4 z^{39}+8
   z^{38}+4 z^{36}+13 z^{34}+6
   z^{33}\\
&\kern.5cm
+10 z^{31}+z^{30}+4
   z^{29}+11 z^{28}+8
   z^{27}+13 z^{25}+z^{24}+8
   z^{23}+z^{22}+z^{21}+4
   z^{20}+14 z^{19}\\
&\kern.5cm
+9 z^{18}+6
   z^{17}+14 z^{16}+8
   z^{15}+13 z^{14}+6 z^{13}+2
   z^{12}+9 z^{10}+11 z^9+6
   z^7+8 z^6+6 z^5\\
&\kern.5cm
+5 z^4+3
   z^2+4 z+1,\\
p_2(z)&=12 z^{51}+2 z^{49}+12
   z^{48}+4 z^{47}+4 z^{46}+12
   z^{44}+10 z^{43}+2 z^{42}+4
   z^{41}+8 z^{40}+14 z^{39}\\
&\kern.5cm
+5
   z^{37}+2 z^{36}+6 z^{35}+10
   z^{34}+8 z^{33}+10 z^{32}+5
   z^{31}+7 z^{30}+12 z^{29}+7
   z^{28}+4 z^{27}\\
&\kern.5cm
+12 z^{26}+9
   z^{25}+12 z^{24}+2
   z^{23}+14 z^{22}+10
   z^{21}+12 z^{20}+10
   z^{19}+3 z^{18}+6 z^{16}+9
   z^{15}\\
&\kern.5cm
+12 z^{14}+15
   z^{13}+14 z^{12}+10
   z^{11}+2 z^{10}+12 z^9+14
   z^8+13 z^7+7 z^6+8
   z^5+z^3+2
   z^2,\\
p_3(z)&=6 z^{52}+8 z^{50}+12
   z^{48}+8 z^{47}+14 z^{46}+4
   z^{45}+8 z^{44}+6 z^{43}+8
   z^{42}+4 z^{41}+5 z^{40}+6
   z^{39}\\
&\kern.5cm
+12 z^{38}+2 z^{37}+2
   z^{36}+7 z^{34}+6 z^{33}+7
   z^{31}+14 z^{30}+8 z^{29}+5
   z^{28}+14 z^{27}+8
   z^{26}\\
&\kern.5cm
+14 z^{25}+14
   z^{24}+2 z^{22}+8 z^{21}+12
   z^{20}+14 z^{19}+6
   z^{18}+12 z^{17}+9 z^{16}+4
   z^{15}+2 z^{13}\\
&\kern.5cm
+2 z^{12}+15
   z^{10}+14
   z^9,\\
p_4(z)&=4 z^{53}+15 z^{51}+4
   z^{47}+2 z^{46}+z^{45}+9
   z^{44}+15 z^{42}+4
   z^{41}+10 z^{40}+4 z^{39}+6
   z^{38}+4 z^{37}\\
&\kern.5cm
+2 z^{36}+6
   z^{35}+10 z^{34}+11
   z^{33}+8 z^{32}+8 z^{31}+10
   z^{29}+12 z^{26}+12
   z^{25}+2 z^{23}+4
   z^{22},\\
p_5(z)&=13 z^{54}+15 z^{48}+6
   z^{47}+12 z^{46}+3
   z^{45}+10 z^{44}+4
   z^{43}+12 z^{42}+10
   z^{41}+2 z^{39}+2 z^{38}\\
&\kern.5cm
+15
   z^{36}+14
   z^{35},\\
p_6(z)&=12 z^{51}+2 z^{49}+12
   z^{48}+4 z^{47}+4 z^{46}+12
   z^{44}+10 z^{43}+2 z^{42}+4
   z^{41}+8 z^{40}+14 z^{39}\\
&\kern.5cm
+5
   z^{37}+2 z^{36}+6 z^{35}+10
   z^{34}+8 z^{33}+10 z^{32}+5
   z^{31}+7 z^{30}+12 z^{29}+7
   z^{28}+4 z^{27}\\
&\kern.5cm
+12 z^{26}+9
   z^{25}+12 z^{24}+2
   z^{23}+14 z^{22}+10
   z^{21}+12 z^{20}+10
   z^{19}+3 z^{18}+6 z^{16}\\
&\kern.5cm
+9
   z^{15}+12 z^{14}+15
   z^{13}+14 z^{12}+10
   z^{11}+2 z^{10}+12 z^9+14
   z^8+13 z^7\\
&\kern.5cm
+7 z^6+8
   z^5+z^3+2
   z^2,\\
p_7(z)&=12 z^{53}+13
   z^{51}+12 z^{47}+6 z^{46}+3
   z^{45}+11 z^{44}+13
   z^{42}+12 z^{41}+14
   z^{40}+12 z^{39}+2
   z^{38}\\
&\kern.5cm
+12 z^{37}+6 z^{36}+2
   z^{35}+14 z^{34}+z^{33}+8
   z^{32}+8 z^{31}+14 z^{29}+4
   z^{26}+4 z^{25}+6 z^{23}+12
   z^{22},\\
p_8(z)&=2 z^{52}+8
   z^{50}+4 z^{48}+8 z^{47}+10
   z^{46}+12 z^{45}+8 z^{44}+2
   z^{43}+8 z^{42}+12
   z^{41}+15 z^{40}+2 z^{39}\\
&\kern.5cm
+4
   z^{38}+6 z^{37}+6 z^{36}+5
   z^{34}+2 z^{33}+5 z^{31}+10
   z^{30}+8 z^{29}+15
   z^{28}+10 z^{27}+8
   z^{26}\\
&\kern.5cm
+10 z^{25}+10
   z^{24}+6 z^{22}+8 z^{21}+4
   z^{20}+10 z^{19}+2 z^{18}+4
   z^{17}+11 z^{16}+12
   z^{15}\\
&\kern.5cm
+6 z^{13}+6 z^{12}+13
   z^{10}+10
   z^9,\\
p_{9}(z)&=8 z^{53}+10
   z^{51}+8 z^{47}+12 z^{46}+6
   z^{45}+6 z^{44}+10 z^{42}+8
   z^{41}+12 z^{40}+8 z^{39}+4
   z^{38}+8 z^{37}\\
&\kern.5cm
+12 z^{36}+4
   z^{35}+12 z^{34}+2
   z^{33}+12 z^{29}+8 z^{26}+8
   z^{25}+12 z^{23}+8
   z^{22},\\
p_{10}(z)&=2 z^{54}+6
   z^{48}+12 z^{47}+8
   z^{46}+14 z^{45}+4 z^{44}+8
   z^{43}+8 z^{42}+4 z^{41}+4
   z^{39}+4 z^{38}\\
&\kern.5cm
+6 z^{36}+12
   z^{35},\\
p_{11}(z)&=3
   z^{54}+z^{48}+10 z^{47}+4
   z^{46}+13 z^{45}+6
   z^{44}+12 z^{43}+4 z^{42}+6
   z^{41}+14 z^{39}\\
&\kern.5cm
+14
   z^{38}+z^{36}+2
   z^{35},\\
p_{12}(z)&=6 z^{52}+8 z^{50}+12
   z^{48}+8 z^{47}+14 z^{46}+4
   z^{45}+8 z^{44}+6 z^{43}+8
   z^{42}+4 z^{41}+5 z^{40}+6
   z^{39}+12 z^{38}\\
&\kern.5cm
+2 z^{37}+2
   z^{36}+7 z^{34}+6 z^{33}+7
   z^{31}+14 z^{30}+8 z^{29}+5
   z^{28}+14 z^{27}+8
   z^{26}+14 z^{25}+14
   z^{24}\\
&\kern.5cm
+2 z^{22}+8 z^{21}+12
   z^{20}+14 z^{19}+6
   z^{18}+12 z^{17}+9 z^{16}+4
   z^{15}+2 z^{13}+2 z^{12}+15
   z^{10}+14
   z^9,\\
p_{13}(z)&=12 z^{51}+8
   z^{46}+4 z^{45}+4 z^{44}+12
   z^{42}+8 z^{40}+8 z^{38}+8
   z^{36}+8 z^{35}+8 z^{34}+12
   z^{33}\\
&\kern.5cm
+8 z^{29}+8
   z^{23},\\
p_{14}(z)&=2 z^{54}+6
   z^{48}+12 z^{47}+8
   z^{46}+14 z^{45}+4 z^{44}+8
   z^{43}+8 z^{42}+4 z^{41}+4
   z^{39}\\
&\kern.5cm
+4 z^{38}+6 z^{36}+12
   z^{35},\\
p_{15}(z)&=2 z^{54}+6
   z^{48}+12 z^{47}+8
   z^{46}+14 z^{45}+4 z^{44}+8
   z^{43}+8 z^{42}+4 z^{41}+4
   z^{39}\\
&\kern.5cm
+4 z^{38}+6 z^{36}+12
   z^{35},\\
p_{16}(z)&=4 z^{53}+15 z^{51}+4
   z^{47}+2 z^{46}+z^{45}+9
   z^{44}+15 z^{42}+4
   z^{41}+10 z^{40}+4 z^{39}+6
   z^{38}+4 z^{37}+2 z^{36}\\
&\kern.5cm
+6
   z^{35}+10 z^{34}+11
   z^{33}+8 z^{32}+8 z^{31}+10
   z^{29}+12 z^{26}+12
   z^{25}+2 z^{23}+4
   z^{22},\\
p_{17}(z)&=z^{54}+11
   z^{48}+14 z^{47}+12
   z^{46}+15 z^{45}+2 z^{44}+4
   z^{43}+12 z^{42}+2
   z^{41}+10 z^{39}+10
   z^{38}\\
&\kern.5cm
+11 z^{36}+6
   z^{35},\\
p_{18}(z)&=13 z^{54}+15 z^{48}+6
   z^{47}+12 z^{46}+3
   z^{45}+10 z^{44}+4
   z^{43}+12 z^{42}+10
   z^{41}+2 z^{39}+2 z^{38}\\
&\kern.5cm
+15
   z^{36}+14 z^{35}.
\end{align*}}%

\section{The coefficients in the differential equation \eqref{eq:Ga3Riccati16}}
\label{appC}

Here we provide explicit expressions for 
the coefficients in the Riccati-type differential equation 
\eqref{eq:Ga3Riccati16}, when reduced modulo~16:
{\Small\allowdisplaybreaks
\begin{align*}
q_0(z)&=
8 z^{50}+8 z^{49}+4 z^{47}+12 z^{46}+8 z^{44}+10 z^{43}+2 z^{42}+2 z^{41}+12 
z^{40}+12 z^{38}+7 z^{37}+11 z^{36}\\
&\kern.5cm
+10 z^{35}+11 z^{34}+13 z^{33}+13 
z^{32}+14 z^{31}+15 z^{30}+10 z^{29}+14 z^{28}+8 z^{27}+10 z^{26}+6 
z^{25}\\
&\kern.5cm
+z^{23}+4 z^{22}+8 z^{20}+14 z^{19}+12 
z^{18}+8 z^{17}+9 z^{16}+4 z^{15}+6 z^{14}+8 z^{13}+z^{12}+14 z^{11}\\
&\kern.5cm
+9 
z^{10}+2 z^9+14 z^8+10 z^7+4 z^6+11 z^5+4 z^4+4 z^3+z^2+8 z+15,\\
q_1(z)&=
8 z^{51}+8 z^{50}+8 
z^{49}+12 z^{48}+4 z^{47}+12 z^{46}+4 z^{45}+14 z^{43}+10 z^{42}+5 z^{40}+4 
z^{39}+4 z^{38}\\
&\kern.5cm
+9 z^{37}+4 z^{36}+14 z^{35}+5 z^{34}+14 z^{33}+13 z^{31}+5 
z^{30}+4 z^{29}+14 z^{28}+6 z^{27}+5 z^{26}\\
&\kern.5cm
+10 z^{25}+12 z^{23}+9 z^{22}+12 
z^{21}+13 z^{20}+6 z^{19}+z^{18}+7 z^{17}+15 z^{15}+11 z^{14}+7 z^{13}\\
&\kern.5cm
+5 
z^{12}+z^{11}+6 z^{10}+10 z^9+2 z^8+4 z^7+8 z^6+2 z^5+11 z^4+4 z^3+8 z^2+7 
z+1,\\
q_{2}(z)&=
8 z^{57}+8 
z^{56}+8 z^{55}+8 z^{54}+8 z^{53}+12 z^{51}+10 z^{50}+12 z^{49}+8 z^{48}+10 
z^{47}+15 z^{46}+4 z^{45}\\
&\kern.5cm
+10 z^{43}+z^{42}+11 z^{41}+z^{40}+2 
z^{39}+z^{38}+3 z^{37}+6 z^{36}+7 z^{35}+8 z^{34}+z^{33}+6 z^{31}+9 
z^{29}\\
&\kern.5cm
+14 z^{28}+11 z^{27}+5 z^{26}+2 z^{24}+6 z^{23}+4 z^{22}+6 z^{21}+13 
z^{20}+z^{19}+10 z^{18}+z^{17}+10 z^{15}\\
&\kern.5cm
+12 z^{14}+z^{13}+9 z^{12}+9 
z^{11}+7 z^{10}+4 z^9+12 z^8+3 z^7+2 z^6+9 z^5+3 z^4+9 z^3,\\
q_{3}(z)&=
14 z^{56}+2 
z^{55}+14 z^{54}+11 z^{52}+6 z^{51}+10 z^{50}+11 z^{49}+2 z^{48}+15 z^{47}+6 
z^{46}+10 z^{45}+9 z^{44}\\
&\kern.5cm
+4 z^{43}+5 z^{42}+11 z^{41}+5 z^{40}+10 z^{39}+4 
z^{38}+12 z^{37}+2 z^{36}+5 z^{35}+z^{34}+6 z^{33}+5 z^{32}\\
&\kern.5cm
+13 z^{31}+12 
z^{30}+3 z^{29}+10 z^{28}+13 z^{27}+14 z^{26}+4 z^{25}+12 z^{24}+8 z^{23}+9 
z^{22}+6 z^{21}\\
&\kern.5cm
+14 z^{20}+7 z^{19}+5 z^{18}+4 z^{17}+4 z^{16}+8 z^{14}+15 
z^{13}+13 z^{11}+9 z^9+7 z^8+z^6+4 z^5,\\
q_{4}(z)&=
8 z^{66}+15 z^{58}+14 z^{57}+12 
z^{56}+9 z^{54}+7 z^{53}+9 z^{52}+10 z^{51}+12 z^{50}+z^{49}+10 z^{48}+14 
z^{46}\\
&\kern.5cm
+6 z^{45}+6 z^{44}+6 z^{43}+14 z^{42}+11 z^{41}+13 z^{40}+10 
z^{39}+z^{38}+13 z^{37}+7 z^{36}+2 z^{34}\\
&\kern.5cm
+12 z^{33}+4 z^{32}+13 z^{31}+13 
z^{30}+4 z^{29}+11 z^{28}+7 z^{27}+8 z^{26}+2 z^{25}+6 z^{24}+13 z^{23}\\
&\kern.5cm
+11 
z^{21}+2 z^{20}+9 z^{19}+6 z^{18}+2 z^{17}+15 z^{16}+5 z^{15}+3 z^{14}+14 
z^{13}+2 z^{12}+10 z^{11}\\
&\kern.5cm
+9 z^{10}+6 z^9+12 z^8,\\
q_{5}(z)&=
5 z^{59}+13 
z^{58}+12 z^{56}+z^{55}+8 z^{54}+5 z^{53}+12 z^{52}+15 z^{51}+8 z^{50}+14 
z^{49}+2 z^{48}+4 z^{47}\\
&\kern.5cm
+4 z^{46}+10 z^{45}+12 z^{44}+8 z^{43}+4 z^{41}+2 
z^{40}+8 z^{38}+12 z^{37}+2 z^{36}+12 z^{35}+8 z^{34},\\
q_{6}(z)&=
12 z^{49}+4 z^{48}+12 z^{46}+12 z^{45}+12 z^{44}+4 z^{43}+12 
z^{42}+12 z^{41}+12 z^{39}+4 z^{38}+4 z^{37}\\
&\kern.5cm
+4 z^{36}+4 z^{32}+4 z^{31}+12 
z^{27}+12 z^{24}+12 z^{22}+12 z^{21}+12 z^{20}+12 z^{19}+12 z^{18}+4 
z^{15}\\
&\kern.5cm
+4 z^{13}+12 z^{12}+12 z^{11}+12 z^{10}+4 z^9+4 z^8+4 z^7+12 z^6+4 
z^5,\\
q_{7}(z)&=
10 
z^{58}+6 z^{54}+10 z^{53}+6 z^{52}+6 z^{49}+2 z^{41}+14 z^{40}+6 z^{38}+14 
z^{37}+10 z^{36}+14 z^{31}\\
&\kern.5cm
+14 z^{30}+2 z^{28}+10 z^{27}+14 z^{23}+2 z^{21}+6 
z^{19}+10 z^{16}+14 z^{15}+2 z^{14}+6 z^{10},\\
q_{8}(z)&=
10 z^{56}+6 
z^{55}+10 z^{54}+9 z^{52}+2 z^{51}+14 z^{50}+9 z^{49}+6 z^{48}+5 z^{47}+2 
z^{46}+14 z^{45}+3 z^{44}\\
&\kern.5cm
+12 z^{43}+7 z^{42}+9 z^{41}+7 z^{40}+14 z^{39}+12 
z^{38}+4 z^{37}+6 z^{36}+7 z^{35}+11 z^{34}+2 z^{33}\\
&\kern.5cm
+7 z^{32}+15 z^{31}+4 
z^{30}+z^{29}+14 z^{28}+15 z^{27}+10 z^{26}+12 z^{25}+4 z^{24}+3 z^{22}+2 
z^{21}\\
&\kern.5cm
+10 z^{20}+13 z^{19}+7 z^{18}+12 z^{17}+12 z^{16}+5 z^{13}+15 z^{11}+3 
z^9+13 z^8+11 z^6+12 z^5,\\
q_{9}(z)&=
7 
z^{59}+15 z^{58}+11 z^{55}+7 z^{53}+5 z^{51}+10 z^{49}+6 z^{48}+14 z^{45}+6 
z^{40}+6 z^{36},\\
q_{10}(z)&=
z^{58}+7 z^{54}+9 z^{53}+7 z^{52}+15 z^{49}+5 z^{41}+3 
z^{40}+15 z^{38}+3 z^{37}+9 z^{36}+3 z^{31}+3 z^{30}\\
&\kern.5cm
+5 z^{28}+9 z^{27}+3 
z^{23}+5 z^{21}+7 z^{19}+z^{16}+11 z^{15}+13 z^{14}+7 
z^{10},\\
q_{11}(z)&=
12 z^{55}+4 z^{51}+12 z^{50}+4 z^{47}+12 z^{46}+4 z^{45}+12 z^{43}+4 
z^{42}+4 z^{41}+4 z^{38}+12 z^{34}\\
&\kern.5cm
+4 z^{33}+4 z^{32}+12 z^{29}+4 z^{26}+12 
z^{25}+12 z^{22}+12 z^{16}+12 z^{15}+4 z^{14}+12 z^{13}\\
&\kern.5cm
+12 z^9+4 z^8+4 
z^6,\\
q_{12}(z)&=
15 z^{58}+14 z^{57}+12 z^{56}+9 z^{54}+7 z^{53}+9 z^{52}+10 z^{51}+12 
z^{50}+z^{49}+10 z^{48}+14 z^{46}+6 z^{45}\\
&\kern.5cm
+6 z^{44}+6 z^{43}+14 z^{42}+11 
z^{41}+13 z^{40}+10 z^{39}+z^{38}+13 z^{37}+7 z^{36}+2 z^{34}+12 z^{33}+4 
z^{32}\\
&\kern.5cm
+13 z^{31}+13 z^{30}+4 z^{29}+11 z^{28}+7 z^{27}+2 z^{25}+6 z^{24}+13 
z^{23}+11 z^{21}+2 z^{20}+9 z^{19}+6 z^{18}\\
&\kern.5cm
+2 z^{17}+15 z^{16}+5 z^{15}+3 
z^{14}+14 z^{13}+2 z^{12}+10 z^{11}+9 z^{10}+6 z^9+12 z^8,\\
q_{13}(z)&=
10 z^{59}+10 
z^{58}+2 z^{55}+10 z^{53}+14 z^{51}+12 z^{49}+4 z^{48}+4 z^{45}+4 z^{40}+4 
z^{36},\\
q_{14}(z)&=
8 z^{57}+8 
z^{56}+8 z^{55}+12 z^{53}+8 z^{51}+8 z^{50}+8 z^{49}+8 z^{48}+10 z^{47}+14 
z^{46}+10 z^{45}+8 z^{44}\\
&\kern.5cm
+9 z^{43}+12 z^{42}+2 z^{41}+6 z^{40}+11 z^{39}+3 
z^{38}+12 z^{37}+8 z^{36}+7 z^{35}+2 z^{34}+z^{33}+z^{32}\\
&\kern.5cm
+7 z^{31}+12 
z^{30}+12 z^{29}+z^{28}+z^{26}+15 z^{25}+9 z^{24}+14 z^{23}+z^{22}+3 
z^{21}+3 z^{20}+4 z^{19}\\
&\kern.5cm
+7 z^{17}+7 z^{16}+6 z^{15}+3 z^{14}+z^{13}+3 
z^{12}+2 z^{11}+14 z^9+2 z^8+6 z^7+2 z^5+14 z^4\\
&\kern.5cm
+6 z^3+14 
z^2,\\
q_{15}(z)&=
12 z^{55}+12 z^{54}+8 z^{50}+2 
z^{49}+14 z^{48}+8 z^{47}+10 z^{46}+10 z^{45}+10 z^{44}+14 z^{43}+10 
z^{42}+10 z^{41}\\
&\kern.5cm
+2 z^{39}+14 z^{38}+6 z^{37}+14 z^{36}+12 z^{34}+14 
z^{32}+14 z^{31}+4 z^{30}+8 z^{28}+10 z^{27}+12 z^{25}\\
&\kern.5cm
+10 z^{24}+12 z^{23}+2 
z^{22}+2 z^{21}+2 z^{20}+2 z^{19}+10 z^{18}+8 z^{17}+4 z^{16}+6 z^{15}+8 
z^{14}+14 z^{13}\\
&\kern.5cm
+2 z^{12}+2 z^{11}+2 z^{10}+6 z^9+14 z^8+14 z^7+2 z^6+14 
z^5+12 z^4,\\
q_{16}(z)&=
4 z^{65}+10 z^{57}+6 z^{56}+9 
z^{55}+8 z^{53}+6 z^{52}+11 z^{51}+13 z^{50}+2 z^{49}+12 z^{48}+3 z^{47}+13 
z^{46}\\
&\kern.5cm
+3 z^{45}+10 z^{44}+13 z^{43}+3 z^{42}+3 z^{41}+2 z^{40}+4 z^{39}+7 
z^{38}+4 z^{37}+6 z^{36}+6 z^{35}+5 z^{34}\\
&\kern.5cm
+15 z^{33}+3 z^{32}+10 z^{31}+14 
z^{30}+z^{29}+2 z^{28}+14 z^{27}+11 z^{26}+9 z^{25}+2 z^{24}+9 z^{22}+8 
z^{21}\\
&\kern.5cm
+2 z^{20}+8 z^{19}+10 z^{18}+12 z^{17}+9 z^{16}+5 z^{15}+11 z^{14}+5 
z^{13}+8 z^{12}+10 z^{10}+z^9+7 z^8\\
&\kern.5cm
+6 z^7+7 z^6,\\
q_{17}(z)&=
8 z^{58}+12 z^{55}+8 z^{54}+8 
z^{53}+8 z^{52}+4 z^{51}+12 z^{50}+8 z^{49}+8 z^{48}+4 z^{47}+12 z^{45}+8 
z^{43}\\
&\kern.5cm
+8 z^{42}+12 z^{41}+8 z^{40}+12 z^{39}+8 z^{38}+4 z^{36}+4 z^{35}+8 
z^{34}+4 z^{33}+12 z^{32}+8 z^{31}+4 z^{29}\\
&\kern.5cm
+8 z^{28}+8 z^{26}+8 z^{22}+12 
z^{21}+8 z^{18}+12 z^{17}+8 z^{16},\\
q_{18}(z)&=
6 z^{60}+11 z^{59}+4 z^{57}+10 
z^{56}+11 z^{55}+2 z^{54}+4 z^{53},\\
q_{19}(z)&=
4 z^{56}+12 z^{55}+4 z^{54}+2 z^{52}+4 z^{51}+12 z^{50}+2 
z^{49}+12 z^{48}+10 z^{47}+4 z^{46}+12 z^{45}+6 z^{44}\\
&\kern.5cm
+14 z^{42}+2 z^{41}+14 
z^{40}+12 z^{39}+12 z^{36}+14 z^{35}+6 z^{34}+4 z^{33}+14 z^{32}+14 z^{31}+2 
z^{29}\\
&\kern.5cm
+12 z^{28}+14 z^{27}+4 z^{26}+6 z^{22}+4 z^{21}+4 z^{20}+10 z^{19}+14 
z^{18}+10 z^{13}+14 z^{11}+6 z^9\\
&\kern.5cm
+10 z^8+6 z^6,\\
q_{20}(z)&=
12 z^{65}+14 z^{57}+2 z^{56}+3 z^{55}+2 z^{52}+9 z^{51}+15 z^{50}+6 
z^{49}+4 z^{48}+z^{47}+15 z^{46}+z^{45}\\
&\kern.5cm
+14 z^{44}+15 z^{43}+z^{42}+z^{41}+6 
z^{40}+12 z^{39}+13 z^{38}+12 z^{37}+2 z^{36}+2 z^{35}+7 z^{34}+5 
z^{33}\\
&\kern.5cm
+z^{32}+14 z^{31}+10 z^{30}+11 z^{29}+6 z^{28}+10 z^{27}+9 z^{26}+3 
z^{25}+6 z^{24}+3 z^{22}+6 z^{20}\\
&\kern.5cm
+14 z^{18}+4 z^{17}+3 z^{16}+7 z^{15}+9 
z^{14}+7 z^{13}+14 z^{10}+11 z^9+13 z^8+2 z^7+13 z^6,\\
q_{21}(z)&=
2 z^{60}+9 z^{59}+14 z^{56}+9 z^{55}+6 z^{54},\\
q_{22}(z)&=
3 z^{58}+5 z^{54}+11 z^{53}+5 
z^{52}+13 z^{49}+15 z^{41}+9 z^{40}+13 z^{38}+9 z^{37}+11 z^{36}+9 z^{31}\\
&\kern.5cm
+9 
z^{30}+15 z^{28}+11 z^{27}+9 z^{23}+15 z^{21}+5 z^{19}+3 z^{16}+z^{15}+7 
z^{14}+5 z^{10},\\
q_{23}(z)&=
14 z^{58}+12 z^{57}+2 z^{54}+14 z^{53}+2 z^{52}+4 z^{51}+2 
z^{49}+4 z^{48}+12 z^{46}+12 z^{45}+12 z^{44}+12 z^{43}\\
&\kern.5cm
+12 z^{42}+6 
z^{41}+10 z^{40}+4 z^{39}+2 z^{38}+10 z^{37}+14 z^{36}+4 z^{34}+10 z^{31}+10 
z^{30}+6 z^{28}\\
&\kern.5cm
+14 z^{27}+4 z^{25}+12 z^{24}+10 z^{23}+6 z^{21}+4 z^{20}+2 
z^{19}+12 z^{18}+4 z^{17}+14 z^{16}+10 z^{15}\\
&\kern.5cm
+6 z^{14}+12 z^{13}+4 z^{12}+4 
z^{11}+2 z^{10}+12 z^9,\\
q_{24}(z)&=
4 z^{59}+4 z^{58}+4 z^{55}+4 z^{53}+12 
z^{51},\\
q_{25}(z)&=
12 z^{55}+4 z^{51}+12 
z^{50}+4 z^{47}+12 z^{45}+12 z^{41}+12 z^{39}+4 z^{36}+4 z^{35}+4 z^{33}+12 
z^{32}+4 z^{29}\\
&\kern.5cm
+12 z^{21}+12 z^{17},\\
q_{26}(z)&=
12 z^{60}+6 z^{59}+4 z^{56}+6 z^{55}+4 z^{54},\\
q_{27}(z)&=
8 z^{57}+8 z^{56}+8 z^{55}+8 z^{54}+8 
z^{53}+4 z^{51}+14 z^{50}+4 z^{49}+8 z^{48}+14 z^{47}+5 z^{46}+12 z^{45}\\
&\kern.5cm
+14 
z^{43}+11 z^{42}+9 z^{41}+11 z^{40}+6 z^{39}+11 z^{38}+z^{37}+2 z^{36}+13 
z^{35}+8 z^{34}+11 z^{33}+2 z^{31}\\
&\kern.5cm
+3 z^{29}+10 z^{28}+9 z^{27}+7 z^{26}+6 
z^{24}+2 z^{23}+12 z^{22}+2 z^{21}+15 z^{20}+11 z^{19}+14 z^{18}\\
&\kern.5cm
+11 
z^{17}+14 z^{15}+4 z^{14}+11 z^{13}+3 z^{12}+3 z^{11}+13 z^{10}+12 z^9+4 
z^8+z^7+6 z^6\\
&\kern.5cm
+3 z^5+z^4+3 z^3,\\
q_{28}(z)&=
4 
z^{56}+12 z^{55}+4 z^{54}+2 z^{52}+4 z^{51}+12 z^{50}+2 z^{49}+12 z^{48}+10 
z^{47}+4 z^{46}+12 z^{45}+6 z^{44}\\
&\kern.5cm
+8 z^{43}+14 z^{42}+2 z^{41}+14 z^{40}+12 
z^{39}+8 z^{38}+8 z^{37}+12 z^{36}+14 z^{35}+6 z^{34}+4 z^{33}\\
&\kern.5cm
+14 z^{32}+14 
z^{31}+8 z^{30}+2 z^{29}+12 z^{28}+14 z^{27}+4 z^{26}+8 z^{25}+8 z^{24}+6 
z^{22}+4 z^{21}\\
&\kern.5cm
+4 z^{20}+10 z^{19}+14 z^{18}+8 z^{17}+8 z^{16}+10 z^{13}+14 
z^{11}+6 z^9+10 z^8+6 z^6+8 z^5,\\
q_{29}(z)&=
8 z^{66}+15 z^{58}+14 z^{57}+12 z^{56}+9 z^{54}+7 z^{53}+9 
z^{52}+10 z^{51}+12 z^{50}+z^{49}+10 z^{48}+14 z^{46}\\
&\kern.5cm
+6 z^{45}+6 z^{44}+6 
z^{43}+14 z^{42}+11 z^{41}+13 z^{40}+10 z^{39}+z^{38}+13 z^{37}+7 z^{36}+2 
z^{34}\\
&\kern.5cm
+12 z^{33}+4 z^{32}+13 z^{31}+13 z^{30}+4 z^{29}+11 z^{28}+7 z^{27}+8 
z^{26}+2 z^{25}+6 z^{24}+13 z^{23}\\
&\kern.5cm
+11 z^{21}+2 z^{20}+9 z^{19}+6 z^{18}+2 
z^{17}+15 z^{16}+5 z^{15}+3 z^{14}+14 z^{13}+2 z^{12}+10 z^{11}\\
&\kern.5cm
+9 z^{10}+6 
z^9+12 z^8,\\
q_{30}(z)&=
12 z^{59}+12 z^{58}+12 z^{55}+12 z^{53}+4 z^{51}+8 
z^{49}+8 z^{48}+8 z^{45}+8 z^{40}+8 z^{36},\\
q_{31}(z)&=
4 z^{55}+12 
z^{51}+4 z^{50}+12 z^{47}+4 z^{46}+12 z^{45}+4 z^{43}+12 z^{42}+12 z^{41}+12 
z^{38}+4 z^{34}+12 z^{33}\\
&\kern.5cm
+12 z^{32}+4 z^{29}+12 z^{26}+4 z^{25}+4 z^{22}+4 
z^{16}+4 z^{15}+12 z^{14}+4 z^{13}+4 z^9+12 z^8+12 z^6,\\
q_{32}(z)&=
5 z^{58}+10 z^{57}+4 
z^{56}+3 z^{54}+13 z^{53}+3 z^{52}+14 z^{51}+4 z^{50}+11 z^{49}+14 z^{48}+10 
z^{46}+2 z^{45}\\
&\kern.5cm
+2 z^{44}+2 z^{43}+10 z^{42}+9 z^{41}+15 z^{40}+14 z^{39}+11 
z^{38}+15 z^{37}+13 z^{36}+6 z^{34}+4 z^{33}\\
&\kern.5cm
+12 z^{32}+15 z^{31}+15 
z^{30}+12 z^{29}+9 z^{28}+13 z^{27}+6 z^{25}+2 z^{24}+15 z^{23}+9 z^{21}+6 
z^{20}+3 z^{19}\\
&\kern.5cm
+2 z^{18}+6 z^{17}+5 z^{16}+7 z^{15}+z^{14}+10 z^{13}+6 
z^{12}+14 z^{11}+3 z^{10}+2 z^9+4 z^8,\\
q_{33}(z)&=
12 z^{59}+12 z^{58}+12 z^{55}+12 z^{53}+4 z^{51},\\
q_{34}(z)&=
2 z^{55}+10 z^{54}+4 z^{50}+3 z^{49}+13 
z^{48}+4 z^{47}+7 z^{46}+7 z^{45}+15 z^{44}+5 z^{43}+15 z^{42}+15 z^{41}\\
&\kern.5cm
+8 
z^{40}+11 z^{39}+13 z^{38}+z^{37}+5 z^{36}+8 z^{35}+2 z^{34}+5 z^{32}+5 
z^{31}+14 z^{30}+8 z^{29}+4 z^{28}\\
&\kern.5cm
+15 z^{27}+10 z^{25}+15 z^{24}+10 
z^{23}+11 z^{22}+11 z^{21}+11 z^{20}+3 z^{19}+15 z^{18}+4 z^{17}+14 
z^{16}\\
&\kern.5cm
+z^{15}+12 z^{14}+13 z^{13}+11 z^{12}+11 z^{11}+11 z^{10}+z^9+5 z^8+13 
z^7+11 z^6+5 z^5+2 z^4,\\
q_{35}(z)&=
12 z^{65}+14 z^{57}+2 z^{56}+3 z^{55}+8 z^{53}+2 z^{52}+9 
z^{51}+15 z^{50}+6 z^{49}+4 z^{48}+z^{47}+15 z^{46}\\
&\kern.5cm
+z^{45}+14 z^{44}+15 
z^{43}+z^{42}+z^{41}+6 z^{40}+12 z^{39}+13 z^{38}+12 z^{37}+2 z^{36}+2 
z^{35}+7 z^{34}\\
&\kern.5cm
+5 z^{33}+z^{32}+14 z^{31}+10 z^{30}+11 z^{29}+6 z^{28}+10 
z^{27}+9 z^{26}+3 z^{25}+6 z^{24}+3 z^{22}+8 z^{21}\\
&\kern.5cm
+6 z^{20}+8 z^{19}+14 
z^{18}+4 z^{17}+3 z^{16}+7 z^{15}+9 z^{14}+7 z^{13}+8 z^{12}+14 z^{10}+11 
z^9+13 z^8\\
&\kern.5cm
+2 z^7+13 z^6,\\
q_{36}(z)&=
8 z^{67}+12 z^{58}+14 z^{55}+4 
z^{54}+4 z^{53}+4 z^{52}+2 z^{51}+6 z^{50}+12 z^{49}+12 z^{48}+10 z^{47}+6 
z^{45}\\
&\kern.5cm
+8 z^{44}+12 z^{43}+12 z^{42}+6 z^{41}+12 z^{40}+6 z^{39}+4 z^{38}+2 
z^{36}+10 z^{35}+12 z^{34}+2 z^{33}\\
&\kern.5cm
+6 z^{32}+12 z^{31}+10 z^{29}+12 
z^{28}+12 z^{26}+8 z^{25}+12 z^{22}+14 z^{21}+8 z^{19}+4 z^{18}+14 z^{17}\\
&\kern.5cm
+4 
z^{16}+8 z^{15},\\
q_{37}(z)&=
4 
z^{60}+2 z^{59}+8 z^{57}+12 z^{56}+2 z^{55}+12 z^{54}+8 
z^{53},\\
q_{38}(z)&=
5 z^{58}+10 z^{57}+4 z^{56}+3 
z^{54}+13 z^{53}+3 z^{52}+14 z^{51}+4 z^{50}+11 z^{49}+14 z^{48}+10 z^{46}+2 
z^{45}\\
&\kern.5cm
+2 z^{44}+2 z^{43}+10 z^{42}+9 z^{41}+15 z^{40}+14 z^{39}+11 z^{38}+15 
z^{37}+13 z^{36}+6 z^{34}+4 z^{33}\\
&\kern.5cm
+12 z^{32}+15 z^{31}+15 z^{30}+12 z^{29}+9 
z^{28}+13 z^{27}+6 z^{25}+2 z^{24}+15 z^{23}+9 z^{21}+6 z^{20}\\
&\kern.5cm
+3 z^{19}+2 
z^{18}+6 z^{17}+5 z^{16}+7 z^{15}+z^{14}+10 z^{13}+6 z^{12}+14 z^{11}+3 
z^{10}+2 z^9+4 z^8,\\
q_{39}(z)&=
4 
z^{58}+10 z^{55}+12 z^{54}+12 z^{53}+12 z^{52}+6 z^{51}+2 z^{50}+4 z^{49}+4 
z^{48}+14 z^{47}+2 z^{45}+4 z^{43}\\
&\kern.5cm
+4 z^{42}+2 z^{41}+4 z^{40}+2 z^{39}+12 
z^{38}+6 z^{36}+14 z^{35}+4 z^{34}+6 z^{33}+2 z^{32}+4 z^{31}+14 z^{29}\\
&\kern.5cm
+4 
z^{28}+4 z^{26}+4 z^{22}+10 z^{21}+12 z^{18}+10 z^{17}+12 
z^{16},\\
q_{40}(z)&=
12 z^{59}+12 
z^{58}+12 z^{55}+12 z^{53}+4 z^{51},\\
q_{41}(z)&=
4 z^{60}+2 
z^{59}+12 z^{56}+2 z^{55}+12 z^{54},\\
q_{42}(z)&=
2 z^{56}+14 
z^{55}+2 z^{54}+5 z^{52}+10 z^{51}+6 z^{50}+5 z^{49}+14 z^{48}+z^{47}+10 
z^{46}+6 z^{45}+7 z^{44}\\
&\kern.5cm
+12 z^{43}+11 z^{42}+5 z^{41}+11 z^{40}+6 z^{39}+12 
z^{38}+4 z^{37}+14 z^{36}+11 z^{35}+15 z^{34}+10 z^{33}\\
&\kern.5cm
+11 z^{32}+3 z^{31}+4 
z^{30}+13 z^{29}+6 z^{28}+3 z^{27}+2 z^{26}+12 z^{25}+4 z^{24}+8 z^{23}+7 
z^{22}+10 z^{21}\\
&\kern.5cm
+2 z^{20}+9 z^{19}+11 z^{18}+12 z^{17}+12 z^{16}+8 
z^{14}+z^{13}+3 z^{11}+7 z^9+9 z^8+15 z^6+12 z^5,\\
q_{43}(z)&=
8 z^{66}+3 
z^{58}+6 z^{57}+12 z^{56}+5 z^{54}+11 z^{53}+5 z^{52}+2 z^{51}+12 z^{50}+13 
z^{49}+2 z^{48}+6 z^{46}\\
&\kern.5cm
+14 z^{45}+14 z^{44}+14 z^{43}+6 z^{42}+15 z^{41}+9 
z^{40}+2 z^{39}+13 z^{38}+9 z^{37}+11 z^{36}+10 z^{34}\\
&\kern.5cm
+12 z^{33}+4 z^{32}+9 
z^{31}+9 z^{30}+4 z^{29}+15 z^{28}+11 z^{27}+8 z^{26}+10 z^{25}+14 z^{24}+9 
z^{23}\\
&\kern.5cm
+15 z^{21}+10 z^{20}+5 z^{19}+14 z^{18}+10 z^{17}+3 z^{16}+z^{15}+7 
z^{14}+6 z^{13}+10 z^{12}+2 z^{11}\\
&\kern.5cm
+5 z^{10}+14 z^9+12 z^8,\\
q_{44}(z)&=
2 z^{59}+2 z^{58}+8 z^{56}+10 z^{55}+2 z^{53}+8 z^{52}+6 
z^{51}+12 z^{49}+4 z^{48}+8 z^{47}+8 z^{46}+4 z^{45}\\
&\kern.5cm
+8 z^{44}+8 z^{41}+4 
z^{40}+8 z^{37}+4 z^{36}+8 z^{35},\\
q_{45}(z)&=
8 z^{21} ,\\
q_{46}(z)&=
6 z^{59}+6 z^{58}+14 z^{55}+6 z^{53}+2 z^{51}+4 z^{49}+12 
z^{48}+12 z^{45}+12 z^{40}+12 z^{36},\\
q_{47}(z)&=
4 
z^{65}+2 z^{57}+14 z^{56}+13 z^{55}+8 z^{53}+14 z^{52}+7 z^{51}+z^{50}+10 
z^{49}+12 z^{48}+15 z^{47}+z^{46}\\
&\kern.5cm
+15 z^{45}+2 z^{44}+z^{43}+15 z^{42}+15 
z^{41}+10 z^{40}+4 z^{39}+3 z^{38}+4 z^{37}+14 z^{36}+14 z^{35}+9 z^{34}\\
&\kern.5cm
+11 
z^{33}+15 z^{32}+2 z^{31}+6 z^{30}+5 z^{29}+10 z^{28}+6 z^{27}+7 z^{26}+13 
z^{25}+10 z^{24}+13 z^{22}+8 z^{21}\\
&\kern.5cm
+10 z^{20}+8 z^{19}+2 z^{18}+12 z^{17}+13 
z^{16}+9 z^{15}+7 z^{14}+9 z^{13}+8 z^{12}+2 z^{10}+5 z^9+3 z^8\\
&\kern.5cm
+14 z^7+3 
z^6,\\
q_{48}(z)&=
8 
z^{58}+4 z^{55}+8 z^{54}+8 z^{53}+8 z^{52}+12 z^{51}+4 z^{50}+8 z^{49}+8 
z^{48}+12 z^{47}+4 z^{45}+8 z^{43}+8 z^{42}\\
&\kern.5cm
+4 z^{41}+8 z^{40}+4 z^{39}+8 
z^{38}+12 z^{36}+12 z^{35}+8 z^{34}+12 z^{33}+4 z^{32}+8 z^{31}+12 z^{29}+8 
z^{28}\\
&\kern.5cm
+8 z^{26}+8 z^{22}+4 z^{21}+8 z^{18}+4 z^{17}+8 
z^{16},\\
q_{49}(z)&=
4 z^{60}+10 z^{59}+8 z^{57}+12 
z^{56}+10 z^{55}+12 z^{54}+8 z^{53},\\
q_{50}(z)&=
4 
z^{59}+4 z^{58}+4 z^{55}+4 z^{53}+12 z^{51},\\
q_{51}(z)&=
12 z^{60}+14 z^{59}+4 z^{56}+14 z^{55}+4 
z^{54},\\
q_{52}(z)&=
8 z^{66}+7 z^{58}+14 z^{57}+12 z^{56}+z^{54}+15 z^{53}+z^{52}+10 
z^{51}+12 z^{50}+9 z^{49}+10 z^{48}+14 z^{46}\\
&\kern.5cm
+6 z^{45}+6 z^{44}+6 z^{43}+14 
z^{42}+3 z^{41}+5 z^{40}+10 z^{39}+9 z^{38}+5 z^{37}+15 z^{36}+2 z^{34}+12 
z^{33}\\
&\kern.5cm
+4 z^{32}+5 z^{31}+5 z^{30}+4 z^{29}+3 z^{28}+15 z^{27}+8 z^{26}+2 
z^{25}+6 z^{24}+5 z^{23}+3 z^{21}+2 z^{20}\\
&\kern.5cm
+z^{19}+6 z^{18}+2 z^{17}+7 
z^{16}+13 z^{15}+11 z^{14}+14 z^{13}+2 z^{12}+10 z^{11}+z^{10}+6 z^9+12 
z^8,\\
q_{53}(z)&=
4 z^{59}+4 z^{58}+4 z^{55}+4 
z^{53}+12 z^{51}+8 z^{49}+8 z^{48}+8 z^{45}+8 z^{40}+8 
z^{36},\\
q_{54}(z)&=
4 z^{67}+14 
z^{58}+8 z^{57}+15 z^{55}+10 z^{54}+10 z^{53}+2 z^{52}+z^{51}+11 z^{50}+14 
z^{49}+6 z^{48}+5 z^{47}\\
&\kern.5cm
+8 z^{46}+3 z^{45}+12 z^{44}+6 z^{43}+6 z^{42}+3 
z^{41}+6 z^{40}+3 z^{39}+2 z^{38}+9 z^{36}+5 z^{35}+6 z^{34}\\
&\kern.5cm
+9 z^{33}+3 
z^{32}+6 z^{31}+8 z^{30}+13 z^{29}+6 z^{28}+6 z^{26}+4 z^{25}+8 z^{24}+14 
z^{22}+15 z^{21}+4 z^{19}\\
&\kern.5cm
+2 z^{18}+15 z^{17}+10 z^{16}+4 
z^{15},\\
q_{55}(z)&=
6 
z^{60}+3 z^{59}+4 z^{57}+10 z^{56}+3 z^{55}+2 z^{54}+4 
z^{53},\\
q_{56}(z)&=
5 z^{59}+13 z^{58}+12 z^{56}+z^{55}+8 
z^{54}+5 z^{53}+12 z^{52}+15 z^{51}+8 z^{50}+14 z^{49}+2 z^{48}+4 z^{47}\\
&\kern.5cm
+4 
z^{46}+10 z^{45}+12 z^{44}+8 z^{43}+4 z^{41}+2 z^{40}+8 z^{38}+12 z^{37}+2 
z^{36}+12 z^{35}+8 z^{34},\\
q_{57}(z)&=
14 z^{60}+15 z^{59}+4 z^{57}+2 z^{56}+15 
z^{55}+10 z^{54}+4 z^{53},\\
q_{58}(z)&=
8 z^{57}+8 z^{56}+8 z^{55}+12 z^{53}+8 z^{51}+8 
z^{50}+8 z^{49}+8 z^{48}+10 z^{47}+14 z^{46}+10 z^{45}+8 z^{44}\\
&\kern.5cm
+9 z^{43}+12 
z^{42}+2 z^{41}+6 z^{40}+11 z^{39}+3 z^{38}+12 z^{37}+8 z^{36}+7 z^{35}+2 
z^{34}+z^{33}+z^{32}\\
&\kern.5cm
+7 z^{31}+12 z^{30}+12 z^{29}+z^{28}+z^{26}+15 z^{25}+9 
z^{24}+14 z^{23}+z^{22}+3 z^{21}+3 z^{20}+4 z^{19}\\
&\kern.5cm
+7 z^{17}+7 z^{16}+6 
z^{15}+3 z^{14}+z^{13}+3 z^{12}+2 z^{11}+14 z^9+2 z^8+6 z^7+2 z^5+14 z^4\\
&\kern.5cm
+6 
z^3+14 z^2,\\
q_{59}(z)&=
4 z^{56}+12 z^{55}+4 z^{54}+2 z^{52}+4 z^{51}+12 z^{50}+2 z^{49}+12 
z^{48}+10 z^{47}+4 z^{46}+12 z^{45}+6 z^{44}\\
&\kern.5cm
+14 z^{42}+2 z^{41}+14 z^{40}+12 
z^{39}+12 z^{36}+14 z^{35}+6 z^{34}+4 z^{33}+14 z^{32}+14 z^{31}+2 z^{29}\\
&\kern.5cm
+12 
z^{28}+14 z^{27}+4 z^{26}+6 z^{22}+4 z^{21}+4 z^{20}+10 z^{19}+14 z^{18}+10 
z^{13}+14 z^{11}+6 z^9\\
&\kern.5cm
+10 z^8+6 z^6,\\
q_{60}(z)&=
12 z^{65}+14 z^{57}+2 z^{56}+3 z^{55}+2 z^{52}+9 z^{51}+15 
z^{50}+6 z^{49}+4 z^{48}+z^{47}+15 z^{46}+z^{45}\\
&\kern.5cm
+14 z^{44}+15 
z^{43}+z^{42}+z^{41}+6 z^{40}+12 z^{39}+13 z^{38}+12 z^{37}+2 z^{36}+2 
z^{35}+7 z^{34}+5 z^{33}\\
&\kern.5cm
+z^{32}+14 z^{31}+10 z^{30}+11 z^{29}+6 z^{28}+10 
z^{27}+9 z^{26}+3 z^{25}+6 z^{24}+3 z^{22}+6 z^{20}+14 z^{18}\\
&\kern.5cm
+4 z^{17}+3 
z^{16}+7 z^{15}+9 z^{14}+7 z^{13}+14 z^{10}+11 z^9+13 z^8+2 z^7+13 
z^6,\\
q_{61}(z)&=
2 z^{60}+9 z^{59}+14 z^{56}+9 z^{55}+6 
z^{54},\\
q_{62}(z)&=
3 z^{58}+5 z^{54}+11 
z^{53}+5 z^{52}+13 z^{49}+15 z^{41}+9 z^{40}+13 z^{38}+9 z^{37}+11 z^{36}+9 
z^{31}+9 z^{30}\\
&\kern.5cm
+15 z^{28}+11 z^{27}+9 z^{23}+15 z^{21}+5 z^{19}+3 
z^{16}+z^{15}+7 z^{14}+5 z^{10},\\
q_{63}(z)&=
6 z^{55}+14 z^{54}+12 z^{50}+9 z^{49}+7 z^{48}+12 z^{47}+5 z^{46}+5 
z^{45}+13 z^{44}+15 z^{43}+13 z^{42}\\
&\kern.5cm
+13 z^{41}+8 z^{40}+z^{39}+7 z^{38}+3 
z^{37}+15 z^{36}+8 z^{35}+6 z^{34}+15 z^{32}+15 z^{31}+10 z^{30}\\
&\kern.5cm
+8 z^{29}+12 
z^{28}+13 z^{27}+14 z^{25}+13 z^{24}+14 z^{23}+z^{22}+z^{21}+z^{20}+9 
z^{19}+13 z^{18}\\
&\kern.5cm
+12 z^{17}+10 z^{16}+3 z^{15}+4 z^{14}+7 
z^{13}+z^{12}+z^{11}+z^{10}+3 z^9+15 z^8+7 z^7+z^6\\
&\kern.5cm
+15 z^5+6 
z^4,\\
q_{64}(z)&=
15 z^{58}+14 
z^{57}+12 z^{56}+9 z^{54}+7 z^{53}+9 z^{52}+10 z^{51}+12 z^{50}+z^{49}+10 
z^{48}+14 z^{46}+6 z^{45}\\
&\kern.5cm
+6 z^{44}+6 z^{43}+14 z^{42}+11 z^{41}+13 z^{40}+10 
z^{39}+z^{38}+13 z^{37}+7 z^{36}+2 z^{34}+12 z^{33}+4 z^{32}\\
&\kern.5cm
+13 z^{31}+13 
z^{30}+4 z^{29}+11 z^{28}+7 z^{27}+2 z^{25}+6 z^{24}+13 z^{23}+11 z^{21}+2 
z^{20}+9 z^{19}+6 z^{18}\\
&\kern.5cm
+2 z^{17}+15 z^{16}+5 z^{15}+3 z^{14}+14 z^{13}+2 
z^{12}+10 z^{11}+9 z^{10}+6 z^9+12 z^8,\\
q_{65}(z)&=
12 z^{58}+14 z^{55}+4 z^{54}+4 
z^{53}+4 z^{52}+2 z^{51}+6 z^{50}+12 z^{49}+12 z^{48}+10 z^{47}+6 z^{45}+12 
z^{43}\\
&\kern.5cm
+12 z^{42}+6 z^{41}+12 z^{40}+6 z^{39}+4 z^{38}+2 z^{36}+10 z^{35}+12 
z^{34}+2 z^{33}+6 z^{32}+12 z^{31}+10 z^{29}\\
&\kern.5cm
+12 z^{28}+12 z^{26}+12 
z^{22}+14 z^{21}+4 z^{18}+14 z^{17}+4 z^{16},\\
q_{66}(z)&=
12 z^{65}+14 z^{57}+2 z^{56}+3 z^{55}+8 
z^{53}+2 z^{52}+9 z^{51}+15 z^{50}+6 z^{49}+4 z^{48}+z^{47}+15 
z^{46}+z^{45}\\
&\kern.5cm
+14 z^{44}+15 z^{43}+z^{42}+z^{41}+6 z^{40}+12 z^{39}+13 
z^{38}+12 z^{37}+2 z^{36}+2 z^{35}+7 z^{34}+5 z^{33}\\
&\kern.5cm
+z^{32}+14 z^{31}+10 
z^{30}+11 z^{29}+6 z^{28}+10 z^{27}+9 z^{26}+3 z^{25}+6 z^{24}+3 z^{22}+8 
z^{21}+6 z^{20}\\
&\kern.5cm
+8 z^{19}+14 z^{18}+4 z^{17}+3 z^{16}+7 z^{15}+9 z^{14}+7 
z^{13}+8 z^{12}+14 z^{10}+11 z^9+13 z^8\\
&\kern.5cm
+2 z^7+13 z^6,\\
q_{67}(z)&=
12 z^{59}+12 z^{58}+12 z^{55}+12 
z^{53}+4 z^{51},\\
q_{68}(z)&=
4 z^{60}+2 z^{59}+12 z^{56}+2 
z^{55}+12 z^{54},\\
q_{69}(z)&=
4 z^{67}+14 z^{58}+8 z^{57}+7 z^{55}+10 
z^{54}+10 z^{53}+2 z^{52}+9 z^{51}+3 z^{50}+14 z^{49}+6 z^{48}+13 z^{47}\\
&\kern.5cm
+8 
z^{46}+11 z^{45}+12 z^{44}+6 z^{43}+6 z^{42}+11 z^{41}+6 z^{40}+11 z^{39}+2 
z^{38}+z^{36}+13 z^{35}+6 z^{34}\\
&\kern.5cm
+z^{33}+11 z^{32}+6 z^{31}+8 z^{30}+5 
z^{29}+6 z^{28}+6 z^{26}+4 z^{25}+8 z^{24}+14 z^{22}+7 z^{21}+4 z^{19}\\
&\kern.5cm
+2 
z^{18}+7 z^{17}+10 z^{16}+4 z^{15},\\
q_{70}(z)&=
14 z^{60}+15 z^{59}+4 
z^{57}+2 z^{56}+15 z^{55}+10 z^{54}+4 z^{53},\\
q_{71}(z)&=
4 z^{51}+14 z^{50}+4 z^{49}+14 z^{47}+5 z^{46}+12 
z^{45}+14 z^{43}+11 z^{42}+9 z^{41}+11 z^{40}+6 z^{39}+11 z^{38}\\
&\kern.5cm
+z^{37}+2 
z^{36}+13 z^{35}+11 z^{33}+2 z^{31}+3 z^{29}+10 z^{28}+9 z^{27}+7 z^{26}+6 
z^{24}+2 z^{23}+12 z^{22}\\
&\kern.5cm
+2 z^{21}+15 z^{20}+11 z^{19}+14 z^{18}+11 
z^{17}+14 z^{15}+4 z^{14}+11 z^{13}+3 z^{12}+3 z^{11}+13 z^{10}\\
&\kern.5cm
+12 z^9+4 
z^8+z^7+6 z^6+3 z^5+z^4+3 z^3,\\
q_{72}(z)&=
5 z^{58}+10 z^{57}+4 z^{56}+3 
z^{54}+13 z^{53}+3 z^{52}+14 z^{51}+11 z^{49}+14 z^{48}+10 z^{46}+2 z^{45}+2 
z^{44}\\
&\kern.5cm
+2 z^{43}+10 z^{42}+9 z^{41}+15 z^{40}+14 z^{39}+11 z^{38}+15 
z^{37}+13 z^{36}+6 z^{34}+15 z^{31}+15 z^{30}\\
&\kern.5cm
+9 z^{28}+13 z^{27}+6 z^{25}+2 
z^{24}+15 z^{23}+9 z^{21}+6 z^{20}+3 z^{19}+2 z^{18}+6 z^{17}+5 z^{16}+7 
z^{15}\\
&\kern.5cm
+z^{14}+10 z^{13}+6 z^{12}+14 z^{11}+3 z^{10}+2 z^9+4 
z^8,\\
q_{73}(z)&=
2 z^{55}+6 z^{51}+10 z^{50}+6 z^{47}+10 
z^{46}+6 z^{45}+10 z^{43}+6 z^{42}+6 z^{41}+14 z^{38}+10 z^{34}+14 z^{33}\\
&\kern.5cm
+6 
z^{32}+2 z^{29}+6 z^{26}+2 z^{25}+2 z^{22}+2 z^{16}+10 z^{15}+6 z^{14}+10 
z^{13}+2 z^9+14 z^8+14 z^6,\\
q_{74}(z)&=
2 z^{55}+10 z^{54}+4 
z^{50}+3 z^{49}+13 z^{48}+4 z^{47}+7 z^{46}+7 z^{45}+15 z^{44}+5 z^{43}+15 
z^{42}+15 z^{41}\\
&\kern.5cm
+11 z^{39}+13 z^{38}+z^{37}+5 z^{36}+2 z^{34}+5 z^{32}+5 
z^{31}+14 z^{30}+4 z^{28}+15 z^{27}+10 z^{25}\\
&\kern.5cm
+15 z^{24}+10 z^{23}+11 
z^{22}+11 z^{21}+11 z^{20}+3 z^{19}+15 z^{18}+4 z^{17}+14 z^{16}+z^{15}+12 
z^{14}\\
&\kern.5cm
+13 z^{13}+11 z^{12}+11 z^{11}+11 z^{10}+z^9+5 z^8+13 z^7+11 z^6+5 
z^5+2 z^4,\\
q_{75}(z)&=
10 
z^{58}+13 z^{55}+14 z^{54}+14 z^{53}+6 z^{52}+3 z^{51}+z^{50}+10 z^{49}+2 
z^{48}+15 z^{47}+9 z^{45}+2 z^{43}\\
&\kern.5cm
+2 z^{42}+9 z^{41}+2 z^{40}+9 z^{39}+6 
z^{38}+11 z^{36}+15 z^{35}+2 z^{34}+11 z^{33}+9 z^{32}+2 z^{31}+7 z^{29}\\
&\kern.5cm
+2 
z^{28}+2 z^{26}+10 z^{22}+13 z^{21}+6 z^{18}+13 z^{17}+14 
z^{16},\\
q_{76}(z)&=
5 z^{52}+5 z^{49}+z^{47}+7 z^{44}+11 z^{42}+5 
z^{41}+11 z^{40}+11 z^{35}+15 z^{34}+11 z^{32}+3 z^{31}+13 z^{29}\\
&\kern.5cm
+3 z^{27}+7 
z^{22}+9 z^{19}+11 z^{18}+z^{13}+3 z^{11}+7 z^9+9 z^8+15 
z^6,\\
q_{77}(z)&=
13 z^{55}+7 z^{51}+z^{50}+15 
z^{47}+z^{46}+15 z^{45}+z^{43}+15 z^{42}+15 z^{41}+3 z^{38}+9 z^{34}+11 
z^{33}\\
&\kern.5cm
+15 z^{32}+5 z^{29}+7 z^{26}+13 z^{25}+13 z^{22}+13 z^{16}+9 z^{15}+7 
z^{14}+9 z^{13}+5 z^9+3 z^8+3 z^6.
\end{align*}}%

\section{The method ``reloaded"}
\label{appD}

As the reader will recall from Section~\ref{sec:method}
(cf.\ Remark~\ref{rem:comp}), 
our method described there is based on the ``hope"
that, if a polynomial in $\Phi(z)$ is zero modulo a $2$-power $2^\be$
(as a formal Laurent series), then already all coefficients
of powers of $\Phi(z)$ in this polynomial vanish modulo $2^\be$.
(This is manifest in each comparison of coefficients of powers of
$\Phi(z)$ in Section~\ref{sec:method}.) 
In general, however, this implication does not
hold (see Lemma~\ref{lem:Null16} below for the case of modulus
$2^4=16$). It may consequently happen
that the method from Section~\ref{sec:method} fails to find a
solution modulo $2^\be$
to a given differential equation in the form of a polynomial in
$\Phi(z)$ with coefficients that are Laurent polynomials in $z$ over
the integers, while such a solution may in fact
exist. As it turns out, this situation occurs when treating the
subgroup numbers of $SL_2(\Z)$ and of $\Ga_3(3)$ modulo~$16$,
see the paragraph above
Theorem~\ref{thm:Sl2Z16} and Remark~\ref{rem:Ga3}.
(In the former case, there is indeed a solution, while in the
latter there is not.)

Our aim here is to explain how the method from
Section~\ref{sec:method} can be enhanced so that one can {\it decide}
whether or not such a solution modulo a given $2$-power $2^\be$
exists; and, if it exists, how to find it. In principle, it should be
possible to describe such an improved method for an arbitrary $2$-power
$2^\be$. Since, in the present paper, we need it only for the modulus
$16$, and since we are not able to rigorously establish the validity of
the enhancement we have in mind in general (it would depend on
Conjecture~\ref{conj:1}, which at present we are not able to prove), 
we content ourselves with describing the enhanced method for the
modulus $16$. From this description, the reader should have no
difficulty to ``extrapolate" to arbitrary $2$-powers, assuming the
truth of Conjecture~\ref{conj:1}. 

\medskip
We begin by characterising when a polynomial in $\Phi(z)$
with coefficients that are Laurent polynomials in $z$ over the
integers vanishes modulo $16$ as a Laurent series in $z$.

\begin{lemma} \label{lem:Null16}
As before, let $\Phi(z)=\sum _{n\ge0} ^{}z^{2^n}$.
Furthermore,
let $P(z,\Phi(z))$ be a polynomial in $\Phi(z)$ with coefficients that are
Laurent polynomials in $z$ over the integers. Then, as a Laurent
series in $z$,
$$P(z,\Phi(z))=0\quad \text {\em modulo }16
$$
if, and only if, 
the coefficients of powers of $\Phi$ in $P(z,\Phi(z))$ agree
modulo~$16$ with the corresponding ones in
\begin{multline} \label{eq:Pmod16}
c_1(z)M_{16}(z,\Phi(z))+2\big(c_2(z)\Phi(z)+c_3(z)\big)M_8(z,\Phi(z))\\
+
8\big(c_4(z)\Phi(z)+c_5(z)\big)M_2(z,\Phi(z)).
\end{multline}
Here, $M_2(z,t),M_8(z,t),M_{16}(z,t)$ are the minimal polynomials for the
moduli $2,8,16$, respectively, given in
Proposition~{\em\ref{prop:minpol}}, 
and 
$c_1(z),c_2(z),c_3(z),c_4(z),c_5(z)$ are suitable Laurent polynomials
in $z$ over the integers.
\end{lemma}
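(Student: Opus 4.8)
\emph{Approach.} The ``if'' direction is immediate and I would dispose of it first: by Proposition~\ref{prop:minpol} one has $M_{16}(z,\Phi(z))=0$ modulo~$16$, $M_8(z,\Phi(z))=0$ modulo~$8$, and $M_2(z,\Phi(z))=0$ modulo~$2$; multiplying by an arbitrary Laurent polynomial in $z$ (or by $\Phi(z)$) preserves these congruences, so $c_1M_{16}$, $2(c_2\Phi+c_3)M_8$ and $8(c_4\Phi+c_5)M_2$ each vanish modulo~$16$ as Laurent series, whence so does any $P$ whose coefficients of powers of $\Phi$ agree modulo~$16$ with those in \eqref{eq:Pmod16}.

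For the ``only if'' direction I would proceed as follows. We may assume that $P(z,t)$ has $t$-degree at most~$6$; subtracting $c_1(z)M_{16}(z,t)$ with $c_1(z)=[\,t^6\,]P(z,t)$ brings us to $\deg_tP\le5$, and it then has to be shown that, modulo~$16$, $P(z,t)$ lies in the $(\Z/16\Z)[z,z^{-1}]$-module generated by $M_8$, $tM_8$, $M_2$, $tM_2$ scaled by $2$, $2$, $8$, $8$. The first step is to substitute into $P(z,\Phi(z))=\sum_{k=0}^{5}p_k(z)\Phi^k(z)$ the expansions of $\Phi^0(z),\dots,\Phi^5(z)$ in terms of the series $H_{a_1,\dots,a_r}(z)$ with all $a_i$ odd --- read off from \eqref{eq:Phi2}--\eqref{eq:Phi4} and Appendix~\ref{appA} (equivalently, produced by the algorithm of Section~\ref{sec:extr}) --- thereby writing $P(z,\Phi(z))$ as a $\Z[z,z^{-1}]$-linear combination of $1$ and of the finitely many series $H_{a_1,\dots,a_r}(z)$ with $a_1+\dots+a_r\le6$. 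By Corollary~\ref{lem:Hind} these are linearly independent over $(\Z/16\Z)[z,z^{-1}]$, so ``$P(z,\Phi(z))=0$ modulo~$16$'' is equivalent to a finite system of $\Z[z,z^{-1}]$-linear congruences modulo~$16$ for the vector $(p_0(z),\dots,p_5(z))$.

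The remaining task is to solve this system and to show that its solution module coincides with the one described by \eqref{eq:Pmod16} (the ``if'' part already gives one inclusion). Conceptually, the reason the solutions organise themselves into the three layers $M_{16}$, $2M_8$, $8M_2$ is the minimality statements of Proposition~\ref{prop:minpol}: minimality of $M_2$ modulo~$2$, of $M_8$ modulo~$8$ and of $M_{16}$ modulo~$16$ forces $M_2(z,\Phi(z))=2v_2(z)$ and $M_8(z,\Phi(z))=8v_8(z)$ with $v_2(z),v_8(z)$ \emph{not} divisible by~$2$ (indeed $v_2(z)=\Phi(z)+H_{1,1}(z)$ is $z$ times a unit of $(\Z/16\Z)((z))$), so that no relation of ``intermediate'' $2$-adic depth can arise. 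I would make this precise by a descent on the exponent: starting from $P(z,\Phi(z))=0$ modulo~$2$, divide by the monic $M_2$ and use independence of $1$ and $\Phi(z)=H_1(z)$ modulo~$2$ to extract a factor~$2$ from the degree-$\le1$ remainder, then exploit invertibility of $v_2(z)$ to pass from modulus $2^j$ to $2^{j-1}$, peeling off in turn the contributions $8M_2$, $2M_8$, $M_{16}$; here one also needs $M_8\equiv M_2^2$ modulo~$2$, the identity $t^2M_8=M_{16}-tM_8-zM_8$, and $8M_2^2\equiv8M_8$ modulo~$16$ in order to fold higher-degree pieces such as $2t^2M_8$ and $8t^2M_2$ back into the shape \eqref{eq:Pmod16}.

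The main obstacle will be exactly this last step: organising the descent so that it terminates with \emph{precisely} the three generators $M_{16}$, $2M_8$, $8M_2$ and produces no further relations, while keeping the $t$-degree bookkeeping consistent. In effect this is the assertion that the kernel ideal $\{P\in(\Z/16\Z)[z,z^{-1}][t]:P(z,\Phi(z))=0\text{ modulo }16\}$ is generated by $M_{16}$, $2M_8$ and $8M_2$; once that is established, intersecting with $t$-degree $\le6$ and applying the reductions just mentioned yields the parametrisation \eqref{eq:Pmod16}. Should the descent prove awkward to make fully rigorous, the equality of the two solution modules can instead be verified by solving the (small, explicit) linear system directly, which is in any case the computation the enhanced method performs.
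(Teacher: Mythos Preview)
Your proposal is correct in outline, and the ``if'' direction plus the initial reduction to $\deg_t P\le 5$ match the paper exactly. The ``only if'' direction, however, is handled much more directly in the paper than in either of your two suggested routes.

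Rather than expanding all of $\Phi^0,\dots,\Phi^5$ into $H$-series and solving the resulting linear system, or attempting a bottom-up $2$-adic descent via invertibility of $v_2(z)$, the paper does a straightforward top-down descent on the $\Phi$-degree, using at each step only the single fact \eqref{eq:PhiE}, namely $\Phi^m(z)=m!\,E_m(z)+R_m(z)$ with $R_m$ of strictly smaller asymptotic density. Concretely: once $P_1=\sum_{k\le 5}d_k\Phi^k$ is known to vanish modulo~$16$, the leading density term is $5!\,d_5(z)E_5(z)$, so $120\,d_5\equiv 0\pmod{16}$, i.e.\ $d_5=2c_2$; subtracting $2c_2\Phi M_8$ drops the degree to~$4$. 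The same argument gives $24\,d_4\equiv 0\pmod{16}$ so $d_4=2c_3$; subtract $2c_3M_8$. Then $6\,d_3\equiv 0\pmod{16}$ forces $d_3=8c_4$; subtract $8c_4\Phi M_2$. Next $2\,d_2\equiv 0\pmod{16}$ forces $d_2=8c_5$; subtract $8c_5M_2$. Finally $g_1\Phi+g_0\equiv 0\pmod{16}$ forces $g_1\equiv g_0\equiv 0\pmod{16}$, and the proof is finished.

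What this buys is that no linear system needs to be set up or solved, no Appendix~\ref{appA} expansions are required, and the identities $M_8\equiv M_2^2\pmod 2$, $t^2M_8=M_{16}-tM_8-zM_8$, etc., that you flag as ``the main obstacle'' never enter. Your approach would also succeed (the $H$-series route is certainly rigorous, just more laborious), but the density argument from the proof of Lemma~\ref{lem:minpol} is exactly the right tool here and makes the descent mechanical.
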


\begin{proof}
We assume that $P(z,\Phi(z))=0\ \text {modulo }16$.

Recall that, by definition, $M_{16}(z,\Phi(z))$ is a {\it monic}
polynomial in $\Phi(z)$. We use this fact to
perform division of $P(z,\Phi(z))$ by $M_{16}(z,\Phi(z))$ (as
polynomials in $\Phi(z)$),
thus obtaining
\begin{equation} \label{eq:R1}
P(z,\Phi(z))=c_1(z)M_{16}(z)+P_1(z,\Phi(z)),
\end{equation}
where $P_1(z,\Phi(z))$ is a polynomial in $\Phi(z)$ of degree at most
$5$, with coefficients that are Laurent polynomials in $z$ over the
integers, say
\begin{multline} \label{eq:R1a}
P_1(z,\Phi(z))=
d_5(z)\Phi^5(z)+
d_4(z)\Phi^4(z)+
d_3(z)\Phi^3(z)\\
+
d_2(z)\Phi^2(z)+
d_1(z)\Phi(z)+
d_0(z).
\end{multline}
As Laurent series in $z$, both $P(z,\Phi(z))$ and $M_{16}(z,\Phi(z))$ 
vanish modulo~$16$. Using this observation in \eqref{eq:R1}, we see
that $P_1(z,\Phi(z))$ vanishes modulo~16 as well.
Now recall from \eqref{eq:PhiE} and the proof of
Lemma~\ref{lem:minpol} that 
$$
P_1(z,\Phi(z))=
d_5(z)\,5!\,E_5(z)+Q_1(z)
$$
(with a suitable series $Q_1(z)$), where 
$D(Q_1(z),16;n)$ has strictly smaller
asymptotic growth (in $n$) than $D(E_5(z),16;n)$.
Since, as we already observed, $P_1(z,\Phi(z))$ vanishes modulo~16,
it follows that $5!d_5(z)$ must vanish modulo~16, that is, 
there exists a Laurent polynomial $c_2(z)$ over the integers such
that $d_5(z)=2c_2(z)$. We use this observation in \eqref{eq:R1a} 
to see that
\begin{equation} \label{eq:R2}
P(z,\Phi(z))=c_1(z)M_{16}(z)+2c_2(z)\Phi(z)M_8(z,\Phi(z))+P_2(z,\Phi(z)),
\end{equation}
where $P_2(z,\Phi(z))$ is a polynomial in $\Phi(z)$ of degree at most
$4$, with coefficients that are Laurent polynomials in $z$ over the
integers, say
\begin{equation*} 
P_2(z,\Phi(z))=
e_4(z)\Phi^4(z)+
e_3(z)\Phi^3(z)+
e_2(z)\Phi^2(z)+
e_1(z)\Phi(z)+
e_0(z).
\end{equation*}
Applying the same kind of argument again, we further deduce that
\begin{equation} \label{eq:R3}
P(z,\Phi(z))=c_1(z)M_{16}(z)+2\big(c_2(z)\Phi(z)+c_3(z)\big)M_8(z,\Phi(z))
+P_3(z,\Phi(z)),
\end{equation}
where $c_3(z)$ is a Laurent polynomial in $z$ over the integers and
$P_3(z,\Phi(z))$ is a polynomial in $\Phi(z)$ of degree at most
$3$, with coefficients that are Laurent polynomials in $z$ over the
integers, say
\begin{equation*} 
P_3(z,\Phi(z))=
f_3(z)\Phi^3(z)+
f_2(z)\Phi^2(z)+
f_1(z)\Phi(z)+
f_0(z).
\end{equation*}
As Laurent series in $z$, all of $P(z,\Phi(z))$, $M_{16}(z,\Phi(z))$,
and $2M_8(z,\Phi(z))$ vanish modulo~$16$. 
Using this observation in \eqref{eq:R3}, we see
that $P_3(z,\Phi(z))$ vanishes modulo~16 as well.
Equation~\eqref{eq:PhiE} and the proof of
Lemma~\ref{lem:minpol} give that
$$
P_3(z,\Phi(z))=
d_3(z)\,3!\,E_3(z)+Q_3(z)
$$
(with a suitable series $Q_3(z)$), where 
$D(Q_3(z),16;n)$ has strictly smaller
asymptotic growth (in $n$) than $D(E_3(z),16;n)$.
Since, as we already observed, $P_3(z,\Phi(z))$ vanishes modulo~16,
it follows that $3!d_3(z)$ must vanish modulo~16, that is, 
there exists a Laurent polynomial $c_4(z)$ over the integers such
that $d_3(z)=8c_4(z)$. By another application of the same kind of argument,
this leads to
\begin{multline} \label{eq:R4}
P(z,\Phi(z))=c_1(z)M_{16}(z)+2\big(c_2(z)\Phi(z)+c_3(z)\big)M_8(z,\Phi(z))\\
+
8\big(c_4(z)\Phi(z)+c_5(z)\big)M_2(z,\Phi(z))
+P_4(z,\Phi(z)),
\end{multline}
where $c_5(z)$ is a Laurent polynomial in $z$ over the
integers and
$P_4(z,\Phi(z))$ is a polynomial in $\Phi(z)$ of degree at most
$1$, with coefficients that are Laurent polynomials in $z$ over the
integers, say
\begin{equation*} 
P_4(z,\Phi(z))=
g_1(z)\Phi(z)+
g_0(z).
\end{equation*}
Since $P(z,\Phi(z))$, $M_{16}(z,\Phi(z))$, 
$2M_8(z,\Phi(z)))$, $8M_2(z,\Phi(z))$
all vanish modulo~$16$, also $P_4(z,\Phi(z))$ must have this
property; but this means that $g_1(z)$ and $g_0(z)$ both vanish
modulo $16$. 

If we combine \eqref{eq:R1}, 
\eqref{eq:R2},
\eqref{eq:R3},
\eqref{eq:R4}, then we obtain our claim.
\end{proof}

Now we put ourselves in the situation that we want to describe the
coefficients of the formal power series $F(z)$ modulo~$16$, where
$F(z)$ solves a Riccati-type differential equation of the form
\eqref{eq:diffeq}, and that we try to solve this problem by
expressing $F(z)$ in the form
\begin{equation*} 
F(z)=\sum _{i=0} ^{5}a_i(z)\Phi^i(z)\quad \text {modulo
}16,
\end{equation*}
where the $a_i(z)$'s are Laurent polynomials in $z$ over the integers 
to be determined.
Let us assume that, while following the approach outlined in
Section~\ref{sec:method} (with $M_{16}(z,\Phi(z))$ in place of the
polynomial in \eqref{eq:PhiRel}), we have already reached the level of
modulus~$8$, that is, that we have found Laurent polynomials 
$a_{0,3}(z),
a_{1,3}(z),
a_{2,3}(z),
a_{3,3}(z),
a_{4,3}(z),
a_{5,3}(z)$
such that
$$
\sum _{i=0} ^{5}a_{i,3}(z)\Phi^i(z)
$$
solves the differential equation \eqref{eq:diffeq} modulo~$8$.
According to the Ansatz \eqref{eq:Ansatz2}--\eqref{eq:Ansatz2b} with
$\be=3$, we now substitute 
\begin{equation} \label{eq:bi4}
\sum _{i=0} ^{5}(a_{i,3}(z)+8b_{i,4}(z))\Phi^i(z)
\end{equation}
(where the $b_{i,4}(z)$'s are at this point undetermined Laurent
polynomials in $z$)
instead of $F(z)$ in \eqref{eq:diffeq}. For the sake of better
readability, in the sequel we write $b_0(z)$ for $b_{0,4}(z)$, etc.
After simplification of the left-hand side of \eqref{eq:diffeq}
modulo $16$ as
described below \eqref{eq:Ansatz2b}, and after reduction of the
resulting expression 
modulo~$M_{16}(z,\Phi(z))$ (which is a polynomial in $\Phi(z)$ of
degree~6), we obtain a polynomial of the form
\begin{equation} \label{eq:ExpG}
8\sum _{i=0} ^{5}\Big(p_i(z)+G_i\big(z,\mathbf b(z),\mathbf
b'(z)\big)\Big)\Phi^i(z),
\end{equation}
where the $p_i(z)$'s are certain Laurent polynomials in $z$ over the
integers, and
the $G_i\big(z,\mathbf b(z),\mathbf b'(z)\big)$'s are certain linear
forms in
$$
b_0(z),
b_1(z),
b_2(z),
b_3(z),
b_4(z),
b_5(z)\text { and }
b'_0(z),
b'_1(z),
b'_2(z),
b'_3(z),
b'_4(z),
b'_5(z),
$$
with coefficients that are (known) Laurent polynomials in $z$ over the
integers. 

Our goal is to find Laurent polynomials
$b_0(z),b_1(z),b_2(z),b_3(z),b_4(z),b_5(z)$
such that the expression \eqref{eq:ExpG} is zero modulo~$16$ as
Laurent series in $z$. Lemma~\ref{lem:Null16}, combined with the
explicit forms of $M_2(z,t)$ and $M_8(z,t)$ given in
Proposition~\ref{prop:minpol}, then says that
\begin{align*} 
8\Big(p_0(z)+G_0\big(z,\mathbf b(z),\mathbf b'(z)\big)\Big)
&=(4z+10z^2)c_3(z)+8zc_5(z)
\quad \text {modulo }16,\\
8\Big(p_1(z)+G_1\big(z,\mathbf b(z),\mathbf b'(z)\big)\Big)
&=(4z+10z^2)c_2(z)+(12+4z)c_3(z)+8zc_4(z)+8c_5(z)\\
&\kern7.7cm
\quad \text {modulo }16,\\
8\Big(p_2(z)+G_2\big(z,\mathbf b(z),\mathbf b'(z)\big)\Big)
&=(12+4z)c_2(z)+(6+4z)c_3(z)+8c_4(z)+8c_5(z)\\
&\kern7.7cm
\quad \text {modulo }16,\\
8\Big(p_3(z)+G_3\big(z,\mathbf b(z),\mathbf b'(z)\big)\Big)
&=(6+4z)c_2(z)+12c_3(z)+8c_4(z)
\quad \text {modulo }16,\\
8\Big(p_4(z)+G_4\big(z,\mathbf b(z),\mathbf b'(z)\big)\Big)
&=12c_2(z)+2c_3(z)
\quad \text {modulo }16,\\
8\Big(p_5(z)+G_5\big(z,\mathbf b(z),\mathbf b'(z)\big)\Big)&=2c_2(z)
\quad \text {modulo }16,
\end{align*}
for suitable Laurent polynomials $c_2(z),c_3(z),c_4(z),c_5(z)$.
From the last congruence one sees that $c_2(z)$ is actually zero
modulo~$4$, and then the next-to-last congruence implies that the
same holds for $c_3(z)$. Writing 
$4a(z)=c_2(z)$,
$4b(z)=c_3(z)$,
$c(z)=c_4(z)$,
$d(z)=c_5(z)$, we see that the above system of congruences simplifies
to
\begin{align} \notag
G_0\big(z,\mathbf b(z),\mathbf b'(z)\big)&=p_0(z)+z^2b(z)+zd(z)
\quad \text {modulo }2,\\
\notag
G_1\big(z,\mathbf b(z),\mathbf b'(z)\big)
&=p_1(z)+z^2a(z)+zc(z)+d(z)
\quad \text {modulo }2,\\
\notag
G_2\big(z,\mathbf b(z),\mathbf b'(z)\big)
&=p_2(z)+b(z)+c(z)+d(z)
\quad \text {modulo }2,\\
\notag
G_3\big(z,\mathbf b(z),\mathbf b'(z)\big)
&=p_3(z)+a(z)+c(z)
\quad \text {modulo }2,\\
\notag
G_4\big(z,\mathbf b(z),\mathbf b'(z)\big)&=p_4(z)+b(z)
\quad \text {modulo }2,\\
G_5\big(z,\mathbf b(z),\mathbf b'(z)\big)&=p_5(z)+a(z)
\quad \text {modulo }2.
\label{eq:abcd}
\end{align}
This puts us in the situation of Lemma~\ref{lem:2x2diff}, except that
on the right-hand sides of the congruences (denoted by $r_i(z)$,
$i=1,2,\dots,N$, in Lemma~\ref{lem:2x2diff}) there appear the unknown
Laurent polynomials $a(z),b(z),c(z),d(z)$. Still, the idea of the
proof of Lemma~\ref{lem:2x2diff} may be applied: the system of
congruences \eqref{eq:abcd} can be solved with respect to the
``variables" $b_0(z),b_1(z),b_2(z),b_3(z),b_4(z),b_5(z)$ by
separating odd and even parts, and thereby converting the original system
\eqref{eq:2x2diff} of congruences into the system \eqref{eq:4x4} of
linear congruences for the odd and even parts of the variables.
We solve this last system over the {\it field\/} of {\it rational\/} functions
over $\Z/2\Z$, where odd and even parts of the ``auxiliary
variables" $a(z),b(z),c(z),d(z)$ ``sit" inside the odd and even parts of
the ``constants" $r_i(z)$. In the end, if odd and even parts of the variables
$b_0(z),b_1(z),b_2(z),b_3(z),b_4(z),b_5(z)$ are put together, then 
we are able to express these variables in the form
\begin{equation} \label{eq:bi(z)} 
b_i(z)=
\frac {q_i(z)+H_i\big(z,a^{(o)}(z),a^{(e)}(z),\dots,d^{(o)}(z),d^{(e)}(z)\big)} 
{P(z)}\quad \text {modulo }2,\quad \
i=0,1,\dots,5,
\end{equation}
where the $q_i(z)$'s are (known) Laurent polynomials in $z$ over the
integers, $P(z)$ is a (known) polynomial in $z$ over the integers,
and the 
$$H_i\big(z,a^{(o)}(z),
a^{(e)}(z),\dots,d^{(o)}(z),d^{(e)}(z)\big)\text {'s}$$
are linear forms in $a^{(o)}(z),a^{(e)}(z),\dots,d^{(o)}(z),d^{(e)}(z)$
with coefficients that are (known) Laurent polynomials in $z$ 
over the integers.

The task now is to choose
$a^{(o)}(z),a^{(e)}(z),\dots,d^{(o)}(z),d^{(e)}(z)$ in such a way
that in each of the fractions on the right-hand sides of
\eqref{eq:bi(z)} the denominator $P(z)$ cancels out.

In order to carry out this task, we decompose $P(z)$ into its prime
factors (over $\Z/2\Z$), say
$$
P(z)=\prod _{j=1} ^{\ell}P_j^{m_j}(z)
\quad  \text {modulo }2.
$$
Using a standard inductive procedure,\footnote{One first solves
\eqref{eq:Pjmj} modulo $P_j(z)$ (instead of $P_j^{m_j}(z)$); this means
solving a system of linear equations over a field. If one has solved
\eqref{eq:Pjmj} already modulo $P_j^h(z)$, for each variable
$\text{var}(z)$ one makes the Ansatz $\text{var}(z)=\text{var}_0(z)+
\text{var}_1(z)P_j^h(z)$, where $\text{var}_0(z)$ is the value of
$\text{var}(z)$ in the solution modulo $P_j^h(z)$. If this is
substituted in \eqref{eq:Pjmj}, after cancellation,
solving \eqref{eq:Pjmj} modulo
$P_j^{h+1}(z)$ boils again down to solving a system of linear
equations modulo $P_j(z)$, that is, over a field.}
we find $a^{(o)}(z),a^{(e)}(z),\dots,d^{(o)}(z),d^{(e)}(z)$ 
(if there are) such that
\begin{multline} \label{eq:Pjmj}
q_i(z)+H_i\big(z,a^{(o)}(z),a^{(e)}(z),\dots,d^{(o)}(z),d^{(e)}(z)\big)
=0\ \left(\text {mod }P_j^{m_j}(z)\right),\\
\quad i=0,1,\dots,5,\ j=1,2,\dots,\ell,
\end{multline}
(again, over the field $\Z/2\Z$), and then put the particular results
for each $j$ together by means of the Chinese remainder theorem.
We only discuss the generic case here, the discussion for other cases
being completely analogous. Namely, generically,
having to solve $6$ equations in $8$ variables, 
one will be able to express six of the variables in terms of
two ``free" variables. Let us say,
$b^{(o)}(z),b^{(e)}(z),c^{(o)}(z),c^{(e)}(z),d^{(o)}(z),d^{(e)}(z)$ 
can be expressed in terms of $a^{(o)}(z),a^{(e)}(z)$,
\begin{alignat}2 
\notag
b^{(o)}(z)&=s_1(z)+u_1(z)a^{(o)}(z)+v_1(z)a^{(e)}(z)
\quad &\text {modulo }2,\\
\notag
b^{(e)}(z)&=s_2(z)+u_2(z)a^{(o)}(z)+v_2(z)a^{(e)}(z),
\quad &\text {modulo }2,\\
\notag
c^{(o)}(z)&=s_3(z)+u_3(z)a^{(o)}(z)+v_3(z)a^{(e)}(z),
\quad &\text {modulo }2,\\
\notag
c^{(e)}(z)&=s_4(z)+u_4(z)a^{(o)}(z)+v_4(z)a^{(e)}(z),
\quad &\text {modulo }2,\\
\notag
d^{(o)}(z)&=s_5(z)+u_5(z)a^{(o)}(z)+v_5(z)a^{(e)}(z),
\quad &\text {modulo }2,\\
d^{(e)}(z)&=s_6(z)+u_6(z)a^{(o)}(z)+v_6(z)a^{(e)}(z)
\quad &\text {modulo }2,
\label{eq:aoae}
\end{alignat}
where the $s_i(z)$'s, the $u_i(z)$'s, and the $v_i(z)$'s are certain
(known) Laurent polynomials in $z$ over the integers, and where we
are free to choose $a^{(o)}(z)$ and $a^{(e)}(z)$.
If this is substituted in \eqref{eq:bi(z)}, then on the right-hand
sides the denominator $P(z)$ cancels out, and
$b_0(z),b_1(z),b_2(z),b_3(z),b_4(z),b_5(z)$ will all be equal to
Laurent polynomials in $z$ over the integers.

We are still not finished, though. 
In the ``solution" \eqref{eq:aoae} the Laurent polynomials
$a^{(o)}(z),
b^{(o)}(z),
c^{(o)}(z),
d^{(o)}(z)$ must be chosen as odd, while the Laurent polynomials
$a^{(e)}(z),
b^{(e)}(z),
c^{(e)}(z),
d^{(e)}(z)$ must be chosen as even.
In order to achieve this, we must (again) separate odd and even
parts: doing so in \eqref{eq:aoae} yields the system 
\begin{alignat}2
\notag
0&=s_1^{(e)}(z)+u_1^{(o)}(z)a^{(o)}(z)+v_1^{(e)}(z)a^{(e)}(z)
\quad &\text {modulo }2,\\
\notag
0&=s_2^{(o)}(z)+u_2^{(e)}(z)a^{(o)}(z)+v_2^{(o)}(z)a^{(e)}(z),
\quad &\text {modulo }2,\\
\notag
0&=s_3^{(e)}(z)+u_3^{(o)}(z)a^{(o)}(z)+v_3{(e)}(z)a^{(e)}(z),
\quad &\text {modulo }2,\\
\notag
0&=s_4^{(o)}(z)+u_4^{(e)}(z)a^{(o)}(z)+v_4^{(o)}(z)a^{(e)}(z),
\quad &\text {modulo }2,\\
\notag
0&=s_5^{(e)}(z)+u_5^{(o)}(z)a^{(o)}(z)+v_5^{(e)}(z)a^{(e)}(z),
\quad &\text {modulo }2,\\
0&=s_6^{(o)}(z)+u_6^{(e)}(z)a^{(o)}(z)+v_6^{(o)}(z)a^{(e)}(z)
\quad &\text {modulo }2.
\label{eq:aoae2}
\end{alignat}
This is a system of six linear congruences with two variables,
$a^{(o)}(z)$ and $a^{(e)}(z)$, where the first of these should be an
odd Laurent polynomial and the second an even one.
It is of the type of the system of congruences
\eqref{eq:4x4}. How to solve such a system is explained in
the paragraph below the proof of Lemma~\ref{lem:2x2diff}.
(One first solves over the field of rational functions in $z$ over
$\Z/2\Z$, and then cancels denominators, if possible.)
Moreover, the argument in the proof of
Lemma~\ref{lem:2x2diff} showing that, if \eqref{eq:4x4} has {\it some}
solution in Laurent polynomials, then it also has a solution 
in which all $f_j^{(1)}(z)$'s are even Laurent polynomials and all 
$f_j^{(2)}(z)$'s are odd Laurent polynomials, also applies to the
system \eqref{eq:aoae2} to guarantee that, if one is able to find
{\it some} solution $a^{(o)}(z),a^{(e)}(z)$, then one can also find
one in which $a^{(o)}(z)$ is an odd Laurent polynomial and $a^{(e)}(z)$
is an even Laurent polynomial.

If one is able to carry through this procedure, then one has found
the unknowns $b_{i,4}(z)$, $i=0,1,\dots,5$, so that \eqref{eq:bi4}
produces the
desired description modulo~$16$ of the solution $F(z)$ to the Riccati-type
differential equation \eqref{eq:diffeq}. Conversely, if one of the
systems of linear congruences which one has to solve along the way
(these are \eqref{eq:abcd}, \eqref{eq:Pjmj}, and \eqref{eq:aoae2})
has no solution, then one has {\it proved\/} that it is impossible
to describe the series $F(z)$ modulo~$16$ in terms of a polynomial in
$\Phi(z)$ with coefficients that are Laurent polynomials in $z$ over
the integers.


\begin{thebibliography}{99}

\bibitem{AvH} S. A. Abramov and M. van Hoeij, Desingularization of linear 
difference operators with polynomial coefficients, 
{\it Proc.\ ISSAC'99}, pp.~269--275, 1999.

\bibitem{ArmDAA} D.~Armstrong, {\em Generalized noncrossing partitions and 
combinatorics of Coxeter groups}, 
Mem.\ Amer.\ Math.\ Soc., vol.~202, no.~949, Amer.\ Math.\ Soc.,
Providence, R.I., 2009.


\bibitem{CaMuAA} P.\,J. Cameron and T.\,W. M\"uller,
A descent principle in modular subgroup arithmetic, {\it J. Pure
Appl.\ Algebra} {\bf 203} (2005), 189--203.

\bibitem{ChyzAA}
F. Chyzak, {\it Fonctions holonomes en calcul formel}, Ph.D. thesis,
\'Ecole polytechnique, Paris, 1998.

\bibitem{ChSaAA}
F. Chyzak and B. Salvy,
Non-commutative elimination in Ore algebras proves multivariate holonomic identities,
{\it J. Symbolic Comput.} {\bf 26} (1998), 187--227.

\bibitem{ComtAA} L.    Comtet, {\it Advanced
Combinatorics}, D.~Reidel, Dordrecht, Holland, 1974.

\bibitem{CraiAA}
A.\,D.\,D. Craik, Prehistory of Fa\`a di Bruno's formula,
{\it Amer.\ Math.\ Monthly} {\bf 112} (2005), 119--130.

\bibitem{DaWeAA}
K. S. Davis and W. A. Webb, Lucas' theorem for prime powers, Europ.\
J. Combin.\ {\bf 11} (1990), 229--233.

\bibitem{Dey}
I. M. S. Dey, Schreier systems in free products, 
{\it Proc.\ Glasgow Math.\ Soc.} {\bf 7} (1965), 61--79.

\bibitem{DM} A. Dress and T.\,W. M\"uller, Decomposable
functors and the exponential principle,
{\it Adv. in Math.} {\bf 129} (1997), 188--221.

\bibitem{EuLYAA}
S.-P. Eu, S.-C. Liu and Y.-N. Yeh, Catalan and Motzkin numbers
modulo $4$ and $8$, {\it Europ.\ J. Combin.} {\bf 29} (2008), 1449--1466.

\bibitem{Frob1} G. Frobenius, Verallgemeinerung des Sylow'schen Satzes, 
{\it Sitz.ber.\ K\"onigl.\ Preuss.\ Akad.\ Wiss.\ Berlin} 
(1895), 981--993.

\bibitem{Frob2} G. Frobenius, \"Uber einen Fundamentalsatz der
Gruppentheorie, {\it Sitz.ber. K\"onigl.\ Preuss.\ Akad.\ Wiss.\ Berlin} 
{\bf 44} (1903), 987--991.

\bibitem{GIR} C. Godsil, W. Imrich, and R. Razen, On the number of
subgroups of given index in the modular group, \textit{Monatsh. Math.}
\textbf{87} (1979), 273--280. 

\bibitem{GranAA}
A. Granville, Arithmetic properties of binomial
 coefficients, I: Binomial coefficients modulo prime powers,
in: Organic
 mathematics (Burnaby, BC, 1995), CMS Conf.\ Proc., vol.~20,
 Amer. Math. Soc., Providence, RI, 1997, pp.~253--276. 

\bibitem{Guess} M. Kauers, Guess, {\sl Mathematica} package available
at\newline 
{\tt http://www.risc.jku.at/research/combinat/software/Guess}.

\bibitem{PHall2} P. Hall, On a theorem of Frobenius, \textit{Proc.\ London 
Math.\ Soc.} (2) \textbf{40} (1935), 468--501.

\bibitem{JohWAE}
W. P. Johnson, The curious history of Fa\`a di Bruno's formula,
{\it Amer.\ Math.\ Monthly} {\bf 109} (2002), 217--234.

\bibitem{KoutAA}
C. Koutschan, {\em Advanced applications of the holonomic systems
approach}, RISC, J. Kepler University, Linz, Ph.D. thesis, 2009;
{\sl Mathematica} implementation available at 
{\tt
http://www.risc.jku.at/research/combinat/software/HolonomicFunctions}. 

\bibitem{KrMuAC} 
C.    Krattenthaler and T. W. M\"uller,
Parity patterns associated with lifts of Hecke groups,
{\it Abh.\ Math.\ Sem.\ Univ.\ Hamburg} {\bf 78} (2008), 99--147.

\bibitem{LegeAA}
A. M. Legendre, {\em Essai sur la th\'eorie des nombres}, 2ed., 
Courcier, Paris, 1808.

\bibitem{LiYeAA}
S.-C. Liu and J. C.-C. Yeh, Catalan numbers modulo $2^k$,
{\it J. Integer Sequences} {\bf 13} (2010), Art.~10.5.4, 26~pages.

\bibitem{MallAA}
C. Mallinger, {\em Algorithmic manipulations and transformations of
univariate holonomic functions and sequences}, diploma thesis, RISC,
J. Kepler University, Linz, 1996; {\sl Mathematica} implementation
available at\newline
{\tt
http://www.risc.jku.at/research/combinat/software/GeneratingFunctions}.

\bibitem{MuFreeGrowth} T.\,W. M\"uller, Combinatorial aspects of
finitely generated virtually free groups, \textit{J. London
Math. Soc.} (2) \textbf{44} (1991), 75--94. 

\bibitem{ModSub2} T.\,W. M\"uller, Modular subgroup arithmetic and a
theorem of Philip Hall,
{\it Bull.\ London Math.\ Soc.} {\bf 34} (2002), 587--598.

\bibitem{ModSub} T.\,W. M\"uller, Modular subgroup arithmetic in free
products, {\it Forum Math.} {\bf 15} (2003), 759--810.

\bibitem{MuHecke} T.\,W. M\"uller, Parity patterns in Hecke groups and
Fermat primes. In: {\em Groups: Topological, Combinatorial, and
Arithmetic Aspects,} Proceedings of a conference held 1999 in
Bielefeld\break (T.\,W.~M\"uller, ed.), LMS Lecture Note Series vol.~311,
Cambridge University Press, 
Cambridge, 2004, 327--374. 

\bibitem{MuPu2} T.\,W. M\"uller and J.-C. Schlage-Puchta, 
Modular arithmetic of free subgroups, 
{\it Forum Math.} {\bf 17} (2005), 375--405.

\bibitem{MuPu} T.\,W. M\"uller and J.-C. Schlage-Puchta, Divisibility
properties of subgroup numbers for the modular group, \textit{New York
J. Math.} \textbf{11} (2005), 205--224. 

\bibitem{PoSaAA}
A. Postnikov and B. E. Sagan, What power of two divides a
weighted Catalan number?, {\it J. Combin. Theory Ser.~A} {\bf 114} (2007),
970--977.

\bibitem{SaZiAA}
B. Salvy and P. Zimmermann, Gfun: a Maple package for the
manipulation of generating and holonomic functions in one 
       variable, {\it ACM Trans.\ Math.\
       Software} {\bf 20} (1994); available as part of the
standard distribution of {\sl Maple}.

\bibitem{StanBI} R.\,P. Stanley, {\em Enumerative Combinatorics}, vol.~2,
Cambridge University Press, Cambridge, 1999.  

\bibitem{Stadd} R.\,P. Stanley, {\em Catalan Addendum},
continuation of Exercise~6.19 from \cite{StanBI}; available at
{\tt http://math.mit.edu/\lower0.5ex\hbox{\~{}}rstan/ec/catadd.pdf}.

\bibitem{Stothers} W. Stothers, The number of subgroups of given index
in the modular group, \textit{Proc. Royal Soc. Edinburgh, Sec.~A} \textbf{78}
(1977), 105--112. 

\bibitem{SyloAA}
L. Sylow, Th\'eor\`emes sur les groupes de substitutions, 
{\it Math. Ann.} {\bf 5} (1872), 584--594.

\bibitem{Wilf} H.\,S. Wilf, \textit{generatingfunctionology},
2nd edition, Academic Press, San Diego, 1994.

\bibitem{WiZeAC} H. S. Wilf and D. Zeilberger,
An algorithmic proof theory for hypergeometric (ordinary and ``$q$") 
multisum/integral identities, {\it Invent.\ Math.} {\bf 108}
(1992), 575--633.

\bibitem{XiXuAA}
G. Xin and J.-F. Xu, A short approach to Catalan numbers modulo
$2^r$, Electron.\ J. Combin.\ {\bf 18}(1) (2011), Article~\#P177,
12~pp.

\bibitem{ZeilAN} 
D.    Zeilberger, A holonomic systems approach to special functions
identities, {\it J. Comput. Appl. Math.} {\bf 32} (1990), 321--368.

\end{thebibliography}
\end{document}